\newcommand\bartoncrossref[2]{\texorpdfstring{\hyperref[#2]{#1~\ref*{#2}}}{#1~\ref*{#2}}}
\numberwithin{equation}{section}
\newtheorem{thm}[equation]{Theorem}
\newtheorem{lem}[equation]{Lemma}
\newtheorem{cor}[equation]{Corollary}
\theoremstyle{definition}
\newtheorem{dfn}[equation]{Definition}
\theoremstyle{remark}
\newtheorem{rmk}[equation]{Remark}
\newcommand\thmref[1]{\bartoncrossref{Theo\-rem}{thm:#1}}
\newcommand\lemref[1]{\bartoncrossref{Lem\-ma}{lem:#1}}
\newcommand\crlref[1]{\bartoncrossref{Corollary}{cor:#1}}
\newcommand\dfnref[1]{\bartoncrossref{Definition}{dfn:#1}}
\newcommand\rmkref[1]{\bartoncrossref{Remark}{rmk:#1}}
\newcommand\secref[1]{\bartoncrossref{Section}{sec:#1}}
\newcommand\dist{\mathop{\mathrm{dist}}}
\newcommand\Div{\mathop{\mathrm{div}}}
\newcommand\supp{\mathop{\mathrm{supp}}}
\newcommand\re{\mathop{\mathrm{Re}}}
\newcommand\im{\mathop{\mathrm{Im}}}
\newcommand\RR{\mathbb{R}}
\newcommand\CC{\mathbb{C}}
\newcommand\e{\vec{e}}
\newcommand\D{\mathcal{D}}
\newcommand\s{\mathcal{S}}
\newcommand\E{\mathcal{E}}
\newcommand\F{\mathcal{F}}
\newcommand\1{\mathbf{1}}
\newcommand\abs[1]{\lvert{#1}\rvert}
\newcommand\biggabs[1]{\biggl\lvert{#1}\biggr\rvert}
\newcommand\doublebar[1]{\lVert{#1}\rVert}
\newcommand\triplebar[1]{\mathopen\interleave{#1}\mathclose\interleave}
    \def\HyPsd@CatcodeWarning#1{} 
\begin{document}

\title[The Dirichlet problem for higher order equations]
{The Dirichlet problem for higher order equations in composition form}
  
\author{Ariel Barton}
\address{Ariel Barton, School of Mathematics, University of Minnesota, 127 Vincent Hall, 206 Church St.\ SE, Minneapolis, Minnesota 55455}
\email{abarton@math.umn.edu}
\author{Svitlana Mayboroda}
\address{Svitlana Mayboroda, School of Mathematics, University of Minnesota, 127 Vincent Hall, 206 Church St.\ SE, Minneapolis, Minnesota 55455}
\email{svitlana@math.umn.edu}
\thanks{Svitlana Mayboroda is partially supported by the Alfred P. Sloan Fellowship, the NSF CAREER Award DMS 1056004,  and the NSF Materials Research Science and Engineering Center Seed Grant.}

\subjclass[2010]{Primary 
35J40, 
Secondary
31B20, 
35C15
}

\keywords{Dirichlet problem, higher order elliptic equation}


\begin{abstract}
The present paper commences the study of higher order differential equations in composition form. Specifically, we consider the equation $Lu=\Div B^*\nabla(a\,\Div A\nabla u)=0$, where $A$ and $B$ are elliptic matrices with complex-valued bounded measurable coefficients and $a$ is an accretive function. Elliptic operators of this type naturally arise, for instance, via a pull-back of the bilaplacian $\Delta^2$ from a Lipschitz domain to the upper half-space. More generally, this form is preserved under a Lipschitz change of variables, contrary to the case of divergence-form fourth order differential equations. We establish well-posedness of the Dirichlet problem for the equation $Lu=0$, with boundary data in~$L^2$, and with optimal estimates in terms of nontangential maximal functions and square functions. 
\end{abstract}

\maketitle


\section{Introduction}

The last few decades have witnessed a surge of activity on boundary-value problems on Lipschitz domains for second-order divergence-form elliptic equations $-\Div A\nabla u=0$. Their investigation has, in particular, been guided by two principles. 

First, divergence-form equations are naturally associated to a bilinear ``energy'' form, and admit a variational formulation. It turns out that some smoothness of the coefficients $A$ in a selected direction is necessary for well-posedness of the underlying boundary problems in $\RR^{n+1}_+=\{(x,t):\,x\in \RR^n, \,t>0\}$ (see \cite{CafFK81}). 
This observation led to the study of the coefficients \emph{constant along a single coordinate} (the $t$-coordinate when $n\geq 2$). The well-posedness of the corresponding boundary-value problems was established for real symmetric matrices in \cite{JerK81a, KP93}, and the real non-symmetric case was recently treated in \cite{KKPT00, KR09, Rul07, HofKMP12}. In addition, the resolution of the Kato problem \cite{AusHLMT02} provided  well-posedness for complex $t$-independent matrices in a block form; see \cite{AusAH08,May10a}. Furthermore, a number of perturbation-type results have been obtained, pertaining to the coefficient matrices close to ``good'' ones  in the sense of the $L^\infty$ norm  \cite{FabJK84, AusAH08, AusAM10, AlfAAHK11, B12}, or in the sense of Carleson measures  \cite{Dah86a, Fef89, FefKP91, Fef93, KP93, KP95, DinPP07, DinR10, AusA11, AusR11, HofMM}.

A seemingly different point of view emerges from the ultimate goal of treating boundary-value problems on non-smooth domains rather than just the upper half-space. However, it brings to focus equations of the same type as above. Indeed, the direct pull-back of the Laplacian $\Delta$ from a Lipschitz domain $\{(x,t): t>\varphi(x)\}$ to $\RR^{n+1}_+$ yields a boundary-value problem for an operator of the form $-\Div A\nabla u=0$ on $\RR^{n+1}_+$, and the corresponding matrix $A$ is, once again, independent of $t$. More generally, if $\rho$ is a change of variables, then there is a real symmetric matrix $A=A(x)$ such that if $\Delta u=0$ in~$\Omega$ and $\tilde u=u\circ \rho$, then $\Div A\nabla \tilde u=0$ in $\rho^{-1}(\Omega)$.

The model higher order differential operator is the bilaplacian $\Delta^2=-\Delta (-\Delta)$. Investigating the behavior of biharmonic functions under changes of variables, we find that there exists a scalar-valued function $a$ and a real elliptic matrix $A$ such that, if $\Delta^2 u=0$ in~$\Omega$, then
\begin{equation}\label{comp1} \Div A\nabla(a\,\Div A\nabla \tilde u)=0\quad\text{in }\rho^{-1}(\Omega),\end{equation}
More generally, such a form  is preserved under changes of variables. We emphasize that this is not the case for higher order operators in divergence form  $(-1)^m\sum_{\abs{\beta}=\abs{\gamma}=m} \partial^\beta A_{\beta\gamma} \partial^\gamma$. 

In addition,  the  form appearing in \eqref{comp1} mimics the structure of the bilaplacian as a composition of two Laplace operators. As it turns out, this is an important feature that underpins several key properties of the solutions to the biharmonic and  polyharmonic equations $(-\Delta)^m u=0$, $m\geq 1$.

Motivated by these considerations, the present paper commences the study of well-posedness problems for higher order equations in composition form. Specifically, consider the equation 
\begin{equation}\label{eqn:introprodform}
\Div B^*\nabla(a\,\Div A\nabla u) =0.
\end{equation}
Here $a:\RR^{n+1}\mapsto\CC$ is a scalar-valued accretive function and $A$ and $B$ are ${(n+\nobreak1)}\allowbreak\times{(n+\nobreak1)}$ elliptic matrices with complex coefficients. That is,  there exist constants $\Lambda>\lambda>0$ such that, if $M=A$ or $M=B$, then
\begin{equation}
\label{eqn:ellipticaccretive}
\lambda\leq \re a(X)\leq \abs{a(X)}\leq \Lambda
,\quad
\lambda \abs{\eta}^2\leq \re\bar\eta\cdot M(X)\eta,
\quad
\abs{\xi\cdot M(X)\eta}\leq \Lambda\abs{\eta}\abs{\xi}
\end{equation}
for all $X\in\RR^{n+1}$ and all vectors $\xi$, $\eta\in \CC^{n+1}$. The second-order operators $\Div A\nabla $ and $\Div B^*\nabla $ are meant in the weak sense; see \dfnref{weaksoln} below for a precise definition. We assume that the coefficients $a$, $A$ and~$B$ are $t$-independent; no additional regularity assumptions are imposed.
For technical reasons we also require that $A$ and $B$ satisfy the De Giorgi-Nash-Moser condition, that is, that solutions $u$ to $\Div M\nabla u=0$ are locally H\"older continuous for $M=A$, $B$, $A^*$ and~$B^*$.

The main result of this paper is as follows. 
We show that whenever the \emph{second} order \emph{regularity} boundary-value problems for $A$ and $B$ are well-posed, and whenever the operator $L=\Div B^*\nabla\,a\,\Div A\nabla$ is close to being self-adjoint,  
the $L^2$-Dirichlet problem
\begin{equation}
\label{eqn:4dirichlet}\left\{
\begin{aligned}
\Div B^*\nabla(a\,\Div A\nabla u)&=0 &&\text{in }\RR^{n+1}_+,\\
u&=f &&\text{on }\partial\RR^{n+1}_+,\quad \nabla f\in L^2(\RR^n),\\
\e_{n+1}\cdot A\nabla u&=g &&\text{on }\partial\RR^{n+1}_+, \quad g\in L^2(\RR^n),
\end{aligned}\right.\end{equation}
has a unique solution $u$ that satisfies the optimal estimates
\begin{equation*}\widetilde N_+ (\nabla u) \in L^2(\RR^n)\quad\text{and}\quad\triplebar{t\,\Div A\nabla u}_+<\infty,\end{equation*}
where
\begin{equation*}\widetilde N_+(\nabla u)(x) = \sup\biggl\{\biggl(\fint_{B((y,s),s/2)} \abs{\nabla u}^2\biggr)^{1/2}:
y\in\RR^n,\>\abs{x-y}<s\biggr\}\end{equation*}
and where
\begin{equation*}\triplebar{t\,F}_+=\int_{\RR^{n}}\int_0^\infty \abs{F(x,t)}^2 \,t\,dt\,dx.\end{equation*}
Specifically, we will construct solutions whenever $\doublebar{\im a}_{L^\infty(\RR^n)}$ and $\doublebar{A-B}_{L^\infty(\RR^n)}$ are sufficiently small. It is assumed that the second-order regularity problem
\[\Div A\nabla u=0 \text{ in }\RR^{n+1}_\pm,
\quad u=f \text{ on }\partial\RR^{n+1}_\pm,
\quad \widetilde N_\pm(\nabla u)\in L^2(\RR^n)\]
has a unique solution in both the upper and lower half-spaces whenever $\nabla f\in L^2(\RR^n)$, and that the same is true for $A^*$. By perturbation results in \cite{AusAM10}, an analogous statement is then automatically valid for  $B$ and~$B^*$.

We will construct solutions using layer potentials; the De Giorgi-Nash-Moser requirement mentioned above is necessary for this approach and  at the moment is common in the theory of second-order problems. 
If, for example, $A$ and $B$ are real symmetric or complex and constant, then the  De Giorgi-Nash-Moser condition is valid and regularity problems are well-posed; hence, in these cases there is well-posedness of \eqref{eqn:4dirichlet}. We mention that in passing we prove that if  the second-order regularity problems are well-posed then the solutions necessarily can be written as layer potentials.\footnote{This result is tantamount to proving invertibility of the single layer potential. The method of establishing injectivity from the regularity problem and from jump relations is known; the authors would like to thank Carlos Kenig for bringing this argument to their attention. The method of establishing surjectivity is new and again uses jump relations.} This fact  is new and interesting on its own right.   We will precisely state the main theorems in \secref{main}, after the notation of this paper has been established.

Let us point out that to the best of the authors' knowledge, this is the first result regarding well-posedness of higher order boundary-value problems with non-smooth variable coefficients and with boundary data in~$L^p$. For divergence form equations, some results for boundary data in Besov and Sobolev spaces $L^p_\alpha$, $0<\alpha<1$, are available (see \cite{Agr07,MazMS10}), but they do not reach out to the ``end-point'' case of $L^p$ data. Until now, well-posedness in Lipschitz domains  with $L^p$ data was known only for constant coefficient higher order operators (see \cite{DahKV86, PipV92, PipV95B, Ver96, She06A, She06B, She06C}). As explained above, our results extend to Lipschitz domains automatically via a change of variables. (See \thmref{lipschitz}  for the precise statement.)


Let us discuss the history of the subject and our methods in more detail. We will concentrate on higher order operators and only mention the second order results that directly affect our methods. 
The basic boundary-value problem for elliptic differential equations of order greater than~2 is the $L^p$-Dirichlet problem for the biharmonic operator~$\Delta^2$. It is said to be well-posed in a domain $\Omega$ if, for every $f\in W^p_1(\partial\Omega)$ and $g\in L^p(\partial\Omega)$, there exists a unique function $u$ that satisfies
\begin{equation}
\label{eqn:biharmonicDirichlet}
\Delta^2 u=0 \text{ in } \Omega,\quad
u=f \text{ on } \partial\Omega,\quad
\nu\cdot\nabla u=g \text{ on } \partial\Omega,
\quad N_\Omega (\nabla u)\in L^p(\partial\Omega)
\end{equation}
where $\nu$ is the outward unit normal derivative and
\begin{equation}\label{eqn:Nlipschitz}
N_\Omega (\nabla u)(X)=\sup\{\abs{\nabla u(Y)}:Y\in\Omega,\>\abs{X-Y}<(1+a)\dist(Y,\partial\Omega)\}
\end{equation}
for some constant $a>0$. In \cite{SelS81,CohG83,Ver87}, well-posedness of the $L^p$-Dirichlet problem for $\Delta^2$ was established in $C^1$ domains for any $1<p<\infty$. (This result is also valid in convex domains; see \cite{She06C,KilS11A}.) For general Lipschitz domains, 
the $L^2$-Dirichlet problem for $\Delta^2$ was shown to be well-posed by Dahlberg, Kenig and Verchota in \cite{DahKV86} (when the domain is bounded; cf.\ \cite[Theorem~3.7]{PipV92} for domains above Lipschitz graphs). 

The sharp range of $p$ for which the $L^p$-Dirichlet problem is well-posed in $n$-dimensional Lipschitz domains is a difficult problem, still open in higher dimensions even for the bilaplacian (cf.\ \cite{CalFS79,K94}). We do not tackle the well-posedness in $L^p$, $p\neq 2$, in the present paper; it is a subject for future investigation. However, let us mention in passing that for the bilaplacian the sharp results are only known in dimensions less than or equal to~7 \cite{She06A,She06B,She06C} and, in a dramatic contrast with the case of the second order boundary-value problems, there is a sharp dimension-dependent upper bound on the range of well-posedness. That is, if $\Omega\subset\RR^n$ is a Lipschitz domain, then solutions to \eqref{eqn:biharmonicDirichlet} are guaranteed to exist only for $2\leq p\leq p_n$ for some $p_n<\infty$. Related counterexamples can be found in \cite[Theorem~10.7]{PipV92}. See also \cite{BM12} for a review of this and related matters. 

Our methods in the present paper depart from the ideas in \cite{DahKV86} and \cite{PipV92}. 
The solution is represented via 
\begin{equation*}
u(X)=-\D_A f(X) -\s_A g(X) + \E_{B,a,A} h(X).
\end{equation*}
Here $\D_A$ and $\s_A$ are the classic double and single layer potentials associated to the operator $\Div A\nabla$, given by the formulas
\begin{align*}
\D_A f(x,t) &= -\int_{\RR^n} \e_{n+1}\cdot \overline{A^*(y) \nabla\Gamma_{(x,t)}^{A^*}(y,0)}\, f(y) \,dy,\\
\s_A g(x,t) &= \int_{\RR^n} \overline{\Gamma_{(x,t)}^{A^*}(y,0)}\,g(y) \, dy
\end{align*}
where $\Gamma_X^{A^*}$ is the fundamental solution to $-\Div A^* \nabla$ (that is, the solution to  $-\Div A^* \nabla\Gamma_X^{A^*} =\delta_X$). On the other hand, 
$\E_{B,a,A}$ is a new layer potential, specifically built for the problem at hand, to satisfy
\begin{equation*}-a\,\Div A\nabla\E_{B,a,A} h = \1_{\RR^{n+1}_+} \partial_{n+1}^2 \s_{B^*} h.\end{equation*}
See \secref{fourthorderpotential}. This resembles the formula used in \cite{PipV92} to construct solutions to $\Delta^2 u=0$. 
To prove existence of solutions to \eqref{eqn:4dirichlet} or \eqref{eqn:biharmonicDirichlet}, in  addition to the second order results, we require appropriate nontangential maximal function (and square-function) estimates for the new potential $\E_{B,a,A}$, as well as the invertibility of $h\mapsto\partial_{n+1} \E_{B,a,A} h\vert_{\partial\RR^{n+1}_+}$ in~$L^2$. 
However, beyond the representation formula and invertibility argument, our method is necessarily different from  \cite{DahKV86} and \cite{PipV92}.

After a certain integration by parts, the bounds on the nontangential maximal function of the new potential $\E h$ in the case of the \emph{bilaplacian} become  an automatic consequence of the Calder\'on-Zygmund theory and boundedness of the Cauchy integral in $L^2$. On the other hand, for a general composition operator, the related singular integral operators do not fall under the scope of the Calder\'on-Zygmund theory and, because of the presence of non-smooth matrices $A$ and $B$, are not amenable to a similar integration by parts. We develop an alternative argument, appealing to some elements of the method in \cite{AlfAAHK11}, to obtain square function bounds, and then employ the jump relations and intricate interplay between solutions in the upper and lower half-spaces to obtain the desired nontangential maximal function estimates.  

It is interesting to observe that, given an involved composition form of the operator, with several ``layers'' of non-smooth coefficients, the difficulties also manifest themselves in the absence of a classical variational formulation. In particular, such standard properties of solutions as the Caccioppoli inequality have to be reproven and even the existence of the Green function or fundamental solution in $\RR^{n+1}$ is not obvious, in any function space. In the same vein, the existence of the normal derivative of a solution cannot be viewed as the result of an integration by parts and an approximation scheme. Instead, it once again calls for some special properties of the associated higher order potentials.

Needless to say, our results build extensively on the developments from the theory of second-order divergence form operators $L_A = -\Div A\nabla$. We refer the reader to \cite{K94} for a detailed summary of the theory as it stood in the mid-1990s, and to the papers \cite{KKPT00, KP01, DinPP07, KR09, Rul07, AusAH08, AusAM10, May10a, DinR10, AlfAAHK11, AusA11, AusR11, HofKMP12, HofMM} for more recent developments.

Finally, let us mention that aside from the the Dirichlet case, it is natural to consider the Neumann and regularity problems as well as the inhomogeneous equation $Lu=f$, for $f$ in a suitable function space. Recent achievements in this direction for higher order equations include \cite{CohG85,Ver05,She07B}, \cite{Ver90,PipV92,KilS11A}, and \cite{AdoP98,MitMW11,Agr07,MazMS10} respectively. Unfortunately, much as in the homogeneous Dirichlet case, they concentrate mostly on constant coefficients, with the exception of \cite{Agr07,MazMS10}; these two papers consider the inhomogeneous problem $Lu=f$ but require that the boundary data have extra smoothness in~$L^p$.

The outline of this paper is as follows. In \secref{notation} we will define the notation used throughout this paper and state our main results. In \secref{secondorder} we will review known results from the theory of second-order operators of the form $L_A=-\Div A\nabla$. In \secref{Caccioppoli}, we will prove fourth-order analogues to some basic theorems concerning solutions to second-order equations, such as the Caccioppoli inequality. We will construct solutions to \eqref{eqn:4dirichlet} using potential operators and establish that these potentials are well-defined and bounded in Sections~\ref{sec:potentials}, \ref{sec:square} and~\ref{sec:nontangential}. The invertibility and uniqueness results will be presented in \secref{final} together with the end of proof of the main theorems.

\section{Notation and the main theorems}
\label{sec:notation}

In this section we define the notation used throughout this paper; in \secref{main} we will state our main theorems. (The proofs will be delayed until \secref{final}.)

We work in the upper half-space $\RR^{n+1}_+=\RR^n\times(0,\infty)$ and the lower half-space $\RR^{n+1}_-=\RR^n\times(-\infty,0)$.
We identify $\partial\RR^{n+1}_\pm$ with $\RR^n$. The coordinate vector $\e=\e_{n+1}$ is the inward unit normal to $\RR^{n+1}_+$ and the outward unit normal to $\RR^{n+1}_-$. We will reserve the letter $t$ to denote the $(n+1)$st coordinate in~$\RR^{n+1}$. 

If $\Omega$ is an open set (contained in $\RR^n$ or $\RR^{n+1}$), then  $W^2_1(\Omega)$ denotes the Sobolev space of functions $f\in L^2(\Omega)$ whose weak gradient $\nabla f$ also lies in $L^2(\Omega)$, and $W^2_{-1}(\Omega)$ denotes its dual space. The local Sobolev space $W^2_{1,loc}(\Omega)$ denotes the set of all functions $f$ that lie in $W^2_1(V)$ for all open sets $V$ compactly contained in~$\Omega$.
We let $\dot W^2_1(\RR^n)$ be the completion of  $C^\infty_0(\RR^n)$ under the norm $\doublebar{f}_{\dot W^2_1(\RR^n)}=\doublebar{\nabla f}_{L^2(\RR^n)}$; equivalently $\dot W^2_1(\RR^n)$ is the space of functions $f\in W^2_{1,loc}(\RR^n)$ for which $\doublebar{\nabla f}_{L^2(\RR^n)}$ is finite. Observe that functions in $\dot W^2_1(\RR^n)$ are only defined up to additive constants.

If $u\in W^2_{1,loc}(\Omega)$ for some $\Omega\subset\RR^{n+1}$, we let $\nabla_\parallel u$ denote the gradient of $u$ in the first $n$ variables, that is, $\nabla_\parallel u=(\partial_1 u,\partial_2 u,\dots,\partial_n u)$. We will occasionally use $\nabla_\parallel$ to denote the \emph{full} gradient of a function defined on~$\RR^n$.

As in \cite{AlfAAHK11,AusAH08} and other papers, we will let the triple-bar norm denote the $L^2$ norm with respect to the measure $(1/\abs{t})\,dx\,dt$. That is, we will write
\begin{equation}
\label{eqn:squarenormdfn}
\triplebar{F}_\pm^2 =\int_0^\infty \int_{\RR^n}\abs{F(x,\pm t)}^2\,dx\,\frac{1}{t}\,dt,
\quad
\triplebar{F}^2 = \triplebar{F}_+^2 + \triplebar{F}_-^2
\end{equation}
with the understanding that a $t$ inside a triple-bar norm denotes the $(n+1)$st coordinate, that is, 
\begin{equation*}
\triplebar{t\,F}_\pm^2 =\int_0^\infty \int_{\RR^n}\abs{F(x,\pm t)}^2\,dx\,t\,dt
.\end{equation*}

We let $B(X,r)$ denote balls in $\RR^{n+1}$ and let $\Delta(x,r)$ denote ``surface balls'' on $\partial\RR^{n+1}_\pm$, that is, balls in~$\RR^n$. 
If $Q\subset\RR^n$ or $Q\subset\RR^{n+1}$ is a cube, we let $\ell(Q)$ denote its side-length, and let $r Q$ denote the concentric cube with side-length $r\ell(Q)$. If $E$ is a set and $\mu$ is a measure, we let $\fint$ denote the average integral $\fint_E f\,d\mu=\frac{1}{\mu(E)} \int_E f\,d\mu$.

We will use the standard nontangential maximal function~$N$, as well as the modified nontangential maximal function $\widetilde N$ introduced in \cite{KP93}. These functions are defined as follows.
If $a>0$ is a constant and $x\in\RR^n$, then the \emph{nontangential cone} $\gamma_\pm(x)$ is given by
\begin{align}
\label{eqn:cone}
\gamma_{\pm}(x)=\bigl\{(y,s)\in\RR^{n+1}_\pm:\abs{x-y}<a \abs{s}\bigr\}.\end{align}

The nontangential maximal function and modified nontangential maximal function are given by 
\begin{align}
\label{eqn:NTM}
 N_{\pm} F(x) &= \sup\left\{\abs{F(y,s)}:(y,s)\in \gamma_\pm(x)\right\},
\\
\label{eqn:modNTM}
\widetilde N_{\pm} F(x) &= \sup
\biggl\{\biggl(\fint_{B((y,s),\abs{s}/2)} \abs{F}^2\biggr)^{1/2}:
(y,s)\in \gamma_\pm(x)
\biggr\}.
\end{align}
We remark that by \cite[Section~7, Lemma~1]{FefS72}, if we let 
\begin{equation*}N_a F(x)=\sup\left\{\abs{F(y,s)}:\abs{x-y}<a {s},\>0< s\right\},\end{equation*}
then for each $1\leq p\leq \infty$ and for each $0<b<a$, there is a constant $C$ depending only on $p$, $a$ and $b$ such that
$\doublebar{N_a F}_{L^p(\RR^n)} \leq C \doublebar{N_b F}_{L^p(\RR^n)}$. Thus, for our purposes, the exact value of $a$ in \eqref{eqn:cone} is irrelevant provided $a>0$.

Suppose that $A:\RR^{n+1}\mapsto\CC^{(n+1)\times(n+1)}$ is a bounded measurable matrix-valued function defined on $\RR^{n+1}$. We let $A^T$ denote the transpose matrix and let $A^*$ denote the adjoint matrix $\overline {A^T}$.
Recall from the introduction that $A$ is \emph{elliptic} if there exist constants $\Lambda>\lambda>0$ such that
\begin{equation}
\label{eqn:elliptic}
\lambda \abs{\eta}^2\leq \re\bar\eta\cdot A(X)\eta,
\quad
\abs{\xi\cdot A(X)\eta}\leq \Lambda\abs{\eta}\abs{\xi}
\end{equation}
for all $X\in\RR^{n+1}$ and all vectors $\eta$, $\xi\in\CC^{n+1}$. We refer to $\lambda$ and $\Lambda$ as the \emph{ellipticity constants of~$A$}.
Recall also that a scalar function $a$ is \emph{accretive} if 
\begin{equation}
\label{eqn:accretive}
\lambda\leq \re a(X)\leq \abs{a(X)}\leq \Lambda
\quad\text{for all }X\in\RR^{n+1}.
\end{equation}

We say that a function or coefficient matrix $A$ is \emph{$t$-independent} if
\begin{equation}
\label{eqn:tindependent}
A(x,t)=A(x,s) \quad\text{for all }x\in\RR^n\text{ and all }s,\>t\in\RR.
\end{equation}

\subsection{Elliptic equations and boundary-value problems}

If  $A$ is an elliptic matrix, then for any $u\in W^2_{1,loc}(\Omega)$, the expression $L_A u = -\Div A\nabla u\in W^2_{-1,loc}(\Omega)$ is defined by 
\begin{equation}\label{eqn:2weaksoln}
\langle \varphi, L_A u\rangle
=
\int_\Omega \overline\varphi \, L_A u = \int \overline{ \nabla \varphi}\cdot A\nabla u
\quad\text{for all} \quad \varphi\in C^\infty_0(\Omega).
\end{equation}
If $A$ and $B$ are elliptic matrices and $a$ is an accretive function, we may define $L_B^*(a\,L_Au)$ in the weak sense as follows.
\begin{dfn}\label{dfn:weaksoln}
Suppose $u\in W^2_{1,loc}(\Omega)$. Then $L_A u=-\Div A\nabla u$ is a well-defined element of $W^{2}_{-1,loc}(\Omega)$. Suppose that $a L_A u=v$, for some $v\in W^2_{1,loc}(\Omega)$, in the sense that
\begin{equation*}\int \overline{ \nabla \varphi}\cdot A\nabla u = \int \overline{ \varphi} \frac{1}{a} v
\quad\text{for all} \quad \varphi\in C^\infty_0(\Omega).\end{equation*}
If $v$ satisfies
\begin{equation*}\int \overline{\nabla v}\cdot B\nabla \eta=\int \eta \overline f\quad\text{for all} \quad \eta\in C^\infty_0(\Omega),\end{equation*}
that is, if $-\Div B^*\nabla v=f$ in the weak sense,
then we say that $L_B^*(a\,L_Au)=f$.
\end{dfn}

Suppose that $U$ is defined in $\RR^{n+1}_\pm$. We define the boundary values of $U$ as as the $L^2$ limit of $U$ up to the boundary, that is,
\begin{equation}
\label{eqn:trace}
U\big\vert_{\partial\RR^{n+1}_\pm} = F 
\quad\text{provided} \quad
\lim_{t\to 0^\pm}\doublebar{U(\,\cdot\,,t)-F}_{L^2(\RR^n)}=0.
\end{equation}

Given these definitions, we may define the Dirichlet problem for the fourth-order operator $L_B^*(a\,L_A)$ as follows.

\begin{dfn}\label{dfn:Dirichlet}
Suppose that  there is a constant $C_0$ such that, for any $f\in W^2_1(\RR^n)$ and any $g\in L^2(\RR^n)$, there exists a unique function $u\in W^2_{1,loc}(\RR^{n+1}_+)$ that satisfies
\begin{equation}
\label{eqn:Dirichletprob}
\left\{\begin{aligned}
L_B^*(a\,L_Au)&=0  &&\text{in }\RR^{n+1}_+,\\
u&=f, \quad 
\nabla_\parallel u=\nabla f  &&\text{on } \partial\RR^{n+1}_+,\\
\e\cdot A\nabla u &= g &&\text{on }\partial\RR^{n+1}_+,\\
\doublebar{\widetilde N_+(\nabla u)}_{L^2(\RR^n)}
+\triplebar{t\,L_A u}_+
&\multispan3${}\leq
C_0 \doublebar{\nabla f}_{L^2(\RR^n)}
+C_0 \doublebar{g}_{L^2(\RR^n)}$
\end{aligned}\right.
\end{equation}
where $L_B^*(a\,L_Au)=0$ in the sense of \dfnref{weaksoln} and where the boundary values are in the sense of \eqref{eqn:trace}.

Then we say that the $L^2$-Dirichlet problem for $L_B^*(a L_A)$ is well-posed in~$\RR^{n+1}_+$.
\end{dfn}
We specify $\nabla_\parallel u=\nabla f$ as well as $u=f$ in order to emphasize that $u(\,\cdot\,,t)\to f$ in $W^2_1(\RR^n)$ and not merely in~$L^2(\RR^n)$.

\begin{rmk} \label{rmk:4square}
The solutions $u$ to \eqref{eqn:Dirichletprob} constructed in the present paper will also satisfy the square-function estimate
\begin{equation}
\label{eqn:4square}
\triplebar{t\,\nabla \partial_t u}_+\leq 
C_1 \doublebar{\nabla f}_{L^2(\RR^n)}^2
+C_1 \doublebar{g}_{L^2(\RR^n)}^2
\end{equation}
for some constant $C_1$.
If $u$ is a solution to a fourth-order elliptic equation with \emph{constant} coefficients, then by \cite{DahKPV97} we have a square function bound on the complete Hessian matrix~$\nabla^2 u$. However, in the case of solutions to variable-coefficient operators in the composition form of \dfnref{weaksoln}, we do not expect all second derivatives to be well-behaved, and so \eqref{eqn:4square} cannot be strengthened.
\end{rmk}

Our main theorem is that, if $a$, $A$ and $B$ satisfy certain requirements, then the fourth-order Dirichlet problem \eqref{eqn:Dirichletprob} is well-posed. We now define these requirements.
We begin with the De Giorgi-Nash-Moser condition.
\begin{dfn}
We say that a function $u$ is locally H\"older continuous in the domain $\Omega$ if there exist constants $H$ and $\alpha>0$ such that, whenever $B(X_0,2r)\subset\Omega$, we have that
\begin{equation}
\label{eqn:DGN}
\abs{u(X)-u(X')}\leq H\left(\frac{\abs{X-X'}}{r}\right)^\alpha
\left(\fint_{B(X_0,2r)} \abs{u}^2\right)^{1/2}
\end{equation}
for all $X$, $X'\in B(X_0,r)$.
If $u$ is locally H\"older continuous in $B(X,r)$ for some $r>0$, then $u$ also satisfies Moser's ``local boundedness'' estimate 
\begin{equation}
\label{eqn:localbound}
\abs{u(X)}\leq C\left(\fint_{B(X,r)} \abs{u}^2\right)^{1/2}
\end{equation}
for some constant $C$ depending only on $H$ and the dimension $n+1$.

If $A$ is a matrix, we say that \emph{$A$ satisfies the De Giorgi-Nash-Moser condition}
if $A$ is elliptic and, for every open set $\Omega$ and every function $u$ such that $\Div A\nabla u=0$ in $\Omega$, we have that $u$ is locally H\"older continuous in $\Omega$, 
with constants $H$ and $\alpha$ depending only on $A$ (not on $u$ or $\Omega$).
\end{dfn}
Throughout we reserve the letter $\alpha$ for the exponent in the  estimate~\eqref{eqn:DGN}. We will show (see \crlref{4holder} below) that if $A$, $A^*$ and~$B^*$ satisfy the De Giorgi-Nash-Moser condition then solutions to $L_B^*(a\,L_Au)=0$ are also locally H\"older continuous.

We say that the $L^2$-regularity problem $(R)^A_2$ is well-posed in $\RR^{n+1}_\pm$ if, for each $f\in \dot W^2_1(\RR^n)$, there is a function~$u$, unique up to additive constants, that satisfies
\begin{equation*}
(R)^A_2\left\{\begin{aligned}
\Div A\nabla u&=0 && \text{in }\RR^{n+1}_\pm,\\
u &=f &&\text{on }\partial\RR^{n+1}_\pm,\\
\doublebar{\widetilde N_\pm (\nabla u)}_{L^2(\RR^n)}
&\multispan3${}\leq C\doublebar{\nabla f}_{L^2(\RR^n)}.$\hfil
\end{aligned}\right.
\end{equation*} 

\begin{rmk}\label{rmk:NTlimit}
If $\widetilde N_\pm(\nabla u)\in L^2(\RR^n)$, then averages of $u$ have a weak nontangential limit at the boundary, in the sense that there is some function $f$ such that
\[\biggl(\fint_{B((x,t),\abs{t}/2)} \abs{u(y,s)-f(x^*)}^2\,dy\,ds\biggr)^{1/2}
\leq C \abs{t}\widetilde N_\pm(\nabla u)(x^*)\]
for all $(x,t)\in\gamma_\pm(x^*)$.
See the proof of \cite[Theorem~3.1a]{KP93}; here $C$ is a constant depending only on the constant $a$ in the definition \eqref{eqn:cone} of~$\gamma_\pm$. If $u$ is locally H\"older continuous then this implies that $u$ itself has a nontangential limit, which by the dominated convergence theorem must equal its limit in the sense of \eqref{eqn:trace}. Thus, the requirement in \eqref{eqn:Dirichletprob} or $(R)^A_2$ that $u=f$ on $\partial\RR^{n+1}_\pm$ in the sense of vertical $L^2$ limits is equivalent to the requirement that $u=f$ on $\partial\RR^{n+1}_\pm$ in the sense of pointwise nontangential limits almost everywhere.



\end{rmk}

\subsection{The main theorems}
\label{sec:main}

The main theorems of this monograph are as follows.

\begin{thm} \label{thm:Dirichletexists}
Let $a:\RR^{n+1}\mapsto \RR$ and $A:\RR^{n+1}\mapsto \CC^{(n+1)\times(n+1)}$, where $n+1\geq 3$.
Assume that
\begin{itemize}
\item $a$ is accretive and $t$-independent.
\item $a$ is real-valued. 
\item  $A$ is elliptic and $t$-independent.
\item  $A$ and $A^*$ satisfy the De Giorgi-Nash-Moser condition. 
\item  The regularity problems $(R)^A_2$ and $(R)^{A^*}_2$ are well-posed in~$\RR^{n+1}_\pm$.
\end{itemize}


Then the $L^2$-Dirichlet problem for $L_{A}^* (a\,L_A)$ is well-posed in $\RR^{n+1}_+$, and the constants $C_0$ and $C_1$ in \eqref{eqn:Dirichletprob} and \eqref{eqn:4square} depend only on the dimension $n+1$, the ellipticity and accretivity constants $\lambda$, $\Lambda$ in \eqref{eqn:elliptic} and \eqref{eqn:accretive}, the De Giorgi-Nash-Moser constants $H$ and~$\alpha$, and the constants~$C$ in the definition of $(R)^A_2$ and~$(R)^{A^*}_2$.
\end{thm}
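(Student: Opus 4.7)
The plan is to realize $u$ as a sum of layer potentials and reduce existence to an $L^2$-invertibility problem on the boundary. Given $(f,g)\in \dot W^2_1(\RR^n)\times L^2(\RR^n)$, I would seek $u$ of the form
\[
u = -\D_A f - \s_A g + \E_{A,a,A} h
\]
for some $h\in L^2(\RR^n)$ to be determined; here $\D_A$ and $\s_A$ are the second-order double and single layer potentials for $L_A=-\Div A\nabla$, while $\E_{A,a,A}$ is the fourth-order potential from Section~5 defined by $-a\,\Div A\nabla\E_{A,a,A} h=\1_{\RR^{n+1}_+}\,\partial_{n+1}^2\s_{A^*} h$. Because $\D_A f$ and $\s_A g$ are $L_A$-harmonic and the $t$-independence of $A^*$ makes $\partial_{n+1}^2\s_{A^*} h$ itself $L_A^*$-harmonic in each half-space, this ansatz automatically satisfies $L_A^*(a\,L_A u)=0$ in $\RR^{n+1}_+$. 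The Dirichlet conditions $u=f$ and $\nabla_\parallel u=\nabla f$ on $\partial\RR^{n+1}_+$ reduce, by the standard jump relations for $\D_A$ and $\s_A$ and the vanishing boundary trace of $\E_{A,a,A} h$, to an identity involving only $f$ and $g$; matching $\e\cdot A\nabla u\big\vert_{\partial\RR^{n+1}_+}=g$ then reduces to an $L^2$ integral equation for $h$ whose principal part is the boundary operator $T_A:h\mapsto\partial_{n+1}\E_{A,a,A} h\big\vert_{\partial\RR^{n+1}_+}$.

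The quantitative estimate
\[
\doublebar{\widetilde N_+(\nabla u)}_{L^2(\RR^n)}+\triplebar{t\,L_A u}_+
\le C_0\bigl(\doublebar{\nabla f}_{L^2(\RR^n)}+\doublebar{g}_{L^2(\RR^n)}\bigr),
\]
together with the square-function bound of Remark~2.3, is assembled in three pieces. The bounds on $\D_A f$ and $\s_A g$ follow from the hypothesis that $(R)^A_2$ and $(R)^{A^*}_2$ are well-posed, together with the standard second-order layer potential theory recalled in Section~3. For the new piece $\E_{A,a,A} h$, Section~6 will establish the square-function bound $\triplebar{t\,\nabla\partial_t\E_{A,a,A} h}_+\le C\doublebar{h}_{L^2(\RR^n)}$ (and hence control of $\triplebar{t\,L_A\,\E_{A,a,A} h}_+$), by adapting the scheme of Alfonseca--Auscher--Axelsson--Hofmann--Kim: the $t$-independence of the coefficients is exploited in place of the Calder\'on--Zygmund kernel estimates that are unavailable for a composition operator with non-smooth coefficients. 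Section~7 will then upgrade this into the pointwise bound $\doublebar{\widetilde N_+(\nabla\E_{A,a,A} h)}_{L^2(\RR^n)}\le C\doublebar{h}_{L^2(\RR^n)}$, using the jump relations and a comparison between solutions in $\RR^{n+1}_+$ and $\RR^{n+1}_-$.

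The main obstacle will be $L^2$-invertibility of $T_A$. Boundedness is immediate from the nontangential estimate above. For injectivity, if $T_A h=0$ then $v:=\partial_{n+1}\E_{A,a,A} h$ solves $L_A v=0$ in both half-spaces with zero boundary trace and with $\widetilde N_\pm(\nabla v)\in L^2(\RR^n)$, so well-posedness of $(R)^A_2$ forces $v\equiv 0$; the jump relations for $\partial_{n+1}^2\s_{A^*} h$ across $\partial\RR^{n+1}_+$, combined with the defining equation for $\E_{A,a,A}$, then force $h=0$. Surjectivity is the hardest step: here I would use crucially that $a$ is real and $B=A$, so that $L_A^*(a\,L_A)$ is essentially self-adjoint with respect to the weight $1/a$; a duality argument combined with the new observation highlighted in the footnote, that solutions to $(R)^A_2$ and $(R)^{A^*}_2$ are themselves representable as single layer potentials, then yields surjectivity of $T_A$ via a Fredholm / open-mapping argument. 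Uniqueness of the fourth-order Dirichlet solution follows by feeding any null solution $u$ into a Green-type representation to reduce it to a pair of $(R)^A_2$- and $(R)^{A^*}_2$-solutions with zero data, which must vanish by well-posedness of those second-order problems.
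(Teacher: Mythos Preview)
Your overall ansatz and the strategy for the estimates match the paper, but there are two genuine gaps in the argument.

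First, the boundary matching is not as you describe. The potential $\E_{A,a,A}h$ does \emph{not} have vanishing boundary trace on $\partial\RR^{n+1}_+$; its tangential trace and its conormal derivative there are nontrivial functions of $h$ with no usable closed form, so you cannot decouple the Dirichlet condition from the equation for~$h$. The paper instead works in the \emph{lower} half-space, where $\Div A\nabla\E h=0$: one chooses $h$ so that $\partial_{n+1}\E h\vert_{\partial\RR^{n+1}_-}=-\partial_{n+1}(-\D_A f-\s_A g)\vert_{\partial\RR^{n+1}_-}$. Uniqueness for $(D)^A_2$ together with \lemref{utunique} then forces $w=-\D_A f-\s_A g+\E h$ to be constant in $\RR^{n+1}_-$. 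The continuity $\nabla\E h\vert_{\partial\RR^{n+1}_+}=\nabla\E h\vert_{\partial\RR^{n+1}_-}$ (established in \secref{nontangential}) combined with the jump relations for $\D_A$ and $\s_A$ then transfers the correct data to $\partial\RR^{n+1}_+$. One never solves an integral equation on $\partial\RR^{n+1}_+$ directly.

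Second, your invertibility argument for $\E^\perp$ has an error and a gap. For injectivity, $v=\partial_{n+1}\E h$ does \emph{not} satisfy $L_A v=0$ in $\RR^{n+1}_+$: since $-a\,L_A\E h=\partial_{n+1}^2\s_{A^*}h$ there and $a,A$ are $t$-independent, differentiating gives $-a\,L_A v=\partial_{n+1}^3\s_{A^*}h\not\equiv 0$, so well-posedness of $(R)^A_2$ does not apply to~$v$ in the upper half-space. For surjectivity, Fredholm theory is unavailable (there is no compactness), and your sketch supplies no alternative mechanism. The paper's route is quite different and is precisely where the hypotheses $a$ real, $B=A$ enter: an explicit integration by parts in the definition of $\E^\perp_{A,a,A}$ yields the Rellich-type identity
\[
\re\int_{\RR^n}\bar h\,\E^\perp_{A,a,A}h
=\frac{1}{2}\int_{\RR^n}\frac{1}{a}\,\bigl|\s_{A^*}^{\perp,+}h\bigr|^2,
\]
and invertibility of $\s_{A^*}^{\perp,+}$ (part of the single layer potential requirements) gives $\bigl|\int\bar h\,\E^\perp h\bigr|\ge c\,\doublebar{h}_{L^2}^2$. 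This coercivity yields injectivity of $\E^\perp_{A,a,A}$ and of its adjoint simultaneously, hence surjectivity, with no Fredholm theory. Uniqueness of the fourth-order solution is then obtained not via a Green representation but by decomposing any solution as $u=u_A+\E h$ with $\Div A\nabla u_A=0$ (a Fatou-type theorem), and applying \lemref{jumpunique} together with the continuity of $\nabla\E h$ across the boundary.
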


We can generalize \thmref{Dirichletexists} to the following perturbative version.

\begin{thm}\label{thm:perturb} 
Let $A$ be as in \thmref{Dirichletexists}, and let $a:\RR^{n+1}\mapsto \CC$ be accretive and $t$-independent. Let $B:\RR^{n+1}\mapsto\CC^{(n+1)\times(n+1)}$ be $t$-independent.

There is some $\varepsilon>0$, depending only on the quantities listed in \thmref{Dirichletexists}, such that if 
\begin{equation*}
\doublebar{\im a}_{L^\infty(\RR^n)}<\varepsilon \quad\text{and}\quad \doublebar{A-B}_{L^\infty(\RR^n)}<\varepsilon
,\end{equation*}
then the $L^2$-Dirichlet problem for $L_B^*(a\,L_A)$ is well-posed in~$\RR^{n+1}_+$, and the constants $C_0$ and $C_1$ in \eqref{eqn:Dirichletprob} and \eqref{eqn:4square} depend only on the quantities listed in \thmref{Dirichletexists}.
\end{thm}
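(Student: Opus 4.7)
The plan is to deduce \thmref{perturb} from \thmref{Dirichletexists} by a Neumann-series continuity argument applied to the layer-potential construction outlined in the introduction. The solution to \eqref{eqn:Dirichletprob} will be represented in the form $u=-\D_A f-\s_A g+\E_{B,a,A}h$ for a suitable $h\in L^2(\RR^n)$. The classical potentials $\D_A$ and $\s_A$ depend only on $A$, which is unchanged by the perturbation, so every bound and boundary identity proved for them in the course of \thmref{Dirichletexists} carries over verbatim. The whole existence part of \thmref{perturb} therefore reduces to invertibility on $L^2(\RR^n)$ of the boundary operator
\begin{equation*}
T_{B,a}:h\mapsto\partial_{n+1}\E_{B,a,A}h\big|_{\partial\RR^{n+1}_+},
\end{equation*}
together with the nontangential and square-function bounds for $\E_{B,a,A}$ itself, which are proved uniformly in $(B,a)$ in Sections~\ref{sec:square}--\ref{sec:nontangential} provided the inputs listed there remain available for the perturbed coefficients.

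First I would verify that these inputs do survive the perturbation. For $\varepsilon$ small, $B$ is elliptic, and the small-$L^\infty$ perturbation result of \cite{AlfAAHK11} cited in the introduction gives well-posedness of $(R)^B_2$ and $(R)^{B^*}_2$ together with the De Giorgi-Nash-Moser condition for $B$, $B^*$, with constants controlled by those for $A$, $A^*$. Next I would show that $(B,a)\mapsto T_{B,a}$ is Lipschitz continuous in $L^2\to L^2$ operator norm. Using the defining identity $-a\,\Div A\nabla\E_{B,a,A}h=\1_{\RR^{n+1}_+}\partial_{n+1}^2\s_{B^*}h$ from \secref{fourthorderpotential}, one decomposes
\begin{equation*}
\E_{B,a,A}-\E_{A,\re a,A}
=R_A\!\left[\tfrac{1}{a}\,\1_{\RR^{n+1}_+}\partial_{n+1}^2(\s_{B^*}-\s_{A^*})
+\Bigl(\tfrac{1}{a}-\tfrac{1}{\re a}\Bigr)\1_{\RR^{n+1}_+}\partial_{n+1}^2\s_{A^*}\right],
\end{equation*}
where $R_A$ denotes the solution operator for $-\Div A\nabla u=F$, uniformly bounded on the appropriate weighted $L^2$ space by the paper's second-order theory. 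The second summand is controlled in operator norm by $C\doublebar{\im a}_{L^\infty(\RR^n)}$, and the first by $C\doublebar{A-B}_{L^\infty(\RR^n)}$ using the standard Lipschitz dependence of the second-order single layer potential on its coefficients (essentially the quantitative content of \cite{AlfAAHK11}). Passing to boundary traces yields the desired continuity of $T_{B,a}$.

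Given continuity, the argument concludes by a Neumann series. By \thmref{Dirichletexists}, $T_{A,\re a}$ is invertible on $L^2(\RR^n)$ with inverse bounded in terms of the constants listed there. Writing
\begin{equation*}
T_{B,a}=T_{A,\re a}\bigl(I+T_{A,\re a}^{-1}(T_{B,a}-T_{A,\re a})\bigr),
\end{equation*}
we find that if $\doublebar{\im a}_{L^\infty(\RR^n)}+\doublebar{A-B}_{L^\infty(\RR^n)}<\varepsilon$ with $\varepsilon$ chosen so that the perturbation factor has norm at most $1/2$, then $T_{B,a}$ is invertible with uniform bound depending only on the constants of \thmref{Dirichletexists}. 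Inverting $T_{B,a}$ to determine $h$ from $f$ and $g$ in the boundary condition for $u$ then yields a solution of \eqref{eqn:Dirichletprob} satisfying the stated estimates. Uniqueness follows the pattern of \thmref{Dirichletexists}: if $u$ has zero data, then $v:=a\,L_A u$ satisfies $\Div B^*\nabla v=0$ with boundary control extracted from $\triplebar{t\,L_A u}_+$, so $(R)^{B^*}_2$ forces $v\equiv 0$, and then $L_A u\equiv 0$ together with $(R)^A_2$ gives $u\equiv 0$. The main obstacle is the Lipschitz dependence of $T_{B,a}$ on $(B,a)$, which requires quantitative second-order single-layer bounds uniform in the perturbation family, carefully combined with the fourth-order $\triplebar{\cdot}$ estimates of Sections~\ref{sec:square}--\ref{sec:nontangential}.
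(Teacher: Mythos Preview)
Your existence argument is essentially the paper's: the proof of \lemref{invertible} shows $\E^\perp_{A,\re a,A}$ is invertible and then perturbs by the same analyticity/Neumann-series mechanism you describe. Your presentation via the ``solution operator'' $R_A$ is a little informal---the potential $\E_{B,a,A}$ is not simply $R_A$ applied to the forcing, since it carries the boundary correction $\s_A\bigl(\tfrac1a\,\s_{B^*}^{\perp,+}h\bigr)$ from~\eqref{eqn:E}---but the Lipschitz/analytic dependence of $T_{B,a}$ on $(B,a)$ is visible directly from \eqref{eqn:E}--\eqref{eqn:F}, so this is only a matter of phrasing.

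The uniqueness argument, however, has a real gap. From zero boundary data you only know $\widetilde N_+(\nabla u)\in L^2$ and $\triplebar{t\,L_A u}_+<\infty$; for $v=a\,L_A u$ this gives $\Div B^*\nabla v=0$ and $\triplebar{t\,v}_+<\infty$, but \emph{no} boundary condition on~$v$. The claim that ``$(R)^{B^*}_2$ forces $v\equiv 0$'' is therefore unjustified: the square-function bound on $v$ (as opposed to $\nabla v$) does not by itself force $v$ to vanish. Already for the Laplacian, the Poisson extension of any nonzero $f\in\dot H^{-1}(\RR^n)$ satisfies $\triplebar{t\,v}_+<\infty$ with $v\not\equiv 0$. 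The paper handles uniqueness differently (\thmref{converse} and its corollary): one first integrates $w=a\,L_A u$ in~$t$ to produce a $(D)^{B^*}_2$-solution with $N_+$ control, inverts $\s_{B^*}^{\perp,+}$ to identify an $h\in L^2$ with $a\,L_A u=\partial_{n+1}^2\s_{B^*}h$, and writes $u=u_A+\E_{B,a,A}h$ with $\Div A\nabla u_A=0$. Then the continuity relation~\eqref{eqn:Ects} and \lemref{jumpunique} force $u_A$ and $\E_{B,a,A}h$ to be constant in their respective half-spaces, and finally \emph{injectivity of $\E^\perp_{B,a,A}$}---the very perturbation result you established for existence---is needed to conclude $h=0$. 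In short, uniqueness is not a second-order regularity argument; it requires the same invertibility machinery as existence.
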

We will see that if $\varepsilon$ is small enough then $B$ also satisfies the conditions of \thmref{Dirichletexists}; see \thmref{conditions}. It is possible to generalize from $\RR^{n+1}_+$ to domains above Lipschitz graphs; see \thmref{lipschitz} below.

We remark that throughout this paper, we will let $C$ denote a positive constant whose value may change from line to line, but which in general depends only on the quantities listed in \thmref{Dirichletexists}; any other dependencies will be indicated explicitly.

In the remainder of this section we will remind the reader of some known sufficient conditions for a matrix $A$ to satisfy the De Giorgi-Nash-Moser condition or for the regularity problem $(R)^A_2$ to be well-posed. To prove Theorems~\ref{thm:Dirichletexists} and~\ref{thm:perturb}, we will need some consequences of these conditions; we will establish notation for these consequences in \secref{2layer}.

We begin with the De Giorgi-Nash-Moser condition. Suppose that $A$ is elliptic.
It is well known that if $A$ is constant then solutions to $\Div A\nabla u=0$ are smooth (and in particular are H\"older continuous). More generally, the De Giorgi-Nash-Moser condition was proven to hold for real symmetric coefficients $A$ by De Giorgi and Nash in \cite{DeG57,Nas58} and extended to real nonsymmetric coefficients by Morrey in \cite{Mor66}. The De Giorgi-Nash-Moser condition is also valid if $A$ is $t$-independent and the ambient dimension $n+1=3$; this was proven in \cite[Appendix~B]{AlfAAHK11}.

Furthermore, this condition is stable under perturbation.
That is, let $A_0$ be elliptic, and suppose that $A_0$ and $A_0^*$ both satisfy the De Giorgi-Nash-Moser condition. Then there is some constant~$\varepsilon>0$, depending only on the dimension $n+1$ and the constants $\lambda$, $\Lambda$ in \eqref{eqn:elliptic} and $H$, $\alpha$ in~\eqref{eqn:DGN}, such that if $\doublebar{A-A_0}_{L^\infty(\RR^{n+1})}<\varepsilon$, then $A$ satisfies the De Giorgi-Nash-Moser condition.
This result is from 
\cite{Aus96}; see also \cite[Chapter~1, Theorems~6 and~10]{AusT98}.

We observe that in dimension $n+1\geq 4$, or in dimension $n+1=3$ for $t$-dependent coefficients, the De Giorgi-Nash-Moser condition may fail; see \cite{Fre08} for an example.

The regularity problem $(R)^A_2$ has been studied extensively. In particular, if $A$ is $t$-independent, then $(R)^A_2$ is known to be well-posed in $\RR^{n+1}_\pm$ provided $A$ is constant, real symmetric (\cite{KP93}), self-adjoint (\cite{AusAM10}), or of ``block'' form
\begin{equation*}
A(x)=\begin{pmatrix}A_\parallel(x) &\vec 0
\\ {\vec 0}{}^T & a_\perp(x)\end{pmatrix}
\end{equation*}
for some $n\times n$ matrix $A_\parallel$ and some complex-valued function $a_\perp$. The block case follows from validity of the Kato conjecture, as explained in \cite[Remark~2.5.6]{K94}; see \cite{AusHLMT02} for the proof of the Kato conjecture and \cite[Consequence~3.8]{AxeKM06} for the case $a_\perp\not\equiv 1$.

Furthermore, well-posedness of $(R)^A_2$ is stable under perturbation by \cite{AusAM10}; that is, if $(R)^{A_0}_2$ is well-posed in $\RR^{n+1}_\pm$ for some elliptic $t$-independent matrix~$A_0$, then so is $(R)^A_2$ for every elliptic $t$-independent matrix~$A$ with $\doublebar{A-A_0}_{L^\infty}$ small enough.

We mention that if $A$ is a nonsymmetric matrix, the $L^2$-regularity problem need not be well-posed, even in the case where $A$ is real. See the appendix to \cite{KR09} for a counterexample.

We may summarize the results listed above as follows.

\begin{thm}
\label{thm:conditions}
Let $A$ be elliptic and $t$-independent, and suppose that the dimension $n+1$ is at least~$3$. If $A$ satisfies any of the following conditions, then $A$ satisfies the conditions of \thmref{Dirichletexists}.
\begin{itemize}
\item $A$ is constant.
\item $A$ is real symmetric.
\item $A$ is a $3\times 3$ self-adjoint matrix.
\item $A$ is a real or $3\times 3$ block matrix. 
\end{itemize}
If $A_0$ satisfies the conditions of \thmref{Dirichletexists}, and if $\doublebar{A-A_0}_{L^\infty(\RR^n)}<\varepsilon$ for some $\varepsilon$ depending only on the quantities enumerated in \thmref{Dirichletexists}, then $A$ also satisfies the conditions of \thmref{Dirichletexists}.
\end{thm}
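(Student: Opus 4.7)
The plan is to verify each of the listed special cases against the hypotheses of \thmref{Dirichletexists} by quoting the results collected in the two paragraphs immediately preceding the theorem, and then to deduce the perturbative assertion from the corresponding stability theorems for the De Giorgi-Nash-Moser condition and for the regularity problem. I first observe that each of the structural conditions (constant, real symmetric, $3\times 3$ self-adjoint, real block, $3\times 3$ block) is preserved under the operation $A\mapsto A^*$: a constant matrix remains constant, a real symmetric or self-adjoint matrix is equal to its adjoint, the transpose of a real matrix is real, and the block form is manifestly preserved under conjugate transposition. Thus it suffices to verify that $A$ itself satisfies the De Giorgi-Nash-Moser condition and that $(R)^A_2$ is well-posed in both $\RR^{n+1}_\pm$; the analogous statements for $A^*$ follow automatically.

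For the De Giorgi-Nash-Moser condition, I would argue case by case: if $A$ is constant then solutions are smooth, so \eqref{eqn:DGN} is trivial; if $A$ is real symmetric, apply the classical De Giorgi-Nash theorem \cite{DeG57,Nas58}; if $A$ is real but not necessarily symmetric (which covers the real block case), apply Morrey's extension \cite{Mor66}; and if $A$ is $3\times 3$ (self-adjoint or block), apply the $t$-independent three-dimensional result of \cite[Appendix~B]{AlfAAHK11}. For the regularity problem, I would quote the corresponding well-posedness results: \cite{KP93} for real symmetric coefficients, \cite{AusAM10} for self-adjoint coefficients, and for the block case the solution of the Kato conjecture \cite{AusHLMT02} together with \cite{AxeKM06} to handle a non-trivial lower-right entry $a_\perp$; the constant case is subsumed in any of these.

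The perturbative statement then follows by combining two stability results. First, the Auscher perturbation theorem \cite{Aus96} (see also \cite[Chapter~1]{AusT98}) asserts that if $A_0$ and $A_0^*$ satisfy the De Giorgi-Nash-Moser condition, then so do $A$ and $A^*$ provided $\doublebar{A-A_0}_{L^\infty(\RR^n)}<\varepsilon_1$, with $\varepsilon_1$ depending only on the ellipticity constants, the dimension, and the constants $H,\alpha$ from \eqref{eqn:DGN}. Second, the Auscher-Axelsson-McIntosh perturbation theorem \cite{AusAM10} asserts that well-posedness of $(R)^{A_0}_2$ and $(R)^{A_0^*}_2$ in $\RR^{n+1}_\pm$ is preserved under $L^\infty$ perturbations within the class of elliptic $t$-independent matrices, for some $\varepsilon_2>0$ depending only on the quantities enumerated in \thmref{Dirichletexists}. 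Choosing $\varepsilon=\min(\varepsilon_1,\varepsilon_2)$ produces the desired threshold, and the verification that $A^*$ inherits the perturbed conditions uses $\doublebar{A^*-A_0^*}_{L^\infty(\RR^n)}=\doublebar{A-A_0}_{L^\infty(\RR^n)}$.

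I do not expect a genuine obstacle here: the content of the theorem is a bookkeeping summary of results already in the literature, and the only minor point requiring care is confirming that each listed class and its adjoint class coincide, so that the hypotheses on $A^*$ demanded by \thmref{Dirichletexists} come for free from those on $A$.
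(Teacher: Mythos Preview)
Your proposal is correct and matches the paper's approach: the theorem is stated as a summary of the literature results collected in the two paragraphs preceding it, and no separate proof is given. Your explicit observation that each structural class is closed under $A\mapsto A^*$ (so the hypotheses on $A^*$ come for free) is a helpful clarification the paper leaves implicit.
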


\subsection{Second-order boundary-value problems and layer potentials}
\label{sec:2layer}

In order to prove \thmref{Dirichletexists}, we will need well-posedness of several second-order boundary-value problems and good behavior of layer potentials; these conditions follow from well-posedness of $(R)^A_2$ and $(R)^{A^*}_2$. 

We say that the oblique $L^2$-Neumann problem $(N^\perp)^A_2$ is well-posed in $\RR^{n+1}_\pm$ if, for each $g\in L^2(\RR^n)$, there exists a unique (modulo constants) function $u$ that satisfies
\begin{equation*}
(N^\perp)^A_2\left\{\begin{aligned}
\Div A\nabla u&=0 && \text{in }\RR^{n+1}_\pm,
\\
\partial_{n+1} u &=g &&\text{on }\partial\RR^{n+1}_\pm,
\\
\doublebar{\widetilde N_\pm (\nabla u)}_{L^2(\RR^n)}
&\multispan3${}\leq C\doublebar{g}_{L^2(\RR^n)}.$\hfil
\end{aligned}\right.
\end{equation*} 
By \cite[Proposition~2.52]{AusAH08} and \cite[Corollary~3.6]{AusAM10} (see also \cite[Proposition~4.4]{AusAM10}), if $A$ is $t$-independent, then $(R)^{A^*}_2$ is well-posed in $\RR^{n+1}_\pm$ if and only if $(N^\perp)^A_2$ is well-posed in $\RR^{n+1}_\pm$.

We say that the $L^2$-Dirichlet problem $(D)^A_2$ is well-posed in $\RR^{n+1}_\pm$ if there is some constant $C$ such that, for each $f\in L^2(\RR^n)$, there is a unique function $u$ that satisfies
\begin{equation*}
(D)^A_2\left\{\begin{aligned}
\Div A\nabla u&=0 && \text{in }\RR^{n+1}_\pm,\\
u &=f &&\text{on }\partial\RR^{n+1}_\pm,\\
\doublebar{N_\pm u}_{L^2(\RR^n)}
&\multispan3${}\leq C\doublebar{f}_{L^2(\RR^n)}.$\hfil
\end{aligned}\right.
\end{equation*}
Observe that if $u$ is a solution to $(N^\perp)^A_2$ with boundary data~$f$, then $v=\partial_{n+1} u$ is a solution to $(D)^A_2$ with boundary data~$f$; thus, well-posedness of $(N^\perp)^A_2$ implies existence of solutions to $(D)^A_2$. Recall that well-posedness of $(N^\perp)^A_2$ also implies existence of solutions to $(R)^{A^*}_2$. It is possible to show that if $(R)^{A^*}_2$ solutions exist then solutions to $(D)^A_2$ are unique; see the proof of \cite[Lemma~4.31]{AlfAAHK11}.

Thus, if $(R)^{A^*}_2$ is well-posed in $\RR^{n+1}_\pm$ then so is $(D)^A_2$. We observe that this result was proven for real symmetric~$A$ by Kenig and Pipher in \cite{KP93}.

The $L^2$-Neumann problem $(N)^A_2$ more usually considered differs from $(N^\perp)^A_2$ in that the boundary condition is $\e\cdot A\nabla u=g$ rather than $\partial_{n+1} u=g$ on~$\partial\RR^{n+1}_+$. We remark that if $A$ is constant, self-adjoint or of block form then $(N)^A_2$ is known to be well-posed (again see \cite{AusAM10} or \cite[Remark~2.5.6]{K94} and \cite{AusHLMT02,AxeKM06}). $(N)^A_2$ is in many ways more natural than the oblique Neumann problem; however, we will not use well-posedness of the traditional Neumann problem and so do not provide a definition here.

A classic method for constructing solutions to second-order boundary-value problems is the method of layer potentials. 
We will use the double and single layer potentials of the second-order theory, as well as a new fourth-order potential (see \secref{fourthorderpotential}) to construct solutions to \eqref{eqn:Dirichletprob}; thus, we will need some properties of these potentials.

These potentials for second-order operators are defined as follows.
The fundamental solution to $L_A=-\Div A\nabla$ with pole at~$X$ is a function $\Gamma_X^A$ such that (formally)
$-\Div A\nabla \Gamma_X^A = \delta_X$.
For general complex coefficients $A$ such that $A$ and $A^*$ satisfy the De Giorgi-Nash-Moser condition, the fundamental solution was constructed by Hofmann and Kim. See \cite[Theorem 3.1]{HofK07} (reproduced as \thmref{fundsoln} below) for a precise definition of the fundamental solution.

If $f$ and $g$ are functions defined on $\RR^n$, the classical double and single layer potentials $\D_A f$ and $\s_A g$ are defined by the formulas
\begin{align}
\label{eqn:D}
\D_A f(x,t) &= -\int_{\RR^n} \e\cdot A^T(y) \nabla\Gamma_{(x,t)}^{A^T}(y,0)\, f(y) \,dy,\\
\label{eqn:S}
\s_A g(x,t) &= \int_{\RR^n} \Gamma_{(x,t)}^{A^T}(y,0)\,g(y) \, dy.
\end{align}
For well-behaved functions $f$ and~$g$, these integrals converge absolutely for $x\in\RR^n$ and for $t\neq 0$, and satisfy  $\Div A\nabla \D_A f=0$ and $\Div A\nabla \s_A g=0$ in $\RR^{n+1}\setminus\RR^n$; see \secref{goodlayer}.

We define the boundary layer potentials
\begin{align*}
\D_{A}^\pm f&=\D_A f\big\vert_{\partial\RR^{n+1}_\pm},
&
(\nabla \s_A)^\pm g&=\nabla \s_A g\big\vert_{\partial\RR^{n+1}_\pm}
,\\
\s_A^\pm g&=
\s_A g\big\vert_{\partial\RR^{n+1}_\pm},
&
\s_A^{\perp,\pm} g &= \partial_{n+1} \s_A g\big\vert_{\partial\RR^{n+1}_\pm}
\end{align*}
where the boundary values are in the sense of \eqref{eqn:trace}.

We remind the reader of the classic method of layer potentials for constructing solutions to boundary-value problems.
Suppose the nontangential estimate $\doublebar{\widetilde N_\pm(\nabla\s_A g)}_{L^2(\RR^n)}\leq C\doublebar{g}_{L^2(\RR^n)}$ is valid.
Then if $\s_A^\pm$ is invertible $L^2(\RR^n)\mapsto \dot W^2_1(\RR^n)$, then $u=\s_A((\s_A^\pm)^{-1} g)$ is a solution to $(R)^A_2$ with boundary data~$g$. Similarly, if $\s_A^{\perp,\pm}$ or the operator $g\mapsto \e\cdot A(\nabla \s_A)^\pm g$ is invertible on $L^2(\RR^n)$, then we may construct solutions to $(N^\perp)^A_2$ or $(N)^A_2$, respectively. The adjoint to $g\mapsto \e\cdot A(\nabla \s_A)^\pm g$ is the operator $\D_{A^*}^\mp$, and so if we can construct solutions to $(N)^A_2$ using the single layer potential then we can construct solutions to $(D)^{A^*}_2$ using the double layer potential. In the case of $t$-independent matrices, we may also construct solutions to $(D)^A_2$ by using invertibility of $\s_A^{\perp,\pm}$.

Thus, if layer potentials are bounded and invertible, we have well-posedness of boundary-value problems. The formulas \eqref{eqn:D} and \eqref{eqn:S} for solutions are often useful; thus, the layer potential results above are of interest even if well-posedness is known by other methods. It turns out that we can derive the layer potential results from well-posedness.

\begin{lem} \label{lem:regularitytobound}
Suppose $(R)^A_2$ and $(R)^{A^*}_2$ are well-posed in $\RR^{n+1}_\pm$.
Then there exists a constant $C$ such that
\begin{equation}
\label{eqn:Svertsquare}
\triplebar{t\,\partial_t^2\s_A g}^2
=
\int_{\RR^{n+1}} \abs{\partial_t^2 \s_A g(x,t)}^2
\,\abs{t}\,dx\,dt \leq C \doublebar{g}_{L^2(\RR^n)}^2
.\end{equation}
\end{lem}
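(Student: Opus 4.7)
My plan is to split the proof into two stages: a nontangential layer-potential estimate on $\nabla\s_A g$, and a Caccioppoli-plus-Whitney conversion of this estimate into the desired square-function bound. The key structural fact to exploit in the second stage is that $t$-independence of $A$ forces $\partial_t\s_A g$ to be a solution in its own right.

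The first stage records the nontangential bound
\begin{equation*}
\doublebar{\widetilde N_\pm(\nabla\s_A g)}_{L^2(\RR^n)}\leq C\doublebar{g}_{L^2(\RR^n)},
\end{equation*}
which is a known consequence of well-posedness of $(R)^A_2$ and $(R)^{A^*}_2$ for $t$-independent second-order operators, to be collected in \secref{secondorder} (cf.\ \cite{AusAM10, AlfAAHK11}). This is the substantive second-order input; the remainder of the proof is packaging.

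For the second stage, a translation/difference-quotient argument (applied to the solution $\s_A g\in W^2_{1,loc}(\RR^{n+1}_\pm)$) shows that $\partial_t\s_A g$ is itself a $W^2_{1,loc}$ solution of $\Div A\nabla(\partial_t\s_A g)=0$ in each half-space. Caccioppoli's inequality then gives, whenever $B(X,2r)\subset\RR^{n+1}_\pm$,
\begin{equation*}
\int_{B(X,r)}\abs{\nabla\partial_t\s_A g}^2 \leq \frac{C}{r^2}\int_{B(X,2r)}\abs{\partial_t\s_A g}^2 \leq \frac{C}{r^2}\int_{B(X,2r)}\abs{\nabla\s_A g}^2.
\end{equation*}
Summing over a Whitney decomposition $\{Q_k\}$ of $\RR^{n+1}_\pm$ with $\ell(Q_k)\sim t_k:=\dist(Q_k,\RR^n)$, and using suitably enlarged Whitney cubes $\hat Q_k$ for the Caccioppoli estimate, the factor $t_k\cdot(1/t_k^2)=1/t_k$ produces
\begin{equation*}
\int_{\RR^{n+1}_\pm}\abs{\partial_t^2\s_A g}^2\abs{t}\,dX \leq C\sum_k\frac{1}{t_k}\int_{\hat Q_k}\abs{\nabla\s_A g}^2.
\end{equation*}

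To close the argument, let $\pi_k\subset\RR^n$ denote the projection of $\hat Q_k$; then $\abs{\pi_k}\sim t_k^n$, the $\pi_k$ have bounded overlap, and for every $x\in\pi_k$ the center of $\hat Q_k$ lies in the cone $\gamma_\pm(x)$ (a harmless enlargement of the aperture does not change $\doublebar{\widetilde N_\pm(\cdot)}_{L^2}$, by the Fefferman-Stein equivalence recorded after \eqref{eqn:modNTM}). Therefore $\fint_{\hat Q_k}\abs{\nabla\s_A g}^2\leq C\widetilde N_\pm(\nabla\s_A g)(x)^2$ for $x\in\pi_k$, and summing,
\begin{equation*}
\sum_k\frac{1}{t_k}\int_{\hat Q_k}\abs{\nabla\s_A g}^2 \sim \sum_k\abs{\pi_k}\fint_{\hat Q_k}\abs{\nabla\s_A g}^2 \leq C\doublebar{\widetilde N_\pm(\nabla\s_A g)}_{L^2(\RR^n)}^2 \leq C\doublebar{g}_{L^2}^2,
\end{equation*}
which completes the proof on each half-space and hence on $\RR^{n+1}$. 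The principal obstacle is the first-stage nontangential bound, which rests on the $Db$-operator / $T(b)$ layer-potential theory developed in \cite{AlfAAHK11, AusAM10}; once this input is in place, the conversion to the square-function estimate is a standard Whitney-Caccioppoli packaging whose essential feature is that $t$-independence of $A$ promotes $\partial_t\s_A g$ to a solution.
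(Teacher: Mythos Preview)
Your second stage (Caccioppoli plus Whitney) is correct and is a clean way to pass from $\widetilde N_\pm(\nabla\s_A g)\in L^2$ to the square-function bound on $\partial_t^2\s_A g$. The issue is the first stage: within both this paper's logical structure and the references you cite, the implication runs the other way. Immediately after this lemma the paper records (following \cite[Proposition~1.19]{HofMM}, using \cite{AlfAAHK11,AusA11}) that the nontangential estimate \eqref{eqn:NnablaS} is a \emph{consequence} of the square-function bound \eqref{eqn:Svertsquare}, not an independent input. In \cite{AlfAAHK11} the same order holds: the local $T(b)$ theorem first produces the square-function estimate, and the nontangential bound is then deduced from it. So your Stage~1 black box, as cited, already contains \eqref{eqn:Svertsquare} as an intermediate step, and your Stage~2 then re-derives it; the argument is circular (or, more charitably, redundant).

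The paper's proof is different and more direct: it observes that well-posedness of $(R)^A_2$ implies well-posedness of $(D)^{A^*}_2$, and then invokes the local $T(b)$ theorem for square functions (as carried out in \cite[Section~8]{AlfAAHK11} for real symmetric~$A$, with the general complex case handled by the $b$-functions in \cite[Section~5.3]{Gra12}) to obtain \eqref{eqn:Svertsquare} immediately. No nontangential estimate is needed as input. If you want to salvage your two-stage structure, you would need a route to $\widetilde N_\pm(\nabla\s_A g)\in L^2$ that genuinely bypasses the square-function estimate; neither \cite{AusAM10} nor \cite{AlfAAHK11} supplies one in this generality.
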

\begin{proof}
Recall that if $(R)^A_2$ is well-posed then so is~$(D)^{A^*}_2$. The square-function estimate \eqref{eqn:Svertsquare} follows from well-posedness of $(D)^{A^*}_2$ and $(R)^{A^*}_2$ via a local $T(b)$ theorem for square functions.
This argument was carried out in \cite[Section~8]{AlfAAHK11} in the  case where $A$ is real and symmetric; we refer the reader to \cite[Section~5.3]{Gra12} for appropriate functions $b$ to use in the general case. (The interested reader should note that \cite[Chapter~5]{Gra12} is devoted to a proof of \eqref{eqn:Svertsquare} in the case of elliptic \emph{systems}.)
\end{proof}
It was observed in \cite[Proposition~1.19]{HofMM} that by results from \cite{AlfAAHK11} and \cite{AusA11}, if \eqref{eqn:Svertsquare} is valid then 
\begin{align}
\label{eqn:NnablaS}
\doublebar{\widetilde N_\pm(\nabla \s_A g)}_{L^2(\RR^n)}&\leq C\doublebar{g}_{L^2(\RR^n)}
.\end{align}

Using classic techniques involving jump relations, we will show (see \thmref{invertiblelayer} below) that if $(R)^A_2$ is well-posed in $\RR^{n+1}_\pm$ then $\s_A^\pm$ is invertible $L^2(\RR^n)\mapsto \dot W^2_1(\RR^n)$, and if in addition $(N^\perp)^A_2$ is well-posed then $\s_A^{\perp,\pm}$ is invertible. Thus, if $(R)^A_2$ and $(R)^{A^*}_2$ are well-posed, then not only do solutions to $(R)^A_2$, $(N^\perp)^A_2$ and $(D)^A_2$ exist, they are given by the formulas $u=\s_A((\s_A^\pm)^{-1} g)$, $u=\s_A((\s_A^{\perp,\pm})^{-1} g)$ and $u=\partial_{n+1}\s_A((\s_A^{\perp,\pm})^{-1} g)$, respectively.

In many of the results in the later parts of this paper, we will only need a few specific consequences of well-posedness of $(R)^A_2$ and $(R)^{A^*}_2$. These consequences are as follows.

\begin{dfn}\label{dfn:goodlayer}
Suppose that $A$ and $A^*$ are elliptic, $t$-independent and satisfy the De Giorgi-Nash-Moser condition. 

Suppose that 
\begin{itemize}
\item the square-function estimate \eqref{eqn:Svertsquare} is valid,
\item the operators $\s_A^{\perp,\pm}$ are invertible $L^2(\RR^n)\mapsto L^2(\RR^n)$, and 
\item solutions to the Dirichlet problem $(D)^A_2$ are unique in $\RR^{n+1}_+$ and in $\RR^{n+1}_-$.
\end{itemize}
Then we say that $A$ \emph{satisfies the single layer potential requirements}.

\end{dfn}
We remark that if $A$ satisfies the single layer potential requirements then $(N^\perp)^A_2$ (and hence $(R)^{A^*}_2$ and $(D)^A_2$) are well-posed, and so both $A$ and $A^*$ satisfy the single layer potential requirements if and only if $A$ satisfies the conditions of \thmref{Dirichletexists}. We will prove a few bounds under the assumption that $A$ (not necessarily $A^*$) satisfies the single layer potential requirements. We also remark that because we have no need of well-posedness of the Neumann problem $(N)^A_2$, we have not required invertibility of $\D_A^\pm$ or its adjoint. Consequently, our layer potential requirements are weaker than those considered elsewhere in the literature.

\subsection{Layer potentials for fourth-order differential equations}
\label{sec:fourthorderpotential}

We will construct solutions to the fourth-order Dirichlet problem \eqref{eqn:Dirichletprob} using potential operators. For $u=\E_{B,a,A} h$ to be a solution to $L_{B}^*(a\,L_A (\E h))=0$, we must have that $v=a\,L_A\E_{B,a,A} h$ is a solution to $L_{B^*} v=0$ in $\RR^{n+1}_+$. We choose $v=\partial_{n+1}^2 \s_{B^*} h$; if $B^*$ is $t$-independent then $L_B^*(\partial_{n+1}^2 \s_{B^*} h)=\partial_{n+1}^2 L_B^*(\s_{B^*} h)=0$. It will be seen that with this choice of~$v$, the operator $\E_{B,a,A}$ is bounded and invertible in some sense.

Formally, the solution to the equation
\begin{equation*}
-a\Div A\nabla \E_{B,a,A} h=\1_{\RR^{n+1}_+}\partial_{n+1}^2 \s_{B^*} h\end{equation*}
is given by
\begin{equation*}\E_{B,a,A} h(x,t) = 
\int_{\RR^n}\int_0^\infty
\Gamma_{(y,s)}^A(x,t) \,\frac{1}{a(y)}
\,\partial_s^2 \s_{B^*} h(y,s)\,ds\,dy
.\end{equation*}
However, to avoid certain convergence issues, we will instead define
\begin{align}
\label{eqn:E}
\E_{B,a,A} h(x,t) 
&= 
\F_{B,a,A} h(x,t) 
-\s_A\biggl(\frac{1}{a}\s_{B^*}^{\perp,+} h\biggr)(x,t)
\end{align}
where the auxiliary potential $\F_{B,a,A}$ is given by
\begin{align}
\label{eqn:F}
\F_{B,a,A} h(x,t) 
&= -\int_{\RR^n}\int_0^\infty
\partial_s \Gamma_{(y,s)}^A(x,t) \,\frac{1}{a(y)}
\,\partial_s \s_{B^*} h(y,s)\,ds\,dy
.\end{align}
That these two definitions are formally equivalent may be seen by integrating by parts in~$s$.

In \secref{potentials}, we will show that the integral in \eqref{eqn:F} converges absolutely for sufficiently well-behaved functions~$h$; we will see that if $A$ and $B^*$ satisfy the single layer potential requirements, then by the second-order theory the difference $\E_{B,a,A} h-\F_{B,a,A} h=-\s_A\bigl((1/a)\s_{B^*}^{\perp,+} h\bigr)$ is also well-defined and satisfies square-function and nontangential bounds.

\subsection{Lipschitz domains}
\label{sec:Lipschitz}

As we discussed in the introduction, by applying the change of variables $(x,t)\mapsto (x,t-\varphi(x))$, boundary-value problems for the second-order operator $\Div A\nabla$ in the domain $\Omega$ given by
\begin{equation}\label{eqn:specialLipschitz}
\Omega=\{(x,t):x\in\RR^n,\>t>\varphi(x)\}
\end{equation}
may be transformed to boundary-value problems in the upper half-space for an appropriate second-order operator $\Div \widetilde A\nabla$. In particular, the theory of harmonic functions in domains of the form \eqref{eqn:specialLipschitz} is encompassed by the theory of solutions to $\Div \widetilde A\nabla u=0$, for elliptic $t$-independent matrices~$\widetilde A$, in the upper half-space.


We now investigate the behavior of fourth-order operators under this (or another) change of variables.
Let $\rho:\Omega\mapsto\RR^{n+1}_+$ be any bilipschitz change of variables and let $J_\rho$ be the Jacobean matrix, so $\nabla (\tilde u\circ \rho)=J_\rho^T (\nabla \tilde u)\circ\rho$. For any accretive function~$a$ and elliptic matrix~$M$, we let $\tilde a$ and $\widetilde M$ be such that
\begin{align}
\label{eqn:changevars}
a&=\frac{\tilde a\circ\rho}{\abs{J_\rho}},&
J_\rho\,M\,J_\rho^T&=\abs{J_\rho}\, (\widetilde M\circ\rho).
\end{align}
By the weak definition \eqref{eqn:2weaksoln} of $L_{\widetilde M}=-\Div \widetilde M\nabla$ and elementary multivariable calculus, we have that if $\RR^{n+1}_+$ is a domain, and if $\tilde u\in W^2_{1,loc}(\RR^{n+1}_+)$ and $L_{\widetilde M} \tilde u \in L^2_{loc}(\RR^{n+1}_+)$, then 
\begin{equation}\label{eqn:Jacobean}
L_M(\tilde u\circ\rho) = \abs{J_\rho} \, (L_{\widetilde M} \tilde u)\circ\rho\quad\text{ in $\Omega=\rho^{-1}(\RR^{n+1}_+)$.}
\end{equation}
Observe that $u$ is H\"older continuous if and only if $\tilde u$ is; thus  $M$ satisfies the De Giorgi-Nash-Moser condition if and only if $\widetilde M$ does.

Now, suppose that $L_{\widetilde B}^*(\tilde a \,L_{\widetilde A} \tilde u)=0$ in~$\RR^{n+1}_+$, where $\tilde a$, $\widetilde A$, $\widetilde B$ are given by \eqref{eqn:changevars}. Let $u=\tilde u\circ\rho$. By \dfnref{weaksoln}, $\tilde v=\tilde a \,L_{\widetilde A} \tilde u$ lies in $W^2_{1,loc}(\RR^{n+1}_+)$. By \eqref{eqn:Jacobean}, $\tilde v\circ\rho= a\, (L_{ A}  u)\in W^2_{1,loc}(\Omega)$ and $L_{B^*}(\tilde v\circ\rho)=0$. Thus $ a\, (L_{ A}  u)\in W^2_{1,loc}(\Omega)$ and so $L_{ B}^*( a\,L_{ A} u)$ is well-defined in~$\Omega$, and furthermore $L_{ B}^*( a\,L_{ A} u)=0$. Thus, existence of solutions in $\Omega$ follows from existence of solutions in~$\RR^{n+1}_+$.

Similarly, if $L_{ B}^*( a\,L_{ A} u)=0$ in~$\Omega$ and $\tilde u=u\circ \rho^{-1}$ then $L_{\widetilde B}^*( \tilde a\,L_{\widetilde A} \tilde u)=0$ in $\RR^{n+1}_+$. Thus, uniqueness of solutions in $\Omega$ follows from uniqueness of solutions in~$\RR^{n+1}_+$.

We remark that the preceding argument is valid with $\Omega$ and $\RR^{n+1}_+$ replaced by $V$ and $\rho(V)$ for any domain~$V$. 

Because we wish to preserve $t$-independence, we consider only the change of variables $\rho(x,t)=(x,t-\varphi(x))$. This change of variables allows us to generalize \thmref{perturb} to domains $\Omega$ of the form \eqref{eqn:specialLipschitz}. The argument is straightforward; however, to state the result we must first define the fourth-order Dirichlet and second-order regularity problems in such domains.

Let $\Omega$ be a Lipschitz domain of the form \eqref{eqn:specialLipschitz}. Let $\nu$ denote the unit outward normal to~$\Omega$ and $\sigma$ denote surface measure on~$\partial\Omega$.
Let $W^2_1(\partial\Omega)$ denote the Sobolev space of functions in $L^2(\partial\Omega)$ whose weak tangential derivative also lies in $L^2(\partial\Omega)$; in both cases we take the norm with respect to surface measure. We say that $U=F$ on $\partial\Omega$ if $F$ is the vertical limit of $U$ in~$L^2(\partial\Omega)$, that is, if
\begin{equation}\label{eqn:Lipschitztrace}\lim_{t\to 0^+} \int_{\partial\Omega} \abs{ U(X+t\e_{n+1}) - F(X)}^2
\,d\sigma(X)=0.\end{equation}
If $f$ is defined on $\partial\Omega$, we let $\nabla_\tau f$ be the tangential gradient of $f$ along~$\partial\Omega$. If $u$ is defined in~$\Omega$, then $\nabla_\parallel u$ is the gradient of $u$ parallel to~$\partial\Omega$; that is, if $X=X_0+t\e_{n+1}$ for some $X_0\in\partial\Omega$ and some $t>0$, then $\nabla_\parallel u(X)=\nabla u(X)-(\nu(X_0)\cdot \nabla u(X))\nu(X_0)$.

If $\Omega$ is the domain above a Lipschitz graph, we say that the $L^2$-Dirichlet problem for $L_{B}^*(a\,L_A)$ is well-posed in~$\Omega$ if there is a constant~$C_0$ such that, for every $f\in W^2_1(\partial\Omega)$ and every $g\in L^2(\partial\Omega)$, there exists a unique function~$u$ that satisfies
\begin{equation*}
\left\{\begin{aligned}
L_B^*(a\,L_Au)&=0  &&\text{in }\Omega,\\
u&=f,\quad \nabla_\parallel u=\nabla_\tau f && \text{on }\partial\Omega,\\
\nu\cdot A\nabla u&=g && \text{on }\partial\Omega,\\
\doublebar{\widetilde N_\Omega(\nabla u)}_{L^2(\partial\Omega)}
&\multispan3${}\leq
C_0 \doublebar{\nabla_\tau f}_{L^2(\RR^n)}
+C_0 \doublebar{g}_{L^2(\RR^n)}
$,\\
\int_\Omega \abs{L_A u(X)}^2\,\dist(X,\partial\Omega)\,dX
&\multispan3${}\leq
C_0 \doublebar{\nabla_\tau f}_{L^2(\RR^n)}
+C_0 \doublebar{g}_{L^2(\RR^n)}
$.\end{aligned}\right.\end{equation*}
The modified nontangential maximal function $\widetilde N_\Omega$ is given by 
\begin{multline*}
\widetilde N_\Omega (\nabla u)(X)=\sup\biggl\{
\biggl(\fint_{B(Y,\dist(Y,\partial\Omega)/2)} \abs{\nabla u}^2\biggr)^{1/2}
\\
:Y\in\Omega,\>\abs{X-Y}<(1+a)\dist(Y,\partial\Omega)\biggr\}
.\end{multline*}
As in \rmkref{NTlimit}, we also have that $u\to f$ nontangentially in the sense that 
\begin{equation*}\lim_{Y\to X,\>Y\in\gamma(X)} u(Y)=f(X)
,\quad
\gamma(X)=\{
Y\in\Omega,\>\abs{X-Y}<(1+a)\dist(Y,\partial\Omega)\}.\end{equation*}

We say that the $L^2$-regularity problem $(R)^A_2$ is well-posed in $\Omega$ if for every $f\in W^2_1(\partial\Omega)$ there is a unique function $u$ that satisfies
\begin{equation*}
(R)^A_2\left\{\begin{aligned}
\Div A\nabla u&=0 && \text{in }\Omega,\\
u &=f &&\text{on }\partial\Omega,\\
\doublebar{\widetilde N_\Omega (\nabla u)}_{L^2(\RR^n)}
&\multispan3${}\leq C\doublebar{\nabla_\tau f}_{L^2(\RR^n)}$\hfil
\end{aligned}\right.
\end{equation*} 
where $u=f$ in the sense of either \eqref{eqn:Lipschitztrace} or in the sense of nontangential limits.
Clearly, $(R)^A_2$ is well-posed in $\Omega$ if and only if $(R)^{\widetilde A}_2$ is well-posed in $\RR^{n+1}_+$.

We may now generalize \thmref{perturb} to Lipschitz domains.
\begin{thm}
\label{thm:lipschitz}
Let $\Omega=\{(x,t):t>\varphi(x)\}$  for some Lipschitz function~$\varphi$. Let $a:\RR^{n+1}\mapsto\CC$, and let $A$, $B:\RR^{n+1}\mapsto \CC^{(n+1)\times (n+1)}$, where $n+1\geq 3$.

Suppose that $a$, $A$ and~$B$ are $t$-independent, that $a$ is accretive, that $A$ and $A^*$ satisfy the De Giorgi-Nash-Moser condition, and that $(R)^A_2$ and $(R)^{A^*}_2$ are well-posed in $\Omega$ and in~$\bar\Omega^C$.

Then there is some $\varepsilon>0$, depending only on $\doublebar{\nabla \varphi}_{L^\infty(\RR^n)}$ and the quantities listed in \thmref{Dirichletexists}, such that if 
\begin{equation*}
\doublebar{\im a}_{L^\infty(\RR^n)}<\varepsilon \quad\text{and}\quad \doublebar{A-B}_{L^\infty(\RR^n)}<\varepsilon
,\end{equation*}
then the $L^2$-Dirichlet problem for $L_B^*(a\,L_A)$ is well-posed in~$\Omega$.
\end{thm}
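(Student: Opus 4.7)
The strategy is to pull back from $\Omega$ to $\RR^{n+1}_+$ via the bilipschitz change of variables $\rho(x,t) = (x, t-\varphi(x))$, which by the discussion surrounding \eqref{eqn:Jacobean} reduces the fourth-order problem on $\Omega$ to a fourth-order problem on $\RR^{n+1}_+$ for the transformed coefficients $\tilde a$, $\widetilde A$, $\widetilde B$ defined by \eqref{eqn:changevars}. I would then apply \thmref{perturb} to $\tilde a$, $\widetilde A$, $\widetilde B$ and pull the resulting solution back to $\Omega$.

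I would first verify that the transformed coefficients satisfy the hypotheses of \thmref{perturb}. Since $\abs{J_\rho}\equiv 1$ and $J_\rho$ depends only on $x$, the coefficients $\tilde a = a\circ\rho^{-1}$ and $\widetilde M = (J_\rho M J_\rho^T)\circ\rho^{-1}$ are again $t$-independent; $\tilde a$ remains accretive, and $\widetilde A$ is elliptic with constants depending only on the original ellipticity constants and $\doublebar{\nabla\varphi}_{L^\infty(\RR^n)}$. The De Giorgi-Nash-Moser property passes to $\widetilde A$ and $\widetilde A^*$ because $u$ is locally H\"older continuous if and only if $u\circ\rho^{-1}$ is, and well-posedness of $(R)^{\widetilde A}_2$, $(R)^{\widetilde A^*}_2$ in $\RR^{n+1}_\pm$ follows from well-posedness of $(R)^A_2$, $(R)^{A^*}_2$ in $\Omega$ and $\bar\Omega^C$, since the chain rule $\nabla u = J_\rho^T(\nabla\tilde u)\circ\rho$ together with the bilipschitz equivalence of cones gives $\widetilde N_\Omega(\nabla u)\sim\widetilde N_+(\nabla\tilde u)\circ\rho$ on $\partial\Omega$ (analogously for the exterior). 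For the smallness requirements, $\doublebar{\im\tilde a}_{L^\infty}=\doublebar{\im a}_{L^\infty}$, and $\widetilde A-\widetilde B=(J_\rho(A-B)J_\rho^T)\circ\rho^{-1}$ yields $\doublebar{\widetilde A-\widetilde B}_{L^\infty}\leq C(\doublebar{\nabla\varphi}_{L^\infty})\doublebar{A-B}_{L^\infty}$; shrinking $\varepsilon$ by this factor~$C$ gives the smallness hypothesis of \thmref{perturb}.

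Next I would translate the boundary data and estimates. Parametrizing $\partial\Omega$ by $x\mapsto(x,\varphi(x))$ with surface measure $\sqrt{1+\abs{\nabla\varphi}^2}\,dx$, set $\tilde f(x)=f(x,\varphi(x))$; then $u=f$, $\nabla_\parallel u=\nabla_\tau f$ on $\partial\Omega$ is equivalent to $\tilde u=\tilde f$, $\nabla_\parallel\tilde u=\nabla\tilde f$ on $\partial\RR^{n+1}_+$, with $\doublebar{\nabla\tilde f}_{L^2(\RR^n)}\sim\doublebar{\nabla_\tau f}_{L^2(\partial\Omega)}$. The identity $AJ_\rho^T=J_\rho^{-1}(\widetilde A\circ\rho)$ combined with the direct computation that $(\nabla\varphi,-1)\,J_\rho^{-1}$ is the row vector $(0,\ldots,0,-1)$ yields
\begin{equation*}
(\nu\cdot A\nabla u)(x,\varphi(x))\,\sqrt{1+\abs{\nabla\varphi(x)}^2}
= -\,\e_{n+1}\cdot\widetilde A(x)\nabla\tilde u(x,0),
\end{equation*}
so $g\in L^2(\partial\Omega)$ corresponds to $\tilde g(x)=-\sqrt{1+\abs{\nabla\varphi(x)}^2}\,g(x,\varphi(x))\in L^2(\RR^n)$ with comparable norms. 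Bilipschitz invariance gives $\widetilde N_\Omega(\nabla u)(X)\sim\widetilde N_+(\nabla\tilde u)(\rho(X))$, and $\dist(X,\partial\Omega)\sim\rho(X)\cdot\e_{n+1}$ together with $L_A u=(L_{\widetilde A}\tilde u)\circ\rho$ from \eqref{eqn:Jacobean} gives $\int_\Omega\abs{L_A u}^2\,\dist(\,\cdot\,,\partial\Omega)\sim\triplebar{t\,L_{\widetilde A}\tilde u}_+^2$. Thus \thmref{perturb} furnishes a unique $\tilde u$ in $\RR^{n+1}_+$ for the data $\tilde f$, $\tilde g$, and $u=\tilde u\circ\rho$ is the desired solution in $\Omega$; uniqueness in $\Omega$ follows by pulling back any two solutions to $\RR^{n+1}_+$ and invoking uniqueness there.

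The main obstacle is the conormal-derivative identity above: all other transformations (Sobolev norms, nontangential maximal function, weighted $L^2$ norm of $L_A u$) are routinely equivalent under a bilipschitz map of bounded $\doublebar{J_\rho}_{L^\infty}$, but rewriting $\nu\cdot A\nabla u$ on $\partial\Omega$ as $\e_{n+1}\cdot\widetilde A\nabla\tilde u$ on $\partial\RR^{n+1}_+$ hinges on the specific matrix identity $(\nabla\varphi,-1)\,J_\rho^{-1}=-\e_{n+1}^T$, which expresses the fact that the conormal direction $J_\rho^{-T}\nu$ aligns with $-\e_{n+1}$ up to the factor $\sqrt{1+\abs{\nabla\varphi}^2}$ that is absorbed into the surface measure.
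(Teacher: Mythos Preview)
Your proposal is correct and follows exactly the approach the paper indicates. The paper does not give a separate proof of \thmref{lipschitz}; the surrounding discussion in \secref{Lipschitz} simply records the change-of-variables identities \eqref{eqn:changevars}--\eqref{eqn:Jacobean}, notes that the De Giorgi--Nash--Moser condition and well-posedness of $(R)^A_2$ transfer to $\widetilde A$ on $\RR^{n+1}_\pm$, and declares that ``the argument is straightforward.'' You have supplied precisely those straightforward details, including the conormal identity that the paper leaves implicit.
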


\section{Preliminaries: the second-order theory}
\label{sec:secondorder}

In this section, we will review some known results concerning solutions to second-order elliptic equations of the form $\Div A\nabla u=0$, and more specifically, concerning solutions to second-order boundary-value problems. In Sections~\ref{sec:fundamental} and~\ref{sec:goodlayer}, we will discuss the second-order fundamental solution and some properties of layer potentials.

In this section we will state several results valid in $\RR^{n+1}_+$; the obvious analogues are also valid in $\RR^{n+1}_-$.

The following two lemmas are well known.
\begin{lem}[The Caccioppoli inequality]\label{lem:Caccioppoli2}
Suppose that $A$ is elliptic. Let $X\in\RR^{n+1}$ and let $r>0$. Suppose that $\Div A\nabla u=0$ in $B(X,2r)$, for some $u\in W^2_1(B(X,2r))$. Then
\begin{equation*}\int_{B(X,r)}\abs{\nabla u}^2 \leq \frac{C}{r^2}
\int_{B(X,2r)\setminus B(X,r)} \abs{u}^2\end{equation*}
for some constant $C$ depending only on the ellipticity constants of $A$ and the dimension $n+1$.
\end{lem}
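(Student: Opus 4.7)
The plan is to use the standard test-function argument for the Caccioppoli inequality, adapted to the weak formulation \eqref{eqn:2weaksoln} and to possibly complex-valued coefficients. First I would fix a smooth cutoff $\eta\in C^\infty_0(B(X,2r))$ with $\eta\equiv 1$ on $B(X,r)$, $0\leq\eta\leq 1$, and $\abs{\nabla\eta}\leq C/r$, so that $\nabla\eta$ is supported in $B(X,2r)\setminus B(X,r)$. The test function will be $\varphi=\eta^2 u$; although $u$ itself is only in $W^2_1(B(X,2r))$ rather than $C^\infty_0$, a standard mollification argument shows that the weak formulation $\int\overline{\nabla\varphi}\cdot A\nabla u=0$ extends to $\varphi=\eta^2 u\in W^2_1(B(X,2r))$ with compact support in $B(X,2r)$.

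Next I would compute $\nabla\varphi=\eta^2\nabla u+2\eta u\nabla\eta$ and, since $\eta$ is real, $\overline{\nabla\varphi}=\eta^2\overline{\nabla u}+2\eta\bar u\overline{\nabla\eta}$. Substituting into \eqref{eqn:2weaksoln} gives
\begin{equation*}
\int\eta^2\,\overline{\nabla u}\cdot A\nabla u = -2\int \eta\,\bar u\,\overline{\nabla\eta}\cdot A\nabla u.
\end{equation*}
Taking real parts, the ellipticity condition \eqref{eqn:elliptic} bounds the left-hand side below by $\lambda\int\eta^2\abs{\nabla u}^2$, while the upper bound in \eqref{eqn:elliptic} controls the right-hand side above by $2\Lambda\int\eta\abs{u}\abs{\nabla\eta}\abs{\nabla u}$.

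From there the standard Cauchy--Schwarz/Young inequality $2ab\leq (\lambda/\Lambda) a^2+(\Lambda/\lambda) b^2$ absorbs half of $\lambda\int\eta^2\abs{\nabla u}^2$ into the left-hand side, leaving
\begin{equation*}
\frac{\lambda}{2}\int\eta^2\abs{\nabla u}^2 \leq \frac{2\Lambda^2}{\lambda}\int\abs{u}^2\abs{\nabla\eta}^2.
\end{equation*}
Using $\eta\equiv 1$ on $B(X,r)$ on the left, and $\abs{\nabla\eta}\leq C/r$ with $\supp\nabla\eta\subset B(X,2r)\setminus B(X,r)$ on the right, yields the desired inequality with $C=4\Lambda^2/\lambda^2$ times the implicit constant from the cutoff.

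The proof is routine, so there is no real obstacle; the only point requiring slight care is the justification that $\varphi=\eta^2 u$ is a legitimate test function when $u$ is merely in $W^2_1$, which is handled by density/mollification and the fact that $\eta^2 u$ has compact support in $B(X,2r)$.
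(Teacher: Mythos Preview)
Your proof is correct and is exactly the standard cutoff-function argument for the Caccioppoli inequality; the paper itself states this lemma as well known and gives no proof, so there is nothing to compare against.
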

\begin{lem}\label{lem:PDE2} \textup{\cite[Theorem~2]{Mey63}.}
	Let $A$ be elliptic, and suppose that $\Div A\nabla u=0$ in $B(X,2r)$. Then there exists a $p>2$, depending only on the constants $\lambda$, $\Lambda$ in \eqref{eqn:elliptic}, such that 
	\begin{equation*}
	\left(\fint_{B(X,r)}\abs{\nabla u}^p\right)^{1/p}
	\leq
	C\left(\fint_{B(X,2r)}\abs{\nabla u}^2\right)^{1/2}
	.\end{equation*}	
\end{lem}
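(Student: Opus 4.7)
The plan is to establish this higher integrability estimate by combining the Caccioppoli inequality (\lemref{Caccioppoli2}) with a Sobolev--Poincar\'e inequality to obtain a \emph{reverse} H\"older inequality, and then invoke Gehring's self-improvement lemma to upgrade the exponent past $2$.

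Concretely, fix any sub-ball $B(Y,\rho)$ with $B(Y,2\rho)\subset B(X,2r)$, and let $c_Y$ be the mean of $u$ on $B(Y,2\rho)$; note that $\Div A\nabla(u-c_Y)=0$ as well. By \lemref{Caccioppoli2} applied on $B(Y,2\rho)$,
\begin{equation*}
\fint_{B(Y,\rho)} \abs{\nabla u}^2
\leq \frac{C}{\rho^2}\fint_{B(Y,2\rho)} \abs{u-c_Y}^2.
\end{equation*}
Choose the Sobolev exponent $q$ with $1/q = 1/2 + 1/(n+1)$, so that $q<2$ and $q^*=2$. The Sobolev--Poincar\'e inequality gives
\begin{equation*}
\biggl(\fint_{B(Y,2\rho)} \abs{u-c_Y}^2\biggr)^{1/2}
\leq C\rho\biggl(\fint_{B(Y,2\rho)} \abs{\nabla u}^q\biggr)^{1/q}.
\end{equation*}
Combining these two inequalities and cancelling the $\rho^{\pm 1}$ factors yields the \emph{reverse H\"older inequality}
\begin{equation*}
\biggl(\fint_{B(Y,\rho)} \abs{\nabla u}^2\biggr)^{1/2}
\leq C\biggl(\fint_{B(Y,2\rho)} \abs{\nabla u}^q\biggr)^{1/q},
\end{equation*}
valid for every sub-ball, with $C$ depending only on $\lambda$, $\Lambda$ and $n+1$.

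At this point I would apply Gehring's lemma: if a nonnegative function $g=\abs{\nabla u}^q$ satisfies the reverse H\"older estimate $(\fint_{B(Y,\rho)} g^{2/q})^{q/2}\leq C\fint_{B(Y,2\rho)} g$ on every sub-ball, then there exists $\varepsilon>0$, depending only on $C$, $q$, and the dimension, such that $g\in L^{(2/q)+\varepsilon}_{loc}$ with the quantitative bound
\begin{equation*}
\biggl(\fint_{B(Y,\rho)} g^{(2/q)+\varepsilon}\biggr)^{1/((2/q)+\varepsilon)}
\leq C'\biggl(\fint_{B(Y,2\rho)} g\biggr)^{2/q}.
\end{equation*}
Setting $p=q\bigl((2/q)+\varepsilon\bigr)=2+q\varepsilon>2$ and rewriting in terms of $\abs{\nabla u}$, and specializing to $B(Y,\rho)=B(X,r)$, $B(Y,2\rho)=B(X,2r)$, gives the desired inequality with $p$ and $C$ depending only on $\lambda$, $\Lambda$, and $n+1$.

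The main obstacle is really invoking Gehring's lemma cleanly: one has to verify the reverse H\"older estimate on \emph{every} sub-ball strictly interior to $B(X,2r)$ (not just on the two concentric balls appearing in the final statement) in order to activate the self-improvement machinery. This is precisely why I set it up on an arbitrary $B(Y,\rho)\subset B(Y,2\rho)\subset B(X,2r)$ at the outset, so that the hypotheses of Gehring's lemma hold on the ball $B(X,2r)$ and the conclusion is inherited by the concentric $B(X,r)$. Everything else is standard once the reverse H\"older step is in place.
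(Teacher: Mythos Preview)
The paper does not supply a proof of this lemma at all: it is stated with a citation to Meyers' 1963 paper and used as a black box. Your argument --- Caccioppoli on an arbitrary interior sub-ball, Sobolev--Poincar\'e with exponent $q=2(n+1)/(n+3)$ to produce a reverse H\"older inequality for $\abs{\nabla u}$, then Gehring's self-improvement lemma --- is exactly the standard modern proof of Meyers' estimate, and it is correct. One small slip: in your display of Gehring's conclusion the right-hand side should read $\bigl(\fint_{B(Y,2\rho)} g\bigr)$ (or equivalently $\bigl(\fint_{B(Y,2\rho)} g^{2/q}\bigr)^{q/2}$), not $\bigl(\fint_{B(Y,2\rho)} g\bigr)^{2/q}$; the stray exponent makes the inequality dimensionally inconsistent. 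With that corrected, substituting $g=\abs{\nabla u}^q$ and $p=2+q\varepsilon$ gives the claimed bound.
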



These conditions may be strengthened in the case of $t$-independent coefficients.
Suppose that $Q\subset\RR^n$ is a cube. If $u$ satisfies $\Div A\nabla u=0$ in $2Q\times(t-\ell(Q),t+\ell(Q))$, and if $A$ is $t$-independent, then by 
\cite[Proposition~2.1]{AlfAAHK11}, there is some $p_0>2$ such that if $1\leq p\leq p_0$, then
\begin{align}
\label{eqn:2slabs}
\biggl(\fint_Q \abs{\nabla u(x, t)}^p\,dx\biggr)^{1/p}
&\leq
C
\biggl(\fint_{2Q}\fint_{t-\ell(Q)/4}^{t+\ell(Q)/4} \abs{\nabla u(x, s)}^2\,ds\,dx\biggr)^{1/2}
.\end{align}
We will show that a similar formula holds for solutions to fourth-order equations in \lemref{slabs} below.

More generally, we have the following lemma.
\begin{lem} Suppose $\vec f\in L^2(\RR^n\mapsto\CC^{n+1})$ is a vector-valued function, and that $\Div A\nabla u=0$ in $2Q\times (t-\ell(Q),t+\ell(Q))$ for some $t$-independent elliptic matrix~$A$. Then
\begin{align}
\label{eqn:2slabsgeneral}
\fint_Q \abs{\nabla u(x, t)-\vec f(x)}^2\,dx
&\leq
C
\fint_{2Q}\fint_{t-\ell(Q)/2}^{t+\ell(Q)/2} \abs{\nabla u(x, s)-\vec f(x)}^2\,ds\,dx
.\end{align}
\end{lem}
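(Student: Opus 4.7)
The statement is a direct generalization of the $p=2$ case of \eqref{eqn:2slabs}, in which an arbitrary $t$-independent function $\vec f(x)$ is subtracted from $\nabla u$. The plan is to mimic the proof of \eqref{eqn:2slabs}, noting that throughout the argument the subtracted function $\vec f(x)$ acts as a ``constant'' with respect to all $t$-integrals, $t$-translations, and $t$-derivatives. I begin by decomposing
\[
\nabla u(x,t) - \vec f(x) = \bigl(\nabla u(x,t) - \overline{\nabla u}(x)\bigr) + \bigl(\overline{\nabla u}(x) - \vec f(x)\bigr),
\]
where $\overline{\nabla u}(x) := \fint_{t-\ell(Q)/2}^{t+\ell(Q)/2} \nabla u(x, s)\,ds$ is the $s$-average of $\nabla u$ over the slab. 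Since $\overline{\nabla u} - \vec f$ depends only on $x$, Jensen's inequality together with the inclusion $Q\subset 2Q$ gives
\[
\fint_Q \bigabs{\overline{\nabla u}(x) - \vec f(x)}^2\,dx \leq 2^n \fint_{2Q} \fint_{t-\ell(Q)/2}^{t+\ell(Q)/2} \bigabs{\nabla u(x,s) - \vec f(x)}^2 \,ds\,dx,
\]
which is $2^n$ times the desired right-hand side.

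It remains to control the oscillation $\fint_Q \abs{\nabla u(x,t) - \overline{\nabla u}(x)}^2\,dx$. Writing this as the $s$-average of $\nabla u(x,t) - \nabla u(x,s)$ and applying Cauchy--Schwarz in $s$, one obtains the bound
\[
\fint_Q \bigabs{\nabla u(x,t) - \overline{\nabla u}(x)}^2\,dx \leq C\ell(Q)^2 \fint_Q \fint_{t-\ell(Q)/2}^{t+\ell(Q)/2} \bigabs{\nabla \partial_\tau u(x,\tau)}^2 \,d\tau\,dx.
\]
Since $A$ is $t$-independent, the function $w := \partial_\tau u$ is itself a solution of $\Div A\nabla w = 0$ in the enlarged slab (as $t$-translation commutes with the equation). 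Applying the Caccioppoli inequality (\lemref{Caccioppoli2}) to $w - c$ for a carefully chosen constant~$c$ (taken to approximate the average of $f_{n+1}$ over $2Q$), combined with the pointwise comparison $\abs{\partial_\tau u - f_{n+1}(x)}^2 \leq \abs{\nabla u(x,\tau) - \vec f(x)}^2$, then yields the desired bound in terms of the right-hand side of \eqref{eqn:2slabsgeneral}, after a standard Caccioppoli radius-enlargement argument to accommodate the $t$-interval widening.

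\textbf{The main obstacle} is that $\vec f$ is only assumed to lie in $L^2$ and is not in general a gradient, so one cannot simply find a solution $v$ of $\Div A\nabla v=0$ with $\nabla v = \vec f$ and reduce to the $\vec f\equiv 0$ case by subtraction. The resolution is that because $\vec f(x)$ is $t$-independent and only $L^2(\RR^n)$-regularity is needed on the right-hand side, we never need to differentiate $\vec f$: all estimates involving $\vec f$ pass through Caccioppoli applied to $\partial_\tau u$ (which \emph{is} a solution), with $\vec f$ entering only through the pointwise comparison above, so no extra regularity of $\vec f$ is required.
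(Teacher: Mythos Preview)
Your decomposition into $(\nabla u - \overline{\nabla u}) + (\overline{\nabla u} - \vec f)$ is essentially the same as the paper's (with $\overline{\nabla u}(x)=\nabla v(x,t)$ for $v(x,t)=\fint_{t-\ell(Q)/4}^{t+\ell(Q)/4} u(x,s)\,ds$), and your bound on the second piece via Jensen is fine. The gap is in your treatment of the oscillation term.

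You bound the oscillation by $\ell(Q)^2\fint_Q\fint_I |\nabla\partial_\tau u|^2$ and then apply Caccioppoli to $\partial_\tau u - c$ for a \emph{constant}~$c$, obtaining $C\fint|\partial_\tau u - c|^2$ over an enlarged region. To connect this to the right-hand side of \eqref{eqn:2slabsgeneral} you invoke the pointwise comparison $|\partial_\tau u - f_{n+1}(x)|^2\le |\nabla u - \vec f|^2$. But Caccioppoli forces $c$ to be constant, while the comparison requires subtracting $f_{n+1}(x)$, which depends on~$x$. Splitting
\[
|\partial_\tau u - c|^2 \le 2|\partial_\tau u - f_{n+1}(x)|^2 + 2|f_{n+1}(x)-c|^2
\]
leaves the term $\fint_{2Q}|f_{n+1}-c|^2$, and no choice of constant $c$ controls this by $\fint_{2Q}\fint_I|\nabla u-\vec f|^2$ for arbitrary $\vec f\in L^2$: take for instance $u(x,t)=x_1 t$ and $\vec f(x)=(0,\dots,0,x_1)$, where $\partial_\tau u = x_1 = f_{n+1}(x)$ exactly, so the ``good'' piece vanishes but $\fint|f_{n+1}-c|^2$ does not. (In that example the numbers happen to balance, but the mechanism you describe does not produce the bound; more to the point, your argument gives no reason why the variance of $f_{n+1}$ should be dominated by the right-hand side when $\vec f$ is merely~$L^2$.)

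The paper avoids this entirely: since $A$ is $t$-independent, the average $v$ is itself a solution, hence so is $u-v$, and one applies the known slice estimate \eqref{eqn:2slabs} directly to $u-v$ (not to $\partial_\tau u$). Then $\nabla(u-v)=(\nabla u-\vec f)-(\nabla v-\vec f)$, and both pieces are controlled by the right-hand side via Jensen in~$s$, with $\vec f$ passing through harmlessly because it is $t$-independent. No constant shift of $\partial_\tau u$ is ever needed.
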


\begin{proof}
Let $v(x,t)=\fint_{t-\ell(Q)/4}^{t+\ell(Q)/4} u(x,s)\,ds$. Then
\begin{equation*}\fint_Q \abs{\nabla u(x,t)-\vec f(x)}^2\,dx
\leq
2\fint_Q \abs{\nabla u(x,t)-\nabla v(x,t)}^2\,dx
+
2\fint_Q \abs{\nabla v(x,t)-\vec f(x)}^2\,dx
.\end{equation*}
Define $D(r)=2Q\times (t-r\ell(Q),t+r\ell(Q))$.
If $A$ is $t$-independent then $\Div A\nabla(u-v)=0$ in $D(3/4)$.
Applying \eqref{eqn:2slabs} to $u-v$, we have that 
\begin{equation*}\fint_Q \abs{\nabla u(\,\cdot\,,t)-\nabla v(\,\cdot\,,t)}^2\leq C\fint_{D(1/4)} \abs{\nabla u-\nabla v}^2.\end{equation*}
By definition of $v$,
\begin{equation*}\fint_Q \abs{\nabla v(x,t)-\vec f(x)}^2\,dx
=\fint_Q \biggabs{\fint_{t-\ell(Q)/4}^{t+\ell(Q)/4} \nabla u(x,s)-\vec f(x)\,ds}^2\,dx.\end{equation*}

Writing $\nabla u-\nabla v=(\nabla u-\vec f)+(\nabla v-\vec f)$, and again applying the definition of~$v$ to bound $\nabla v(x,s)-\vec f$ completes the proof.
\end{proof}

We also have the following theorems from \cite{AusA11}. Although we quote these theorems for $t$-independent coefficients only, in fact they are valid for $t$-dependent coefficients that satisfy a Carleson-measure condition.

\begin{thm}
\label{thm:squareN}
\textup{\cite[Theorem~2.4(i)]{AusA11}.}
Suppose that $A$ is $t$-independent and elliptic, that $\Div A\nabla u=0$ in $\RR^{n+1}_+$, and that $u$ satisfies the square-function estimate
\begin{equation*}
\int_{0}^\infty \int_{\RR^n} \abs{\nabla u(x,t)}^2 \,t\,dx\,dt<\infty.\end{equation*}
Then there is a constant $c$ and a function $f\in L^2(\RR^n)$ such that
\begin{equation*}\lim_{t\to 0^+} \doublebar{u(\,\cdot\,,t)-f-c}_{L^2(\RR^n)} = 0\end{equation*}
and such that 
\begin{equation*}\doublebar{\widetilde N_+ (u-c)}_{L^2(\RR^n)}^2 \leq C\int_{0}^\infty \int_{\RR^n} \abs{\nabla u(x,t)}^2 \,  t\,dx\,dt\end{equation*}
where $C$ depends only on the dimension $n+1$ and the ellipticity constants $\lambda$, $\Lambda$ of~$A$.
If $A$ satisfies the De Giorgi-Nash-Moser condition then we may replace $\widetilde N_+ u$ by~$N_+ u$.
\end{thm}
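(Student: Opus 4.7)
The strategy has two intertwined parts: existence of an $L^2$ boundary trace $f$ for $u-c$, and the nontangential estimate $\doublebar{\widetilde N_+(u-c)}_{L^2}$ controlled by the square function. For both, the decisive observation is that $t$-independence of $A$ forces $\partial_t u$ to solve the same equation $\Div A\nabla(\partial_t u)=0$ in $\RR^{n+1}_+$. Consequently the Caccioppoli inequality \lemref{Caccioppoli2} and the improved slab bound \eqref{eqn:2slabs} apply to $\partial_t u$ on Whitney balls $B((x,t),t/4)$, upgrading the integrated square-function bound on $\nabla u$ to pointwise control of $\partial_t u$ and its derivatives, and in particular giving averaged decay of $u$ and its derivatives at infinity.

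\emph{The nontangential bound.} The core is a pointwise $N\lesssim S$ inequality, which I would prove by a Whitney-chain argument. Given $(y,s)\in\gamma_+(x)$, construct a chain $\{B_k\}_{k\geq 0}$ of balls with $B_0=B((y,s),s/2)$, each $B_k$ contained in $B_{k+1}$ and centered at height $\sim 2^k s$ lying over (or near) $x$, and write
\begin{equation*}
u_{B_0}-c=-\sum_{k=0}^\infty\bigl(u_{B_{k+1}}-u_{B_k}\bigr),\qquad u_{B_k}:=\fint_{B_k} u,
\end{equation*}
where convergence is supplied by averaged decay of $u-c$ at infinity. Poincar\'e on $B_k\cup B_{k+1}$ controls the $k$-th summand by $2^k s\,\bigl(\fint_{B_{k+1}}\abs{\nabla u}^2\bigr)^{1/2}$. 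Squaring and using Cauchy-Schwarz with the geometric weight $2^{-k\varepsilon}$ against $2^{k\varepsilon}$ for some $\varepsilon>0$ converts the telescoping sum into an $\ell^2$ estimate in $k$; after integrating in $x$ and applying Fubini, the Carleson-type sum on the right-hand side reorganizes into $\int_0^\infty\int_{\RR^n}\abs{\nabla u(x,\tau)}^2\,\tau\,dx\,d\tau$. Replacing the average $u_{B_0}$ by the $L^2$-average appearing in $\widetilde N_+$ is immediate; under the De Giorgi-Nash-Moser hypothesis, the local boundedness bound \eqref{eqn:localbound} applied to $u-c$ on Whitney balls replaces $\widetilde N_+$ by $N_+$.

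\emph{Existence of the trace.} With the $\widetilde N_+$ bound in hand, and using the telescoping identity $u(x,s_1)-u(x,s_2)=\int_{s_1}^{s_2}\partial_\tau u(x,\tau)\,d\tau$ averaged over Whitney regions, one shows first that averages $\fint_{B_0}u(\cdot,T)$ over a fixed ball $B_0\subset\RR^n$ converge as $T\to\infty$ to a constant $c$ independent of $B_0$ (different choices differ by a quantity controlled by the square function, hence vanishing). A parallel argument at $t\to 0^+$, together with a Whitney-chain bound analogous to the one above, shows $\{u(\cdot,t)-c\}_{t>0}$ is Cauchy in $L^2(\RR^n)$ as $t\to 0^+$, producing the trace $f$.

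\emph{Main obstacle.} A naive Cauchy-Schwarz applied to $\int_t^T\partial_\tau u\,d\tau$ with weight $\tau$ introduces a logarithmic divergence $\log(T/t)$ that is fatal at both endpoints. The Whitney-chain summation is what defeats this: successive increments $u_{B_{k+1}}-u_{B_k}$ are controlled by dyadically separated pieces of the square-function functional, whose squares \emph{sum} (via $\ell^2$) rather than combine multiplicatively. Organizing this summation cleanly so that the dyadic annuli aggregate to the correct Carleson integral $\int\int\abs{\nabla u}^2\,\tau\,dx\,d\tau$ is the main technical step, and is equivalent to the tent-space $T^2$ atomic/duality machinery invoked in the cited literature.
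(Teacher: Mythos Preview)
The paper does not prove this theorem at all: it is quoted verbatim as \cite[Theorem~2.4(i)]{AusA11} and used as a black box. So there is no ``paper's own proof'' to compare against; your proposal is an attempt to supply an argument the authors deliberately outsourced.

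That said, your outline has a real gap precisely at the step you flag as the ``main technical step.'' After the telescoping bound
\[
\bigl|u_{B_0}-c\bigr|\leq C\sum_{k\geq 0}(2^k s)\Bigl(\fint_{B_k}|\nabla u|^2\Bigr)^{1/2},
\]
squaring via Cauchy--Schwarz with weights $2^{\pm k\varepsilon}$ yields
\[
\bigl|u_{B_0}-c\bigr|^2\leq C_\varepsilon\sum_{k\geq 0}2^{k\varepsilon}(2^k s)^{1-n}\int_{B_k}|\nabla u|^2.
\]
The balls $B_k$ sit in the cone $\gamma_+(x)$ at height $\sim 2^k s$, so without the $2^{k\varepsilon}$ the right-hand side would be dominated by the conical square function $A(u)(x)^2$ restricted to $\{r\gtrsim s\}$. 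But the factor $2^{k\varepsilon}\sim (r/s)^\varepsilon$ blows up as $s\to 0$ for fixed $r$; when you then take the supremum over $s$ (which you must, to get $\widetilde N_+$) and integrate in $x$, the Fubini reorganization does \emph{not} collapse to $\int\!\!\int|\nabla u|^2\,\tau\,dx\,d\tau$: you are left with a divergent factor. Your concluding sentence that this ``is equivalent to the tent-space $T^2$ atomic/duality machinery'' is an acknowledgement that the argument as written does not close; but the tent-space equivalence $\|\widetilde N_+F\|_{L^2}\approx\|A(F)\|_{L^2}$ holds for \emph{arbitrary} $F$ only when $F$ takes values at scale, not for $F=u$ a function. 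One genuinely needs the equation.

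For the record, the proof in \cite{AusA11} does not use Whitney chains. It passes to the first-order system $\partial_t F+DBF=0$ for the conormal gradient $F$, represents solutions via the operator semigroup generated by $|DB|$, and derives both the trace and the $\widetilde N_+$ bound from off-diagonal and quadratic estimates for the holomorphic functional calculus of $DB$. The square-function hypothesis is exactly the quadratic estimate, and the nontangential bound then follows from operator-theoretic square-function/maximal-function equivalences for the semigroup, not from a geometric telescoping argument.
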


In the $t$-independent setting, \eqref{eqn:2slabsgeneral} lets us state {\cite[Theorem~2.3(i)]{AusA11}} as follows.
\begin{thm}
\label{thm:2L2limits}
Suppose that $\Div A\nabla u=0$ in $\RR^{n+1}_+$ and that $\widetilde N_+ (\nabla u)\in L^2(\RR^n)$, where $A$ is elliptic and $t$-independent. Then there exists a function $\vec G:\RR^n\mapsto\CC^{n+1}$ with $\doublebar{\vec G}_{L^2(\RR^n)}\leq C\doublebar{\widetilde N_+ (\nabla u)}_{L^2(\RR^n)}$ such that
\begin{equation*}
\lim_{t\to 0^+}
\doublebar{\nabla u(\,\cdot\,,t)-\vec G}_{L^2(\RR^n)}^2
=0 = \lim_{t\to\infty}\doublebar{\nabla u(\,\cdot\,,t)}_{L^2(\RR^n)}^2.\end{equation*}
\end{thm}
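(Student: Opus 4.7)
The plan is to derive the statement from \cite[Theorem~2.3(i)]{AusA11} combined with the slab estimate~\eqref{eqn:2slabsgeneral}. The first ingredient provides an averaged (Whitney-type) form of the boundary limit; the second, valid because $A$ is $t$-independent, upgrades any averaged limit over a slab around height $t$ to the pointwise-in-$t$ slice $\nabla u(\,\cdot\,,t)$.

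First, I would invoke \cite[Theorem~2.3(i)]{AusA11} to obtain $\vec G\in L^2(\RR^n;\CC^{n+1})$ with $\doublebar{\vec G}_{L^2(\RR^n)}\leq C\doublebar{\widetilde N_+(\nabla u)}_{L^2(\RR^n)}$ such that Whitney averages of $\nabla u-\vec G$ tend to~$0$ in $L^2(\RR^n)$ as $t\to 0^+$ and Whitney averages of $\nabla u$ tend to~$0$ in $L^2(\RR^n)$ as $t\to\infty$. By Fubini, this implies
\[\lim_{t\to 0^+}\fint_{7t/8}^{9t/8}\doublebar{\nabla u(\,\cdot\,,s)-\vec G}_{L^2(\RR^n)}^2\,ds=0\quad\text{and}\quad \lim_{t\to\infty}\fint_{7t/8}^{9t/8}\doublebar{\nabla u(\,\cdot\,,s)}_{L^2(\RR^n)}^2\,ds=0.\]

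Next, to convert these slab-averaged limits to pointwise-in-$t$ limits, I would fix $t>0$, tile $\RR^n$ by disjoint cubes $\{Q_j\}$ of common side length $\ell=t/4$, and apply~\eqref{eqn:2slabsgeneral} on each $Q_j$ with $\vec f=\vec G$. Since the doubled cubes $\{2Q_j\}$ have bounded overlap (depending only on~$n$), summation (and multiplying through by $|Q_j|$) yields
\[\doublebar{\nabla u(\,\cdot\,,t)-\vec G}_{L^2(\RR^n)}^2\leq C\fint_{7t/8}^{9t/8}\doublebar{\nabla u(\,\cdot\,,s)-\vec G}_{L^2(\RR^n)}^2\,ds.\]
Combining with the first step then gives $\doublebar{\nabla u(\,\cdot\,,t)-\vec G}_{L^2(\RR^n)}\to 0$ as $t\to 0^+$. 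Repeating the argument with $\vec f\equiv 0$ handles the limit as $t\to\infty$ and also supplies the uniform slice bound $\sup_{s>0}\doublebar{\nabla u(\,\cdot\,,s)}_{L^2(\RR^n)}\leq C\doublebar{\widetilde N_+(\nabla u)}_{L^2(\RR^n)}$.

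The only nontrivial point is matching the averaged statement extracted from~\cite{AusA11} (typically formulated in terms of Whitney boxes of the form $B((x,s),s/4)$) to the slab-averaged quantity appearing on the right-hand side of~\eqref{eqn:2slabsgeneral}. This is a routine Fubini swap, legitimized by the uniform slice bound above. Once that bridge is in place, both steps combine cleanly and the bound on $\doublebar{\vec G}_{L^2(\RR^n)}$ transfers directly from~\cite{AusA11}.
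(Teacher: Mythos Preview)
Your proposal is correct and matches the paper's own approach exactly: the paper does not give a standalone proof but simply states that in the $t$-independent setting, \eqref{eqn:2slabsgeneral} allows one to restate \cite[Theorem~2.3(i)]{AusA11} in this slice form. Your write-up spells out precisely this reduction---Whitney-averaged limits from \cite{AusA11}, then the slab estimate \eqref{eqn:2slabsgeneral} summed over a tiling by cubes of side $t/4$ to pass to slices---so there is nothing to add.
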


By the divergence theorem, there is a standard weak formulation of the boundary value $\e\cdot A\nabla u$ for any solution $u$ to $\Div A\nabla u=0$ with $\nabla u\in L^2(\RR^{n+1}_+)$. 
\thmref{2L2limits} implies that if $u$ is an appropriately bounded solution, then the boundary value $\e\cdot A\nabla u\vert_{\partial\RR^{n+1}_\pm}$ exists in the sense of $L^2$ limits. By the following result, these two formulations yield the same value.
\begin{thm}[{\cite[Lemma~4.3]{AlfAAHK11}}]
\label{thm:2traces}
Suppose that $A$ is elliptic, $t$-independent and satisfies the De Giorgi-Nash-Moser condition. Suppose that $\Div A\nabla u=0$ in $\RR^{n+1}_+$ and $\widetilde N_+ (\nabla u)\in L^2(\RR^n)$.

Then there is some function $g\in L^2(\RR^n)$ such that, if $\varphi\in C^\infty_0(\RR^{n+1})$, then
\begin{equation*}\int_{\RR^{n+1}_+} \nabla\varphi(x,t)\cdot A(x)\nabla u(x,t)\,dx\,dt = \int_{\RR^n} \varphi(x,0)\,g(x)\,dx.\end{equation*}
Furthermore, $-\e\cdot A\nabla u(\,\cdot\,,t)\to g$ as $t\to 0^+$ in $L^2(\RR^n)$.
\end{thm}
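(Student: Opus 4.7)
The plan is to combine \thmref{2L2limits} with a cutoff argument in the weak formulation of $\Div A\nabla u=0$. First, \thmref{2L2limits} supplies a vector-valued function $\vec G\in L^2(\RR^n\mapsto\CC^{n+1})$ such that $\nabla u(\,\cdot\,,t)\to\vec G$ in $L^2(\RR^n)$ as $t\to 0^+$ and $\nabla u(\,\cdot\,,t)\to 0$ in $L^2(\RR^n)$ as $t\to\infty$. Since $A$ is bounded and $t$-independent, the function $g(x):=-\e\cdot A(x)\vec G(x)$ lies in $L^2(\RR^n)$, and $-\e\cdot A\nabla u(\,\cdot\,,t)\to g$ in $L^2(\RR^n)$ as $t\to 0^+$. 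This already establishes the ``furthermore'' clause of the theorem.

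For the weak identity, fix $\varphi\in C^\infty_0(\RR^{n+1})$ and choose $R$ so large that $\supp\varphi\subset\RR^n\times(-\infty,R)$. Let $\eta_\epsilon$ be the Lipschitz cutoff in the $t$-variable that equals $1$ on $[\epsilon,R]$, vanishes off $[0,R+1]$, and is affine on each of $[0,\epsilon]$ and $[R,R+1]$; in particular $\eta_\epsilon'=1/\epsilon$ on $(0,\epsilon)$ and $\eta_\epsilon'=-1$ on $(R,R+1)$. Then $\varphi\eta_\epsilon$ is a Lipschitz function with compact support in $\RR^{n+1}_+$, and by a standard mollification it is a valid test function in the weak equation $\Div A\nabla u=0$. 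Using $\nabla\eta_\epsilon=\eta_\epsilon'\e$ and the fact that the $[R,R+1]$ contribution vanishes by the support of $\varphi$, this gives
\begin{equation*}
\int_{\RR^{n+1}_+}\eta_\epsilon(t)\,\nabla\varphi\cdot A\nabla u\,dx\,dt
=-\frac{1}{\epsilon}\int_0^\epsilon\int_{\RR^n}\varphi(x,t)\,\e\cdot A(x)\nabla u(x,t)\,dx\,dt.
\end{equation*}
By the $L^2$-convergence $\nabla u(\,\cdot\,,t)\to\vec G$ and the uniform convergence $\varphi(\,\cdot\,,t)\to\varphi(\,\cdot\,,0)$ on the support of $\varphi$, the inner integral on the right tends to $\int\varphi(x,0)\,\e\cdot A(x)\vec G(x)\,dx=-\int\varphi(x,0)g(x)\,dx$, and the average over $[0,\epsilon]$ inherits the same limit; the left-hand side tends to $\int_{\RR^{n+1}_+}\nabla\varphi\cdot A\nabla u$ by dominated convergence. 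Equating the two limits yields the desired identity $\int_{\RR^{n+1}_+}\nabla\varphi\cdot A\nabla u=\int_{\RR^n}\varphi(x,0)g(x)\,dx$.

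The main technical point is justifying the dominated convergence on the left, which requires $\nabla u$ to be in $L^1$ of the slab $\supp\varphi\cap\RR^{n+1}_+$. This is the one step that is not automatic: the $L^2$-limit at $t=0^+$ bounds $\doublebar{\nabla u(\,\cdot\,,t)}_{L^2(\RR^n)}$ near $0$, the decay at $\infty$ from \thmref{2L2limits} bounds it for large $t$, and for $t$ in any compact subset of $(0,\infty)$ the bound $\widetilde N_+(\nabla u)\in L^2(\RR^n)$ together with \lemref{Caccioppoli2} (applied on balls of size comparable to $t$) controls $\doublebar{\nabla u(\,\cdot\,,t)}_{L^2(\RR^n)}$. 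Cauchy--Schwarz on the bounded slab then yields local $L^1$-integrability, so the dominated convergence applies. I expect this integrability check to be the only subtlety; everything else is a direct calculation.
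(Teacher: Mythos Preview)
The paper does not prove this theorem; it is quoted from \cite[Lemma~4.3]{AlfAAHK11} without argument. Your proof is correct and is essentially the standard one: combine the $L^2$ convergence of $\nabla u(\,\cdot\,,t)$ from \thmref{2L2limits} with a vertical cutoff to pass from the interior weak formulation to the boundary identity.

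One small imprecision: your $\eta_\epsilon$ vanishes only \emph{at} $t=0$, not on a neighborhood of $0$, so $\varphi\eta_\epsilon$ is not compactly supported in the open half-space and mollification alone does not fix this. The clean fix is to take $\eta_\epsilon$ vanishing on $[0,\epsilon]$, affine on $[\epsilon,2\epsilon]$, and equal to $1$ on $[2\epsilon,R]$; then $\varphi\eta_\epsilon$ is genuinely an admissible test function and the same computation goes through with $\eta_\epsilon'=1/\epsilon$ on $(\epsilon,2\epsilon)$. Alternatively, the integrability of $\nabla u$ on the slab that you establish at the end (which follows most directly from \eqref{eqn:2slabs} or \eqref{eqn:slabsL2}, since $\partial_t^k u$ are all solutions when $A$ is $t$-independent) is precisely what justifies extending the weak formulation to Lipschitz test functions supported up to the boundary and vanishing there. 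Either way the argument closes.
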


Given a function $u$ that satisfies $\Div A\nabla u=0$ in $\RR^{n+1}_\pm$, we will frequently analyze the  vertical derivative $\partial_{n+1}u$. In order to make statements about~$u$, given results concerning $\partial_{n+1} u$, we will need a uniqueness result.
\begin{lem}
\label{lem:utunique}
Let $A$ be elliptic and $t$-independent. Suppose that $\Div A\nabla u=0$ and $\partial_{n+1} u\equiv 0$ in $\RR^{n+1}_+$.

If there is some constant $c>0$ and some $t_0>0$ such that
\begin{equation}\label{eqn:graddecay}
\fint_{B((x,t),t/2)} \abs{\nabla u}^2 \leq c t^{-n}
\end{equation}
for all $t>t_0$ and all $x\in\RR^n$, then $u$ is constant in~$\RR^{n+1}_+$.
\end{lem}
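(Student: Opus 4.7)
The plan is to exploit $t$-independence of~$A$ together with $\partial_{n+1}u\equiv 0$ to descend to a divergence-form equation for a function of $x$ alone on~$\RR^n$, and then to close via the standard energy estimate.

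First, since $u\in W^2_{1,loc}(\RR^{n+1}_+)$ has $\partial_{n+1}u=0$ in the weak sense, Fubini together with the one-dimensional characterization of $W^{1,2}$ functions with vanishing derivative yields a function $v\in W^2_{1,loc}(\RR^n)$ with $u(x,t)=v(x)$, so that $\nabla u(x,t)=(\nabla v(x),0)$. Writing $\tilde A(x)$ for the upper-left $n\times n$ block of~$A(x)$, \eqref{eqn:elliptic} applied to vectors of the form $(\eta',0)\in\CC^{n+1}$ makes $\tilde A$ elliptic on~$\RR^n$ with the same constants as~$A$. I would then test the weak equation $\int_{\RR^{n+1}_+}\overline{\nabla\varphi}\cdot A\nabla u=0$ against product test functions $\varphi(x,t)=\psi(x)\chi(t)$ with $\psi\in C^\infty_0(\RR^n)$ and $\chi\in C^\infty_0((0,\infty))$. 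The $t$-independence of~$A$ and the vanishing of~$(\nabla u)_{n+1}$ decouple the $x$ and $t$ integrations; the terms with~$\chi'$ drop out, and choosing $\chi$ with $\int_0^\infty\chi\neq 0$ produces $\Div_x(\tilde A\nabla v)=0$ weakly on all of~$\RR^n$.

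Next I would upgrade the growth hypothesis~\eqref{eqn:graddecay} to global square-integrability of~$\nabla v$. Since $\nabla u$ is independent of~$s$ and $\Delta(x,t/4)\times(3t/4,5t/4)\subset B((x,t),t/2)$, the assumption gives
\begin{equation*}
\frac{t}{2}\int_{\Delta(x,t/4)}\abs{\nabla v}^2
\leq \int_{B((x,t),t/2)}\abs{\nabla u}^2
\leq c\,t^{-n}\bigabs{B((x,t),t/2)}
\leq C\,t
\end{equation*}
for every $x\in\RR^n$ and every $t>t_0$. Hence $\int_{\Delta(x,t/4)}\abs{\nabla v}^2\leq C$ uniformly in $x$ and~$t$; letting $t\to\infty$ with $x=0$ fixed (monotone convergence) gives $\int_{\RR^n}\abs{\nabla v}^2\leq C$, so $v\in\dot W^2_1(\RR^n)$.

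Finally I would close the argument via the energy estimate. By the definition of $\dot W^2_1(\RR^n)$ as the completion of $C^\infty_0(\RR^n)$ under $\doublebar{\nabla\,\cdot\,}_{L^2(\RR^n)}$, there are $\psi_k\in C^\infty_0(\RR^n)$ with $\nabla\psi_k\to\nabla v$ in~$L^2(\RR^n)$. Substituting into $\int\overline{\nabla\psi_k}\cdot\tilde A\nabla v=0$ and passing to the limit gives $\int\overline{\nabla v}\cdot\tilde A\nabla v=0$; taking real parts and using ellipticity of~$\tilde A$ yields $\lambda\int_{\RR^n}\abs{\nabla v}^2\leq 0$, forcing $\nabla v\equiv 0$, so $v$ and hence $u$ is constant. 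The substantive step is the middle one, where the pointwise-in-scale growth bound is converted into a single global $L^2$ bound on~$\nabla v$; this conversion works only because $\nabla v$ is $t$-independent, allowing the slab average over $(3t/4,5t/4)$ to contribute a factor of~$t$ that precisely cancels the $t$-loss coming from the volume of the ball.
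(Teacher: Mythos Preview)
Your proof is correct and takes a genuinely different route from the paper's. The paper works entirely with the $(n+1)$-dimensional equation: after observing $u(x,t)=v(x)$ and $\nabla v\in L^2(\RR^n)$, it places a thin slab $\Delta(x,t)\times(\tau-\tau/C,\tau+\tau/C)$ inside a ball at height~$\tau$, applies H\"older and Meyers' higher integrability (\lemref{PDE2}) to obtain
\[
\int_{\Delta(x,t)}\abs{\nabla v}^2\leq C\,(t/\tau)^{n(p-2)/p},
\]
and lets $\tau\to\infty$. You instead derive the reduced equation $\Div_x(\tilde A\nabla v)=0$ on~$\RR^n$ and close by the energy identity, using the density of $C^\infty_0(\RR^n)$ in $\dot W^2_1(\RR^n)$ (legitimate here since $n\geq 2$). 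Your argument is more elementary in that it avoids Meyers entirely; the paper's argument, by contrast, never needs to justify the passage to the $n$-dimensional equation. Both routes share the same middle step of converting the scale-dependent hypothesis into $\nabla v\in L^2(\RR^n)$.
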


If $\widetilde N_+(\nabla u)\in L^2(\RR^n)$ then \eqref{eqn:graddecay} is valid for all $t>0$. Thus in particular, uniqueness of solutions to $(D)^A_2$ implies uniqueness of solutions to $(N^\perp)^A_2$.

\begin{proof} Since $\partial_{n+1} u(x,t)=0$, we have that $u(x,t)=v(x)$ for some function $v:\RR^n\mapsto\CC$. By letting $t\to \infty$ in \eqref{eqn:graddecay} we have that $\nabla v\in L^2(\RR^n)$. 

Choose some $x\in\RR^n$ and some $t>0$. If $\tau>Ct$, then let $Y(\tau)$ be the cylinder $\Delta(x,t)\times (\tau-\tau/C,\tau+\tau/C)$. By H\"older's inequality and \lemref{PDE2}, there is some $p>2$ such that
\begin{align*}
\int_{\Delta(x,t)} \abs{\nabla v}^2
&=
\frac{C}{\tau} \int_{Y(\tau)} \abs{\nabla u}^2
\leq
\frac{C}{\tau} (t^n\tau)^{1-2/p} \left(\int_{B((x,\tau),\tau/C)} \abs{\nabla u}^p\right)^{2/p}
\\&\leq
Ct^{n(p-2)/p} \tau^{-n(p-2)/p}
\tau^n\fint_{B((x,\tau),2\tau/C)} \abs{\nabla u}^2
.\end{align*}
But if $\tau$ is large enough, then
$\tau^n\fint_{B((x,\tau),2\tau/C)} \abs{\nabla u}^2\leq c$.
By letting $\tau\to\infty$, we see that $\nabla v\equiv 0$ and so $u(x,t)=v(x)$ is a constant.
\end{proof}

We will also need the following uniqueness result.
\begin{lem}\label{lem:jumpunique}
Let $A$ be elliptic, $t$-independent and satisfy the De Giorgi-Nash-Moser condition. Suppose that $u_+$ and $u_-$ are two functions that satisfy
\begin{equation*}
\Div A\nabla u_\pm =0 \text{ in }\RR^{n+1}_\pm,
\quad \widetilde N_\pm(\nabla u_\pm) \in L^2(\RR^n),
\quad \nabla u_+\vert_{\partial\RR^{n+1}_+}=\nabla u_-\vert_{\partial\RR^{n+1}_+}.\end{equation*}
Then $u_+$ and $u_-$ are constant in $\RR^{n+1}_\pm$.
\end{lem}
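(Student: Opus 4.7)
The plan is to glue $u_+$ and $u_-$ into a single global weak solution to $\Div A\nabla u=0$ on $\RR^{n+1}$, show by Moser's estimate applied at arbitrarily large scales that $\partial_t u\equiv 0$, and then conclude via \lemref{utunique}.

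By \thmref{2L2limits} the traces $\vec G_\pm$ of $\nabla u_\pm$ on $\partial\RR^{n+1}_\pm$ exist in $L^2(\RR^n)$ and by hypothesis they coincide. In particular the tangential parts agree, so by \rmkref{NTlimit} the nontangential boundary values of $u_+$ and $u_-$ differ by a constant. After replacing $u_-$ by $u_-+c$ for the appropriate constant, we may assume these boundary traces coincide, and then $u:=u_\pm$ on $\RR^{n+1}_\pm$ defines a function in $W^2_{1,loc}(\RR^{n+1})$. Since $A$ is $t$-independent, the conormal fluxes $\e\cdot A\nabla u_\pm\vert_{\partial\RR^{n+1}_+}$ coincide as well; applying \thmref{2traces} separately in each half-space, the boundary contributions cancel and $u$ is a weak solution to $\Div A\nabla u=0$ on all of $\RR^{n+1}$.

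Next I would show that $\partial_t u\in W^2_{1,loc}(\RR^{n+1})$ is itself a global weak solution. Because $A$ is $t$-independent, each translate $u(\,\cdot\,+h\e)$ is a global weak solution, hence so is the difference quotient $(u(\,\cdot\,+h\e)-u)/h$; applying \lemref{Caccioppoli2} to this quotient and letting $h\to 0$ yields
\[\int_{B(X,r)}\abs{\nabla\partial_t u}^2\leq \frac{C}{r^2}\int_{B(X,2r)}\abs{\partial_t u}^2,\]
and passing to the limit in the weak formulation shows that $\Div A\nabla\partial_t u=0$ globally. The De Giorgi-Nash-Moser hypothesis then gives Moser's local boundedness estimate $\abs{\partial_t u(X)}^2\leq C\fint_{B(X,r)}\abs{\partial_t u}^2$ for every $r>0$.

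The key step is to exploit this at arbitrarily large scale. The pointwise bound $\widetilde N_\pm(\nabla u_\pm)(y)^2\geq \fint_{B((y,s),\abs{s}/2)}\abs{\nabla u}^2$, combined with Fubini and a dyadic iteration, yields the slab estimate
\[\int_{-R}^{R}\int_{\RR^n}\abs{\nabla u(x,t)}^2\,dx\,dt\leq CR\bigl(\doublebar{\widetilde N_+(\nabla u_+)}_{L^2(\RR^n)}^2+\doublebar{\widetilde N_-(\nabla u_-)}_{L^2(\RR^n)}^2\bigr)\]
for every $R>0$. Inserting this into Moser's estimate gives $\fint_{B(X,r)}\abs{\partial_t u}^2\leq C(\abs{X_{n+1}}+r)\doublebar{\widetilde N(\nabla u)}_{L^2}^2/r^{n+1}$, which tends to zero as $r\to\infty$ with $X$ fixed. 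Hence $\partial_t u\equiv 0$ on $\RR^{n+1}$. The gradient-decay hypothesis \eqref{eqn:graddecay} of \lemref{utunique} follows from $\widetilde N_\pm(\nabla u_\pm)\in L^2$ by the same averaging argument, so applying \lemref{utunique} to $u_+$ and to $u_-$ gives that each of $u_\pm$ is constant on its respective half-space.

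The main obstacle is the passage from slicewise $L^2$-control of $\nabla u$ to pointwise vanishing of $\partial_t u$: this step uses both the $t$-independence of $A$ (to ensure $\partial_t u$ is itself a global weak solution) and the De Giorgi-Nash-Moser hypothesis (so that Moser's estimate holds at arbitrary scale $r>0$).
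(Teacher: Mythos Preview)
Your argument is correct, but it takes a longer route than the paper's. Both proofs begin identically: glue $u_\pm$ (after adjusting by a constant) into a single function $u$ on $\RR^{n+1}$, and use \thmref{2traces} in each half-space to show that $u$ is a global weak solution to $\Div A\nabla u=0$. Both also use essentially the same slab estimate $\int_{B(0,2r)}\abs{\nabla u}^2\leq Cr\doublebar{\widetilde N(\nabla u)}_{L^2}^2$.

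The divergence is in how one then concludes constancy. The paper applies the De Giorgi--Nash H\"older estimate \eqref{eqn:DGN} directly to $u$, combined with Poincar\'e, to get
\[\abs{u(X)-u(X')}\leq C\Bigl(\frac{\abs{X-X'}}{r}\Bigr)^\alpha r\biggl(\fint_{B(0,2r)}\abs{\nabla u}^2\biggr)^{1/2}\leq Cr^{1-n/2}\Bigl(\frac{\abs{X-X'}}{r}\Bigr)^\alpha\doublebar{\widetilde N(\nabla u)}_{L^2},\]
and sends $r\to\infty$; since $n\geq 2$ this gives $u\equiv\mathrm{const}$ in one stroke. Your route instead first establishes that $\partial_t u$ is itself a global weak solution (via a difference-quotient and Caccioppoli argument), then applies Moser's local boundedness to $\partial_t u$ at scale $r\to\infty$ to get $\partial_t u\equiv 0$, and finally invokes \lemref{utunique} separately in each half-space. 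This is perfectly valid, and the reduction to \lemref{utunique} is natural, but the paper's argument is shorter because it avoids both the difference-quotient step and the appeal to \lemref{utunique}: the H\"older estimate on $u$ already encodes enough to pass directly to constancy.
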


\begin{proof}
Let $u=u_+$ in $\RR^{n+1}_+$, $u=u_-+c$ in $\RR^{n+1}_-$, where $c$ is such that $u_+=u_-$ on~$\RR^n$.
We claim that $\Div A\nabla u=0$ in the \emph{whole} space $\RR^{n+1}$ in the weak sense of \eqref{eqn:2weaksoln}. This is clearly true in each of the half-spaces $\RR^{n+1}_\pm$. Let $\varphi\in C^\infty_0(\RR^{n+1})$. Then
\begin{align*}
\int_{\RR^{n+1}}\nabla\varphi\cdot A\nabla u
&=
\int_{\RR^{n+1}_+}\nabla\varphi\cdot A\nabla u
+\int_{\RR^{n+1}_-}\nabla\varphi\cdot A\nabla u.
\end{align*}
Let $\vec G=\nabla u\vert_{\partial\RR^{n+1}_+}=\nabla u\vert_{\partial\RR^{n+1}_-}$. By \thmref{2traces}, we have that
\begin{align*}
\int_{\RR^{n+1}_\pm}\nabla\varphi(x,t)\cdot A(x)\nabla u(x,t)\,dx\,dt
&=
\mp\int_{\RR^n}\varphi(x,0)\,\e\cdot A(x)\vec G(x)\,dx
\end{align*}
and so $\int_{\RR^{n+1}}\nabla\varphi\cdot A\nabla u=0$; thus, $\Div A\nabla u=0$ in $\RR^{n+1}$.

We now show that $u$ is constant in all of $\RR^{n+1}$.
Fix some $X$, $X'\in\RR^{n+1}$. By the De Giorgi-Nash-Moser estimate \eqref{eqn:DGN} and by the Poincar\'e inequality, if $r$ is large enough then
\begin{equation*}\abs{u(X)-u(X')}
\leq 
Cr\biggl(\frac{\abs{X-X'}}{r}\biggr)^\alpha
\biggl(\fint_{B(0,2r)} \abs{\nabla u}^2\biggr)^{1/2}
.\end{equation*}
By definition of $\widetilde N_\pm (\nabla u)$ we have that
\begin{equation*}\abs{u(X)-u(X')}
\leq 
Cr^{1-n/2}\biggl(\frac{\abs{X-X'}}{r}\biggr)^\alpha
\biggl(\int_{\RR^{n+1}} \widetilde N_+ (\nabla u)^2 + \widetilde N_-(\nabla u)^2\biggr)^{1/2}
\end{equation*}
and so, taking the limit as $r\to\infty$, we have that $u$ is constant in $\RR^{n+1}$, as desired.
\end{proof}

\subsection{The fundamental solution}
\label{sec:fundamental}

We now discuss the second-order fundamental solution.
Let $2^*=2(n+1)/(n-1)$, and let $Y^{1,2}(\RR^{n+1})$ be the space of functions $u\in L^{2^*}(\RR^{n+1})$ that have weak derivatives $\nabla u$ that lie in $L^2(\RR^{n+1})$.
From \cite{HofK07}, we have the following theorems (essentially their Theorems 3.1 and~3.2).
\begin{thm}
\label{thm:fundsoln}
Assume that $A$ and $A^*$ are elliptic and satisfy the De Giorgi-Nash-Moser condition. Assume that $n+1\geq 3$. 

Then there is a unique fundamental solution $\Gamma^A_Y$ with the following properties.

\begin{itemize}
\item
$v(X,Y)=\Gamma^A_Y(X)$ is continuous in $\{(X,Y)\in \RR^{n+1} \times\RR^{n+1}:X\neq Y\}$.
\item
 $v(Y)=\Gamma^A_Y(X)$ is locally integrable in $\RR^{n+1}$ for any fixed $X\in\RR^{n+1}$.
 \item
For all smooth, compactly supported functions $f$ defined in $\RR^{n+1}$, the function $u$ given by
\begin{equation*}u(X) := \int_{\RR^{n+1}} \Gamma^A_Y(X) f(Y)\,dY\end{equation*}
belongs to $Y^{1,2}(\RR^{n+1})$ and satisfies $-\Div A\nabla u=f$ in the sense that
\begin{equation*}\int_{\RR^{n+1}} A\nabla u\cdot \nabla \varphi
=\int_{\RR^{n+1}} f\varphi\end{equation*}
for all $\varphi$ smooth and compactly supported in $\RR^{n+1}$.
\end{itemize}
\end{thm}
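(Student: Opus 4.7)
The plan is to construct $\Gamma_Y^A$ as the limit of approximate fundamental solutions $\Gamma_{Y,\varepsilon}^A$ defined by mollification of the pole, in the spirit of Gr\"uter--Widman. Fix $Y\in\RR^{n+1}$ and $\varepsilon>0$, and set $f_{Y,\varepsilon}=\abs{B(Y,\varepsilon)}^{-1}\1_{B(Y,\varepsilon)}$. Because $n+1\geq 3$, the Sobolev inequality $\doublebar{u}_{L^{2^*}(\RR^{n+1})}\leq C\doublebar{\nabla u}_{L^2(\RR^{n+1})}$ makes the form $(u,v)\mapsto\int A\nabla u\cdot\overline{\nabla v}$ a bounded coercive sesquilinear form on $Y^{1,2}(\RR^{n+1})$. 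By Lax--Milgram there is a unique $\Gamma_{Y,\varepsilon}^A\in Y^{1,2}(\RR^{n+1})$ with $-\Div A\nabla \Gamma_{Y,\varepsilon}^A=f_{Y,\varepsilon}$ weakly. Symmetrically, for each $X$ there is an approximate fundamental solution for the adjoint, and I would use the weak formulation together with Fubini to derive the symmetry identity $\int\Gamma_{Y,\varepsilon}^A\,f_{X,\delta}=\int\overline{\Gamma_{X,\delta}^{A^*}}\,f_{Y,\varepsilon}$.

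The heart of the argument is a family of uniform pointwise bounds on $\Gamma_{Y,\varepsilon}^A$ that are independent of $\varepsilon$. For $X\neq Y$ I would apply Moser's local boundedness \eqref{eqn:localbound} at scale $r=\abs{X-Y}/4$ to get
\begin{equation*}
\bigabs{\Gamma_{Y,\varepsilon}^A(X)}\leq C\biggl(\fint_{B(X,r)}\bigabs{\Gamma_{Y,\varepsilon}^A}^2\biggr)^{1/2}
\end{equation*}
(valid because $L_A\Gamma_{Y,\varepsilon}^A=0$ on $B(X,r)$ once $\varepsilon<r$), and then iterate a standard dyadic energy argument using the Caccioppoli inequality \lemref{Caccioppoli2} and Sobolev embedding to obtain the key decay estimate $\bigabs{\Gamma_{Y,\varepsilon}^A(X)}\leq C\abs{X-Y}^{1-n}$. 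The De Giorgi--Nash--Moser condition applied on $B(X,r)$ then upgrades this to an equicontinuous H\"older bound
\begin{equation*}
\bigabs{\Gamma_{Y,\varepsilon}^A(X)-\Gamma_{Y,\varepsilon}^A(X')}\leq C\,\abs{X-X'}^\alpha\,\abs{X-Y}^{1-n-\alpha}
\end{equation*}
for $\abs{X-X'}<\abs{X-Y}/4$. The decay bound also yields $\doublebar{\Gamma_{Y,\varepsilon}^A}_{L^1(B(Y,R))}\leq CR^2$ uniformly in $\varepsilon$ for any $R>0$.

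With these uniform bounds in hand, a diagonal Arzel\`a--Ascoli argument produces a subsequence along which $\Gamma_{Y,\varepsilon}^A\to \Gamma_Y^A$ locally uniformly on $\RR^{n+1}\setminus\{Y\}$; the bounds are inherited by the limit, giving both the continuity of $v(X,Y)$ off the diagonal and the local $L^1$-integrability of $v(\,\cdot\,,Y)=\Gamma_Y^A$. Joint continuity in $(X,Y)$ comes by combining the H\"older bound above with the symmetry identity, which in the limit reads $\Gamma_Y^A(X)=\overline{\Gamma_X^{A^*}(Y)}$, so H\"older continuity in $X$ transfers to H\"older continuity in $Y$. To verify the distributional identity for a test datum $f\in C^\infty_0$, I would set $u_\varepsilon(X)=\int \Gamma_{Y,\varepsilon}^A(X)\,f(Y)\,dY$, observe that $u_\varepsilon\in Y^{1,2}(\RR^{n+1})$ solves $-\Div A\nabla u_\varepsilon=f*\omega_\varepsilon$, obtain uniform $Y^{1,2}$ bounds from testing against $u_\varepsilon$ itself, extract a weak limit $u$, and use dominated convergence (justified by the decay estimate on $\Gamma_Y^A$) to identify $u(X)=\int\Gamma_Y^A(X)f(Y)\,dY$ and pass to the limit in the weak formulation.

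Uniqueness is comparatively short: if $\widetilde\Gamma_Y^A$ were another fundamental solution with the same three properties, then for each fixed smooth compactly supported $f$ the difference of the associated $u$'s would lie in $Y^{1,2}(\RR^{n+1})$ and satisfy $\Div A\nabla(u-\tilde u)=0$ weakly on $\RR^{n+1}$; testing against $u-\tilde u$ and using ellipticity forces $u=\tilde u$, so $\int(\Gamma_Y^A-\widetilde\Gamma_Y^A)f\,dY=0$ for all such $f$, hence the two agree a.e.\ and, by continuity, everywhere off the diagonal. The main obstacle I expect is making the dyadic-energy/Moser iteration yield the $\abs{X-Y}^{1-n}$ decay uniformly in $\varepsilon$ with constants depending only on the ellipticity and De Giorgi--Nash--Moser data; once that single estimate is in place, all three listed properties and uniqueness follow by fairly standard functional-analytic machinery.
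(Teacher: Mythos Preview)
The paper does not supply its own proof of this theorem; it is quoted directly from Hofmann and Kim \cite{HofK07} (their Theorems~3.1 and~3.2). Your Gr\"uter--Widman-style construction via mollified poles, uniform decay estimates from Moser/Caccioppoli iteration, Arzel\`a--Ascoli compactness, and the symmetry relation is essentially the approach carried out in that cited reference, and the outline is correct.
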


\begin{thm}
\label{thm:fundsoln:properties}
$\Gamma^A$ has the property
\begin{equation}
\label{eqn:fundsolnweakdfn}
\int_{\RR^{n+1}} A\nabla\Gamma^A_Y\cdot\nabla\varphi
=\varphi(Y)
\end{equation}
for all $Y\in\RR^{n+1}$ and all $\varphi$ smooth and compactly supported in $\RR^{n+1}$.

Furthermore, $\Gamma^A$ satisfies the following estimates:
\begin{align}
\label{eqn:fundsolnsize}
\abs{\Gamma^A_Y(X)}&\leq \frac{C}{\abs{X-Y}^{n-1}}
\\
\label{eqn:gradsize:far}
\doublebar{\nabla\Gamma^A_Y}_{L^2(\RR^{n+1}\setminus B(Y,r))} &\leq Cr^{(1-n)/2},
\\
\label{eqn:gradsize:near}
\doublebar{\nabla\Gamma^A_Y}_{L^p(B(Y,r))} &\leq Cr^{-n+(n+1)/p}\quad\text{if}\quad 1\leq p<\frac{{n+1}}{n},
\end{align}
for some $C$ depending only on the dimension $n+1$, the constants $\lambda$, $\Lambda$ in \eqref{eqn:elliptic}, and the constants $H$, $\alpha$ in the De Giorgi-Nash-Moser bounds. 

Finally, if $f\in L^{2(n+1)/(n+3)}(\RR^{n+1})\cap L^p_{loc}(\RR^{n+1})$ for some $p>(n+1)/2$, then
\begin{equation*}u(X)=\int_{\RR^{n+1}} \Gamma^A_Y(X) f(Y)\,dY\end{equation*}
is continuous, lies in $Y^{1,2}(\RR^{n+1})$, and satisfies
\begin{equation*}\int_{\RR^{n+1}} A\nabla u\cdot \nabla \varphi
=\int_{\RR^{n+1}} f\varphi\end{equation*}
for all $\varphi$ smooth and compactly supported in $\RR^{n+1}$.
\end{thm}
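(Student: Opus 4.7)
The plan is to follow the construction of Hofmann and Kim. Fix $Y \in \RR^{n+1}$ and, for each $\rho > 0$, let $\Gamma^{A,\rho}_Y \in Y^{1,2}(\RR^{n+1})$ be the weak solution of the mollified equation
$$-\Div A \nabla \Gamma^{A,\rho}_Y = \frac{1}{|B(Y,\rho)|}\,\1_{B(Y,\rho)}.$$
Existence of this approximate Green's function follows from Lax-Milgram applied to $(u,v) \mapsto \int \overline{\nabla v}\cdot A \nabla u$ on $Y^{1,2}(\RR^{n+1})$, which is coercive by ellipticity of $A$ together with the Sobolev embedding $Y^{1,2} \hookrightarrow L^{2^*}$, $2^* = 2(n+1)/(n-1)$; this is where the hypothesis $n+1\geq 3$ enters. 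The fundamental solution $\Gamma^A_Y$ will be extracted as $\rho \to 0$.

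First I would establish the pointwise bound \eqref{eqn:fundsolnsize} for the approximations, uniformly in $\rho$. For $X$ with $r = |X-Y| > 2\rho$, the function $\Gamma^{A,\rho}_Y$ solves the homogeneous equation in $B(X, r/2)$, so Moser's local boundedness gives
$$|\Gamma^{A,\rho}_Y(X)| \leq C \Bigl(\fint_{B(X, r/2)} |\Gamma^{A,\rho}_Y|^{2}\Bigr)^{1/2}.$$
To upgrade this to an $r^{1-n}$ bound, one runs a duality argument: for $Z$ with $|Z-Y| < \rho$ one writes $\Gamma^{A,\rho}_Y(Z)$ as a pairing against an approximate Green's function for $A^*$ centered near $X$, which is itself controlled pointwise by Moser's estimate. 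Here the De Giorgi-Nash-Moser hypothesis on \emph{$A^*$} is essential since, without symmetry, one cannot simply invoke a reciprocity relation. This yields $\|\Gamma^{A,\rho}_Y\|_{L^\infty(\RR^{n+1} \setminus B(Y, 2\rho))} \leq C r^{1-n}$ with constant independent of $\rho$.

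Next I would derive the gradient estimates from the pointwise bound. For \eqref{eqn:gradsize:far}, fix $r > 0$, choose a cutoff $\chi$ vanishing on $B(Y, r/2)$ and equal to $1$ outside $B(Y, r)$, and test the weak equation for $\Gamma^{A,\rho}_Y$ against $\chi^2 \Gamma^{A,\rho}_Y$; since $Y \notin \supp \chi$ for $\rho$ small, the right-hand side vanishes and ellipticity produces a Caccioppoli-type estimate
$$\int \chi^2 |\nabla \Gamma^{A,\rho}_Y|^2 \leq C \int |\Gamma^{A,\rho}_Y|^2 |\nabla \chi|^2.$$
Iterating this on a dyadic annular decomposition of $\RR^{n+1} \setminus B(Y, r)$ and inserting \eqref{eqn:fundsolnsize} on each annulus produces \eqref{eqn:gradsize:far} after summing a convergent geometric series (convergence again uses $n+1 \geq 3$). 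For \eqref{eqn:gradsize:near}, partition $B(Y, r)$ into dyadic annuli away from $Y$ and apply the same Caccioppoli-plus-pointwise-bound estimate on each; the innermost ball is handled by the energy bound $\int |\nabla \Gamma^{A,\rho}_Y|^2 \leq C\rho^{1-n}$ coming from testing the defining equation against $\Gamma^{A,\rho}_Y$ itself, together with H\"older's inequality, which requires $p < (n+1)/n$ to converge as $\rho \to 0$.

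Passing $\rho\to 0$ by a diagonal weak-compactness argument produces $\Gamma^A_Y$ satisfying \eqref{eqn:fundsolnsize}-\eqref{eqn:gradsize:near}; continuity of $(X,Y) \mapsto \Gamma^A_Y(X)$ off the diagonal follows from the uniform $L^\infty$ bound combined with the H\"older estimate \eqref{eqn:DGN} applied to the approximations. The identity \eqref{eqn:fundsolnweakdfn} is obtained by testing the mollified equation against $\varphi \in C^\infty_0(\RR^{n+1})$: the right-hand side tends to $\varphi(Y)$ by continuity of $\varphi$, while the left-hand side converges using the uniform $L^p$ bound on $\nabla \Gamma^{A,\rho}_Y$ near the pole. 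Finally, the representation formula follows by combining the size and gradient estimates with the hypotheses on $f$: $L^{2(n+1)/(n+3)}$ is the dual Sobolev exponent of $2^*$, so Hardy-Littlewood-Sobolev places $u$ in $Y^{1,2}(\RR^{n+1})$ with $-\Div A\nabla u = f$, while local $L^p$ integrability with $p > (n+1)/2$ dominates the singularity of $\Gamma^A_Y$ by H\"older, yielding continuity of $u$ by dominated convergence. The hard part is the uniform pointwise bound \eqref{eqn:fundsolnsize}; everything else is a consequence of that bound via Caccioppoli and Sobolev.
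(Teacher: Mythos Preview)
The paper does not prove this theorem at all: it is quoted verbatim from Hofmann--Kim \cite{HofK07} (``essentially their Theorems 3.1 and~3.2''), with no argument given. Your proposal is a correct outline of the Hofmann--Kim construction, so in that sense it agrees with the source the paper cites; there is nothing further to compare.
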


We will need some additional properties of the fundamental solution. First, by uniqueness of the fundamental solution, we have that if $A$ is $t$-independent then
$\Gamma_{(y,s)}^A(x,t)=\Gamma_{(y,s+r)}^A(x,t+r)$ for any $x$, $y\in\RR^n$ and any $r$, $s$, $t\in\RR$. In particular, this implies that
\begin{equation}
\label{eqn:fundsoln:vert}
\partial_s^k\Gamma_{(y,s)}^A(x,t)=(-1)^k \partial_t^k \Gamma_{(y,s)}^A(x,t).
\end{equation}

Second, by the Caccioppoli inequality and the De Giorgi-Nash-Moser estimates, we have the bounds
\begin{align}
\label{eqn:fundsolnsize:k}
\abs{\partial_{n+1}^k\Gamma_Y^{A}(X)}&\leq \frac{C_k}{\abs{X-Y}^{{n}+k-1}},
\\
\label{eqn:fundsolnholder:k}
\abs{\partial_{n+1}^k\Gamma_Y^{A}(X)-\partial_{n+1}^k\Gamma_{Y}^{A}(X')}
&\leq 
\frac{C_k\abs{X-X'}^\alpha}{\abs{X-Y}^{{n}+k+\alpha-1}}
\end{align}
for any $k\geq 0$ and any $X$, $X'$, $Y\in\RR^n$ with $\abs{X-X'}\leq \frac{1}{2}\abs{X-Y}$.

Finally, it is straightforward to show that if $X$, $Y\in\RR^{n+1}$ with $X\neq Y$, then 
\begin{equation}
\label{eqn:fundsoln:symm}
\Gamma_X^A(Y)=\Gamma_Y^{A^T}(X)
.\end{equation}

\subsection{Layer potentials}
\label{sec:goodlayer}

Recall that if $A$ and $A^*$ satisfy the De Giorgi-Nash-Moser condition, then the double and single layer potentials are given by the formulas
\begin{align*}
\D_A f(X) &= -\int_{\RR^n} f(y) \, \e\cdot A^T(y) \nabla\Gamma_{X}^{A^T}(y,0)\,dy,\\
\s_A g(X) &= \int_{\RR^n} g(y) \, \Gamma_{X}^{A^T}(y,0)\,dy.
\end{align*}
Notice that by \eqref{eqn:fundsoln:symm}, $\Div A\nabla \D_A f=0$ and 
$\Div A\nabla\partial_{n+1}^k \s_A g=0$ in $\RR^{n+1}\setminus\RR^n$.

We will be most concerned with the case where $f$, $g\in L^2(\RR^n)$.
If $A$ is $t$-independent, then by \eqref{eqn:2slabs} and \eqref{eqn:gradsize:far}, the integral in the definition of $\D_A f(X)$ converges absolutely for all $X\in \RR^{n+1}\setminus\RR^n$ and all $f\in L^2(\RR^n)$. Similarly, by \eqref{eqn:fundsolnsize}, if $t\neq 0$ and if $g\in L^p(\RR^n)$ for some $1\leq p<n$ then the integral in the definition of $\s_A g(x,t)$ converges absolutely.
If $n\geq 3$ this implies that $\s_A g(x,t)$ is well-defined for all $g\in L^2(\RR^n)$. If $n=2$, so the ambient dimension $n+1=3$, then $\s_A g$ is well-defined \emph{up to an additive constant} for $g\in L^2(\RR^2)$. 
That is, if $n=2$ and if $g\in L^2(\RR^2)$, or more generally if $g\in L^p(\RR^n)$ for some $1\leq p<n/(1-\alpha)$, then by \eqref{eqn:fundsolnholder:k}, the integral
\begin{equation*}\s_A g(X)-\s_A g(X_0) = \int_{\RR^n} g(y) \, (\Gamma_{(y,0)}^A(X)-\Gamma_{(y,0)}^A(X_0))\,dy \end{equation*}
converges absolutely for all $X$, $X_0\in \RR^{n+1}\setminus\RR^n$. We remark that by \eqref{eqn:fundsolnsize:k}, if $k\geq 1$ then
\begin{equation*}\int_{\RR^n} g(y) \, \partial_t^k\Gamma_{(y,0)}^A(x,t)\,dy\end{equation*}
converges absolutely provided $g\in L^p(\RR^n)$, $1\leq p <\infty$, and if $k\geq 2$ then the integral also converges if $g\in L^\infty(\RR^n)$. We will write
\begin{equation*}
\partial_t^k \s_A g(x,t)=\int_{\RR^n} g(y) \, \partial_t^k\Gamma_{(y,0)}^A(x,t)\,dy
\end{equation*}
for all such~$g$, even if $\s_A g$ does not converge absolutely.

By \eqref{eqn:fundsolnsize:k} we have a pointwise bound on $\partial_t^k\s_A g(x,t)$. If $1\leq p\leq \infty$ then for all integers $k\geq 2$ we have that
\begin{equation}
\label{eqn:Shdecay}
\abs{\partial_t^k \s_{A} g(x,t)}\leq 
\frac{C(p)}{(\abs{t}+\dist(x,\supp g))^{n/p+k-1}} \doublebar{g}_{L^p(\RR^n)}.
\end{equation}
This also holds for $k=1$ provided $1\leq p<\infty$ and for $k=0$ provided $1\leq p<n$.

We now establish that if $(R)^A_2$ and $(R)^{A^*}_2$ are well-posed, then certain properties of the single layer potential follow. Recall from \lemref{regularitytobound} that under these conditions the square-function estimate \eqref{eqn:Svertsquare} is valid.
By \cite[Formula~(5.5)]{AlfAAHK11}, if the square-function estimate \eqref{eqn:Svertsquare} is valid then it may be strengthened to the following estimate on the whole gradient:
\begin{align}
\label{eqn:Ssquare}
\triplebar{t\,\nabla \partial_t\s_A g}^2
&=
\int_{\RR^{n+1}} \abs{\nabla \partial_t \s_A g(x,t)}^2 \abs{t}\,dx\,dt
\leq C \doublebar{g}_{L^2(\RR^n)}^2
.\end{align}

Suppose that $A$ is $t$-independent and has bounded layer potentials, meaning that $\doublebar{\widetilde N_\pm(\nabla \s_A g)}_{L^2(\RR^n)}\leq C\doublebar{g}_{L^2(\RR^n)}$. (This is Formula~\eqref{eqn:NnablaS}; recall that it follows from \eqref{eqn:Svertsquare}.)
By \thmref{2L2limits},  the operators $\s_A^{\perp,\pm}$ and $(\nabla \s_A)^\pm$  are well-defined and bounded on~$L^2(\RR^n)$.
Observe that $\s_A^{\perp,+}$ is a Calder\'on-Zygmund operator by \eqref{eqn:fundsolnsize:k} and \eqref{eqn:fundsolnholder:k}. Thus,  by standard Calder\'on-Zygmund theory, $\s_A^{\perp,+}$ is bounded on $L^p(\RR^n)$ for any $1<p<\infty$.
By a standard argument  (see \cite[Proposition 4.3]{DahV90}), we may strengthen this to a nontangential bound: if $g\in L^p(\RR^n)$ for any $1<p<\infty$, then
\begin{align}
\label{eqn:NSt}\doublebar{N_\pm(\partial_{n+1}\s_{A}g)}_{L^p(\RR^n)}
&\leq C(p)\doublebar{g}_{L^p(\RR^n)}.
\end{align}

The following formulas come from \cite{AlfAAHK11}.
Again suppose that $A$ has bounded layer potentials.
If $g\in L^2(\RR^n)$, then by the proof of \cite[Lemma 4.18]{AlfAAHK11} we have that
\begin{align}
\label{eqn:Sjump}
\e\cdot A(\nabla \s_A)^+g - \e\cdot A(\nabla \s_A)^-g&= -g
,\\
\label{eqn:Scts}
\nabla_\parallel \s_A^+ g - \nabla_\parallel \s_A^- g &= 0
.\end{align}
In particular $\s_A^+=\s_A^-$ regarded as operators $L^2(\RR^n)\mapsto\dot W^2_1(\RR^n)$.

We now establish invertibility of~$\s_A^+$ and $\s_A^{\perp,\pm}$.
\begin{thm} \label{thm:invertiblelayer}
Let $A$ and $A^*$ be $t$-independent and satisfy the De Giorgi-Nash-Moser condition. Suppose that $A$ has bounded layer potentials in the sense that \eqref{eqn:NnablaS} is valid.

If $(R)^A_2$ is well-posed in $\RR^{n+1}_+$ and $\RR^{n+1}_-$, then the operator $\s_A^+$ is invertible $L^2(\RR^n)\mapsto \dot W^2_1(\RR^n)$.

If in addition $(N^\perp)^A_2$ is well-posed in $\RR^{n+1}_\pm$, then $\s_A^{\perp,\pm}$ is invertible on $L^2(\RR^n)$.

\end{thm}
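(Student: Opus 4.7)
The plan is to handle $\s_A^+$ first and then bootstrap to $\s_A^{\perp,\pm}$, splitting each into injectivity and surjectivity. All four sub-arguments rest on the jump relations \eqref{eqn:Sjump}--\eqref{eqn:Scts} combined with uniqueness or existence for the relevant second-order problem; the injectivity of $\s_A^+$ via the regularity problem is classical, whereas the surjectivity argument is the new content.

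For injectivity of $\s_A^+$: suppose $\s_A^+ g$ is constant. Then \eqref{eqn:Scts} makes $\nabla_\parallel\s_A^- g=0$, so $\s_A^- g$ is also constant, and since $\widetilde N_\pm(\nabla\s_A g)\in L^2(\RR^n)$ by \eqref{eqn:NnablaS}, uniqueness for $(R)^A_2$ in both half-spaces forces $\s_A g$ to be constant in $\RR^{n+1}\setminus\RR^n$. Then $(\nabla\s_A)^\pm g\equiv 0$ and \eqref{eqn:Sjump} gives $g=0$. Surjectivity is the main step: given $f\in \dot W^2_1(\RR^n)$, I will use well-posedness of $(R)^A_2$ in both half-spaces to get $u_\pm$ with $u_\pm=f$ on $\partial\RR^{n+1}_\pm$ and $\widetilde N_\pm(\nabla u_\pm)\in L^2(\RR^n)$; \thmref{2traces} gives $\e\cdot A\nabla u_\pm|_\partial\in L^2(\RR^n)$, so I set
\begin{equation*}
g:=-\e\cdot A\nabla u_+|_{\partial\RR^{n+1}_+}+\e\cdot A\nabla u_-|_{\partial\RR^{n+1}_-}\in L^2(\RR^n),
\end{equation*}
$v:=\s_A g$, and $w_\pm:=v-u_\pm$ in $\RR^{n+1}_\pm$. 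The tangential boundary gradients of $w_\pm$ agree, since \eqref{eqn:Scts} gives $\nabla_\parallel v^+=\nabla_\parallel v^-$ and $\nabla_\parallel u_\pm|_\partial=\nabla f$; and the conormal traces agree by the definition of $g$ and \eqref{eqn:Sjump}. Ellipticity yields $\re A_{n+1,n+1}\geq\lambda>0$, so $\partial_{n+1}w_\pm|_\partial$ is recoverable from $\nabla_\parallel w_\pm|_\partial$ and $\e\cdot A\nabla w_\pm|_\partial$; hence the full gradients $\nabla w_\pm|_\partial$ coincide. Applying \lemref{jumpunique} to $(w_+,w_-)$ then forces each $w_\pm$ to be constant, and taking traces gives $\s_A^+ g=f$ modulo constants.

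The story for $\s_A^{\perp,+}$ (and symmetrically $\s_A^{\perp,-}$) runs as follows. For injectivity, if $\s_A^{\perp,+} h=0$ then $\s_A h$ solves $(N^\perp)^A_2$ in $\RR^{n+1}_+$ with zero data, hence is constant above; \eqref{eqn:Scts} then makes $\nabla_\parallel\s_A^- h=0$, so $(R)^A_2$-uniqueness in $\RR^{n+1}_-$ makes $\s_A h$ constant below as well, whence \eqref{eqn:Sjump} yields $h=0$. For surjectivity I piggyback on the first half of the theorem: given $g\in L^2(\RR^n)$, I solve $(N^\perp)^A_2$ in $\RR^{n+1}_+$ to get $u_+$ with $\partial_{n+1}u_+|_\partial=g$; \thmref{2L2limits} makes $f:=u_+|_\partial\in \dot W^2_1(\RR^n)$; the already-proved invertibility of $\s_A^+$ supplies $h\in L^2(\RR^n)$ with $\s_A^+ h=f$ modulo constants; and then $\s_A h$ and $u_+$ both solve $(R)^A_2$ in $\RR^{n+1}_+$ with the same data, so by uniqueness $\s_A h=u_++\mathrm{const}$ in $\RR^{n+1}_+$ and therefore $\s_A^{\perp,+} h=g$.

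The main obstacle is the surjectivity of $\s_A^+$: the delicate step is to define $g$ as the conormal jump of the two regularity solutions and then verify that \emph{all} components of $\nabla w_\pm$ match across $\partial\RR^{n+1}$, so that \lemref{jumpunique} becomes applicable. Once that is done, the remaining three sub-arguments are essentially bookkeeping with the jump relations and the invertibility of $\s_A^+$.
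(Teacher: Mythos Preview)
Your proposal is correct and follows essentially the same route as the paper's own proof: jump relations plus uniqueness for injectivity, and the construction $w_\pm=\s_A(\text{conormal jump of the two regularity solutions})\mp u_\pm$ followed by \lemref{jumpunique} for surjectivity of $\s_A^+$, then bootstrapping to $\s_A^{\perp,\pm}$ via the already-established invertibility. You are in fact slightly more explicit than the paper at one point: the paper verifies only that $\nabla_\parallel w_+=\nabla_\parallel w_-$ and $\e\cdot A\nabla w_+=\e\cdot A\nabla w_-$ on the boundary and then invokes \lemref{jumpunique} directly, whereas you spell out that $\re A_{n+1,n+1}\geq\lambda$ lets one recover $\partial_{n+1}w_\pm$ and hence the full gradient match that \lemref{jumpunique} actually requires.
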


\begin{proof}
The proof exploits extensively the jump relations  for the single layer potential. By \eqref{eqn:Sjump}, \eqref{eqn:NnablaS} and \thmref{2L2limits}, if $g\in L^2(\RR^n)$ then 
\begin{align*}\doublebar{g}_{L^2(\RR^n)}
&=
\doublebar{e\cdot A(\nabla \s_A)^+ g-\e\cdot A(\nabla \s_A)^- g}_{L^2(\RR^n)}
\\&\leq C\doublebar{\widetilde N_+(\nabla \s_A g)}_{L^2(\RR^n)}
+C\doublebar{\widetilde N_-(\nabla \s_A g)}_{L^2(\RR^n)}
.\end{align*}
But if $(R)^A_2$ is well-posed in $\RR^{n+1}_\pm$, then $\doublebar{\widetilde N_\pm(\nabla \s_A g)}_{L^2(\RR^n)}\leq C\doublebar{\nabla_\parallel \s_A^\pm g}_{L^2(\RR^n)}$. By \eqref{eqn:Scts} we have that $\nabla_\parallel \s_A^+ g=\nabla_\parallel \s_A^- g$ and so
\begin{equation*}\doublebar{g}_{L^2(\RR^n)}
\leq C\doublebar{\nabla_\parallel \s_A^\pm g}_{L^2(\RR^n)}
.\end{equation*}

We need only show that $\s_A^+$ is surjective. Choose some $f\in \dot W^2_1(\RR^n)$. Let $u_\pm$ be the solutions to $(R)^A_2$ with boundary data~$f$ in $\RR^{n+1}_\pm$. By \thmref{2L2limits}, the functions $g_\pm = \e\cdot A\nabla u_\pm\vert_{\partial\RR^{n+1}_\pm}$ exist and lie in $L^2(\RR^n)$.

Now, let $v=\s_A(g_+-g_-)$, and let $v_\pm=v\vert_{\RR^{n+1}_\pm}$. By \eqref{eqn:NnablaS}, $\widetilde N_\pm(\nabla v_\pm)\in L^2(\RR^n)$. Consider $w_\pm=u_\pm+v_\pm$. By definition of $u_\pm$ and by the continuity relation \eqref{eqn:Scts}, we have that
\begin{equation*}\nabla_\parallel w_+\big\vert_{\partial\RR^{n+1}_+}
=\nabla_\parallel w_-\big\vert_{\partial\RR^{n+1}_-}.\end{equation*}
Consider the conormal derivative. We have that
\begin{equation*}\e\cdot A\nabla w_+\big\vert_{\partial\RR^{n+1}_+}
=
	\e\cdot A\nabla u_+\big\vert_{\partial\RR^{n+1}_+}
	+\e\cdot A\nabla v_+\big\vert_{\partial\RR^{n+1}_+}
=
	g_+
	+\e\cdot A(\nabla \s_A)^+(g_+-g_-)
.\end{equation*}
But by \eqref{eqn:Sjump},
\begin{align*}\e\cdot A\nabla w_+\big\vert_{\partial\RR^{n+1}_+}
&=
	g_+
	+\e\cdot A(\nabla \s_A)^-(g_+-g_-)
	-(g_+ - g_-)
\\&=
	\e\cdot A\nabla u_-\big\vert_{\partial\RR^{n+1}_-}
	+\e\cdot A\nabla v_-\big\vert_{\partial\RR^{n+1}_-}
\\&=\e\cdot A\nabla w_-\big\vert_{\partial\RR^{n+1}_-}
.\end{align*}
Thus by \lemref{jumpunique}, we have that $w_\pm$ is constant in $\RR^{n+1}_\pm$. In particular, $\s_A^\pm (g_+-g_-)=f$ in $\dot W^2_1(\RR^n)$, as desired.

If in addition $(N^\perp)^A_2$ is well-posed in $\RR^{n+1}_\pm$, then
\begin{equation*}\doublebar{g}_{L^2(\RR^n)}
\leq C\doublebar{\nabla_\parallel \s_A^\pm g}_{L^2(\RR^n)}
\leq C\doublebar{\widetilde N_\pm(\nabla \s_A  g)}_{L^2(\RR^n)}
\leq C\doublebar{\s_A^{\perp,\pm} g}_{L^2(\RR^n)}
\end{equation*}
and so we need only show that $\s_A^{\perp,\pm}$ is surjective on $L^2(\RR^n)$. 

If $(N^\perp)^A_2$ is well-posed in $\RR^{n+1}_\pm$, then for each $g\in L^2(\RR^n)$ there exists some $u$ with $\widetilde N_\pm(\nabla u)\in L^2(\RR^n)$ and with $\partial_{n+1} u=g$ on $\partial\RR^{n+1}_\pm$. Let $F=u\vert_{\partial\RR^{n+1}_\pm}$; by \thmref{2L2limits} $F$ exists and $F\in \dot W^2_1(\RR^n)$. Because $\s_A^\pm$ is invertible, we have that $F=\s_A^\pm f$ for some $f\in L^2(\RR^n)$; by uniqueness of regularity solutions, $\s_A^{\perp,\pm} f=g$.
\end{proof}

We will need some bounds on the double layer potential as well. Suppose that $(R)^A_2$ and $(R)^{A^*}_2$ are both well-posed in $\RR^{n+1}_\pm$, so \eqref{eqn:Svertsquare} is valid.
By \cite[Corollary~4.28]{AlfAAHK11}, if $t>0$ then for all $g\in C^\infty_0(\RR^n)$ we have that
\begin{equation}
\label{eqn:DSformula}
\D_A (\s_A^+ g)(x,\pm t)=\mp \s_A(\e\cdot A(\nabla \s_A )^\mp g)(x,\pm t).
\end{equation} 

We have the following consequences of~\eqref{eqn:DSformula}. By \thmref{invertiblelayer}, $\s_A^+:L^2(\RR^n)\mapsto \dot W^2_1(\RR^n)$ is invertible. Let $f\in W^2_1(\RR^n)$, and let $g=(\s_A^+)^{-1}f$. By \eqref{eqn:NnablaS} and \eqref{eqn:Ssquare},
\begin{align}
\label{eqn:NnablaD}
\doublebar{\widetilde N_\pm(\nabla \D_A f)}_{L^2(\RR^n)}&\leq C\doublebar{\nabla f}_{L^2(\RR^n)}
,\\
\label{eqn:Dtsquare}
\triplebar{t\,\nabla \partial_t\D_A f}^2&=
\int_{\RR^{n+1}} \abs{\nabla \partial_t\D_A f(x,t)}^2 \abs{t}\,dx\,dt
\leq C \doublebar{\nabla f}_{L^2(\RR^n)}^2.
\end{align}
Combining \eqref{eqn:DSformula} with the jump relations \eqref{eqn:Sjump} and \eqref{eqn:Scts}, we have that
\begin{align}
\label{eqn:Dcts}
\e\cdot A\nabla \D_A f\big\vert_{\partial\RR^{n+1}_+}-\e\cdot A\nabla \D_A f\big\vert_{\partial\RR^{n+1}_-} &= 0
,\\
\label{eqn:Djump}
\nabla_\parallel\D_A^+ f-\nabla_\parallel\D_A^- f &= -f
\end{align}
for all $f\in W^2_1(\RR^n)$. 


\section{The Caccioppoli inequality and related results}
\label{sec:Caccioppoli}
The Caccioppoli inequality for second-order elliptic equations (\lemref{Caccioppoli2}) is well known. 
In this section, we will prove a similar inequality for weak solutions to the fourth-order equation $L_B^*(a\,L_Au)=0$. We will also prove fourth-order analogs to some other basic results of the second-order theory.

\begin{thm}
\label{thm:Caccioppoli}
Suppose that $L_B^* (a\,L_A u)=0$ in $B(X,2r)$ in the sense of \dfnref{weaksoln}. Suppose that $A$ and $B$ are elliptic in the sense of \eqref{eqn:elliptic} and that $a$ is accretive in the sense of \eqref{eqn:accretive}. Then
\begin{equation*}\int_{B(X,r)} \abs{L_A u}^2 + \frac{1}{r^2}\int_{B(X,r)}\abs{\nabla u}^2\leq \frac{C}{r^4}\int_{B(X,2r)} \abs{u}^2\end{equation*}
where $C$ depends only on the constants $\lambda$, $\Lambda$ in \eqref{eqn:elliptic} and \eqref{eqn:accretive}.
\end{thm}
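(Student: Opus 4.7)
The plan is to show that it suffices to bound $\Phi(\rho):=\int_{B(X,\rho)}|v|^2$, where $v=a\,L_Au\in W^2_{1,loc}$ by \dfnref{weaksoln}, since $|L_Au|\le|v|/\lambda$ by accretivity. Throughout, fix a smooth cutoff $\eta$ with $\eta\equiv 1$ on $B(X,r)$, $\supp\eta\subset B(X,r')$, $|\nabla\eta|\le C/(r'-r)$, and write $V(\rho)=\int_{B(X,\rho)}|\nabla u|^2$, $U(\rho)=\int_{B(X,\rho)}|u|^2$. Testing the weak identity $\int\overline{\nabla\varphi}\cdot A\nabla u=\int\overline\varphi\,v/a$ from \dfnref{weaksoln} against $\varphi=\eta^2 u$, and using ellipticity of $A$, accretivity of $a$, and Young's inequality with parameter $(r'-r)^2$, yields the energy bound
\begin{equation*}
V(r)\le \frac{C}{(r'-r)^2}\,U(r')+C\,(r'-r)^2\,\Phi(r').
\end{equation*}

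Next I test the \emph{same} weak equation against $\varphi=\eta^2 v$, which is a valid compactly-supported $W^2_1$ test function because $v\in W^2_{1,loc}$. The right-hand side becomes $\int\eta^2|v|^2/a$, and since $\re(1/a)\ge\lambda/\Lambda^2$, its real part is bounded below by $(\lambda/\Lambda^2)\int\eta^2|v|^2$. The left-hand side splits into a harmless cutoff term $2\int\eta\nabla\eta\,\bar v\cdot A\nabla u$, treated by Young's inequality, and the cross term $\int\eta^2\overline{\nabla v}\cdot A\nabla u$. For the latter I invoke the Caccioppoli inequality for $v$ (which satisfies $\Div B^*\nabla v=0$ weakly) in the sharp form exploiting the annular support of $\nabla\eta$,
\begin{equation*}
\int\eta^2|\nabla v|^2\le \frac{C}{(r'-r)^2}\bigl(\Phi(r')-\Phi(r)\bigr),
\end{equation*}
followed by Cauchy--Schwarz and Young's inequality with parameter $\tau$ to conclude
\begin{equation*}
\Bigl|\int\eta^2\overline{\nabla v}\cdot A\nabla u\Bigr|\le \tau\bigl(\Phi(r')-\Phi(r)\bigr)+\frac{C}{\tau(r'-r)^2}\,V(r').
\end{equation*}

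Assembling these bounds, substituting the first energy inequality at a third radius $r''=r'+(r'-r)$ to eliminate $V(r')$, and moving the $\tau\Phi(r)$ piece to the left via hole-filling produces, for parameters $\epsilon$ and $\tau$ chosen of size $O(1)$ depending only on $\lambda,\Lambda$,
\begin{equation*}
\Phi(r)\le \theta\,\Phi(r'')+\frac{C}{(r''-r)^4}\,U(2r),\qquad \theta\in(0,1).
\end{equation*}
This inequality holds for every $r<r''\le 2r$, so the iteration lemma of Giaquinta gives $\Phi(r)\le C\,U(2r)/r^4$; combining with the first energy bound then controls $V(r)/r^2$, and accretivity converts the bound on $\Phi$ into the desired bound on $\int_{B(X,r)}|L_Au|^2$.

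The non-routine ingredient is the cross term $\int\eta^2\overline{\nabla v}\cdot A\nabla u$: for general $A\ne B$ there is no algebraic cancellation, and a naive Cauchy--Schwarz bound produces an $O(1)$ coefficient on $\Phi(r')$ that defeats iteration. The crux is the sharp annular Caccioppoli for $v$, which replaces $\Phi(r')$ by the \emph{difference} $\Phi(r')-\Phi(r)$; the $\Phi(r)$ piece is absorbed into the left via hole-filling, cutting the effective coefficient on $\Phi(r'')$ strictly below $1$ and making the iteration lemma applicable.
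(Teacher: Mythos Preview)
Your strategy—test with $\eta^2 u$ and $\eta^2 v$, extract an annular gain $\Phi(r')-\Phi(r)$ from the second-order Caccioppoli for $v$, hole-fill, and iterate—is a genuinely different route from the paper's. The paper avoids iteration altogether: it uses a single cutoff $\varphi$ with nested powers $\varphi^2$, $\varphi^4$, $\varphi^6$ and two free constants $c_1$, $c_2$. The point there is that the Caccioppoli step gives $\int\varphi^6|\nabla v|^2\le (C/r^2)\int\varphi^4|v|^2$, and accretivity converts $\int\varphi^4|v|^2$ back into $\re\int\varphi^4\bar v\,L_Au$, which is exactly the quantity being estimated; choosing $c_2$ large absorbs it directly. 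Then choosing $c_1$ large absorbs the remaining $\int\varphi^2|\nabla u|^2$ term. No hole-filling, no Giaquinta lemma.

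Your argument, however, has a gap as written: the iteration does not close. After hole-filling you have
\[
(c_0+\tau)\,\Phi(r)\le \tau\,\Phi(r')+\frac{C_{\epsilon,\tau}}{(r'-r)^2}\,V(r'),
\qquad C_{\epsilon,\tau}\sim \tfrac1\epsilon+\tfrac1\tau,
\]
and when you substitute your first energy inequality $V(r')\le C/(r''-r')^2\,U(r'')+C(r''-r')^2\,\Phi(r'')$, the coefficient on $\Phi(r'')$ becomes $\tau+C\cdot C_{\epsilon,\tau}$. For Giaquinta's lemma you need this strictly less than $c_0+\tau$, i.e.\ $C\cdot C_{\epsilon,\tau}<c_0$. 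But $C$ here is the \emph{fixed} structural constant from your first energy inequality (you took the Young weight to be exactly $(r'-r)^2$), while $C_{\epsilon,\tau}$ is bounded below once $\epsilon<c_0$ is fixed for absorption. No choice of $\epsilon,\tau$ alone forces $\theta<1$.

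The repair is easy: in the first energy inequality use Young with weight $\mu(r'-r)^2$ for a third free parameter $\mu>0$, so that
\[
V(r)\le \frac{C_\mu}{(r'-r)^2}\,U(r')+\mu\,(r'-r)^2\,\Phi(r').
\]
After substitution the coefficient on $\Phi(r'')$ is $\tau+\mu\,C_{\epsilon,\tau}$; fix $\epsilon,\tau$ first and then take $\mu$ small. With this extra parameter your hole-filling iteration goes through and yields the stated bound.
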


\begin{proof}
Let $\varphi$ be a real smooth cutoff function, so that $\varphi=1$ on $B(X,r)$,
$\varphi$ is supported in $B(X,2r)$, and $\abs{\nabla\varphi}\leq C/r$.

Then, for any constant $c_1$, we have that
\begin{align*}
\int \varphi^2 \abs{\nabla u}^2
&\leq \frac{1}{\lambda}\re\int \varphi^2 \nabla \bar u\cdot A\nabla u
\\&=
\frac{1}{\lambda}\re\int \varphi^2\,\bar u\,L_A u
-\frac{1}{\lambda}\re\int 2 \bar u \varphi\,\nabla\varphi\cdot A\nabla u
\\&\leq
\frac{C c_1 }{2 r^2} \int_{B(X,2 r)} \abs{u}^2
+\frac{r^2}{2c_1} \int\varphi^4 \abs{L_A u}^2
+\frac{C}{2}\int \abs{u}^2\abs{\nabla\varphi}^2
+\frac{1}{2}\int \varphi^2\abs{\nabla u}^2
\end{align*}
and so
\begin{align*}
\int \varphi^2 \abs{\nabla u}^2
&\leq
\frac{C(c_1+1)}{2 r^2} \int_{B(X,2 r)} \abs{u}^2
+\frac{r^2}{c_1} \int\varphi^4 \abs{L_A u}^2.
\end{align*}

Now, recall that $a L_A u=v$ for some $v\in W^2_{1,loc}(B(X,2r))$. Therefore,
\begin{align}
\label{eqn:LAuv}
\abs{L_A u}^2
&\leq \frac{\re \bar a}{\lambda}\,\overline{L_A u} L_A u
=\frac{1}{\lambda}\re(\bar v\, L_A u),
\\
\label{eqn:vLAu}
\abs{v}^2
&\leq \frac{1}{\re (1/a)}\re\left(\bar v\, \frac{1}{a} v\right)
\leq \frac{\Lambda^2}{\lambda} \re(\bar v\,L_A u).
\end{align}

Because $\varphi$ is compactly supported, the weak definition of $L_A u$ implies that
\begin{align*}
\int\varphi^4 \,\bar v\,L_A u
&=
\int \nabla(\varphi^4 \bar v)  \cdot A\nabla u
=
\int 4\varphi^3 \bar v\, \nabla\varphi  \cdot A\nabla u
+\int \varphi^4 \, \nabla \bar v  \cdot A\nabla u
\end{align*}
and so for any $c_2>0$ we have that
\begin{align*}
\re \int\varphi^4 \,\bar v\,L_A u
&\leq
\frac{\lambda}{2\Lambda^2}\int \varphi^4 \abs{v}^2 
+C\int \varphi^2 \abs{\nabla \varphi}^2\abs{\nabla u}^2
\\&\qquad
+\frac{r^2}{2c_2}\int \varphi^6\abs{\nabla v}^2
+\frac{Cc_2}{r^2}\int \varphi^2\abs{\nabla u}^2
\\&\leq
\frac{1}{2}\re\int \varphi^4 \bar v L_A u 
+\frac{r^2}{2c_2}\int \varphi^6\abs{\nabla v}^2
+\frac{C(1+c_2)}{r^2}\int \varphi^2\abs{\nabla u}^2
.\end{align*}
This implies that
\begin{align*}
\re\int\varphi^4 \,\bar v\,L_A u
&\leq
\frac{C(1+c_2)}{r^2}\int \varphi^2\abs{\nabla u}^2
+\frac{r^2}{c_2}\int \varphi^6\abs{\nabla v}^2.
\end{align*}

But $\Div B^*\nabla v=0$ in the weak sense, so 
\begin{equation*}0=\int \nabla(\varphi^6 \bar v)\cdot B^*\nabla v.\end{equation*}
As in the proof of the second-order Caccioppoli inequality, and by \eqref{eqn:vLAu},
\begin{equation*}\int \varphi^6\abs{\nabla v}^2
\leq C\int \varphi^4\abs{\nabla\varphi}^2\abs{v}^2
\leq C\re\int \varphi^4\abs{\nabla\varphi}^2\bar v\, L_A u
.\end{equation*}
So
\begin{align*}
\re\int\varphi^4 \,\bar v\,L_A u
&\leq
\frac{C(1+c_2)}{r^2}\int \varphi^2 \abs{\nabla u}^2
+\frac{C}{c_2}\re\int \varphi^4 \bar v L_A u
.\end{align*}
Choosing $c_2$ large enough, we see that
\begin{align*}
\int\varphi^4 \abs{L_A u}^2
&\leq \frac{1}{\lambda}\re\int\varphi^4 \,\bar v\,L_A u
\leq
\frac{C}{r^2}\int \varphi^2 \abs{\nabla u}^2
\end{align*}
and so
\begin{align*}
\int \varphi^2 \abs{\nabla u}^2
&\leq
\frac{C(c_1+1)}{2 r^2} \int_{B(X,2r)} \abs{u}^2
+\frac{C}{c_1} \int \varphi^2 \abs{\nabla u}^2
.\end{align*}
Choosing $c_1$ large enough lets us conclude that
\begin{equation*}
\int\varphi^4 \abs{L_A u}^2
\leq
\frac{C}{r^2}\int \varphi^2 \abs{\nabla u}^2
\leq\frac{C}{r^4}\int_{B(X,2r)} \abs{u}^2
\end{equation*}
and so
\begin{align*}
\int_{B(X,r)}\abs{L_A u}^2
+
\frac{1}{r^2}\int_{B(X,r)} \abs{\nabla u}^2
\leq\frac{C}{r^4}\int_{B(X,2r)} \abs{u}^2
\end{align*}
as desired.
\end{proof}

We now prove H\"older continuity of solutions under the assumption that $A$, $A^*$ and $B^*$ satisfy the De Giorgi-Nash-Moser condition. We begin with the following De Giorgi-Nash estimate for solutions to \emph{inhomogeneous} second-order problems.
This estimate is well-known in the case of real coefficients; see, for example, \cite[Theorem~8.24]{GilT01}. Given the fundamental solution of \cite{HofK07} it is straightforward to generalize to complex coefficients.

\begin{thm}
	\label{thm:DGNMinhomogeneous}
	Suppose that $A$, $A^*$ satisfy the De Giorgi-Nash-Moser condition. Let $(n+1)/2<p\leq \infty$ and let $\beta=\min(\alpha,2-(n+1)/p)$.
	
	Then if $\Div A\nabla u=f$ in $B(X_0,2r)$ for some $f\in L^p(B(X_0,2r))$, then
	\begin{align*}
	\abs{u(X)-u(X')}
	&\leq 
	C\frac{\abs{X-X'}^\beta}{r^\beta} r^{2-(n+1)/p}\doublebar{f}_{L^p(B(X_0,2r))}
	\\&\qquad
	+C\frac{\abs{X-X'}^\alpha}{r^\alpha}\left(\fint_{B(X_0,2r)} \abs{u}^2\right)^{1/2}
	\end{align*}
	for all $X$, $X'\in B(X_0,r)$.
\end{thm}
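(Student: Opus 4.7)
The plan is to split off the inhomogeneity via convolution with the fundamental solution and then apply the De Giorgi-Nash-Moser estimate to the homogeneous remainder. Explicitly, set
\[
  w(X) = \int_{B(X_0,2r)} \Gamma^A_Y(X)\, f(Y)\,dY.
\]
Since $f\,\1_{B(X_0,2r)}$ is compactly supported and lies in $L^p_{loc}(\RR^{n+1})\cap L^{2(n+1)/(n+3)}(\RR^{n+1})$, \thmref{fundsoln:properties} gives that $w$ is continuous on $\RR^{n+1}$ and satisfies $-\Div A\nabla w = f\,\1_{B(X_0,2r)}$ weakly. Hence $v := u+w$ solves $\Div A\nabla v=0$ in $B(X_0,2r)$, and the De Giorgi-Nash-Moser hypothesis on $A$ yields
\[
  \abs{v(X)-v(X')}\le C\Bigl(\frac{\abs{X-X'}}{r}\Bigr)^{\!\alpha} \Bigl(\fint_{B(X_0,2r)}\abs{v}^2\Bigr)^{\!1/2}
\]
for all $X, X'\in B(X_0,r)$.

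Using the pointwise estimate \eqref{eqn:fundsolnsize} together with H\"older's inequality and a polar-coordinate computation, I would first establish the bound $\abs{w(X)}\le Cr^{2-(n+1)/p}\doublebar{f}_{L^p(B(X_0,2r))}$ on $B(X_0,r)$; the assumption $p>(n+1)/2$ is exactly what makes $\int \abs{X-Y}^{-(n-1)p'}\,dY$ converge near the pole. Feeding $(\fint|v|^2)^{1/2}\le (\fint|u|^2)^{1/2}+(\fint|w|^2)^{1/2}$ into the $v$-estimate, and absorbing the factor $(\abs{X-X'}/r)^{\alpha-\beta}\le 1$ (which holds since $\beta\le\alpha$ and $\abs{X-X'}\le r$), produces the two right-hand terms of the theorem provided that in addition we establish $\abs{w(X)-w(X')}\le C\delta^{\beta}r^{2-(n+1)/p-\beta}\doublebar{f}_{L^p}$ with $\delta := \abs{X-X'}$.

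To bound $\abs{w(X)-w(X')}$, I would split the defining integral at $\abs{X-Y}=2\delta$. On the near region $B(X,2\delta)\cap B(X_0,2r)$, the pointwise bound on $\Gamma^A_Y(X)$ and the analogous bound on $\Gamma^A_Y(X')$ (after replacing $B(X,2\delta)$ by $B(X',3\delta)$) yield a contribution of size $C\delta^{2-(n+1)/p}\doublebar{f}_{L^p}$, by the same polar-coordinate computation as for the $L^\infty$ bound on $w$. On the complementary annulus the H\"older estimate \eqref{eqn:fundsolnholder:k} for $k=0$ applies since $\abs{X-X'}\le \tfrac12\abs{X-Y}$, and H\"older's inequality reduces matters to the tail integral $\int_{2\delta}^{Cr}\rho^{n-(n-1+\alpha)p'}\,d\rho$.

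The principal technical hurdle is the case analysis at this final step: depending on the sign of $n-(n-1+\alpha)p'+1$, the tail is dominated either by its lower endpoint (giving $C\delta^{2-(n+1)/p}\doublebar{f}_{L^p}$) or by its upper endpoint $r$ (giving $C\delta^{\alpha}r^{2-(n+1)/p-\alpha}\doublebar{f}_{L^p}$); one must verify that in each regime of $p$ the natural bound, after using $\delta\le 2r$, can be rewritten in the unified form $C\delta^{\beta}r^{2-(n+1)/p-\beta}\doublebar{f}_{L^p}$ with $\beta=\min(\alpha,2-(n+1)/p)$. Once this is done, adding the $v$- and $w$-contributions via the triangle inequality completes the proof.
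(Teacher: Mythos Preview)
Your proposal is correct and follows essentially the same route as the paper: define the Newton-type potential via the fundamental solution, subtract it to obtain a homogeneous solution, apply the De Giorgi--Nash--Moser estimate to that, and control the potential pointwise (via \eqref{eqn:fundsolnsize}) and its oscillation by splitting near/far at scale $\abs{X-X'}$ (via \eqref{eqn:fundsolnsize} and \eqref{eqn:fundsolnholder:k}). Your variable names $v,w$ are swapped relative to the paper's, and your sign convention $v=u+w$ is in fact the correct one given that $-\Div A\nabla w=f$; the paper writes $w=u-v$, which is a harmless slip.
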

\begin{proof} 
	Let 
	\begin{equation*}v(X)=\int_{B(X_0,2r)} \Gamma_Y^A(X)\,f(Y)\,dY.\end{equation*}
	Let $w(X)=u(X)-v(X)$. By \thmref{fundsoln:properties}, $\Div A\nabla w=0$ in $B(X_0,2r)$ and so because $A$ satisfies the De Giorgi-Nash-Moser condition,
	\begin{align*}
	\abs{w(X)-w(X')}&\leq C \frac{\abs{X-X'}^\alpha}{r^\alpha}
	\left(\fint_{B(X_0,2r)} \abs{w}^2\right)^{1/2}
	\\&\leq C \frac{\abs{X-X'}^\alpha}{r^\alpha}
	\left(\fint_{B(X_0,2r)} \abs{u}^2+\abs{v}^2\right)^{1/2}
	.\end{align*}

	By \eqref{eqn:fundsolnsize}, if $X\in\RR^n$ then
	\begin{align*}
	\abs{v(X)}&\leq \int_{B(X_0,2r)} \abs{\Gamma_Y^A(X)}\,\abs{f(Y)}\,dY
	\leq C r^{2-(n+1)/p} \doublebar{f}_{L^p(B(X_0,2r)},
	\end{align*}
	and by applying \eqref{eqn:fundsolnsize} in $B(X_0,2r)\cap B(X,2\abs{X-X'})$ and \eqref{eqn:fundsolnholder:k} in $B(X_0,2r)\setminus B(X,2\abs{X-X'})$, we see that if $X$, $X'\in\RR^{n+1}$ then
	\begin{align*}
	\abs{v(X)-v(X')}
	&\leq
	C \left(\frac{\abs{X-X'}^\alpha}{r^\alpha}r^{2-(n+1)/p}+
	\abs{X-X'}^{2-(n+1)/p}
	\right)\doublebar{f}_{L^p(B(X_0,2r))}
	.\end{align*}
	Summing these two bounds completes the proof.
\end{proof}

The following corollary follows immediately from Theorems~\ref{thm:Caccioppoli} and~\ref{thm:DGNMinhomogeneous}.
\begin{cor}\label{cor:4holder}
	Suppose that $L_B^* (a\,L_A u)=0$ in $B(X_0,2r)$ in the sense of \dfnref{weaksoln}, where $a$, $A$, $B$ are as in \thmref{Caccioppoli}, and where $A$, $A^*$ and $B^*$ satisfy the De Giorgi-Nash-Moser condition. Then
	\begin{equation}\label{eqn:4holder}
	\abs{u(X)-u(X')}
	\leq C\frac{\abs{X-X'}^\alpha}{r^\alpha} \left(\fint_{B(X_0,2r)} \abs{u}^2\right)^{1/2}\end{equation}
	provided $X$, $X'\in B(X_0,r)$.
\end{cor}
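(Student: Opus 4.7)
The plan is to reduce the statement to an application of the inhomogeneous De Giorgi-Nash-Moser estimate (\thmref{DGNMinhomogeneous}) to $u$, viewed as a solution of the inhomogeneous second-order equation $\Div A\nabla u=-L_A u$. The main preparatory task is to upgrade $L_A u$, which a priori lies in $L^2_{loc}$ via Caccioppoli, to $L^\infty_{loc}$; this upgrade is exactly what the De Giorgi-Nash-Moser hypothesis on $B^*$ supplies.

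Set $v:=a\,L_A u$. By \dfnref{weaksoln}, $v\in W^2_{1,loc}(B(X_0,2r))$ with $\Div B^*\nabla v=0$ weakly. Because $B^*$ satisfies the De Giorgi-Nash-Moser condition, Moser's local boundedness~\eqref{eqn:localbound} applies to~$v$. For any $Y\in B(X_0,r)$ we have $B(Y,r)\subset B(X_0,2r)$, so \thmref{Caccioppoli} applied on $B(Y,r/2)$ gives
\begin{equation*}
\int_{B(Y,r/4)}\abs{L_A u}^2\leq \frac{C}{r^4}\int_{B(X_0,2r)}\abs{u}^2.
\end{equation*}
Using $\abs{L_A u}\leq \lambda^{-1}\abs{v}$ from accretivity and Moser's estimate on~$v$, this yields
\begin{equation*}
\doublebar{L_A u}_{L^\infty(B(Y,r/8))}\leq \frac{C}{r^2}\biggl(\fint_{B(X_0,2r)}\abs{u}^2\biggr)^{1/2}=:\frac{CM}{r^2}.
\end{equation*}

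With this pointwise control on the inhomogeneity, I apply \thmref{DGNMinhomogeneous} to $u$ with $f=-L_A u$, $p=\infty$, and $\beta=\min(\alpha,2)=\alpha$, on each ball $B(Y,r/4)$ with $Y\in B(X_0,r)$. The inhomogeneous term on the right-hand side contributes at most $C(\abs{X-X'}/r)^\alpha\cdot r^2\cdot(CM/r^2) = C\abs{X-X'}^\alpha r^{-\alpha} M$, matching the homogeneous term, so I obtain \eqref{eqn:4holder} for all $X$, $X'$ lying in a common sub-ball $B(Y,r/8)$. To extend to arbitrary $X$, $X'\in B(X_0,r)$, I split into the cases $\abs{X-X'}\leq r/16$ (take $Y=X$) and $\abs{X-X'}>r/16$ (the bound is then equivalent to $\doublebar{u}_{L^\infty(B(X_0,r))}\leq CM$, which follows from the small-scale oscillation estimate combined with the $L^2$ average of~$u$).

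The one genuinely nontrivial ingredient is the combination of the fourth-order Caccioppoli inequality with Moser's estimate applied to~$v$; once $L_A u\in L^\infty_{loc}$ has been established with the correct scaling, everything else is a routine invocation of \thmref{DGNMinhomogeneous} and a standard chaining argument.
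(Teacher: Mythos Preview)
Your proof is correct and follows precisely the route the paper intends: the paper simply says the corollary ``follows immediately from Theorems~\ref{thm:Caccioppoli} and~\ref{thm:DGNMinhomogeneous},'' and your argument makes explicit the one step hidden in that phrase, namely that Moser's local boundedness for $v=a\,L_A u$ (available because $B^*$ satisfies De Giorgi--Nash--Moser) upgrades $L_A u$ from $L^2_{loc}$ to $L^\infty_{loc}$ so that \thmref{DGNMinhomogeneous} can be applied with $p=\infty$. There are minor radius mismatches (e.g.\ you control $\doublebar{L_A u}_{L^\infty}$ on $B(Y,r/8)$ but then invoke \thmref{DGNMinhomogeneous} on the larger ball $B(Y,r/4)$), but these are trivially fixed by shrinking constants, and your chaining argument for the case $\abs{X-X'}>r/16$ is fine.
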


We now prove the following pointwise estimate for solutions~$u$ in terms of their $L^1$ norms. (The bound \eqref{eqn:localbound} is essentially the same estimate in terms of the $L^2$ norm.) This estimate is known for solutions to second-order equations (see, for example, \cite[Theorem~4.1]{HanL97}) and may be proven for solutions to higher order equations using the same techniques.
\begin{cor}\label{cor:reverseholder} Suppose that $u$ is as in \crlref{4holder}. Then
\begin{equation*}
\sup_{B(X_0,r)}\abs{u}\leq C\fint_{B(X_0,2r)}\abs{u}.\end{equation*}
\end{cor}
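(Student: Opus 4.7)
The plan is to derive the $L^\infty$-$L^1$ bound from the pointwise $L^\infty$-$L^2$ bound \eqref{eqn:localbound} (which follows from the Hölder estimate \eqref{eqn:4holder} of \crlref{4holder}) via a standard self-improvement argument; the key ingredients are the scale invariance of $u\mapsto L_B^*(a L_A u)$ and a one-variable iteration lemma. Observe that the equation $L_B^*(a\,L_A u)=0$ is preserved under translation and under the rescaling $u(X)\mapsto u(X_0+\rho X)$ (since $a$, $A$, $B$ are merely bounded measurable), so \eqref{eqn:localbound} applied to $u$ on every ball $B(Y,\rho)\subset B(X_0,2r)$ yields
\begin{equation*}
\abs{u(Y)}\leq C\Bigl(\fint_{B(Y,\rho)} \abs{u}^2\Bigr)^{1/2}.
\end{equation*}

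\textbf{Step 1: Self-improvement inequality.} For $0<\rho_1<\rho_2\leq 2r$, fix $Y\in B(X_0,\rho_1)$ and apply the previous display with radius $(\rho_2-\rho_1)/2$. Writing $M(\rho)=\sup_{B(X_0,\rho)}\abs{u}$, the crude estimate $\abs{u}^2\leq M(\rho_2)\,\abs{u}$ on $B(Y,(\rho_2-\rho_1)/2)\subset B(X_0,\rho_2)$ gives
\begin{equation*}
\abs{u(Y)}^2\leq \frac{C\,M(\rho_2)}{(\rho_2-\rho_1)^{n+1}}\int_{B(X_0,2r)} \abs{u}.
\end{equation*}
Taking the supremum over $Y\in B(X_0,\rho_1)$ and applying the elementary inequality $\sqrt{AB}\leq A/2+B/2$ with $A=M(\rho_2)$ yields
\begin{equation*}
M(\rho_1)\leq \tfrac{1}{2} M(\rho_2) + \frac{C}{(\rho_2-\rho_1)^{n+1}}\int_{B(X_0,2r)} \abs{u}.
\end{equation*}

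\textbf{Step 2: Iteration.} This is exactly the hypothesis of the standard iteration lemma (see e.g.\ Giaquinta): if $M:[r,2r]\to[0,\infty)$ is bounded and satisfies $M(\rho_1)\leq \theta\,M(\rho_2)+K(\rho_2-\rho_1)^{-(n+1)}$ for all $r\leq \rho_1<\rho_2\leq 2r$ with a fixed $\theta<1$, then $M(r)\leq C_\theta\, K\, r^{-(n+1)}$. Applying this with $\theta=1/2$ and $K=C\int_{B(X_0,2r)}\abs{u}$ gives
\begin{equation*}
\sup_{B(X_0,r)}\abs{u}=M(r)\leq \frac{C}{r^{n+1}}\int_{B(X_0,2r)}\abs{u} = C\fint_{B(X_0,2r)}\abs{u},
\end{equation*}
which is the desired bound.

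There is no serious obstacle: boundedness of $M(\rho)$ on $[r,2r]$ needed to invoke the iteration lemma follows from \eqref{eqn:localbound}, and scale/translation invariance of the class of solutions is immediate from \dfnref{weaksoln} since the coefficients are only assumed bounded and measurable. The only mild subtlety is that the argument uses \crlref{4holder} only through the qualitative local boundedness \eqref{eqn:localbound}; the Hölder exponent plays no role in the conclusion, which is why the same proof works verbatim for second-order equations.
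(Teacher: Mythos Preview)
Your proof is correct and follows essentially the same approach as the paper's own argument: both obtain the self-improvement inequality $M(\rho_1)\le \tfrac12 M(\rho_2)+C(\rho_2-\rho_1)^{-(n+1)}\int_{B(X_0,2r)}|u|$ from the $L^\infty$--$L^2$ bound via $|u|^2\le M(\rho_2)|u|$ and Young's inequality, and then iterate. The only cosmetic difference is that you invoke the standard Giaquinta-type iteration lemma, while the paper writes out the iteration explicitly with radii $\rho_{k+1}=\rho_k+\tfrac12 r(1-\tau)\tau^k$ accumulating at $3r/2$; and your remark about scale invariance is not really needed since \crlref{4holder} already applies on every subball, but it does no harm.
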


\begin{proof}
Let $f(\rho)=\doublebar{u}_{L^\infty(B(X_0,\rho))}$ for $0<\rho<2r$. By \crlref{4holder}, if $0<\rho<\rho'<2r$ and if $X\in B(X_0,\rho)$, then
\begin{equation*}\abs{u(X)}
\leq C \left(\fint_{B(X,\rho'-\rho)} \abs{u}^2\right)^{1/2}
\leq \frac{C\doublebar{u}_{L^\infty(B(X_0,\rho'))}^{1/2}} {(\rho'-\rho)^{(n+1)/2}} 
\left(\int_{B(X_0,2r)} \abs{u}\right)^{1/2}\end{equation*}
and so
\begin{align*}
f(\rho)
&\leq 
	\frac{1}{2} f(\rho')+
	\frac{C}{(\rho'-\rho)^{n+1}} 
	\int_{B(X_0,2r)} \abs{u}.
\end{align*}
We eliminate the $f(\rho')$ as follows.
Let $\rho_0=r$ and let $\rho_{k+1} = \rho_k + \frac{1}{2}r(1-\tau) \tau^k$, where ${1/2}<\tau^{n+1}<1$. Then by induction
\begin{equation*}\sup_{B(X_0,r)}\abs{u}
=f(\rho_0)
\leq \frac{1}{2^k} f(\rho_k)
+\sum_{j=0}^{k-1} \frac{C}{(1-\tau)^{n+1}(2\tau^{n+1})^j} 
\fint_{B(X_0,2r)} \abs{u}
.\end{equation*}
Observe that $\lim_{k\to \infty} \rho_k=\frac32 r$ and so $f(\rho_k)$ is bounded uniformly in~$k$. Thus, we may take the limit as $k\to\infty$; this completes the proof.
\end{proof}

We conclude this section with the higher order analogue of \eqref{eqn:2slabs} and some similar results from \cite{AlfAAHK11}, that is, with Caccioppoli-type inequalities valid in horizontal slices.

\begin{lem}\label{lem:slabs} Suppose that $u$, $\partial_{n+1} u$, and $\partial_{n+1}^2 u$ satisfy the Caccioppoli inequality in $\RR^{n+1}_+$, that is, that whenever $B(X,2r)\subset \RR^{n+1}_+$ we have that
\begin{equation*}
\fint_{B(X,r)} \abs{\nabla \partial_{n+1}^k u}^2
\leq \frac{C}{r^2}\fint_{B(X,2r)} \abs{\partial_{n+1}^k u}^2
\qquad\text{for $k=0$, $1$, $2$.}
\end{equation*}

If $\widetilde N_+(\nabla u)\in L^2(\RR^n)$ then 
\begin{equation}
\label{eqn:slabsL2}
\sup_{t>0}\doublebar{\nabla u(\,\cdot\,,t)}_{L^2(\RR^n)} \leq C\doublebar{\widetilde N_+(\nabla u)}_{L^2(\RR^n)}.
\end{equation}
If $t>0$ then
\begin{equation}
\label{eqn:slabsL2Caccioppoli}
\doublebar{\nabla u(\,\cdot\,,t)}_{L^2(\RR^n)} \leq \frac{C}{t}\fint_{t/2}^{2t} \doublebar{u(\,\cdot\,,s)}^2_{L^2(\RR^n)}\,ds
\end{equation}
provided the right-hand side is finite.

Finally, if $0<s<t<2s$, then
\begin{equation}
\label{eqn:slabscts}
\doublebar{\nabla u(\,\cdot\,,t)-\nabla u(\,\cdot\,,s)}_{L^2(\RR^n)}
\leq C\frac{t-s}{s} \biggl(\fint_{s/2}^{3s}
\doublebar{ \nabla u(\,\cdot\,,r)}_{L^2(\RR^n)}^2 \,dr\biggr)^{1/2}
\end{equation}
provided the right-hand side is finite.
\end{lem}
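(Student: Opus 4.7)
The plan is to derive all three estimates from a single auxiliary reverse-Poincar\'e-type inequality for $g(r) := \doublebar{\nabla u(\,\cdot\,,r)}_{L^2(\RR^n)}^2$:
\begin{equation*}
g(t) \leq C\fint_{t/2}^{3t/2} g(r)\,dr.
\end{equation*}
To prove this, I would write $\nabla u(\,\cdot\,,t) - \nabla u(\,\cdot\,,r) = \int_r^t \nabla\partial_\sigma u(\,\cdot\,,\sigma)\,d\sigma$; Minkowski's inequality in $L^2(\RR^n)$ followed by Cauchy--Schwarz in $\sigma$ yields
\begin{equation*}
\doublebar{\nabla u(\,\cdot\,,t) - \nabla u(\,\cdot\,,r)}_{L^2(\RR^n)}^2
\leq |t-r|\int_{\min(r,t)}^{\max(r,t)} \doublebar{\nabla\partial_\sigma u(\,\cdot\,,\sigma)}_{L^2(\RR^n)}^2\,d\sigma.
\end{equation*}
The integral on the right is controlled via the $k=1$ Caccioppoli hypothesis applied at each $(x,\sigma)$ on balls of radius $\sim t$, integrated in $x \in \RR^n$ and then passed through Fubini to convert each ball average into a slab average; the result is $\int_{t/2}^{3t/2}\doublebar{\nabla\partial_\sigma u}_{L^2}^2\,d\sigma \leq (C/t^2)\int_J g(\sigma)\,d\sigma$ on some slightly enlarged interval $J$ comparable to $(t/2, 3t/2)$. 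Inserting this into $g(t) \leq 2g(r) + 2\doublebar{\nabla u(\,\cdot\,,t) - \nabla u(\,\cdot\,,r)}_{L^2}^2$ and averaging over $r \in (t/2, 3t/2)$ lets the error term be absorbed into the left-hand side, giving the key inequality.

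The first claim \eqref{eqn:slabsL2} now follows because for $r \in (t/2, 3t/2)$ and $y \in \RR^n$, each $x$ with $|x-y| < ar$ has $(y,r) \in \gamma_+(x)$, so $\fint_{B((y,r),r/2)}|\nabla u|^2 \leq \widetilde N_+(\nabla u)(x)^2$; integrating in $y$ and applying Fubini symmetrically in $(x,y)$ produces $\fint_{t/2}^{3t/2} g(r)\,dr \leq C\doublebar{\widetilde N_+(\nabla u)}_{L^2(\RR^n)}^2$, so the key inequality yields \eqref{eqn:slabsL2}. For \eqref{eqn:slabsL2Caccioppoli}, I would instead apply the $k=0$ Caccioppoli hypothesis to $u$ in the same Fubini-style manner to bound $\fint_{t/2}^{3t/2} g(r)\,dr \leq (C/t^2)\fint_{t/2}^{2t}\doublebar{u(\,\cdot\,,s)}_{L^2}^2\,ds$, and combine with the key inequality (interpreting the stated inequality as the square-root of this).

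For \eqref{eqn:slabscts} the strategy is to replay the proof of the key inequality with $v := \partial_{n+1} u$ in place of $u$, using the $k=1$ and $k=2$ Caccioppoli hypotheses to conclude $h(r) \leq C\fint_{r-cs}^{r+cs} h(\sigma)\,d\sigma$ for $h(\sigma) := \doublebar{\nabla\partial_\sigma u(\,\cdot\,,\sigma)}_{L^2}^2$; a further Fubini--Caccioppoli step using the $k=1$ hypothesis on $\partial_{n+1} u$ then bounds the right side by $(C/s^2)\fint_{s/2}^{3s}g(\sigma)\,d\sigma$ uniformly in $r \in [s,t]$. Since Minkowski's inequality gives $\doublebar{\nabla u(\,\cdot\,,t) - \nabla u(\,\cdot\,,s)}_{L^2(\RR^n)} \leq \int_s^t h(r)^{1/2}\,dr$, integrating the resulting pointwise bound on $h(r)^{1/2}$ over $r \in [s,t]$ directly produces \eqref{eqn:slabscts}. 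The main technical obstacle throughout is bookkeeping: each Fubini conversion from ball averages to slab averages enlarges the integration interval by a fixed multiplicative factor, so the auxiliary radii must be chosen small enough that all enlarged intervals remain within the regime where $r$ is comparable to $t$ (or $s$); this is the reason the hypotheses are stated separately for $k=0,1,2$ rather than appearing as equations on $u$ alone.
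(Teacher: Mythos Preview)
Your proposal is correct and matches the paper's argument: both prove the slab inequality $g(t)\le C\fint g$ (the paper does this locally on cubes, obtaining $\int_Q|\nabla u(\cdot,t)|^2\le \frac{C}{\ell(Q)}\int_{(3/2)Q}\int|\nabla u|^2$, while you do it globally) by writing vertical differences of $\nabla u$ as $\int \nabla\partial_r u\,dr$ and passing through the Caccioppoli hypotheses via Fubini, and then derive the three conclusions in the same way. One minor note: after averaging in $r$ there is nothing to ``absorb''---the error term is already of the desired form $C\fint_J g$---and your parenthetical about the square in \eqref{eqn:slabsL2Caccioppoli} is apt, since the paper's own proof tacitly uses the squared version.
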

	
	\begin{proof}	
	First, we have that
	\begin{align*} 
	\int_Q \abs{\nabla u(x,t)}^2\,dx
	&\leq
	C\int_Q \biggabs{\nabla u(x,t)-\fint_t^{t+l(Q)/4} \nabla u(x,s)\,ds}^2\,dx
	\\&\qquad+
	\frac{C}{l(Q)}\int_Q \int_t^{t+l(Q)/4} \abs{\nabla u(x,s)}^2\,ds\,dx
	.\end{align*}
	But we may bound $\abs{\nabla u(x,t)-\nabla u(x,s)}$ by $\int \abs{\nabla\partial_r u(x,r)\,dr}$. Applying the Caccioppoli inequality to $\nabla \partial_r u(x,r)$ yields that
	\begin{equation}
	\label{eqn:slabsgradient}
	\int_Q \abs{\nabla u(x,t)}^2\,dx
	\leq 
	\frac{C}{l(Q)} \int_{(3/2)Q} \int_{t-l(Q)/3}^{t+l(Q)/3} \abs{\nabla u(x,t)}^2\,dt\,dx.
	\end{equation}	
	Applying the Caccioppoli inequality to $\nabla u$, this yields that
	\begin{equation}
	\label{eqn:slabsCaccioppoli}
	\int_Q \abs{\nabla u(x,t)}^2\,dx
	\leq 
	\frac{C}{l(Q)^3} \int_{2Q} \int_{t-l(Q)/2}^{t+l(Q)/2} \abs{u(x,t)}^2\,dt\,dx.
	\end{equation}
	Note that \eqref{eqn:slabsCaccioppoli} is valid with $u$ replaced by~$\partial_{n+1} u$.
	
	Dividing $\RR^{n}$ into cubes of side-length $t/2$, we have that \eqref{eqn:slabsL2Caccioppoli} follows from \eqref{eqn:slabsCaccioppoli}.
	Suppose that $t>0$. By definition of $\widetilde N_+$, 
	\begin{equation*}\int_{\RR^n}\int_{t/2}^{2t} \abs{\nabla u(x,s)}^2\,dx\,ds \leq C t \int_{\RR^n} \widetilde N_+(\nabla u)(x)^2\,dx\end{equation*}
	and applying \eqref{eqn:slabsgradient} in cubes of side-length $t$ completes the proof of \eqref{eqn:slabsL2}. 
	
	Finally, let $0<s<t<2s$ and let $Q$ be a cube of side-length~$t-s$. Then
	\begin{align*}
	\int_{Q} \abs{\nabla u(x,t)-\nabla u(x,s)}^2\,dx
	&=
	\int_{Q} \biggabs{\int_s^t\nabla \partial_r u(x,r)\,dr}^2\,dx
	\\&\leq
	(t-s)\int_{Q} \int_s^t\abs{\nabla \partial_r u(x,r)}^2\,dr\,dx.
	\end{align*}
	We have that $0<t-s<s$. 
	Applying \eqref{eqn:slabsCaccioppoli} yields that
	\begin{align*}
	\int_{\RR^n} \abs{\nabla u(x,t)-\nabla u(x,s)}^2\,dx
	&\leq
	C\frac{(t-s)^2}{s^3}\int_{\RR^n} \int_{s/2}^{3s}\abs{\partial_r u(x,r)}^2\,dr\,dx
	\end{align*}
	and so \eqref{eqn:slabscts} is valid.
	\end{proof}


\section{The potentials $\E h$ and $\F h$}
\label{sec:potentials}
We will construct solutions to the fourth-order Dirichlet problem \eqref{eqn:Dirichletprob} using layer potentials. Specifically, our solution $u$ will be given by $u=-\D_A f - \s_A g + \E_{B,a,A} h$ for appropriate functions $f$, $g$ and~$h$. The behavior of the second-order potentials $\D_A f$ and $\s_A g$ is by now well understood (see \secref{secondorder} or the extensive literature on the subject). It remains to investigate $\E=\E_{B,a,A}$. Observe that by the definition \eqref{eqn:E} of~$\E_{B,a,A}$, when convenient we may instead investigate the potential $\F=\F_{B,a,A}$. 

In this section, we will show that $\F h$ (and thus $\E h$) are well-defined in $\RR^{n+1}_+$ for appropriate~$h$ and will establish a few useful preliminary bounds on $\F h$. We will also investigate the behavior of $\E$ across the boundary; that is, we will prove analogues to the jump relations \eqref{eqn:Sjump} and~\eqref{eqn:Scts}. In Sections~\ref{sec:square} and~\ref{sec:nontangential}, we will establish somewhat more delicate bounds on $\F$ (and~$\E$); specifically, our goal in these three sections is to show that $\doublebar{\widetilde N_\pm(\nabla \E h)}_{L^2(\RR^n)}\leq C \doublebar{h}_{L^2(\RR^n)}$. 

In \secref{final}, we will show that the map $h\mapsto \partial_{n+1} \E h\vert_{\partial\RR^{n+1}_\pm}$ is invertible $L^2(\RR^n)\mapsto L^2(\RR^n)$. 
We will need the assumptions that $a$ is real-valued and $A=B$, or that $\doublebar{\im a}_{L^\infty}$ and $\doublebar{A-B}_{L^\infty}$ are small, only in \secref{final}; the bounds of Sections~\ref{sec:potentials}, \ref{sec:square} and~\ref{sec:nontangential} require only that $a$ be accretive and that $A$ and $B^*$ satisfy the single layer potential requirements of \dfnref{goodlayer}. We will conclude this paper by using these boundedness and invertibility results to prove existence of solutions to the fourth-order Dirichlet problem.


We begin by establishing conditions under which $\F h$ exists. Like the single layer potential $\s_A g$, in dimensions $n+1\geq 4$, the integral in the definition of $\F h(x,t)$ converges absolutely whenever $t\neq 0$ and $h\in L^2(\RR^n)$; in dimension $n+1=3$, $\F h$ is only well-defined up to an additive constant if $h\in L^2(\RR^2)$.

More precisely, we have the following.

\begin{lem} \label{lem:Fexists}
Suppose that $a$, $A$ and~$B$ are $t$-independent, $a$ is accretive, $A$ and $A^*$ satisfy the De Giorgi-Nash-Moser condition, and $\s_{B^*}$ satisfies the square-function estimate \eqref{eqn:NSt}.

If $h\in L^p(\RR^n)$ for some $1<p<n$, and if $t\neq 0$, then the integral in the definition \eqref{eqn:F} of $\F h(x,t)$ converges absolutely. Furthermore, 
\begin{align}
\label{eqn:Fdecay}
\abs{\F h(x,t)}
&\leq
C \abs{t}^{1-n/p}\doublebar{h}_{L^p(\RR^n)}.
\end{align}

If $h\in L^p(\RR^n)$ for some $n\leq p<n/(1-\alpha)$, then $\F h$ is well-defined in $\RR^{n+1}_+$ and $\RR^{n+1}_-$ up to an additive constant; that is, if we write
\begin{multline*}
\F h(x,t)-\F h(x',t')=\\
\int_{\RR^n}\int_0^\infty
\bigl(\partial_s \Gamma_{(y,s)}^A(x',t') - \partial_s \Gamma_{(y,s)}^A(x,t)\bigr) \,\frac{1}{a(y)}
\,\partial_s\s_{B^*} h(y,s)\,ds\,dy
\end{multline*}
then the right-hand integral converges absolutely. If $(x',t')\in B((x,t),\abs{t}/4)$, or if $(x',t')=(x,-t)$, then
\begin{align}
\label{eqn:Fdiffdecay}
\abs{\F h(x,t)-\F h(x',t')}
&\leq
C(p) \abs{t}^{1-n/p}\doublebar{h}_{L^p(\RR^n)}.
\end{align}
\end{lem}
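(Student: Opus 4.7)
The plan is to reduce both conclusions to a single pointwise estimate on a composition kernel, and then recover the stated decay by Hölder's inequality in $z$. Differentiating the definition of $\s_{B^*}h$ under the integral and using \eqref{eqn:fundsoln:symm} gives
\[
\partial_s \s_{B^*}h(y,s) = \int_{\RR^n} h(z)\,\partial_s \Gamma^{B^{*T}}_{(y,s)}(z,0)\,dz,
\]
so the size bound \eqref{eqn:fundsolnsize:k}, applied once to this kernel and once to $\partial_s\Gamma^A_{(y,s)}(x,t)$ in \eqref{eqn:F}, yields
\[
|\F h(x,t)| \leq C\int_{\RR^n} |h(z)|\,K(X,Z)\,dz, \qquad X=(x,t),\;Z=(z,0),
\]
where
\[
K(X,Z) := \int_{\RR^n}\int_0^\infty \frac{dy\,ds}{((t-s)^2+|x-y|^2)^{n/2}\,(s+|y-z|)^n}.
\]
The key claim, which drives everything, is $K(X,Z) \leq C|X-Z|^{1-n}$.

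To prove this estimate I would split the $(y,s)$-integration into three pieces: the ball $B(X,|X-Z|/4)$, the ball $B(Z,|X-Z|/4)$, and their common exterior. In each of the two balls, one denominator is comparable to $|X-Z|^n$ while the other is a power $|W|^{-n}$ with $W$ the vector to the center, which is integrable over the ball in $\RR^{n+1}$ (since $n<n+1$) and contributes $|X-Z|\cdot |X-Z|^{-n}=|X-Z|^{1-n}$. In the exterior both denominators are comparable to $|W|^n$, and $\int_{|W|\gtrsim |X-Z|}|W|^{-2n}\,dW\leq C|X-Z|^{1-n}$ because $2n>n+1$ for $n\geq 2$. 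Given this bound, Hölder's inequality in $z$ produces
\[
|\F h(x,t)| \leq C\doublebar{h}_{L^p(\RR^n)}\biggl(\int_{\RR^n}|X-Z|^{-(n-1)p'}\,dz\biggr)^{1/p'},
\]
and the inner integral converges exactly when $(n-1)p'>n$, i.e.\ $1<p<n$, with value $C|t|^{n-(n-1)p'}$; the $p'$-th root equals $C|t|^{1-n/p}$, which is \eqref{eqn:Fdecay}. The kernel estimate also justifies the Fubini step by absolute convergence.

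For the range $n\leq p<n/(1-\alpha)$ the kernel $|X-Z|^{1-n}$ is no longer integrable to the $p'$-th power, but the \emph{difference} $\F h(X)-\F h(X')$ is. In the far region $|(y,s)-X|\geq 2|X-X'|$ I would replace \eqref{eqn:fundsolnsize:k} by the Hölder bound \eqref{eqn:fundsolnholder:k}, gaining a factor $|X-X'|^\alpha\,|(y,s)-X|^{-\alpha}$ and producing, by the same three-region calculation (now with exponent $n+\alpha$ in place of $n$), a kernel bounded by $C|X-X'|^\alpha|X-Z|^{1-n-\alpha}$. Hölder's inequality then requires only $(n-1+\alpha)p'>n$, equivalent to $p<n/(1-\alpha)$, and yields $C|X-X'|^\alpha\doublebar{h}_{L^p}|t|^{1-n/p-\alpha}\leq C\doublebar{h}_{L^p}|t|^{1-n/p}$ once we observe $|X-X'|\leq C|t|$ under either hypothesis on $(x',t')$. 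The near region $|(y,s)-X|<2|X-X'|$ must be handled by the triangle inequality applied to each of $\partial_s\Gamma^A_{(y,s)}(X)$ and $\partial_s\Gamma^A_{(y,s)}(X')$ separately; the two hypotheses are tailored precisely so that this region is small and well-located—in the ball case $s\gtrsim|t|$ throughout, while in the reflection case $(x',t')=(x,-t)$ we automatically have $|(y,s)-X'|\geq t$, so that $|\partial_s\Gamma^A_{(y,s)}(X')|\leq C/t^n$ on the near region.

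I expect the main obstacle to be the uniform kernel estimate $K(X,Z)\lesssim |X-Z|^{1-n}$: the correct degree of homogeneity is forced by dilation invariance, but confirming uniformity of the constant across the three regions---particularly in the ``intermediate'' regime where $(y,s)$ sits at comparable distances from both poles and close to the boundary $\RR^n$---is the technical heart of the proof. Once $K$ is controlled, both parts of the lemma reduce to the two Hölder-and-dilation computations described above.
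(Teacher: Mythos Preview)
Your argument is correct but takes a genuinely different route from the paper's. The paper never unfolds $\partial_s\s_{B^*}h$ as an integral in~$z$; instead it invokes the stated hypothesis \eqref{eqn:NSt} directly, bounding $|\partial_s\s_{B^*}h(y,s)|$ by $N_+(\partial_{n+1}\s_{B^*}h)(y)$ and then integrating the single kernel $|\partial_s\Gamma^A_{(y,s)}(x,t)|\lesssim |(x,t)-(y,s)|^{-n}$ against this maximal function over $\RR^{n+1}_+$. A split into $B=B((x,t),|t|/2)$ and its complement, together with H\"older in~$y$, gives \eqref{eqn:Fdecay} immediately; for \eqref{eqn:Fdiffdecay} the paper uses the same maximal-function bound and simply replaces the size estimate on $\partial_s\Gamma^A$ by the H\"older estimate \eqref{eqn:fundsolnholder:k} on the complement of~$B$.

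Your composite-kernel approach is more elementary in that it bypasses the Calder\'on--Zygmund theory underlying \eqref{eqn:NSt}, relying only on the pointwise bounds \eqref{eqn:fundsolnsize:k} and \eqref{eqn:fundsolnholder:k}; this is a real gain. On the other hand, it implicitly requires $B$ and $B^*$ to satisfy the De Giorgi--Nash--Moser condition (so that \eqref{eqn:fundsolnsize:k} applies to $\Gamma^{\overline B}$), which the lemma does not state explicitly---though the hypothesis \eqref{eqn:NSt} presupposes it. Your three-region estimate for $K(X,Z)$ is routine and not the obstacle you anticipate; the place where your sketch is thinnest is the near-region bookkeeping in the reflection case, particularly the $X$-term when $(y,s)$ can approach the boundary. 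There one checks that the near-region contribution to the composite kernel is $\lesssim t\,(t+|x-z|)^{-n}$, which integrates against $h\in L^p$ for all $1<p<\infty$. The paper's route avoids this case analysis entirely because the maximal function absorbs the $s$-behaviour of $\partial_s\s_{B^*}h$.
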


\begin{proof}
Recall that
\begin{equation*}
\F h(x,t) 
= -\int_{\RR^n}\int_0^\infty
\partial_s \Gamma_{(y,s)}^A(x,t) \,\frac{1}{a(y)}
\,\partial_s \s_{B^*} h(y,s)\,ds\,dy.
\end{equation*}
Choose some $(x,t)\in \RR^{n+1}$ with $t\neq 0$. Let $B=B((x,t),\abs{t}/2)$, $\Delta=\Delta(x,\abs{t}/2)$. 
By \eqref{eqn:fundsolnsize:k}, $\abs{\partial_t \Gamma_{(y,s)}^A(x,t)}\leq C/\abs{(x,t)-(y,s)}^n$.
If $t>0$ and the aperture $a$ in the definition of nontangential maximal function is large enough, and if $1\leq p\leq\infty$, then
\begin{multline}
\label{eqn:SagainstCZ:local}
\int_B
\frac{C}{\abs{(x,t)-(y,s)}^{n}}
\,\abs{\partial_s \s_{B^*} h(y,s)}\,ds\,dy
\\\leq
Ct \fint_\Delta N_+(\partial_{n+1}\s_{B^*} h)(y)\,dy
\leq Ct^{1-n/p}\doublebar{N_+(\partial_{n+1}\s_{B^*} h)}_{L^p(\RR^n)}.
\end{multline}

Observe that
\begin{multline}
\label{eqn:SagainstCZ}
\int_{\RR^{n+1}_+\setminus B}
\frac{C}{\abs{(x,t)-(y,s)}^{m}}
\,\abs{\partial_s \s_{B^*} h(y,s)}\,ds\,dy
\\
\begin{aligned}
&\leq
\int_{\RR^n} N_+(\partial_{n+1}\s_{B^*} h)(y)
\int_{0}^\infty
\frac{C}{\abs{x-y}^m+(s+\abs{t})^m}
\,ds\,dy
.\end{aligned}
\end{multline}
If $m=n$ and $1\leq p<n$, then these integrals converge and are at most \begin{equation*}C(p) \abs{t}^{1-n/p}\doublebar{N_+(\partial_{n+1}\s_{B^*} h)}_{L^p(\RR^n)}.\end{equation*}
By \eqref{eqn:NSt} we may bound $\doublebar{N_+(\partial_{n+1}\s_{B^*} h)}_{L^p(\RR^n)}$ and \eqref{eqn:Fdecay} is proven.

To establish \eqref{eqn:Fdiffdecay}, recall that by \eqref{eqn:fundsolnsize:k} and \eqref{eqn:fundsolnholder:k}, if  $\abs{(x',t')-(x,t)}<\abs{t}/4$, or if $t>0$ and $(x',t')=(x,-t)$, then for all $(y,s)\in\RR^{n+1}_+\setminus B$ we have that
\begin{equation*}\abs{\partial_t \Gamma_{(y,s)}^A(x,t) - \partial_t \Gamma_{(y,s)}^A(x',t')}
\leq \frac{C\abs{t}^\alpha}{\abs{(x,t)-(y,s)}^{n+\alpha}}
\end{equation*}
Letting $m=n+\alpha$, we see that if $1<p<n/(1-\alpha)$ then the right-hand side of \eqref{eqn:SagainstCZ} converges and is at most $C(p)\abs{t}^{1-n/p}\doublebar{N_+(\partial_{n+1}\s_{B^*} h)}_{L^p(\RR^n)}$. By \eqref{eqn:NSt}, and since \eqref{eqn:SagainstCZ:local} is valid for all $1\leq p\leq\infty$, this completes the proof of \eqref{eqn:Fdiffdecay}.
\end{proof}

Next, we consider $\nabla \F h$ and $L_A \F h$.
\begin{lem}
\label{lem:Fgradientexists} 
Let $a$, $A$ and $B$ be as in \lemref{Fexists}.
If $h\in L^p(\RR^n)$ for some $1<p<n/(1-\alpha)$, then
$\nabla \F h\in L^2_{loc}(\RR^{n+1}_\pm)$, and in particular, $\Div A\nabla\F h$ is a well-defined element of $W^{2}_{-1,loc}(\RR^{n+1}_\pm)$.

Furthermore,
\begin{alignat}{2}
\label{eqn:LFupper}
-a\,\Div A\nabla \F h &= \partial_{n+1}^2\s_{B^*} h
&\quad&\text{in }\RR^{n+1}_+,
\\
\label{eqn:LFlower}
\Div A\nabla \F h &= 0
&&\text{in }\RR^{n+1}_-
\end{alignat}
in the weak sense.
\end{lem}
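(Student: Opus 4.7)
The plan is to handle the Sobolev regularity of $\F h$ and both weak PDEs simultaneously via a single localization. Given a compact set $K \subset\subset \RR^{n+1}\setminus\RR^n$, I would fix a cutoff $\chi \in C^\infty_0(\RR^{n+1}_+)$ with $\chi \equiv 1$ on a neighborhood of $K \cap \RR^{n+1}_+$, and decompose $\F h = \F_1 h + \F_2 h$ by inserting $\chi(Y)$ and $1-\chi(Y)$ respectively into the integral \eqref{eqn:F}. The point of the split is that $\F_1 h$ can be analyzed via the fundamental-solution representation of \thmref{fundsoln:properties}, while $\F_2 h$ is smooth near $K$ because the integrand lives on a set at positive distance from $K$.

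For $\F_2 h$: since $\supp(1-\chi)$ is bounded away from $K$, I would differentiate under the integral sign, using \eqref{eqn:fundsolnsize:k}, \eqref{eqn:fundsolnholder:k}, the Caccioppoli inequality, and the $L^p$-type decay estimates that appear in the proof of \lemref{Fexists} to justify convergence. The key observation is that for each fixed $Y = (y,s) \in \supp(1-\chi)$, the function $X \mapsto \partial_s\Gamma_Y^A(X)$ satisfies $-\Div A\nabla u = 0$ weakly on any open set disjoint from $Y$: $t$-independence of $A$ makes $\partial_s = -\partial_t$ (by \eqref{eqn:fundsoln:vert}) commute with $\Div A\nabla$, and then differentiating \eqref{eqn:fundsolnweakdfn} in $s$ gives the homogeneous equation for $\partial_s\Gamma_Y^A$. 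Fubini then yields $-\Div A\nabla \F_2 h = 0$ weakly on a neighborhood of $K$.

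For $\F_1 h$: since $\chi$ is compactly supported in $\RR^{n+1}_+$, integrating by parts in $s$ (boundary terms at $s=0$ and $s=\infty$ vanish) gives
\[\F_1 h(X) = \int_{\RR^{n+1}} \Gamma_Y^A(X)\,\tilde f(Y)\,dY,\qquad \tilde f(Y) := \partial_s\Bigl(\chi(Y)\,\tfrac{1}{a(y)}\,\partial_s \s_{B^*}h(Y)\Bigr).\]
The bound \eqref{eqn:Shdecay} shows that both $\partial_s \s_{B^*}h$ and $\partial_s^2 \s_{B^*}h$ are bounded on compact subsets of $\RR^{n+1}_+$, so $\tilde f$ is bounded with compact support in $\RR^{n+1}_+$ and satisfies the hypotheses of \thmref{fundsoln:properties}. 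That theorem yields $\F_1 h \in Y^{1,2}(\RR^{n+1})$ and $-\Div A\nabla \F_1 h = \tilde f$ weakly on all of $\RR^{n+1}$. Combining with $\F_2 h$ gives $\nabla \F h \in L^2_{loc}$ near $K$, together with the weak identity $-\Div A\nabla \F h = \tilde f$ on a neighborhood of $K$. On $K \cap \RR^{n+1}_+$ one has $\chi\equiv 1$, so $\tilde f = \tfrac{1}{a}\partial_{n+1}^2\s_{B^*}h$, yielding \eqref{eqn:LFupper}; on $K \cap \RR^{n+1}_-$ one has $\tilde f \equiv 0$ (since $\supp \tilde f \subset \RR^{n+1}_+$), yielding \eqref{eqn:LFlower}. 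Exhausting $\RR^{n+1}_\pm$ by such compact $K$ completes the proof.

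I expect the most delicate step to be verifying the hypotheses of \thmref{fundsoln:properties} for $\tilde f$, which rests on the interior smoothness and decay of $\partial_s^k\s_{B^*}h$ on compact subsets of $\RR^{n+1}_+$. A secondary issue is reconciling the unconjugated weak formulation of \eqref{eqn:fundsolnweakdfn} with the conjugated formulation used in \dfnref{weaksoln}; this can be handled by testing against real-valued $\varphi$ throughout and extending by complex linearity.
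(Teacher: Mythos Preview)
Your proposal is correct and follows essentially the same strategy as the paper: split the integral via a smooth cutoff in the $Y$-variable, handle the compactly supported piece by integrating by parts in $s$ and invoking \thmref{fundsoln:properties}, and handle the far piece by differentiating under the integral (justified via \eqref{eqn:fundsolnholder:k} and Caccioppoli) to see it is $A$-harmonic near the target set. The only cosmetic difference is that the paper localizes around a single point $(x_0,t_0)$ and works with $\F h(x,t)-\F h(x_0,t_0)$ (which also cleanly handles the additive-constant ambiguity when $p\geq n$), whereas you localize to a compact set; both arrive at the same estimates and the same weak identities.
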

As an immediate corollary, $\E h$ is well-defined and also lies in $W^2_{1,loc}(\RR^{n+1}_\pm)$, and $\Div A\nabla\E h=\Div A \nabla \F h$ in $\RR^{n+1}_\pm$.

\begin{proof}
Fix some $(x_0,t_0)\in\RR^{n+1}$ with $t_0\neq 0$, and let $B_r=B((x_0,t_0),r)$.
Let $\eta$ be a smooth cutoff function, supported in $B_{\abs{t_0}/2}$ and identically equal to 1 in $B_{\abs{t_0}/4}$, with $0\leq\eta\leq 1$, $\abs{\nabla\eta}\leq C/\abs{t_0}$. For all $(x,t)\in B_{\abs{t_0}/8}$, we have that by definition of $\F h$ and by \eqref{eqn:fundsoln:vert},
\begin{multline*}
\F h(x,t)-\F h(x_0,t_0)
\\\begin{aligned}&=
	\int_{\RR^{n+1}_+}
	\Gamma_{(y,s)}^A(x,t)\,\frac{1}{a(y)}
	\,\partial_s\bigl(\eta(y,s) \partial_s \s_{B^*} h(y,s)\bigr)\,ds\,dy
	\\&\qquad
	-\int_{\RR^{n+1}_+}
	\Gamma_{(y,s)}^A(x_0,t_0)\,\frac{1}{a(y)}
	\,\partial_s\bigl(\eta(y,s) \partial_s \s_{B^*} h(y,s)\bigr)\,ds\,dy
	\\&\qquad
	-\int_{\RR^{n+1}_+}
	\bigl(\partial_s \Gamma_{(y,s)}^A(x,t) -\partial_s\Gamma_{(y,s)}^A(x_0,t_0)\bigr)
	\,\frac{1-\eta(y,s)}{a(y)}
	\,\partial_s \s_{B^*} h(y,s)\,ds\,dy
\\&= I(x,t)-I(x_0,t_0)+II(x,t)
.\end{aligned}\end{multline*}

If $t_0<0$ then $I\equiv 0$. Otherwise, by \eqref{eqn:Shdecay}, the function $\partial_{n+1}(\eta\,\partial_{n+1}\s_{B^*} h)$ is bounded and compactly supported, and so by \thmref{fundsoln:properties}, $I\in Y^{1,2}(\RR^{n+1})\subset W^2_{1,loc}(\RR^{n+1})$. Furthermore, $-\Div A\nabla I = (1/a)\partial_{n+1}(\eta\,\partial_{n+1}\s_{B^*} h)$, and so \begin{equation*}-a \,\Div A\nabla I = \partial_{n+1}^2 \s_{B^*} h\quad\text{in }B_{\abs{t_0}/8}.\end{equation*}

We must show that $\nabla II \in L^2(B_{\abs{t_0}/8})$ and that $\Div A\nabla II=0$ in $B_{\abs{t_0}/8}$. By \eqref{eqn:fundsolnholder:k} and \lemref{Caccioppoli2}, we have that if $(y,s)\notin B_{\abs{t_0}/4}$ then
\begin{equation*} \int_{B_{\abs{t_0}/8}}\abs{\nabla_{x,t}\partial_s \Gamma_{(y,s)}^A(x,t)}\,dx\,dt
\leq \frac{C\abs{t_0}^{n+\alpha}}{\abs{(x_0,t_0)-(y,s)}^{n+\alpha}}. \end{equation*}
Thus by \eqref{eqn:SagainstCZ}, if $1<p<n/(1-\alpha)$ then
\begin{multline*}
\int_{B_{\abs{t_0}/8}} \int_{\RR^{n+1}_+}
	\abs{\nabla_{x,t} \partial_s \Gamma_{(y,s)}^A(x,t)} \,\frac{1-\eta(y,s)}{\abs{a(y)}}
	\,\abs{\partial_s \s_{B^*} h(y,s)}\,ds\,dy\,dx\,dt
\\\leq
C(p) \abs{t_0}^{n-n/p}\doublebar{N_+(\partial_{n+1}\s_{B^*} h)}_{L^p(\RR^n)}.
\end{multline*}
Thus by Fubini's theorem,
\begin{equation}
\label{eqn:nablaII}
\nabla II(x,t)=\int_{\RR^{n+1}_+}
	\nabla_{x,t}\partial_s \Gamma_{(y,s)}^A(x,t) \,\frac{1-\eta(y,s)}{a(y)}
	\,\partial_s \s_{B^*} h(y,s)\,ds\,dy.\end{equation}
If $\varphi$ is a test function supported in $B_{\abs{t_0}/8}$, then again by \eqref{eqn:fundsolnholder:k}, \eqref{eqn:SagainstCZ} and the Caccioppoli inequality,
\begin{multline*}
\biggabs{\int_{\RR^{n+1}} \varphi(x,t) \, \nabla II(x,t)\,dx\,dt}
\\\leq C\doublebar{\varphi}_{L^2(\RR^{n+1})}
\abs{t_0}^{1/2+n/2-n/p}\doublebar{N_+(\partial_{n+1}\s_{B^*} h)}_{L^p(\RR^n)}
\end{multline*}
and so $\nabla II(x,t)\in L^2(B_{\abs{t_0}/8})$. 

Finally, by the weak definition \eqref{eqn:2weaksoln} of $\Div A\nabla$ and by the formula \eqref{eqn:nablaII} for $\nabla II$, we have that $\Div A\nabla II=0$ in $B_{\abs{t_0}/8}$, as desired.
\end{proof}

We conclude this section by proving the continuity of $\nabla \E h$ across the boundary. This property is analogous to the continuity relations \eqref{eqn:Scts} for the single layer potential, and is the reason we will eventually prefer the operator $\E$ to~$\F$.
\begin{lem}
\label{lem:gradEcts} 
Suppose that $a$, $A$ and $B$ are $t$-independent, $a$ is accretive, $A$ satisfies the square-function estimate \eqref{eqn:NnablaS}, and $B^*$ satisfies the single layer potential requirements of \dfnref{goodlayer}.
Then there is a dense subset $S\subset L^2(\RR^n)$ such that if $h\in S$, then
\begin{equation*}\lim_{t\to 0^+} \doublebar{\nabla\E h(\,\cdot\,,t)-\nabla \E h(\,\cdot\,,-t)}_{L^2(\RR^n)}=0.\end{equation*}
\end{lem}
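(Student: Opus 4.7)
The strategy is to decompose $\E h=\F h-\s_A(a^{-1}\s_{B^*}^{\perp,+}h)$ and show that for $h$ in a sufficiently nice dense subset $S\subset L^2(\RR^n)$, the boundary jumps of $\nabla\F h$ and $\nabla\s_A(a^{-1}\s_{B^*}^{\perp,+}h)$ across $\{t=0\}$ cancel exactly. The jump behavior of the single layer potential is already known: by \eqref{eqn:Scts} its tangential derivative is continuous, while \eqref{eqn:Sjump} gives the conormal jump $-a^{-1}\s_{B^*}^{\perp,+}h$. The task is therefore to establish the matching conormal jump for $\F h$ together with matching tangential traces. Take $S:=(\s_{B^*}^{\perp,+})^{-1}(C^\infty_0(\RR^n))$, which is dense in $L^2(\RR^n)$ since $\s_{B^*}^{\perp,+}$ is invertible on $L^2$ (by the single layer potential requirements on $B^*$) and $C^\infty_0$ is dense.

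For $h\in S$, set $v:=\partial_{n+1}\s_{B^*}h$. Its boundary trace $\s_{B^*}^{\perp,+}h$ lies in $C^\infty_0$, so by well-posedness of $(R)^{B^*}_2$ and the De Giorgi--Nash--Moser condition for $B^*$, $v$ is H\"older continuous and bounded globally, with pointwise decay at infinity (via the size bound \eqref{eqn:fundsolnsize:k} applied to $\partial_{n+1}\Gamma^{B^{*T}}$). Using \eqref{eqn:fundsoln:vert}, rewrite $\F h(x,t)=\partial_tW(x,t)$ with
\[W(x,t):=\int_{\RR^n}\int_0^\infty\Gamma^A_{(y,s)}(x,t)\,a^{-1}(y)\,v(y,s)\,ds\,dy.\]
The size estimate \eqref{eqn:fundsolnsize}, combined with the boundedness and decay of $v$, ensures this integral converges absolutely and defines a globally continuous function $W$ on $\RR^{n+1}$; moreover \thmref{fundsoln:properties} yields the global distributional identity $-\Div A\nabla W=a^{-1}v\chi_{\RR^{n+1}_+}$ on $\RR^{n+1}$.

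Because $A$ is $t$-independent, $\partial_t$ commutes with $\Div A\nabla$, so distributionally
\[-\Div A\nabla\F h=a^{-1}\partial_tv\cdot\chi_{\RR^{n+1}_+}+a^{-1}\s_{B^*}^{\perp,+}h\,\delta_{\{t=0\}}\quad\text{in }\RR^{n+1},\]
the singular boundary term arising from the jump of the source of $W$ at $\{t=0\}$. Testing against $\varphi\in C^\infty_0(\RR^{n+1})$ and integrating by parts separately on $\RR^{n+1}_\pm$ (where $-\Div A\nabla\F h=a^{-1}\partial_tv$ and $0$ respectively) gives
\[(\e\cdot A\nabla\F h)^+-(\e\cdot A\nabla\F h)^-=-a^{-1}\s_{B^*}^{\perp,+}h\quad\text{on }\RR^n,\]
matching \eqref{eqn:Sjump} applied to $g=a^{-1}\s_{B^*}^{\perp,+}h$. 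Combined with the global continuity of $W$, which forces the $L^2$ traces of $\F h=\partial_tW$ from above and below to agree, we conclude $\E h\in W^2_{1,loc}(\RR^{n+1})$ across $\{t=0\}$, with both $\nabla_\parallel\E h$ and $\e\cdot A\nabla\E h$ continuous across the interface. Since $A$ is $t$-independent and $\re(\e\cdot A\e)\geq\lambda>0$ by ellipticity, the algebraic identity $\partial_tu=(\e\cdot A\nabla u-\sum_{j=1}^n(\e\cdot A\e_j)\partial_ju)/(\e\cdot A\e)$ transfers this continuity to $\partial_t\E h$, yielding the claimed $L^2$ limit.

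The main obstacle is the rigorous justification of the distributional identity for $\partial_tW$ and its conversion into the pointwise $L^2$ jump condition for $\e\cdot A\nabla\F h$---in particular, showing that $W$ has enough global regularity for the separate integrations by parts on $\RR^{n+1}_\pm$ to be legitimate and to produce a Dirac boundary term with trace precisely $a^{-1}\s_{B^*}^{\perp,+}h$. The smoothness and compact support of $\s_{B^*}^{\perp,+}h$ for $h\in S$ are essential here: they supply enough regularity and decay on the source of $W$ so that differentiation under the integral sign, dominated convergence, and the divergence theorem apply rigorously despite the $L^\infty$-only hypothesis on $A$.
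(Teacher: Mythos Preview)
Your overall strategy---showing that the jump of $\nabla\F h$ across $\{t=0\}$ exactly cancels that of $\nabla\s_A(a^{-1}\s_{B^*}^{\perp,+}h)$---is reasonable in spirit, but the execution has two genuine gaps.

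First, the integral defining $W$ need not converge absolutely. You assert ``boundedness and decay of $v$'', but membership of $\s_{B^*}^{\perp,+}h$ in $C^\infty_0(\RR^n)$ does not by itself give global pointwise decay of $v=\partial_{n+1}\s_{B^*}h$: the decay estimate \eqref{eqn:Shdecay} is stated in terms of $\supp h$, and $h=(\s_{B^*}^{\perp,+})^{-1}g$ for $g\in C^\infty_0$ is only known to lie in $L^2(\RR^n)$, with no reason for it to have compact support. Even if $v$ were bounded, $\Gamma^A_{(y,s)}(x,t)$ decays only like $\abs{(x,t)-(y,s)}^{1-n}$, which is not integrable over the unbounded region $\RR^{n+1}_+$. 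So neither the absolute convergence of $W$ nor the application of \thmref{fundsoln:properties} (which requires the source in $L^{2(n+1)/(n+3)}(\RR^{n+1})$) is justified.

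Second, and more seriously, the step ``global continuity of $W$ forces the $L^2$ traces of $\F h=\partial_t W$ from above and below to agree'' is a non sequitur: continuity of a function across a hyperplane says nothing about continuity of its transversal derivative (think of $W(x,t)=\abs{t}$). To conclude $\E h\in W^2_{1,loc}(\RR^{n+1})$ you would need $\nabla\E h\in L^2_{loc}$ across $\{t=0\}$, but at this stage no $L^2$ control on $\nabla\E h$ near the boundary is available---that is established only later, in \lemref{bdd6}, and the proof there \emph{uses} the present lemma. Likewise, the distributional identity you write for $-\Div A\nabla\F h$ is formally correct, but extracting from it the pointwise $L^2$ jump relation presupposes that the one-sided limits $\e\cdot A\nabla\F h(\,\cdot\,,0^\pm)$ already exist in $L^2$, which is part of what is in question.

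The paper sidesteps both difficulties by choosing a different dense class: $S=(\s_{B^*}^{\perp,+})^{-1}(S')$, where $S'$ consists of horizontal slices $u(\,\cdot\,,\tau)$, $\tau>0$, of $(D)^{B^*}_2$-solutions. The key property this buys is $N_+(\partial_{n+1}^2\s_{B^*}h)\in L^2(\RR^n)$, which permits a legitimate integration by parts in $s$ on $(0,\tau)$ in the integral defining $\F h$. The boundary term at $s=0$ exactly cancels the $\s_A(a^{-1}\s_{B^*}^{\perp,+}h)$ correction, and $\E h(x,\pm t)$ is rewritten as a finite sum of terms of the form $\s_A(\text{$L^2$ data})$ evaluated at shifted heights $\pm t-s$. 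One then estimates $\doublebar{\nabla\E h(\,\cdot\,,t)-\nabla\E h(\,\cdot\,,-t)}_{L^2}$ \emph{directly} using \eqref{eqn:slabscts}, \eqref{eqn:slabsL2} and \eqref{eqn:NnablaS}, choosing $\tau=\sqrt{t}$; no a priori existence of either one-sided boundary limit is required.
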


\begin{proof} 
We define the set $S\subset L^2(\RR^n)$ as follows.
By assumption, $\s_{B^*}^{\perp,+}$ is invertible. Let $S=(\s_{B^*}^{\perp,+})^{-1}(S')$, where $h\in S'$ if $h(x)=u(x,t)$
for some $t>0$ and some $u$ with $\Div B^*\nabla u=0$ in $\RR^{n+1}_+$ and $N_+ u\in L^2(\RR^n)$.

We first show that $S$ is dense. It suffices to show that $S'$ is dense. Choose some $f\in L^2(\RR^n)$. By well-posedness of $(D)^{B^*}_2$, there is some $u$ with $\Div B^*\nabla u=0$ in $\RR^{n+1}_+$ and $u=f$ on $\partial\RR^{n+1}_+$. Define $f_k\in L^2(\RR^n)$ by $f_k(x) = u(x,1/k)$; then $f_k\in S'$.
By \thmref{2L2limits}, $f_k\to f$ in $L^2(\RR^n)$, and so $S'$ is dense in $L^2(\RR^n)$.

Suppose that $h\in S$. Then $\s_{B^*}^{\perp,+} h(x)=u(x,\tau)$ for some $\tau>0$ and some solution~$u$. Let $v(x,t)=\partial_t u(x,t+\tau)$. By \eqref{eqn:NSt}, and by uniqueness of solutions to $(D)^{B^*}_2$, we have that $\partial_t \s h=v$ in $\RR^{n+1}_+$. But by the Caccioppoli inequality and the De Giorgi-Nash-Moser estimates, we have that $N_+(\partial_{n+1} v)(x)\leq \frac{C}{\tau} N_+ u(x)$ (possibly at the cost of increasing the apertures of the nontangential cones). 

So if $h\in S$, then
\begin{equation*}\widetilde N_+(\partial_{n+1}^2\s_{B^*} h)\in L^2(\RR^n).\end{equation*}

Recall that by \eqref{eqn:E} and \lemref{Fexists} if $h\in L^2(\RR^n)$ and $n\geq 3$ then
\begin{equation*}
\E h(x,t)
=
	-\int_0^\infty \int_{\RR^n} \partial_s \Gamma_{(y,s)}^A(x,t)
	\frac{1}{a(y)} \partial_s \s_{B^*} h(y,s)\,dy\,ds
	- \s_A \biggl(\frac{1}{a}\s_{B^*}^{\perp,+} h\biggr)(x,t)
.\end{equation*}
If $n=2$ then we must instead work with $\E h(x,t)-\E h(x,-t)$.

If $h\in S$, so that $N_+(\partial_{n+1}^2 \s_{B^*} h)\in L^2(\RR^n)$, then we may integrate by parts in the region $0<s<\tau$ for any fixed~$\tau$. Observe that the boundary term at $s=0$ precisely cancels the term $\s_A((1/a)\s_{B^*}^{\perp,+}h)$.
We conclude that
\begin{align*}
 \E h(x,t)
&=
	\int_0^{\tau} \int_{\RR^n} \Gamma_{(y,s)}^A(x,t)
	\frac{1}{a(y)} \partial_s^2 \s_{B^*} h(y,s)\,dy\,ds
\\&\qquad
	-\int_{\RR^n}  \Gamma_{(y,\tau)}^A(x,t)
	\frac{1}{a(y)} \partial_{n+1} \s_{B^*} h(y,\tau)\,dy
\\&\qquad
	-\int_{\tau}^\infty \int_{\RR^n} \partial_s \Gamma_{(y,s)}^A(x,t)
	\frac{1}{a(y)} \partial_s \s_{B^*} h(y,s)\,dy\,ds
.\end{align*}
Applying \eqref{eqn:fundsoln:vert} and the definition of $\s_A$, we see that
\begin{align*}
 \E h(x,t)
&=
	\int_0^{\tau} 
	\s_A\biggl(\frac{1}{a} \partial_s^2 \s_{B^*} h(s)\biggr)(x,t-s)
	\,ds
	-\s_A\biggl(\frac{1}{a} \partial_\tau \s_{B^*} h(\tau)\biggr)(x,t-\tau)
\\&\qquad
	-\int_{\tau}^\infty 
	\partial_s \s_A\biggl(\frac{1}{a} \partial_s \s_{B^*} h(s)\biggr)(x,t-s)
	\,ds
.\end{align*}
Here we have adopted the notation that $\partial_s\s_{B^*} h(s)(y)=\partial_s\s_{B^*} h(y,s)$.

Thus $\nabla \E h(x,t)-\nabla \E h(x,-t)$ has three terms.
Suppose that $s\in\RR$, $s\neq 0$.
By \eqref{eqn:NSt}, $\doublebar{\partial_{n+1} \s_{B^*} h(s)}_{L^2(\RR^n)}\leq C\doublebar{h}_{L^2(\RR^n)}$.
Suppose that $g\in L^2(\RR^n)$.
By \eqref{eqn:slabsL2} and by \eqref{eqn:NnablaS},
\begin{equation*}\doublebar{\nabla \s_A g(s)}_{L^2(\RR^n)} \leq C \doublebar{\widetilde N_\pm(\nabla \s_A g)}_{L^2(\RR^n)}
\leq C\doublebar{g}_{L^2(\RR^n)}.\end{equation*}
Furthermore, if $0<t<\abs{s}/2$, then by \eqref{eqn:slabscts}, \eqref{eqn:slabsL2} and \eqref{eqn:NnablaS},
\begin{equation*}\doublebar{\nabla \s_A g(s+t)-\nabla \s_A g(s-t)}_{L^2(\RR^n)} \leq C \frac{t}{\abs{s}}\doublebar{g}_{L^2(\RR^n)}.\end{equation*}
Finally by \eqref{eqn:slabscts}, \eqref{eqn:slabsL2}, \eqref{eqn:NnablaS} and applying the Caccioppoli inequality to $\partial_{n+1}\s_A$,
\begin{equation*}\doublebar{\nabla \partial_s\s_A g(s+t)-\nabla \partial_s\s_A g(s-t)}_{L^2(\RR^n)} \leq C \frac{t}{\abs{s}^2}\doublebar{h}_{L^2(\RR^n)}.\end{equation*}

Thus, if $t<\tau$, then
\begin{align*}
\doublebar{\nabla \E h(t) - \nabla \E h(-t)}_{L^2(\RR^n)}
&\leq 
	C\int_0^{\tau} 
	\doublebar{N_+(\partial_s^2 \s_{B^*} h)}_{L^2(\RR^n)}\,ds
\\&\qquad
	+C\frac{t}{\tau} \doublebar{h}_{L^2(\RR^n)}
	+C\int_{\tau}^\infty 	
	\frac{t}{s^2} \doublebar{h}_{L^2(\RR^n)}\,ds
.\end{align*}
Choosing $\tau=\sqrt{t}$ and recalling that $h\in S$, we see that the right-hand side goes to zero as $t\to 0^+$, as desired.
\end{proof}

\section{A square-function bound}
\label{sec:square}
Recall that we intend to construct solutions to the Dirichlet problem \eqref{eqn:Dirichletprob} by letting $u=u_+ +\E h$ for some appropriately chosen function $h\in L^2(\RR^n)$. To prove Theorems~\ref{thm:Dirichletexists} and~\ref{thm:perturb} we must have that the norms $\doublebar{\widetilde N_+(\nabla \E h)}_{L^2(\RR^n)}$ and $\triplebar{t\,\nabla\partial_t\E h}_+$ are appropriately bounded. 

In this section, we will prove a preliminary square-function estimate; we will prove the full estimate $\doublebar{\widetilde N_+(\nabla \E h)}_{L^2(\RR^n)}+\triplebar{t\,\nabla\partial_t\E h}_+\leq C\doublebar{h}_{L^2(\RR^n)}$ in the next section.

\begin{thm} \label{thm:squarebound}
Suppose that $a$, $A$ and~$B$ are $t$-independent, $a$ is accretive, $A$, $A^*$, $B$ and $B^*$ satisfy the De Giorgi-Nash-Moser condition, and $\s_A$, $\s_{B^*}$ satisfy the square-function estimate \eqref{eqn:Svertsquare}.

Then for all $h\in L^2(\RR^n)$, we have the bound
\begin{equation*}\int_{-\infty}^\infty \int_{\RR^n} \abs{\partial_t^2 \F h(x,t)}^2 \,\abs{t}\,dx\,dt
\leq C \doublebar{h}_{L^2(\RR^n)}^2.\end{equation*}
\end{thm}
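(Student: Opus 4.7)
The plan is to exploit the $t$-independence of $A$—which makes $\Gamma^A_{(y,s)}(x,t)$ a function of $t-s$ only—together with an integration by parts in $s$, and thereby reduce the estimate to two applications of the square-function hypothesis \eqref{eqn:Svertsquare}, one for $\s_A$ and one for $\s_{B^*}$.

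\textbf{Rewriting.} By \eqref{eqn:fundsoln:vert}, $\partial_s\Gamma^A_{(y,s)}(x,t)=-\partial_t\Gamma^A_{(y,s)}(x,t)$, so differentiating $\F h$ twice in $t$ and then rewriting one $\partial_t$ as $-\partial_s$ yields
\begin{equation*}
\partial_t^2\F h(x,t)=-\int_0^\infty\!\int_{\RR^n}\partial_s\bigl[\partial_t^2\Gamma^A_{(y,s)}(x,t)\bigr]\frac{1}{a(y)}\partial_s\s_{B^*}h(y,s)\,dy\,ds.
\end{equation*}
Integration by parts in $s$—the boundary at $s=\infty$ vanishes by the kernel decay \eqref{eqn:fundsolnsize:k} and the pointwise bound \eqref{eqn:Shdecay}, while the boundary at $s=0$ produces $\partial_t^2\s_A((1/a)\s_{B^*}^{\perp,+}h)(x,t)$—yields
\begin{equation*}
\partial_t^2\F h(x,t)=\partial_t^2\s_A\bigl(\tfrac{1}{a}\s_{B^*}^{\perp,+}h\bigr)(x,t)+I(x,t),
\end{equation*}
where, using $\Gamma^A_{(y,s)}(x,t)=\Gamma^A_{(y,0)}(x,t-s)$,
\begin{equation*}
I(x,t)=\int_0^\infty(\partial_\tau^2\s_A w_s)(x,t-s)\,ds,\qquad w_s(y)=\frac{1}{a(y)}\partial_s^2\s_{B^*}h(y,s).
\end{equation*}

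\textbf{Boundary term.} Since $\s_{B^*}^{\perp,+}$ is a Calder\'on--Zygmund operator bounded on $L^2(\RR^n)$ (via \eqref{eqn:NSt}) and $1/a\in L^\infty$, $\|(1/a)\s_{B^*}^{\perp,+}h\|_{L^2}\leq C\|h\|_{L^2}$, and then \eqref{eqn:Svertsquare} for $\s_A$ gives $\triplebar{t\,\partial_t^2\s_A((1/a)\s_{B^*}^{\perp,+}h)}^2\leq C\|h\|_{L^2}^2$.

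\textbf{Main term.} By \eqref{eqn:Svertsquare} applied to $\s_{B^*}$,
\begin{equation*}
\int_0^\infty s\,\|w_s\|_{L^2(\RR^n)}^2\,ds\leq C\|h\|_{L^2}^2,
\end{equation*}
so it suffices to prove that the linear operator $T:w\mapsto I$ is bounded from $L^2(\RR^n\times(0,\infty);\,s\,dx\,ds)$ to $L^2(\RR^{n+1};\,|t|\,dx\,dt)$. By duality this is equivalent to boundedness of the adjoint
\begin{equation*}
T^*F(y,s)=\frac{1}{s}\int_{\RR^{n+1}}|t|\,\overline{\partial_\tau^2\Gamma^A_{(y,0)}(x,t-s)}\,F(x,t)\,dx\,dt
\end{equation*}
between the dual spaces; by the symmetry $\overline{\Gamma^A_Y(X)}=\Gamma^{A^*}_X(Y)$, $T^*$ is built from the same square-function kernel for $\s_{A^*}$, which by hypothesis also satisfies \eqref{eqn:Svertsquare}.

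\textbf{Main obstacle.} The difficulty lies in reconciling the three scales involved: the weight $s$ on the input, the weight $|t|$ on the output, and the natural scale $|\tau|=|t-s|$ in the square-function estimate for $\s_A$. The pointwise operator-norm bound $\|\partial_\tau^2\s_A g\|_{L^2(\RR^n)}\leq C|\tau|^{-1}\|g\|_{L^2}$ (from Schur applied to the kernel bounds \eqref{eqn:fundsolnsize:k}) is too weak in the near-diagonal region $s\approx|t|$, where $|\tau|$ can be small; there the full cancellation in \eqref{eqn:Svertsquare} must be used. I would split the domain of integration into the three regions $s\leq|t|/2$, $|t|/2<s<2|t|$, and $s\geq 2|t|$; in the two "far'' regions $|\tau|\gtrsim\max(s,|t|)$ so Schur-type pointwise bounds control the contribution, while in the near-diagonal region the square-function estimate \eqref{eqn:Svertsquare} (together with Fubini in the $(s,\tau,t)$ variables) closes the estimate by exploiting that $|t|\leq|\tau|+s$ and trading the two weights against each other.
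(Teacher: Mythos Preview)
Your reduction works cleanly in the lower half-space: for $t<0$ the singularity of $\Gamma^A_{(y,s)}(x,t)$ lies at $s=t<0$, outside the region of integration, so the integration by parts in $s$ is legitimate, $|\tau|=|t-s|=|t|+s\geq\max(|t|,s)$ everywhere, and your Schur argument closes the estimate. (The paper remarks that the lower half-space ``is similar and simpler,'' and something like your computation is presumably what is meant.)

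The gap is in the upper half-space. For $t>0$ the kernel $\partial_t^2\Gamma^A_{(y,s)}(x,t)$ is singular at $(y,s)=(x,t)$, with a non-integrable $|X-Y|^{-n-1}$ blow-up in $\RR^{n+1}$. Your integration by parts in $s$ therefore crosses the singularity, and the resulting integrand $(\partial_\tau^2\s_A w_s)(x,t-s)$ behaves like $|t-s|^{-1}$ as $s\to t$ (even pointwise in $x$, when $w_s$ is locally bounded), so your expression for $I(x,t)$ is not an absolutely convergent integral. The near-diagonal fix you sketch cannot repair this: the square-function estimate \eqref{eqn:Svertsquare} comes with the weight $|\tau|=|t-s|$, which vanishes exactly at the point where the integrand is singular. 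There is no way to trade $|t|\leq|\tau|+s$ and Fubini to control an integrand that is not in $L^1$ to begin with; any application of Minkowski or Cauchy--Schwarz in $s$ produces a divergent factor $\int ds/|t-s|$.

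The paper's proof is built precisely to avoid this. It never integrates by parts across the diagonal; instead it decomposes $\RR^{n+1}_+$ into dyadic Whitney cubes and writes $\partial_t^2\F h$ on a cube $Q$ as a sum of four pieces: far cubes with $\partial_t^3\Gamma$ moved inside (terms~I, II), a local piece over the neighbors $N(Q)$ with $\partial_t^2$ kept \emph{outside} the integral (term~III), and a remainder testing against the constant $u_Q$ (term~IV). Terms~I, II, III are controlled by the Poincar\'e inequality and reduce to the square-function norm $\triplebar{t\,\nabla u}_+$ of $u=\partial_{n+1}\s_{B^*}h$; term~IV is exactly a $T(1)$ term and is handled by showing that $|\partial_t^2\s_A(1/a)|^2\,t\,dx\,dt$ is a Carleson measure. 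This $T(1)$-type structure is what absorbs the diagonal singularity that your integration by parts exposes but does not control.
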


In the remainder of this section, let $u=\partial_{n+1} \s_{B^*} h$. Observe that
\begin{align*}
\partial_t^2 \F h(x,t)
&=
	\partial_t^2\int_{\RR^{n+1}} \partial_t\Gamma_{(x,t)}^{A^T}(y,s)\,\frac{1}{a(y)}\,u(y,s)\,dy\,ds
	.\end{align*}
By \eqref{eqn:NnablaS} and \eqref{eqn:Ssquare} it suffices to prove that
\begin{equation}
\label{eqn:triple}
\triplebar{t\,\partial_t^2 \F h}_\pm
\leq
C\doublebar{N_+ u}_{L^2(\RR^n)} + 
C\triplebar{t\,\nabla u}_+
.\end{equation}
This theorem is the technical core of the paper. The proof is inspired by the $T(1)$ theorems of \cite{DavJ84} and \cite{Sem90}. We may think of $\partial_t^2 \F h(x,t)$ as $T u(x,t)$ for a singular integral operator $T$ with kernel $(1/a(y))\,\partial_t^3\Gamma_{(x,t)}^{A^T}(y,s)$. The $T(1)$ theorem of Semmes was proven by analyzing $Tu-T(1) P_t u$ and $T(1)P_t u$ for an averaging operator~$P_t$. We will use the same argument; our operator $P_t$ will be averages over dyadic cubes. Our bound on $T(1)P_t u$ will follow from well-known Carleson-measure properties of solutions to second-order equations. To bound $Tu-T(1) P_t u$ in terms of $\nabla u$, we develop an argument ultimately allowing us to exploit the Poincar\'e inequality. It bears  some resemblance to the arguments of \cite{AlfAAHK11} (see, in particular, Lemma~3.5(i)), but the particular singular integral operator at hand is different form those in \cite{AlfAAHK11}, and new ideas are required.

For ease of notation we will prove \eqref{eqn:triple} only in the upper half-space (that is, only for $\triplebar{t\,\partial_t^2 \F h}_+$); the argument in the lower half-space is similar and simpler.

Let $W$ be the grid of dyadic Whitney cubes in $\RR^{n+1}_+$. That is, 
\begin{equation*}W=\{Q=\widetilde Q\times[\ell(\widetilde Q), 2\ell(\widetilde Q)):\widetilde Q\subset\RR^n\text{ is a dyadic cube}\}.\end{equation*}
Then $\RR^{n+1}_+=\cup_{Q\in W} Q$, and any two distinct cubes $Q$, $R\in W$ have disjoint interiors. For any $Q\in W$ we let 
$F(Q)=\{R\in W:\dist(R,Q)>0\}$ be the set of cubes a positive distance from $Q$ and let $N(Q)=\cup_{R\in W\setminus F(Q)} R$ be the union of cubes adjacent to~$Q$.

Observe that
\begin{align*}
\triplebar{t\,\partial_t^2 \F h}_+
&=
	\sum_{Q\in W} \int_Q \biggabs{
	\partial_t^2\int_{\RR^{n+1}} \partial_t\Gamma_{(x,t)}^{A^T}(y,s)\,\frac{1}{a(y)}\,u(y,s)\,dy\,ds
	}^2\,t\,dx\,dt
	.\end{align*}
If $R\in W$, let $u_R=\fint_R u$. Define the four quantities
\begin{align*}
{I}&= \sum_{Q\in W}
	\int_{Q} \biggl(\sum_{R\in F(Q)} \int_{R} 
	\Bigl\lvert
	\partial_t^3\Gamma_{(x,t)}^{A^T}(y,s) \,\frac{1}{a(y)}\,\bigl(u(y,s)-u_R\bigr)
	\Bigr\rvert\,dy\,ds\biggr)^2 t\,dx\,dt
,\\\displaybreak[0]
{II} &=\sum_{Q\in W}
	\int_{Q} \biggl(\sum_{R\in F(Q)} \abs{u_R-u_Q}\int_{R} \Bigl\lvert\partial_t^3\Gamma_{(x,t)}^{A^T}(y,s)\,\frac{1}{a(y)}
	\Bigr\rvert\,dy\,ds\biggr)^2 t\,dx\,dt
,\\\displaybreak[0]
{III} &=\sum_{Q\in W}
	\int_{Q} \biggabs{\partial_t^2\int_{N(Q)} \partial_t\Gamma_{(x,t)}^{A^T}(y,s)\,\frac{1}{a(y)}\,(u(y,s)-u_Q)\,dy\,ds}^2 t\,dx\,dt
,\\\displaybreak[0]
{IV} &=\sum_{Q\in W}\abs{u_Q}^2
	\int_{Q} \biggl\lvert
	\partial_t^2\int_{N(Q)} \partial_t\Gamma_{(x,t)}^{A^T}(y,s)\,\frac{1}{a(y)}\,dy\,ds
	\\&\quad\phantom{=\sum_{Q\in W}\abs{u_Q}^2
	\int_{Q} \biggl\lvert}+
	\int_{\RR^{n+1}\setminus N(Q)} \partial_t^3\Gamma_{(x,t)}^{A^T}(y,s)\,\frac{1}{a(y)}\,dy\,ds
	\biggr\rvert^2 t\,dx\,dt
.\end{align*}
We have that
\begin{equation*}\triplebar{t\,\partial_t^2 \F h}_+\leq C(I+II+III+IV).\end{equation*}

We will bound each of the terms $I$, $II$, $III$ and~$IV$. We begin with term~${IV}$.

\begin{lem} \label{lem:termIV}
If $a$ and $A$ are $t$-independent, $A$ and $A^*$ satisfy the De Giorgi-Nash-Moser condition and $\s_A$ satisfies the square-function bound \eqref{eqn:Ssquare}, then
\begin{equation*}{IV}\leq C \doublebar{N_+ u}_{L^2(\RR^n)}^2 \doublebar{1/a}_{L^\infty(\RR^n)}^2.\end{equation*}
\end{lem}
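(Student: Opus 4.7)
The plan is to show that for each $(x,t)\in Q$ the bracketed expression in $IV$ equals $\partial_t^2\s_A(1/a)(x,t)$ (the natural ``$T(1)$'' of the Calder\'on--Zygmund-type kernel $(1/a(y))\partial_t^3\Gamma_{(x,t)}^{A^T}(y,s)$ applied to the constant function $1$), and then to apply the Carleson measure principle. By the $t$-independence of $A$ together with \eqref{eqn:fundsoln:vert} and \eqref{eqn:fundsoln:symm}, one has $\partial_t^3\Gamma_{(x,t)}^{A^T}(y,s)=-\partial_s\partial_t^2\Gamma_{(x,t)}^{A^T}(y,s)$. An integration by parts in $s$---performed separately on $N(Q)$ and $\RR^{n+1}_+\setminus N(Q)$, with the boundary terms on $\partial N(Q)\cap\RR^{n+1}_+$ cancelling and those at $s=\infty$ vanishing by \eqref{eqn:fundsolnsize:k}---leaves only the boundary contribution at $s=0$, namely $\int_{\RR^n}(1/a(y))\,\partial_t^2\Gamma_{(x,t)}^{A^T}(y,0)\,dy=\partial_t^2\s_A(1/a)(x,t)$. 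Consequently,
$$IV=\int_{\RR^{n+1}_+}\abs{v(x,t)}^2\,\abs{\partial_t^2\s_A(1/a)(x,t)}^2\,t\,dx\,dt,\qquad v(x,t):=u_Q\text{ on }Q\in W.$$

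Next I will show that $d\mu(x,t):=\abs{\partial_t^2\s_A(1/a)(x,t)}^2\,t\,dx\,dt$ is a Carleson measure with norm at most $C\doublebar{1/a}_{L^\infty(\RR^n)}^2$. Given a surface ball $\Delta=\Delta(x_0,r)$ with tent $T(\Delta)=\Delta\times(0,r)$, pick $\phi\in C^\infty_0(4\Delta)$ with $\phi\equiv 1$ on $2\Delta$ and split $1/a=(1/a)\phi+(1/a)(1-\phi)$. For the local piece, $\doublebar{(1/a)\phi}_{L^2(\RR^n)}^2\leq Cr^n\doublebar{1/a}_{L^\infty}^2$, so \eqref{eqn:Ssquare} applied to $\nabla\partial_t\s_A((1/a)\phi)$ yields $\int_{\RR^{n+1}_+}\abs{\partial_t^2\s_A((1/a)\phi)}^2\,t\,dx\,dt\leq Cr^n\doublebar{1/a}_{L^\infty}^2$. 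For the far piece, \eqref{eqn:fundsolnsize:k} with $k=2$ gives the pointwise bound $\abs{\partial_t^2\s_A((1/a)(1-\phi))(x,t)}\leq C\doublebar{1/a}_{L^\infty}/r$ for $(x,t)\in T(\Delta)$, and integration against $t\,dx\,dt$ yields $Cr^n\doublebar{1/a}_{L^\infty}^2$. Hence $\mu(T(\Delta))\leq C\doublebar{1/a}_{L^\infty}^2\abs{\Delta}$.

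By the classical Carleson embedding theorem, $IV\leq C\doublebar{1/a}_{L^\infty}^2\doublebar{N_+v}_{L^2(\RR^n)}^2$. To conclude, observe that $u=\partial_{n+1}\s_{B^*}h$ is locally H\"older continuous in $\RR^{n+1}_+$, so $\abs{v(y,s)}=\abs{u_Q}\leq\sup_Q\abs{u}$ on each Whitney cube $Q$, and any Whitney cube meeting $\gamma(x)$ lies in a nontangential cone of slightly larger aperture around $x$. By the aperture equivalence noted after \eqref{eqn:modNTM}, $\doublebar{N_+v}_{L^2(\RR^n)}\leq C\doublebar{N_+u}_{L^2(\RR^n)}$, which gives the desired estimate.

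The main technical obstacle lies in making the identification $K_Q(x,t)=\partial_t^2\s_A(1/a)(x,t)$ rigorous. The expression $\partial_t^2\int_{N(Q)}\partial_t\Gamma_{(x,t)}^{A^T}(y,s)(1/a(y))\,dy\,ds$ cannot be evaluated by simply moving the derivatives under the integral sign, because the pole at $(y,s)=(x,t)$ lies in the interior of $N(Q)$ and the resulting integrand is non-integrable there. A clean justification proceeds via regularization: excise the ball $B((x,t),\varepsilon)$, carry out the integration by parts on the regularized integral, and verify that the boundary contributions on $\partial B((x,t),\varepsilon)$ cancel (or vanish) as $\varepsilon\to 0^+$ using the size estimates \eqref{eqn:fundsolnsize:k}. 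Once this step is handled, the remainder---the Carleson bound for $\partial_t^2\s_A(1/a)$ and the Carleson embedding---is essentially standard.
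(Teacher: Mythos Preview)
Your overall strategy is exactly the paper's: identify the bracketed quantity in $IV$ with $\partial_t^2\s_A(1/a)(x,t)$, prove that $\abs{\partial_t^2\s_A(1/a)}^2\,t\,dx\,dt$ is a Carleson measure by a local/far split, and apply Carleson embedding together with $N_+ u_W\leq C\,N_+ u$. Your Carleson estimate and your nontangential comparison are carried out just as in the paper.

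The one place where you diverge is the justification of the identification over $N(Q)$, and here your regularization is both unnecessary and, as written, not correct. You propose to excise $B((x,t),\varepsilon)$, bring $\partial_t^2$ inside, integrate by parts, and then let $\varepsilon\to 0$, claiming the $\partial B_\varepsilon$ contributions ``cancel (or vanish)'' by the size bounds \eqref{eqn:fundsolnsize:k}. But those bounds give $\abs{\partial_t^2\Gamma}\sim\varepsilon^{-n-1}$ on $\partial B_\varepsilon$ against surface measure $\sim\varepsilon^n$, i.e.\ an $O(\varepsilon^{-1})$ boundary term; it does not vanish, and there is nothing in your setup to cancel it against. Any genuine cancellation argument ends up being equivalent to the simpler route the paper takes.

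The paper avoids the singularity entirely by exploiting the structure of the first term in $IV$: only \emph{one} $\partial_t$ sits under the integral over $N(Q)$, while $\partial_t^2$ remains outside. Using $\partial_t\Gamma_{(x,t)}^{A^T}(y,s)=-\partial_s\Gamma_{(x,t)}^{A^T}(y,s)$, the inner integral $\int_{N(Q)}\partial_t\Gamma\,(1/a)\,dy\,ds$ is an absolutely convergent integral (since $\abs{\partial_t\Gamma}\lesssim\abs{(x,t)-(y,s)}^{-n}$ is locally integrable in $\RR^{n+1}$), and by Fubini in $s$ it collapses to boundary values of $\Gamma$ on $\partial N(Q)$ for a.e.~$y$. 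Now apply the outer $\partial_t^2$ to this nonsingular boundary expression; combined with the analogous (and genuinely unproblematic) integration by parts on $\RR^{n+1}_+\setminus N(Q)$, the $\partial N(Q)$ terms cancel and only the $s=0$ term survives. No excision, no limit, no delicate cancellation---the trick is to integrate in $s$ \emph{before} differentiating twice in~$t$.
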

\begin{proof}
Recall that
\begin{align*}
{IV} &=\sum_{Q\in W}\abs{u_Q}^2
	\int_{Q} \biggl\lvert
	\partial_t^2\int_{N(Q)} \partial_t\Gamma_{(x,t)}^{A^T}(y,s)\,\frac{1}{a(y)}\,dy\,ds
	\\&\quad\phantom{=\sum_{Q\in W}\abs{u_Q}^2
	\int_{Q} \biggl\lvert}+
	\int_{\RR^{n+1}\setminus N(Q)} \partial_t^3\Gamma_{(x,t)}^{A^T}(y,s)\,\frac{1}{a(y)}\,dy\,ds
	\biggr\rvert^2 t\,dx\,dt
.\end{align*}	
By the decay estimate \eqref{eqn:fundsolnsize:k}, both of the innermost integrals converge absolutely.
By \eqref{eqn:fundsoln:vert}, $\partial_t\Gamma_{(x,t)}^{A^T}(y,s)=-\partial_s\Gamma_{(x,t)}^{A^T}(y,s)$, and so
\begin{multline*}
\partial_t^2\int_{N(Q)} \partial_t\Gamma_{(x,t)}^{A^T}(y,s)\,\frac{1}{a(y)}\,dy\,ds
	+\int_{\RR^{n+1}\setminus N(Q)} \partial_t^3\Gamma_{(x,t)}^{A^T}(y,s)\,\frac{1}{a(y)}\,dy\,ds
\\=\int_{\RR^n} \partial_t^2 \Gamma_{(x,t)}^{A^T}(y,0)\,\frac{1}{a(y)}\,dy
=\partial_t^2\s_A(1/a)(x,t).
\end{multline*}	

We claim that because $1/a\in L^\infty(\RR^n)$, we have that 
\begin{equation*}
d\mu(x,t) = \abs{\partial_t^2 \s_A\left(1/a\right)(x,t)}^2\, t\,dx\,dt
\end{equation*} 
is a Carleson measure with Carleson norm $\doublebar{\mu}_{\mathcal{C}}$ at most $C\doublebar{1/a}_{L^\infty(\RR^n)}^2$. This follows from the square-function bound \eqref{eqn:Ssquare} and the decay estimate \eqref{eqn:fundsolnsize:k} by a simple argument due to Fefferman and Stein (see the proof of Theorem~3 in \cite{FefS72}).
Let $\widetilde R\subset\RR^n$ be a cube; then
\begin{multline*}
\int_0^{\ell(\widetilde R)}\int_{\widetilde R} \abs{\partial_t^2 \s_A\left(1/a\right)(x,t)}^2\, t\,dx\,dt
\\\begin{aligned}&\leq
2\int_0^{\ell(\widetilde R)}\int_{\widetilde R} \abs{\partial_t^2 \s_A\left(\1_{2\widetilde R}(1/a)\right)(x,t)}^2\, t\,dx\,dt
\\&\qquad+
2\int_0^{\ell(\widetilde R)}\int_{\widetilde R} \abs{\partial_t^2 \s_A\left((1-\1_{2\widetilde R})(1/a)\right)(x,t)}^2\, t\,dx\,dt.
\end{aligned}\end{multline*}
The first integral is at most $C\doublebar{1/a}_{L^2(2\widetilde R)}^2\leq C\abs{\widetilde R}\,\doublebar{1/a}_{L^\infty(\RR^n)}^2$ by \eqref{eqn:Ssquare}, while the second is at most $C\abs{\widetilde R}\,\doublebar{1/a}_{L^\infty(\RR^n)}^2$ by \eqref{eqn:Shdecay}.

Let $u_W(x,t)=u_Q$ whenever $(x,t)\in Q$. Observe that $N_+u_W(x)\leq N_+u(x)$ (possibly at a cost of increasing the aperture of nontangential cones). Then
\begin{equation*}{IV}=\int_{\RR^{n+1}_+} \abs{u_W(x,t)}^2 \,\abs{\partial_t^2 \s_A(1/a)(x,t)}^2 \,t\,dx\,dt.\end{equation*}	
Applying duality between Carleson measures and nontangentially bounded functions, we see that
\begin{equation*}{IV}\leq \doublebar{N_+((u_W)^2)}_{L^1(\RR^n)} \doublebar{\mu}_{\mathcal{C}}
\leq C \doublebar{N_+ u}_{L^2(\RR^n)}^2 \doublebar{1/a}_{L^\infty(\RR^n)}^2\end{equation*}
as desired.
\end{proof}

Next, we bound the term ${I}$.

\begin{lem} 
\label{lem:termI}
If $a$ and $A$ are $t$-independent, $a$ is accretive, and $A$ and $A^*$ satisfy the De Giorgi-Nash-Moser condition, then
\begin{equation*}{I}\leq C\int_{\RR^{n+1}_+} \abs{\nabla u(y,s)}^2\,s\,dy\,ds.\end{equation*}
\end{lem}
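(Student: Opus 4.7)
The plan is to combine three ingredients: a pointwise decay estimate for $\partial_t^3\Gamma^{A^T}_{(x,t)}(y,s)$, the $L^2$ Poincar\'e inequality on each Whitney cube $R$, and a weighted Cauchy--Schwarz argument to absorb the resulting double sum.

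I would first note that, under the De Giorgi--Nash--Moser condition, \eqref{eqn:fundsolnsize:k} gives $\abs{\partial_t^3\Gamma^{A^T}_{(x,t)}(y,s)}\leq C/\abs{(x,t)-(y,s)}^{n+2}$, and for $(x,t)\in Q$, $(y,s)\in R$ with $R\in F(Q)$ this distance is comparable to $D(Q,R):=\dist(Q,R)+\ell(Q)+\ell(R)$. Combined with accretivity ($\abs{1/a}\leq C$), the $L^2$ Poincar\'e inequality on $R$, and Cauchy--Schwarz in $(y,s)$, this yields
\begin{equation*}
\int_R\abs{\partial_t^3\Gamma^{A^T}_{(x,t)}(y,s)}\,\frac{1}{\abs{a(y)}}\,\abs{u-u_R}\,dy\,ds\leq C\,\frac{\ell(R)^{(n+3)/2}}{D(Q,R)^{n+2}}\,b_R,
\end{equation*}
where $b_R:=\bigl(\int_R\abs{\nabla u}^2\bigr)^{1/2}$. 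Since $t\sim\ell(Q)$ on $Q$, one has $\int_Q t\,dx\,dt\leq C\ell(Q)^{n+2}$, so
\begin{equation*}
I\leq C\sum_{Q\in W}\ell(Q)^{n+2}\biggl(\sum_{R\in F(Q)}\frac{\ell(R)^{(n+3)/2}}{D(Q,R)^{n+2}}\,b_R\biggr)^{\!2}.
\end{equation*}

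Next, I would apply Cauchy--Schwarz with a small parameter $\epsilon\in(0,1)$:
\begin{equation*}
\biggl(\sum_{R\in F(Q)}\frac{\ell(R)^{(n+3)/2}}{D(Q,R)^{n+2}}\,b_R\biggr)^{\!2}
\leq\biggl(\sum_{R\in F(Q)}\frac{\ell(R)^{n+2-\epsilon}}{D(Q,R)^{n+2}}\biggr)\biggl(\sum_{R\in F(Q)}\frac{\ell(R)^{1+\epsilon}b_R^2}{D(Q,R)^{n+2}}\biggr).
\end{equation*}
Using that $\ell(R(y,s))\sim s$ and writing $\sum_R f(R)\abs{R}=\int f(R(y,s))\,dy\,ds$, the first factor converts to an integral, and integrating in $dy$ first gives $\sum_{R\in F(Q)}\ell(R)^{n+2-\epsilon}/D(Q,R)^{n+2}\leq C\ell(Q)^{-\epsilon}$.

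Substituting and exchanging the order of summation then leaves
\begin{equation*}
I\leq C\sum_R\ell(R)^{1+\epsilon}b_R^2\sum_{Q:R\in F(Q)}\frac{\ell(Q)^{1-\epsilon}\abs{Q}}{D(Q,R)^{n+2}},
\end{equation*}
and by symmetry (integrating in $dx$ first) the inner sum over $Q$ is likewise bounded by $C\ell(R)^{-\epsilon}$. Thus
\begin{equation*}
I\leq C\sum_R\ell(R)\int_R\abs{\nabla u}^2\leq C\int_{\RR^{n+1}_+}s\,\abs{\nabla u(y,s)}^2\,dy\,ds,
\end{equation*}
as desired. The main technical obstacle is choosing the Cauchy--Schwarz weight so that both Whitney-sum integrals converge uniformly in dimension $n+1\geq 3$; any $\epsilon\in(0,1)$ works once one reduces to continuous integrals, which is why introducing the small parameter (rather than a symmetric square-root split) is essential.
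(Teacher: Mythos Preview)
Your proof is correct. The paper follows the same three-ingredient skeleton (kernel decay via \eqref{eqn:fundsolnsize:k}, Poincar\'e on each cube~$R$, and a Cauchy--Schwarz/Schur argument), but organizes the last two steps differently: it applies the $L^1$ Poincar\'e inequality $\int_R\abs{u-u_R}\leq C\ell(R)\int_R\abs{\nabla u}$, passes immediately to the continuous integral
\[
I\leq C\int_{\RR^{n+1}_+}\biggl(\int_{\RR^{n+1}_+}\frac{s}{(s+t+\abs{x-y})^{n+2}}\abs{\nabla u(y,s)}\,dy\,ds\biggr)^{2} t\,dx\,dt,
\]
and then applies Cauchy--Schwarz with the split $s/K^{n+2}=\bigl(s^2/K^{n+2}\bigr)^{1/2}\bigl(1/K^{n+2}\bigr)^{1/2}$, using $\int_{\RR^{n+1}_+}K^{-(n+2)}\,dy\,ds=C/t$ and then $\int_{\RR^{n+1}_+}K^{-(n+2)}\,dx\,dt=C/s$. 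No auxiliary parameter is needed. Your $\epsilon$ is forced only because you take $L^2$ Poincar\'e first and work with the discrete quantities $b_R$: with $\epsilon=0$ your first factor $\sum_R\ell(R)^{n+2}/D(Q,R)^{n+2}$ is comparable to $\int_0^\infty s\,(s+t)^{-2}\,ds$, which diverges logarithmically. The paper's ordering sidesteps this borderline case, while your Schur-test formulation makes the $Q\leftrightarrow R$ symmetry more explicit; both arrive at the same bound.
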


\begin{proof}
Recall that
\begin{align*}
{I} &=
	\sum_{Q\in W}
	\int_{Q} \biggl(\sum_{R\in F(Q)} \int_{R} 
	\Bigl\lvert
	\partial_t^3\Gamma_{(x,t)}^{A^T}(y,s) \,\frac{1}{a(y)}\,\bigl(u(y,s)-u_R\bigr)
	\Bigr\rvert\,dy\,ds\biggr)^2 t\,dx\,dt
.\end{align*}
By the Poincar\'e inequality,
\begin{equation*}\int_{R} \abs{u(y,s)-u_R}\,dy\,ds\leq C\ell(R)\int_R \abs{\nabla u(y,s)}\,dy\,ds.\end{equation*}
By \eqref{eqn:fundsolnsize:k}, and because $\ell(R)\leq s\leq 2\ell(R)$ for any $(y,s)\in R$, we have that
\begin{align*}
{I} &\leq	
	\sum_{Q\in W}
	\int_{Q} \biggl(\sum_{R\in F(Q)} 
	\frac{C\ell(R)\doublebar{1/a}_{L^\infty(\RR^n)}}{\dist(Q,R)^{n+2}}
	\int_{R} \abs{\nabla u(y,s)}\,dy\,ds\biggr)^2 t\,dx\,dt
\\&\leq
	C\doublebar{1/a}_{L^\infty(\RR^n)}^2\int_{\RR^{n+1}_+} \biggl(
	\int_{\RR^{n+1}_+} \frac{s}{(s+t+\abs{x-y})^{n+2}}\abs{\nabla u(y,s)}\,dy\,ds\biggr)^2 t\,dx\,dt
.\end{align*}
But by H\"older's inequality,
\begin{multline}
\label{eqn:squarenorm}
\int_{\RR^{n+1}_+} \biggl(
	\int_{\RR^{n+1}_+} \frac{s}{(s+t+\abs{x-y})^{n+2}}\abs{\nabla u(y,s)}\,dy\,ds\biggr)^2 t\,dx\,dt
\\\begin{aligned}
&\leq
	\int_{\RR^{n+1}_+} 
	\int_{\RR^{n+1}_+} \frac{s^2\,\abs{\nabla u(y,s)}^2\,dy\,ds}{(s+t+\abs{x-y})^{n+2}}
	\int_{\RR^{n+1}_+}
	\frac{ \,dy\,ds}{(s+t+\abs{x-y})^{n+2}}
	\,t\,dx\,dt
\\&=
	C\int_{\RR^{n+1}_+} 
	\abs{\nabla u(y,s)}^2s\,dy\,ds.
\end{aligned}\end{multline}
This completes the proof.
\end{proof}

We may bound ${III}$ similarly.

\begin{lem} 
\label{lem:termIII}
Suppose that $a$ and $A$ are as in \lemref{termI}. Suppose in addition that $B^*$ is $t$-independent and satisfies the De Giorgi-Nash-Moser condition, and $\Div B^*\nabla u=0$ in $\RR^{n+1}_+$. Then
\begin{equation*}
{III}
\leq
	C\int_{\RR^{n+1}_+} \abs{\nabla u(y,s)}^2\,s\,dy\,ds.\end{equation*}
\end{lem}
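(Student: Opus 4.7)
The plan is to extend the \lemref{termI} argument by handling the local singularity of $\partial_t^3\Gamma$ on $N(Q)$ through integration by parts in $s$ combined with the PDE $\Div B^*\nabla u=0$.

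First, I would move $\partial_t^2$ inside the integral and integrate by parts once in $s$ over the Whitney cubes comprising $N(Q)$, using the $t$-independence of $A$ (so that $\partial_t\Gamma^{A^T}_{(x,t)}(y,s)=-\partial_s\Gamma^{A^T}_{(x,t)}(y,s)$). By continuity of $\Gamma$ and $u$, the horizontal faces interior to $N(Q)$ cancel between adjacent cubes, leaving
\begin{equation*}
\partial_t^2\int_{N(Q)}\partial_t\Gamma\,\frac{1}{a}(u-u_Q)\,dy\,ds
=\int_{N(Q)}\partial_t^2\Gamma\,\frac{1}{a}\,\partial_s u\,dy\,ds-B(x,t),
\end{equation*}
where $B(x,t)$ is supported on the horizontal pieces of $\partial N(Q)$. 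Since $\dist(Q,\partial N(Q))\geq c\,\ell(Q)$, one has $\abs{\partial_t^2\Gamma}\leq C\ell(Q)^{-(n+1)}$ on $\partial N(Q)$; the De Giorgi--Nash--Moser estimate for~$u$ (valid because $\Div B^*\nabla u=0$ and $B^*$ satisfies DGNM), combined with Poincar\'e, gives
\begin{equation*}
\doublebar{u-u_Q}_{L^\infty(N(Q))}\leq C\ell(Q)\biggl(\fint_{2N(Q)}\abs{\nabla u}^2\biggr)^{1/2}.
\end{equation*}
Hence $\int_Q\abs{B}^2\,t\,dx\,dt\leq C\int_{2N(Q)}\abs{\nabla u}^2\,s\,dy\,ds$, which sums over $Q$ by the bounded-overlap property $\sum_Q\1_{2N(Q)}\leq C$.

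For the bulk volume term, the kernel $\partial_t^2\Gamma$ has critical singularity of order $n+1$ in $\abs{(x,t)-(y,s)}$, so the direct Cauchy--Schwarz of \lemref{termI} diverges. I would integrate by parts in~$s$ once more (handling the new boundary contribution exactly as above) to reduce to bounding $\int_{N(Q)}\partial_t\Gamma\,(1/a)\,\partial_s^2 u\,dy\,ds$, and then invoke the PDE together with the $t$-independence of~$B^*$ to write
\begin{equation*}
B^*_{n+1,n+1}\,\partial_s^2 u=-\sum_{j<n+1}B^*_{n+1,j}\,\partial_s\partial_j u-\sum_{i<n+1}\partial_i\biggl(\sum_{j\leq n+1}B^*_{ij}\,\partial_j u\biggr).
\end{equation*}
Using this identity in weak form shifts the spatial divergence onto the $\Gamma$-factor---without differentiating the merely-measurable coefficients $1/a$ or~$B^*$---producing an integrand schematically of the form $(\nabla_y\partial_t\Gamma)\cdot(B^*\nabla u)+\partial_t\Gamma\cdot(\nabla_\parallel\nabla u)$, i.e.\ a second-order Calder\'on--Zygmund-type kernel paired against first derivatives of~$u$. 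The $L^2$ boundedness of such operators (which follows in the $t$-independent setting from the square-function estimate \eqref{eqn:Svertsquare} and off-diagonal decay of $\partial^2\Gamma$) yields $\doublebar{\text{bulk}}_{L^2(Q)}\leq C\doublebar{\nabla u}_{L^2(N(Q))}$; multiplying by $t\sim\ell(Q)$ and summing over $Q$ with finite overlap gives the desired bound by $\int\abs{\nabla u}^2 s$.

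The main obstacle is the critical singularity $\abs{(x,t)-(y,s)}^{-(n+1)}$ of $\partial_t^2\Gamma$ on~$N(Q)$: unlike in \lemref{termI}, here the singularity lies inside the integration domain, so kernels are never pointwise dominated by the $(s+t+\abs{x-y})^{-(n+2)}$ weight used there. The integration-by-parts scheme, combined with the PDE for~$u$, restores a \lemref{termI}-type structure by exchanging $(u-u_Q)$ for $\nabla u$ paired against $L^2$-bounded CZ-type operators, but requires careful tracking of boundary contributions at each integration-by-parts step and systematic use of the weak formulation of the equation to avoid differentiating the merely-measurable coefficients $a$ and $B^*$.
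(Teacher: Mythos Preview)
Your approach differs substantially from the paper's, and the bulk-term step has a real gap.

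The paper's argument is short and avoids integration by parts entirely. Set
\begin{equation*}
w(x,t)=\int_{N(Q)}\Gamma^{A^T}_{(x,t)}(y,s)\,\frac{1}{a(y)}\bigl(u(y,s)-u_Q\bigr)\,dy\,ds,
\end{equation*}
so that the quantity inside the absolute value in $III$ is $\partial_t^3 w$. By \thmref{fundsoln:properties}, $a\,L_A w = u-u_Q$ in $N(Q)$; since $\Div B^*\nabla u=0$ there, $L_B^*(a\,L_A w)=0$ in $N(Q)$, i.e.\ $w$ solves the \emph{fourth-order} composition equation. The fourth-order Caccioppoli inequality and H\"older estimate (\thmref{Caccioppoli} and \crlref{4holder}), iterated using $t$-independence to pass from $w$ to $\partial_t^3 w$, together with the pointwise bound $\abs{w}\leq C\ell(Q)^2\doublebar{u-u_Q}_{L^\infty(N(Q))}$ coming from~\eqref{eqn:fundsolnsize}, give
\begin{equation*}
\sup_Q\abs{\partial_t^3 w}\leq C\ell(Q)^{-1}\doublebar{u-u_Q}_{L^\infty(N(Q))}.
\end{equation*}
The DGNM bound for $u$ plus Poincar\'e then finishes exactly as in your boundary-term estimate. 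The machinery of \secref{Caccioppoli} was built precisely for this step.

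Your scheme has two difficulties. First, after substituting the PDE into $\int\partial_t\Gamma\cdot(1/a)\,\partial_s^2 u$, the integrand carries the factor $(a\,B^*_{n+1,n+1})^{-1}$; when you move the spatial divergence $\partial_i\bigl(\sum_j B^*_{ij}\partial_j u\bigr)$ onto $\Gamma$ by integration by parts in~$y_i$, that derivative lands on the full product $\partial_t\Gamma\cdot(a\,B^*_{n+1,n+1})^{-1}$ and so \emph{does} hit the merely-$L^\infty$ coefficients, contrary to your claim. The same obstruction recurs for the $\partial_s\partial_j u$ terms if you try to reduce them to first derivatives of~$u$. Second, even granting a reduction to $\int_{N(Q)}K\cdot\nabla u$ with $\abs{K}\lesssim\abs{(x,t)-(y,s)}^{-(n+1)}$, this is a critical singular integral on $\RR^{n+1}$ whose $L^2$-boundedness does not follow from~\eqref{eqn:Svertsquare} (a statement about layer potentials on~$\RR^n$); justifying it would itself require a $T(1)$-type argument.
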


\begin{proof}
Recall that
\begin{equation*}
{III}=\sum_{Q\in W}
	\int_{Q} \biggabs{\partial_t^2\int_{N(Q)} \partial_t\Gamma_{(x,t)}^{A^T}(y,s)\,\frac{1}{a(y)}\,(u(y,s)-u_Q)\,dy\,ds}^2 t\,dx\,dt.\end{equation*}

Observe that $2Q\subset N(Q)\subset 5Q$. Let 
\begin{equation*}w(x,t)= \int_{N(Q)}\Gamma_{(x,t)}^{A^T}(y,s)\,\frac{1}{a(y)}\,(u(y,s)-u_Q)\,dy\,ds.\end{equation*}
By \eqref{eqn:fundsolnsize}, $w$ satisfies
$\abs{w(x,t)}\leq C\ell(Q)^2 \doublebar{1/a}_{L^\infty(\RR^n)} \doublebar{u-u_Q}_{L^\infty(N(Q))}$.

By \thmref{fundsoln:properties}, $a\,L_A w=(u-u_Q)$ in~$N(Q)$.
If $\Div B^*\nabla u = 0$ in $N(Q)$, then $L_B^*(a\,L_A w)=0$ in~$N(Q)$. Thus, we may use \thmref{Caccioppoli} and \crlref{4holder} twice to show that
\begin{equation*}\sup_{(x,t)\in Q}\abs{\partial_t^3 w(x,t)}
\leq \frac{C}{\ell(Q)} \doublebar{1/a}_{L^\infty(\RR^n)} \doublebar{u-u_Q}_{L^\infty(N(Q))}
\end{equation*}
and so
\begin{equation*}{III}
\leq
\sum_{Q\in W}
	\int_{Q} \biggl(\frac{C}{\ell(Q)} \doublebar{1/a}_{L^\infty(\RR^n)} \doublebar{u-u_Q}_{L^\infty(N(Q))}\biggr)^2 t\,dx\,dt.\end{equation*}
Let 
\begin{equation*}\pi(Q)=\{x:(x,t)\in Q\text{ for some }t>0\}\end{equation*}
be the projection of $Q$ onto~$\RR^n$. Notice that $\pi(Q)$ is also a cube. Let $U(Q)=6\pi(Q) \times (\ell(Q)/4, 5\ell(Q))$. Observe that if $X\in N(Q)$, then $B(X, \ell(Q)/4)\subset U(Q)$.
If $\Div B^*\nabla u=0$ in~$U(Q)$, then by the De Giorgi-Nash-Moser condition and the Poincar\'e inequality,
\begin{equation*}\sup_{N(Q)} \abs{u-u_Q} 
\leq C\biggl(\fint_{U(Q)} \abs{u-u_Q}^2\biggr)^{1/2}
\leq C\ell(Q)\biggl(\fint_{U(Q)} \abs{\nabla u}^2\biggr)^{1/2}
.\end{equation*}
So
\begin{align*}{III}
&\leq
	C\doublebar{1/a}_{L^\infty(\RR^n)}^2
	\sum_{Q\in W}
	\int_{Q} \fint_{U(Q)} \abs{\nabla u(y,s)}^2\,dy\,ds\, t\,dx\,dt
.\end{align*}
Because each $(y,s)\in\RR^{n+1}$ lies in $U(Q)$ for at most $C$ cubes $Q\in W$, this implies that
\begin{align*}{III}
&\leq
	C\doublebar{1/a}_{L^\infty(\RR^n)}^2
	\int_{\RR^{n+1}_+} \abs{\nabla u(y,s)}^2\,s\,dy\,ds
\end{align*}
as desired.\end{proof}

Finally, we come to the term ${II}$.

\begin{lem} \label{lem:termII}
If $a$ is accretive, and if $A$ and $A^*$ are $t$-independent and satisfy the De Giorgi-Nash-Moser condition, then
\begin{equation}
\label{eqn:termII}
{II}\leq C\int_{\RR^{n+1}_+} \biggl(
	\int_{\RR^{n+1}_+} \frac{s}{(s+t+\abs{x-y})^{n+2}}\abs{\nabla u(y,s)}\,dy\,ds\biggr)^2 t\,dx\,dt.
\end{equation}
\end{lem}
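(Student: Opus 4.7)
The plan is to dominate each oscillation $\abs{u_R-u_Q}$ by an integral of $\abs{\nabla u}$ along a Whitney-cube tube joining $Q$ to~$R$, and then use Fubini to exchange the sum over $R\in F(Q)$ with a sum over cubes populating the tubes. For each ordered pair $Q,R\in W$, I would build a chain $Q=Q_0, Q_1, \ldots, Q_{N(Q,R)}=R$ of Whitney cubes such that $Q_j$ and $Q_{j+1}$ are adjacent (sharing a face), side lengths along the chain are comparable, and $\sum_j\ell(Q_j)\leq C(\ell(Q)+\ell(R)+\dist(Q,R))$. Geometrically the chain rises from $Q$ to a height comparable to $\ell(Q)+\ell(R)+\dist(Q,R)$, traverses horizontally at that scale, and descends to~$R$.

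Applying the $L^1$-Poincar\'e inequality to each adjacent pair $Q_j\cup Q_{j+1}$ and telescoping yields
\[ \abs{u_R-u_Q}\leq C\sum_{j=0}^{N(Q,R)-1}\ell(Q_j)\fint_{Q_j\cup Q_{j+1}}\abs{\nabla u}. \]
The bound \eqref{eqn:fundsolnsize:k} together with the accretivity of~$a$ gives the pointwise kernel estimate $\bigabs{\partial_t^3\Gamma_{(x,t)}^{A^T}(y,s)/a(y)}\leq C(\abs{x-y}+t+s)^{-n-2}$. Substituting these two estimates into the definition of $II$ and then exchanging the sum over $R\in F(Q)$ with the sum over the Whitney cubes $P$ that appear in the tubes leads, for each $(x,t)\in Q$, to a pointwise bound
\[ \sum_{R\in F(Q)}\abs{u_R-u_Q}\int_R \biggabs{\frac{\partial_t^3\Gamma_{(x,t)}^{A^T}(y,s)}{a(y)}}\,dy\,ds\leq C\int_{\RR^{n+1}_+}\frac{s\,\abs{\nabla u(y,s)}}{(\abs{x-y}+t+s)^{n+2}}\,dy\,ds. \]
Squaring, multiplying by~$t$, and integrating over $(x,t)\in\RR^{n+1}_+$ would then produce~\eqref{eqn:termII}.

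\textbf{Main obstacle.} The technical heart of the argument is organizing the chains so that the bookkeeping after Fubini's theorem comes out clean: for each intermediate cube $P$, the set $\{R\colon P\in\mathrm{chain}(Q,R)\}$ must be confined to a cone-like region based at $P$ and extending away from~$Q$, and the sum $\sum_{R\colon P\in\mathrm{chain}(Q,R)}\int_R(\abs{x-y}+t+s)^{-n-2}\,dy\,ds$ must decay at just the right rate in $\abs{x-x_P}+t+\ell(P)$ so that the cube-by-cube contributions $\ell(P)\fint_{P^*}\abs{\nabla u}$ reassemble into a Riemann sum for the target integrand. Once this combinatorial control is in place the remainder is routine; indeed, the integrand on the right-hand side above is precisely the one already handled in the proof of \lemref{termI} (cf.\ \eqref{eqn:squarenorm}), so no further analytic input is required.
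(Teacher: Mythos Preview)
Your plan is correct and rests on the same idea as the paper's proof: control $\abs{u_R-u_Q}$ by a telescoping sum of Poincar\'e inequalities along a path of Whitney cubes joining $Q$ to~$R$, then reorganize so that the contribution of each intermediate cube matches the integrand $s\,\abs{\nabla u(y,s)}/(s+t+\abs{x-y})^{n+2}$.

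The paper organizes the bookkeeping differently, in a way that sidesteps the Fubini step you flag as the main obstacle. Rather than building an individual chain for each pair $(Q,R)$ and then summing over~$R$, the paper first splits the Whitney grid into a discretized cone $G(Q)$ over~$Q$ and, for each boundary cube $R'\in B(Q)=G(Q)\setminus G'(Q)$, the ``tree'' $T(R')$ of cubes below it. Every $R\in W$ is either in $G(Q)$ or in exactly one $T(R')$. Writing $\abs{u_S-u_Q}\leq\abs{u_S-u_{R'}}+\abs{u_{R'}-u_Q}$ for $S\in T(R')$ decouples the problem into two pieces: a ``cone'' term $V$ (telescoping through concentric cylinders $C_k(Q)$ above~$Q$) and a ``tree'' term $VI$ (telescoping layer by layer from $R'$ down to~$S$). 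In each piece the combinatorics is one-dimensional (a single chain up, or a single dyadic descent), so the geometric series close by inspection and the target kernel emerges directly. This is effectively a canonical choice of the chains you describe---ascend inside the cone, descend inside the tree---packaged so that the ``sum over $R$ with $P$ in the chain'' never has to be confronted head-on. Your approach would also go through, but the verification you identify as the obstacle is exactly what the cone/tree split is designed to make transparent.
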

Note that the estimate \eqref{eqn:termII} together with  \eqref{eqn:squarenorm} above imply that
\begin{equation*}{II} \leq C\int_{\RR^{n+1}_+} \abs{\nabla u(y,s)}^2s\,dy\,ds.\end{equation*}

\begin{proof}

Recall that 
\begin{equation*}
{II}=\sum_{Q\in W}
	\int_{Q} \biggl(\sum_{R\in F(Q)} \abs{u_R-u_Q}\int_{R} \Bigl\lvert\partial_t^3\Gamma_{(x,t)}^{A^T}(y,s)\,\frac{1}{a(y)}
	\Bigr\rvert\,dy\,ds\biggr)^2 t\,dx\,dt.\end{equation*}
	
To analyze the inner sum, we establish some notation. As in the proof of \lemref{termIII}, if $R\in W$ is a cube, we let $\pi(R)$ be the projection of $R$ onto~$\RR^n$. 
We let $P(R)$ denote the Whitney cube directly above~$R$, so $\pi(P(R))$ is the dyadic parent of $\pi(R)$.
Let $\delta(Q,R)=\ell(Q)+\ell(R)+\dist(\pi(Q),\pi(R))$. If $(x,t)\in Q$ and $(y,s)\in R$ for some $R\in F(Q)$, then
\begin{equation*}\abs{(y,s)-(x,t)} \approx \abs{x-y} + \max(s,t)\approx\delta(Q,R).\end{equation*}
Here $U\approx V$ if $U\leq CV$ and $V\leq CU$.
Applying \eqref{eqn:fundsolnsize:k}, we see that
\begin{align*}
{II} &\leq 
	C\sum_{Q\in W} 
	\int_{Q} \biggl(\sum_{R\in W} \abs{u_R-u_Q}
	\frac{\abs{R}}{\delta(Q,R)^{n+2}}
	\biggr)^2 t\,dx\,dt
.\end{align*}

To analyze the sum over $R\in W$, we will divide $W$ into a ``discretized cone'' over $Q$ and a leftover  region. 
Let $G'(Q)$ be the discretized cone given by
\begin{equation*}G'(Q)=\{R\in W:\ell(R)> \ell(Q),\>\ell(R)>\dist(\pi(R),\pi(Q))\}.\end{equation*}
Let $G(Q)$ be the larger discretized cone given by
\begin{equation*}G(Q)=\{R\in W:P(R)\in G'(Q).\} \end{equation*}
Let $B(Q)=G(Q)\setminus G'(Q)$ be the lower boundary of $G(Q)$. 
If $R$ is a cube, let $T(R)$ be the set of cubes below $R$, that is,
\begin{equation*}T(R)=\{S\in W: \pi(S)\subsetneq \pi(R)\}.\end{equation*}
Observe that if $R$, $S\in B(Q)$ are distinct cubes then their projections $\pi(R)$ and $\pi(S)$ are disjoint, and so if $R\in B(Q)$ then $T(R)\cap G(Q)$ is empty. Furthermore, 
if $S$ is a Whitney cube, then either $S\in G(Q)$ or $S\in T(R)$ for a unique cube $R\in B(Q)$.

Thus, we may write
\begin{align*}
{II} &\leq
	C\sum_{Q\in W}
	\int_{Q} \biggl(\sum_{R\in G(Q)} \abs{u_R-u_Q}
	\frac{\abs{R}}{\delta(Q,R)^{n+2}}
	\biggr)^2 t\,dx\,dt
	\\&\quad+
	C\sum_{Q\in W}
	\int_{Q} \biggl(\sum_{R\in B(Q)}\sum_{S\in T(R)} \abs{u_S-u_Q}
	\frac{\abs{S}}{\delta(Q,S)^{n+2}}
	\biggr)^2 t\,dx\,dt
.\end{align*}
We may write $\abs{u_S-u_Q}\leq \abs{u_S-u_R}+\abs{u_R-u_Q}$ in the second sum. Observe that if $R\in B(Q)=G(Q)\setminus G'(Q)$, then $R$ is not in $G'(Q)$ but the cube $P(R)$ above it is, and so either $\ell(R)\approx\ell(Q)$ or $\ell(R)\approx \dist (\pi(R),\pi(Q))$. Thus if $S\in T(R)$ then $\delta(S,Q)\approx \ell(R)\approx\delta(R,Q)$. So we may write
\begin{align*}
{II} &\leq
	C\sum_{Q\in W}
	\int_{Q} \biggl(\sum_{R\in G(Q)} \abs{u_R-u_Q}
	\frac{\abs{R}}{\delta(Q,R)^{n+2}}
	\biggr)^2 t\,dx\,dt
	\\&\quad+
	C\sum_{Q\in W}
	\int_{Q} \biggl(\sum_{R\in B(Q)}\sum_{S\in T(R)} \abs{u_S-u_R}
	\frac{\abs{S}}{\delta(Q,R)^{n+2}}
	\biggr)^2 t\,dx\,dt
\\&= C({V}+{VI})
.\end{align*}

We begin by analyzing the term ${VI}$. Choose some $R\in B(Q)$.
Let $L_0=\{R\}$, and let $L_j=\{S\in T(R):\ell(S)=2^{-j} \ell(R)\}$. Then
\begin{equation*}
\sum_{S\in T(R)}
	\abs{u_S-u_R}\,\abs{S}
=
	\abs{R}\sum_{j=1}^\infty 2^{-j(n+1)}
	\sum_{S\in L_j} \abs{u_S-u_R} .\end{equation*}
Let $U_j = \sum_{S\in L_j} \abs{u_S-u_R}$; observe that $U_0=0$. Then for each $j\geq 1$,
\begin{align*}
U_j
&=
	\sum_{S\in L_j} \abs{u_S-u_R}
\leq 
	\sum_{S\in L_j} \abs{u_S-u_{P(S)}}+\sum_{S\in L_j} \abs{u_{P(S)}-u_R}
.\end{align*}
The second term is equal to $2^n U_{j-1}$. To contend with the first term, we apply the Poincar\'e inequality in the set $S\cup P(S)$. Then
\begin{align*}
\abs{u_S-u_{P(S)}}
&\leq
\abs{u_S-u_{S\cup P(S)}} + \abs{u_{P(S)}-u_{S\cup P(S)}}
\\&=
\frac{1}{\abs{S}}\int_S \abs{u-u_{S\cup P(S)}}
+
\frac{1}{2^n\abs{S}}\int_{P(S)} \abs{u-u_{S\cup P(S)}}
\\&\leq
\frac{1}{\abs{S}}\int_{S\cup P(S)} \abs{u-u_{S\cup P(S)}}
\leq
C\frac{\ell(S)}{\abs{S}}\int_{S\cup P(S)} \abs{\nabla u(y,s)}\,dy\,ds
.\end{align*}
Thus,
\begin{align*}
U_j
&\leq
	2^n U_{j-1}
	+\sum_{S\in L_j} \frac{C}{\ell(S)^n} \int_{S} \abs{\nabla u}
	+2^{2n}\sum_{S\in L_{j-1}} \frac{C}{\ell(S)^n} \int_{S} \abs{\nabla u}
.\end{align*}
Since $U_0=0$, by induction
\begin{align*}
U_j
&\leq
	\sum_{k=0}^{j-1} (2^n+1) 2^{n(j-k)}\sum_{S\in L_k} \frac{C}{\ell(S)^n}\int_S\abs{\nabla u}
	+\sum_{S\in L_j} \frac{C}{\ell(S)^n}\int_S\abs{\nabla u}
\end{align*}
which may be simplified to 
\begin{align*}
U_j
&\leq
	C\sum_{k=0}^{j} 2^{n(j-k)}\sum_{S\in L_k} \frac{1}{\ell(S)^n}\int_S\abs{\nabla u}
.\end{align*}
Thus
\begin{align*}
\sum_{S\in T(R)}
	\abs{u_S-u_R}\, \abs{S}
&=
	\abs{R}\sum_{j=1}^\infty 2^{-j(n+1)} U_j
\\&\leq
	\abs{R}\sum_{j=1}^\infty 2^{-j(n+1)} \sum_{k=0}^j 2^{n(j-k)}\sum_{S\in L_k} \frac{C}{\ell(S)^n}\int_S \abs{\nabla u(y,s)}\,dy\,ds
\\&=
	\abs{R}
	\sum_{k=0}^\infty 2^{-k-n k}
	\sum_{S\in L_k} \frac{C}{\ell(S)^n}\int_S \abs{\nabla u(y,s)}\,dy\,ds
\\&\leq
	C
	\sum_{S\in T(R)\cup\{R\}} \ell(S)\int_S \abs{\nabla u(y,s)}\,dy\,ds
.\end{align*}
So
\begin{align*}
{VI} &=
	\sum_{Q\in W}
	\int_{Q} \biggl(\sum_{R\in B(Q)} \frac{1}{\delta(Q,R)^{n+2}} \sum_{S\in T(R)} \abs{u_S-u_R}\,\abs{S}	
	\biggr)^2 t\,dx\,dt
\\&\leq C
	\sum_{Q\in W}
	\int_{Q} \biggl(\sum_{R\in B(Q)} \frac{1}{\delta(Q,R)^{n+2}} \sum_{S\in T(R)\cup\{R\}} \ell(S)\int_S \abs{\nabla u(y,s)}\,dy\,ds
	\biggr)^2 t\,dx\,dt
\\&\leq C
	\int_{\RR^{n+1}_+} \biggl(\int_{\RR^{n+1}_+} \frac{s}{(t+s+\abs{x-y})^{n+2}} \abs{\nabla u(y,s)}\,dy\,ds
	\biggr)^2 t\,dx\,dt
\end{align*}
because $\delta(Q,R)\approx \delta(Q,S)\approx t+s+\abs{x-y}$ for any $S\in T(R)$, any $R\in B(Q)$ and any $(x,t)\in Q$ and $(y,s)\in S$.

Now, recall that
\begin{align*}
{V} &=
\sum_{Q\in W}
	\int_{Q} \biggl(\sum_{R\in G(Q)} \abs{u_R-u_Q}
	\frac{\abs{R}}{\delta(Q,R)^{n+2}}
	\biggr)^2 t\,dx\,dt
	.\end{align*}
Observe that if $R\in G(Q)$ then $\dist(\pi(R),\pi(Q))\leq C\ell(R)$ and so $\delta(R,Q)\approx \ell(R)$; thus
\begin{align*}
{V} &\leq
	C\sum_{Q\in W}
	\int_{Q} \biggl(\sum_{R\in G(Q)} \abs{u_R-u_Q}
	\frac{1}{\ell(R)}
	\biggr)^2 t\,dx\,dt	
	.\end{align*}
Let $G_j(Q)=\{R\in G(Q): \ell(R)=2^j\ell(Q)\}$. Let $x_Q$ be the midpoint of the cube $\pi(Q)$. There is some constant $c$ depending only on dimension such that if $R\in G_j(Q)$ then $R$ is contained in the cylinder $C_j(Q)= \Delta(x_Q, c\,2^j\ell(Q))\times (2^j\ell(Q),2^{j+1}\ell(Q))$.
Therefore,
\begin{align*}
\sum_{R\in G_j(Q)} \abs{u_R-u_Q}
&=
	\frac{1}{2^{j(n+1)}\abs{Q}} \sum_{R\in G_j(Q)} \int_R \abs{u(y,s)-u_Q}\,dy\,ds
\\&\leq
	C\fint_{C_j(Q)} \abs{u(y,s)-u_{C_j(Q)}}\,dy\,ds
	+
	C\abs{u_{C_j(Q)}-u_Q}
.\end{align*}
If $j\geq 1$, then 
\begin{align*}
\abs{u_{C_j(Q)}-u_Q}
&\leq
	\abs{u_{C_0(Q)}-u_Q}
	+\sum_{k=1}^j \abs{u_{C_k(Q)}-u_{C_{k-1}(Q)}}
.\end{align*}
Applying the Poincar\'e inequality in $C_k(Q)\cup C_{k-1}(Q)$, we see that 
\begin{align*}
\abs{u_{C_k(Q)}-u_{C_{k-1}(Q)}}
&\leq 
\frac{C}{2^{kn}\ell(Q)^n}\int_{C_{k}(Q)\cup C_{k-1}(Q)} \abs{\nabla u(y,s)}\,dy\,ds
.\end{align*}
Observe that $Q\subset C_0(Q)$, and so by the Poincar\'e inequality,
\begin{equation*}
\abs{u_{C_0(Q)}-u_Q} \leq C\,\ell(Q) \fint_{C_0(Q)} \abs{\nabla u(y,s)}\,dy\,ds.
\end{equation*}
Finally, $\fint_{C_j(Q)} \abs{u(y,s)-u_{C_j(Q)}}\,dy\,ds\leq C 2^j\ell(Q)\fint_{C_j(Q)} \abs{\nabla u(y,s)}\,dy\,ds$.
So
\begin{align*}
\sum_{R\in G_j(Q)} \abs{u_R-u_Q}
&\leq
	\sum_{k=0}^j \frac{C}{2^{kn}\ell(Q)^n}\int_{C_{k}(Q)} \abs{\nabla u(y,s)}\,dy\,ds
 .\end{align*}
Thus,
\begin{align*}
\sum_{R\in G(Q)} \frac{1}{\ell(R)} \abs{u_R-u_Q}
&=
	\sum_{j=0}^\infty \frac{1}{2^j\ell(Q)}
	\sum_{R\in G_j(Q)} \abs{u_R-u_Q}
\\&\leq 
	\sum_{j=0}^\infty \frac{1}{2^j\ell(Q)}
	\sum_{k=0}^j \frac{C}{2^{kn}\ell(Q)^n}\int_{C_{k}(Q)} \abs{\nabla u(y,s)}\,dy\,ds
\\&=
	C\sum_{k=0}^\infty
	\frac{2^k\ell(Q)}{2^{k(n+2)}\ell(Q)^{n+2}}\int_{C_{k}(Q)} \abs{\nabla u(y,s)}\,dy\,ds
.\end{align*}
But if $(y,s)\in C_k(Q)$ and $(x,t)\in Q$, then $\abs{x-y}+t\leq C 2^k\ell(Q)$ and $s\approx 2^k\ell(Q)$. Thus, 
\begin{align*}
{V} &\leq
	C\sum_{Q\in W}
	\int_{Q} \biggl(\sum_{R\in G(Q)} \abs{u_R-u_Q}
	\frac{1}{\ell(R)}
	\biggr)^2 t\,dx\,dt	
\\&\leq
	C\sum_{Q\in W}
	\int_{Q} \biggl(\sum_{k=0}^\infty
	\frac{2^k\ell(Q)}{2^{k(n+2)}\ell(Q)^{n+2}}\int_{C_{k}(Q)} \abs{\nabla u(y,s)}\,dy\,ds
	\biggr)^2 t\,dx\,dt	
\\&\leq
	C\int_{\RR^{n+1}_+} \biggl(
	\int_{\RR^{n+1}_+} \frac{s}{(\abs{x-y}+t+s)^{n+2}}\abs{\nabla u(y,s)}\,dy\,ds
	\biggr)^2 t\,dx\,dt	
.\end{align*}

This completes the proof.\end{proof}

\section{A nontangential bound}
\label{sec:nontangential}
We have established that, if $h\in L^2(\RR^n)$, then under appropriate assumptions on $a$, $A$ and~$B$,
\begin{equation}
\label{eqn:Fsquarebound}
\int_{\RR^{n}}\int_0^\infty \abs{\partial_t^2 \F h(x,\pm t)}^2 {t}\,dt \,dx
\leq C\doublebar{h}_{L^2(\RR^n)}^2.
\end{equation}

In this section we will prove that, if $h\in L^2(\RR^n)$, then under appropriate assumptions on $a$, $A$ and~$B$,
\begin{equation*}
\triplebar{t\,\nabla\partial_t \E h)}_+
+\doublebar{\widetilde N_+(\nabla\E h)}_{L^2(\RR^n)}\leq C\doublebar{h}_{L^2(\RR^n)}^2.\end{equation*}
Recall that $\E h= \F h - \s_A\bigl(\frac{1}{a}\s_{B^*}^{\perp,+} h\bigr)$; thus by \eqref{eqn:NnablaS} and \eqref{eqn:Ssquare}, the difference $\E h-\F h$ satisfies square-function estimates and nontangential estimates, and so we may work with $\E$ or $\F$, whichever is more convenient.

This proof will require several steps.

\begin{lem}\label{lem:bdd1}
Suppose that $v\in W^2_{1,loc}(\RR^{n+1}_\pm)$ satisfies the conditions of \lemref{slabs}, that is, that $v$, $\partial_{n+1} v$ and $\partial_{n+1}^2 v$ satisfy the Caccioppoli inequality in~$\RR^{n+1}_\pm$.

Suppose furthermore that there is a constant $C_1$ such that
\begin{equation}
\label{eqn:L2forsquare}
\doublebar{\nabla v(\,\cdot\,,t)}_{L^2(\RR^n)} \leq C_1 
\abs{t}^{-1}.\end{equation}
Then there is some constant $C$, depending only on the constants in the Caccioppoli inequality, such that
\begin{equation*}
\int_0^\infty\int_{\RR^n} \abs{\nabla v(x,\pm t)}^2\, t\,dx\,dt
\leq
C\int_0^\infty\int_{\RR^n} \abs{ \partial_t v(x,\pm t)}^2\, t\,dx\,dt
+C_1^2
.\end{equation*}
\end{lem}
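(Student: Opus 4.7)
Write $\phi(t) = \doublebar{\nabla v(\,\cdot\,,\pm t)}_{L^2(\RR^n)}^2$ for $t>0$; the goal is to bound $\int_0^\infty t\,\phi(t)\,dt$ by $C\int_0^\infty t\,\doublebar{\partial_t v(\,\cdot\,,\pm t)}_{L^2(\RR^n)}^2\,dt + C C_1^2$. The two half-spaces are treated identically, so I will work in $\RR^{n+1}_+$. The strategy is to integrate by parts in $t$ to trade the weight $t$ for a derivative on $\phi$, apply Cauchy--Schwarz to convert that derivative into a mixed $\nabla\partial_t v$ factor, and then use the slab/Caccioppoli estimate \eqref{eqn:slabsL2Caccioppoli} (applied to $\partial_t v$, which is legitimate since both $\partial_t v$ and $\partial_t^2 v$ satisfy the Caccioppoli inequality by hypothesis) to turn that factor into $\partial_t v$ without the $\nabla$.

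Concretely, fix $0<\epsilon<T<\infty$ and let $A_{\epsilon,T} = \int_\epsilon^T t\,\phi(t)\,dt$. Note that by \eqref{eqn:L2forsquare}, $t^2\phi(t) \leq C_1^2$, so $A_{\epsilon,T} \leq C_1^2\log(T/\epsilon)<\infty$. Integrating by parts,
\begin{equation*}
A_{\epsilon,T} = \tfrac{T^2}{2}\phi(T) - \tfrac{\epsilon^2}{2}\phi(\epsilon) - \tfrac{1}{2}\int_\epsilon^T t^2 \phi'(t)\,dt.
\end{equation*}
Dropping the negative $-\tfrac{\epsilon^2}{2}\phi(\epsilon)$ and bounding $\tfrac{T^2}{2}\phi(T) \leq \tfrac{1}{2}C_1^2$ using \eqref{eqn:L2forsquare}, I reduce to estimating $\int_\epsilon^T t^2 |\phi'(t)|\,dt$. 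Since $\phi'(t) = 2\re\int_{\RR^n} \overline{\nabla v(\cdot,t)}\cdot\nabla\partial_t v(\cdot,t)\,dx$ (justified by the Caccioppoli hypothesis on $\partial_t v$, which places $\nabla\partial_t v \in L^2_{loc}$), Cauchy--Schwarz in both $x$ and $t$ gives
\begin{equation*}
\int_\epsilon^T t^2 |\phi'(t)|\,dt \leq 2\sqrt{A_{\epsilon,T}}\,\biggl(\int_\epsilon^T t^3\,\doublebar{\nabla\partial_t v(\,\cdot\,,t)}_{L^2(\RR^n)}^2\,dt\biggr)^{1/2}.
\end{equation*}

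Now apply the slab-Caccioppoli bound \eqref{eqn:slabsL2Caccioppoli} to $\partial_t v$: its hypotheses (Caccioppoli for $\partial_t v$ and $\partial_t^2 v$) are exactly what we have assumed. This yields $\doublebar{\nabla\partial_t v(\cdot,t)}_{L^2}^2 \leq (C/t^2)\fint_{t/4}^{5t/4}\doublebar{\partial_s v(\cdot,s)}_{L^2}^2 \,ds$, so that integrating against $t^3\,dt$ and applying Fubini gives
\begin{equation*}
\int_0^\infty t^3 \,\doublebar{\nabla\partial_t v(\,\cdot\,,t)}_{L^2}^2 \,dt \leq C\int_0^\infty s\,\doublebar{\partial_s v(\,\cdot\,,s)}_{L^2}^2 \,ds =: CB.
\end{equation*}
Combining, $A_{\epsilon,T} \leq \tfrac{1}{2}C_1^2 + \sqrt{A_{\epsilon,T}}\sqrt{CB}$, and the elementary inequality $\sqrt{xy}\leq \tfrac{1}{2}x + \tfrac{1}{2}y$ lets me absorb $\tfrac{1}{2}A_{\epsilon,T}$ into the left side (possible because $A_{\epsilon,T}<\infty$) to obtain $A_{\epsilon,T} \leq C_1^2 + CB$, uniformly in $\epsilon, T$. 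Monotone convergence as $\epsilon\to 0$ and $T\to\infty$ finishes the proof.

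The main technical point is step (c), the slab estimate for $\nabla\partial_t v$: we need the full strength of the hypothesis that $\partial_t v$ (and $\partial_t^2 v$) satisfies Caccioppoli, not just $v$. A secondary subtlety is the a priori finiteness of $A_{\epsilon,T}$, needed for the absorbing argument; the logarithmic growth permitted by \eqref{eqn:L2forsquare} on a bounded interval suffices. Differentiability of $\phi$ under the integral sign, technically, follows from $\nabla v,\nabla\partial_t v \in L^2_{loc}$, but if desired can be bypassed by mollifying $v$ in $t$ and passing to the limit.
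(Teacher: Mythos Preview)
Your proof is correct and follows essentially the same approach as the paper's: both set $\phi(t)=\doublebar{\nabla v(\cdot,t)}_{L^2}^2$, integrate by parts to trade the weight $t$ for $t^2\phi'(t)$, bound $|\phi'(t)|\leq 2\sqrt{\phi(t)}\,\doublebar{\nabla\partial_t v(\cdot,t)}_{L^2}$, and then use the slab Caccioppoli estimate for $\partial_t v$ to control $\int t^3\doublebar{\nabla\partial_t v}_{L^2}^2\,dt$ by $\int t\,\doublebar{\partial_t v}_{L^2}^2\,dt$ before absorbing. The only cosmetic differences are that the paper writes the integration by parts via the fundamental theorem of calculus (using $\phi(t)=\phi(S)-\int_t^S\phi'$) and applies the arithmetic--geometric mean inequality pointwise in $t$ rather than Cauchy--Schwarz globally in $t$; it also justifies the differentiability of $\phi$ by invoking the local Lipschitz estimate \eqref{eqn:slabscts}.
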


We will immediately apply this lemma to $\partial_t \F h$ in the lower half-space. Later in \crlref{Esquare}, we will apply this lemma to $\partial_t \E h$ in the upper half-space as well. The bound \eqref{eqn:L2forsquare} is simpler to establish in the lower half-space 
because $\F h(x,t)$ is defined in terms of an integral over $\RR^{n+1}_+$, and certain direct bounds can be computed if $(x,t)$ does not lie in the integrand.

\begin{proof} This parallels the proof of \cite[Formula (5.5)]{AlfAAHK11}, where a similar inequality was proven for the single layer potential.

Define $u_m(t)=\int_{\RR^n} \abs{\nabla \partial_t^m v(x,\pm t)}^2 \,dx$.
Observe that by assumption $u_0(t)\leq C_1^2 t^{-2}$, and by the Caccioppoli inequality,
\begin{equation*}
\int_0^\infty t^3 \,u_1(t)\,dt =
\int_{\RR^{n+1}_+} \abs{\nabla \partial_t v(x,\pm t)}^2 \,t^3 \,dx\,dt
\leq
C\int_{\RR^{n+1}_+} \abs{\partial_t v(x,\pm t)}^2\, t \,dx\,dt.
\end{equation*}
We wish to bound $\int_0^\infty t \,u_0(t)\,dt$.

Suppose $t>0$. By \eqref{eqn:slabscts}, $u_0$ is locally Lipschitz continuous.
Thus if $0<\varepsilon<S<\infty$, then 
\begin{align*}
\int_{\varepsilon}^S t \,u_0(t)\,dt
&=
\int_{\varepsilon}^S t\, u_0(S)\,dt
-\int_{\varepsilon}^S t\int_t^S u_0'(s)\,ds\,dt
\\&\leq
\frac{C_1^2}{2}+\frac{1}{2}\int_{\varepsilon}^Ss^{2}\, \abs{u_0'(s)}\,ds.
\end{align*}
But 
$\abs{u_0'(s)}
\leq 2 \sqrt{u_0(s)\,u_1(s)}
$
and so
\begin{align*}
\int_{\varepsilon}^S t \,u_0(t)\,dt
&\leq
\frac{C_1^2}{2}+\int_{\varepsilon}^S s^2\, \sqrt{u_0(s)\,u_1(s)}\,ds
\\&\leq
\frac{C_1^2}{2}
+\frac{1}{2}\int_{\varepsilon}^S s\, u_0(s)\,ds
+\frac{1}{2}\int_{\varepsilon}^S s^3\, u_1(s)\,ds
.\end{align*}
Rearranging terms, we have that
\begin{align*}
\int_{\varepsilon}^S t \,u_0(t)\,dt
&\leq
C_1^2
+\int_{\varepsilon}^S s^3\, u_1(s)\,ds
.\end{align*}
Taking the limit as $\varepsilon\to 0^+$ and $S\to \infty$, we have that
\begin{align*}
\int_0^\infty\int_{\RR^n} \abs{\nabla v(x,\pm t)}^2 \,t\,dx\,dt
&=
\int_{0}^\infty t \,u_0(t)\,dt
\leq
C_1^2+\int_{0}^\infty t^3 \,u_1(t)\,dt
\\&\leq
C_1^2+
C\int_0^\infty\int_{\RR^n} \abs{ \partial_t v(x,\pm t)}^2 \,t\,dx\,dt
\end{align*}
as desired.\end{proof}

We now proceed to nontangential estimates in the lower half-space.
\begin{lem}
\label{lem:bdd2}
Suppose that $a$, $A$ and $B$ are $t$-independent, $a$ is accretive, $A$ satisfies the single layer potential requirements of \dfnref{goodlayer}, $B$ and $B^*$ satisfy the De Giorgi-Nash-Moser condition and $\s_{B^*}$ satisfies the square-function estimate~\eqref{eqn:Svertsquare}.

Then for every $h\in L^2(\RR^n)$, we have that
\begin{align}
\label{eqn:NnablaEminus}
\doublebar{\widetilde N_-(\nabla \E h)}_{L^2(\RR^n)}
&\leq C\doublebar{h}_{L^2(\RR^n)}
\end{align}
and the boundary value $\nabla \E h\big\vert_{\partial\RR^{n+1}_-}$ exists in the sense of \eqref{eqn:trace}.
\end{lem}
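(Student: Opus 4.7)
The plan is first to reduce to bounding $\F h$ using the fact that $\E h-\F h$ is a second-order single-layer potential, then to establish a square-function bound $\triplebar{t\,\nabla\partial_t\F h}_-\leq C\doublebar{h}_{L^2(\RR^n)}$ via \lemref{bdd1}, and finally to convert this to a nontangential bound on $\nabla\F h$ by representing $\F h$ as an $(N^\perp)^A_2$ solution in the lower half-space. Since $\E h-\F h=-\s_A\bigl((1/a)\s_{B^*}^{\perp,+}h\bigr)$ and $(1/a)\s_{B^*}^{\perp,+}h\in L^2(\RR^n)$ with norm at most $C\doublebar{h}_{L^2(\RR^n)}$, the bounds \eqref{eqn:NnablaS} and \thmref{2L2limits} applied to $\s_A$ (valid since $A$ satisfies the single layer potential requirements) handle the difference and reduce the lemma to its statement for $\F h$.

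For the square-function bound, I would first show the slice estimate $\doublebar{\nabla\partial_t\F h(\,\cdot\,,-t)}_{L^2(\RR^n)}\leq C\doublebar{h}_{L^2(\RR^n)}/t$ for $t>0$ by differentiating under the integral defining $\F h$ and applying Minkowski's integral inequality, using the kernel bound $\bigabs{\nabla_x\partial_t\partial_s\Gamma_{(y,s)}^A(x,-t)}\leq C/(\abs{x-y}+t+s)^{n+2}$ (obtained from \eqref{eqn:fundsolnsize:k} together with the Caccioppoli inequality) and the uniform estimate $\sup_{s>0}\doublebar{\partial_s\s_{B^*}h(\,\cdot\,,s)}_{L^2(\RR^n)}\leq C\doublebar{h}_{L^2(\RR^n)}$ (from \eqref{eqn:slabsL2} and \eqref{eqn:NnablaS}). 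Since $A$ is $t$-independent and $\Div A\nabla\F h=0$ in $\RR^{n+1}_-$ by \lemref{Fgradientexists}, each derivative $\partial_t^k\F h$ is itself a solution and satisfies Caccioppoli, so the hypotheses of \lemref{bdd1} hold for $v=\partial_t\F h$; combining \lemref{bdd1} with \thmref{squarebound} yields $\triplebar{t\,\nabla\partial_t\F h}_-\leq C\doublebar{h}_{L^2(\RR^n)}$. Applying \thmref{squareN} to the solution $\partial_t\F h$ in the lower half-space then produces a constant $c$ and $f\in L^2(\RR^n)$ with $\partial_t\F h(\,\cdot\,,-t)\to c+f$ in $L^2$ and $\doublebar{\widetilde N_-(\partial_t\F h-c)}_{L^2(\RR^n)}\leq C\doublebar{h}_{L^2(\RR^n)}$; the pointwise decay $\partial_t\F h(x,-t)\to 0$ as $t\to\infty$ (obtained by dominated convergence from the integral representation, using the decay \eqref{eqn:Shdecay} on a dense subclass of $L^2$) forces $c=0$, so $\widetilde N_-(\partial_t\F h)\in L^2$ with $L^2$-trace $f$ on $\partial\RR^{n+1}_-$.

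For the full gradient, I would invert $\s_A^{\perp,-}$ on $L^2$ (available from the single layer potential requirements) and set $u_0=\s_A\bigl((\s_A^{\perp,-})^{-1}f\bigr)$; then $\doublebar{\widetilde N_-(\nabla u_0)}_{L^2(\RR^n)}\leq C\doublebar{h}_{L^2(\RR^n)}$, the trace $\nabla u_0\vert_{\partial-}$ exists in $L^2$ by \thmref{2L2limits}, and $\partial_t u_0\vert_{\partial-}=f$. The difference $w=\F h-u_0$ is a solution of $\Div A\nabla w=0$ in $\RR^{n+1}_-$ whose $t$-derivative $\partial_t w$ is also a solution, has vanishing boundary trace, and satisfies $N_-(\partial_t w)\in L^2$ (passing from $\widetilde N$ to $N$ via \eqref{eqn:localbound}); uniqueness of the Dirichlet problem $(D)^A_2$ in $\RR^{n+1}_-$, part of the single layer potential requirements, then forces $\partial_t w\equiv 0$. \textbf{The main obstacle} is to upgrade this to $w\equiv$ constant, which (since $w$ becomes $t$-independent) reduces by \lemref{utunique} to the claim $\nabla_x w\in L^2(\RR^n)$: while $\nabla_\parallel\F h(\,\cdot\,,-t)$ and $\nabla_\parallel u_0(\,\cdot\,,-t)$ need not individually be $L^2$, their difference is expected to cancel into $L^2$. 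Establishing this cancellation requires a careful comparison of the integral representation of $\F h$ with the single-layer potential $u_0$, exploiting the fact that they share the conormal boundary datum $f$ and the Calder\'on-Zygmund structure of $\s_A$. Once $w$ is shown to be constant, $\nabla\F h=\nabla u_0$ yields $\doublebar{\widetilde N_-(\nabla\F h)}_{L^2(\RR^n)}\leq C\doublebar{h}_{L^2(\RR^n)}$ and the existence of the trace $\nabla\F h\vert_{\partial\RR^{n+1}_-}$ immediately.
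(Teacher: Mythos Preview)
Your approach is essentially the paper's, and the argument is correct up to the step you flag as the ``main obstacle''. But that obstacle is illusory: you do not need $\nabla_x w\in L^2(\RR^n)$, nor any cancellation between $\F h$ and $u_0$.

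\lemref{utunique} requires only the averaged decay
\[
\fint_{B((x,-t),t/2)}\abs{\nabla w}^2\leq c\,t^{-n}\qquad\text{for }t>t_0,
\]
and this holds for $\F h$ and $u_0$ \emph{separately}; no comparison is needed. For $u_0=\s_A g$ it follows from $\widetilde N_-(\nabla\s_A g)\in L^2(\RR^n)$ by averaging over the base of the nontangential cone. For $\F h$, recall that $\Div A\nabla\F h=0$ in $\RR^{n+1}_-$ by \lemref{Fgradientexists}, so the second-order Caccioppoli inequality gives
\[
\fint_{B((x,-t),t/8)}\abs{\nabla\F h}^2\leq\frac{C}{t^2}\fint_{B((x,-t),t/4)}\abs{\F h-\F h(x,-t)}^2,
\]
and the right-hand side is at most $Ct^{-n}\doublebar{h}_{L^2(\RR^n)}^2$ by the pointwise difference bound \eqref{eqn:Fdiffdecay} with $p=2$. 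Summing gives the hypothesis of \lemref{utunique} for $w=\F h-u_0$, hence $w$ is constant and $\nabla\F h=\nabla u_0$. This is exactly what the paper does (citing \eqref{eqn:NnablaS}, \eqref{eqn:Fdiffdecay} and Caccioppoli); the ``careful comparison'' you anticipate is unnecessary.
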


\begin{proof}
By the definitions \eqref{eqn:F} and \eqref{eqn:S} of $\F$ and $\s_A$,
\begin{equation}
\label{eqn:um}
\nabla \partial_t \F h(x,-t)
=
-\int_0^\infty
\nabla \partial_t^2\s_A\biggl(\frac{1}{a} \partial_s \s_{B^*} h(s)\biggr)(x,-t-s)\,ds
\end{equation}
where as in the proof of \lemref{gradEcts} we let $\s_{B^*}h(s)(y)=\s_{B^*} h(y,s)$.
Because $A$ and $B^*$ are $t$-independent and have bounded layer potentials,
if $t>0$, then by \eqref{eqn:2slabs}, \eqref{eqn:NSt} and the Caccioppoli inequality,
$\doublebar{\nabla \partial_t \F h(\,\cdot\,,-t)}_{L^2(\RR^n)} \leq Ct^{-1}\doublebar{h}_{L^2(\RR^n)}^2$.
Thus by \lemref{bdd1},
\begin{equation*}
\int_0^\infty\int_{\RR^n} \abs{\nabla \partial_t \F h(x,-t)}^2\, t\,dx\,dt
\leq
C\int_0^\infty\int_{\RR^n} \abs{ \partial_t^2 \F h(x,-t)}^2\, t\,dx\,dt
+C\doublebar{h}_{L^2(\RR^n)}^2
.\end{equation*}
By \thmref{squarebound} this is at most $C\doublebar{h}_{L^2(\RR^n)}^2$.

Since $\Div A\nabla (\partial_t\F h) = 0$ in $\RR^{n+1}_-$,  we may apply \thmref{squareN} in the lower half-space. Thus 
\begin{equation}
\label{eqn:NvertF}
\doublebar{N_-(\partial_t \F h)}_{L^2(\RR^n)}^2
\leq
\int_{\RR^{n+1}_+} \abs{\nabla \partial_t \F h(x,-t)}^2 \,t \,dx\,dt
\leq
C\doublebar{h}_{L^2(\RR^n)}^2
\end{equation}
and the boundary value
\begin{equation*}\F^{\perp,-} h=\partial_{n+1} \F h\big\vert_{\partial\RR^{n+1}_-}\end{equation*}
exists in the sense of $L^2$ functions.

Because $A$ satisfies the single layer potential requirements, $\s_A^{\perp,-}$ is invertible $L^2(\RR^n)\mapsto L^2(\RR^n)$, and so there is some $g\in L^2(\RR^n)$ with $\s_A^{\perp,-} g= \F^{\perp,-} h$. Furthermore, $\doublebar{g}_{L^2(\RR^n)}\leq C\doublebar{h}_{L^2(\RR^n)}$.

By \eqref{eqn:NSt}, if $u=\partial_{n+1}\s_A g$, then $\doublebar{N_- u}_{L^2(\RR^n)}<\infty$.
By \eqref{eqn:NvertF} the same is true of $u=\partial_{n+1}\F h$, and so by uniqueness of solutions to $(D)^A_2$, we have that $\partial_{n+1}\s_A g = \partial_{n+1}\F h$ in $\RR^{n+1}_-$. 
By \eqref{eqn:NnablaS}, and by \eqref{eqn:Fdiffdecay} and \thmref{Caccioppoli}, we have that $u=\s_A g-\F h$ satisfies the conditions of \lemref{utunique}, and so
$\s_A g=\F h$ in $\RR^{n+1}_-$ up to an additive constant.

In particular, by \eqref{eqn:NnablaS} we have that
\begin{equation*}
\doublebar{\widetilde N_- (\nabla \F h)}_{L^2(\RR^n)}\leq C\doublebar{h}_{L^2(\RR^n)}.\end{equation*}
By definition of $\E h$ and by \eqref{eqn:NnablaS}, \eqref{eqn:NnablaEminus} is valid. 
By \thmref{2L2limits}, and because $\Div A\nabla (\E h)=0$ in $\RR^{n+1}_-$, we have that $\nabla \E h\vert_{\partial\RR^{n+1}_-}$ exists and lies in $L^2(\RR^n)$.
\end{proof}

\begin{lem}\label{lem:bdd5}
Suppose that $a$, $A$ and~$B$ are as in \lemref{Fexists}. Let $h\in L^2(\RR^n)$. 
Then
\begin{equation*}
\doublebar{N_+ (\partial_t \E h)}_{L^2(\RR^n)}
\leq
C\doublebar{N_- (\partial_t \E h)}_{L^2(\RR^n)}
+C\doublebar{h}_{L^2}
.\end{equation*}
\end{lem}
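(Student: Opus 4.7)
The plan is to reduce the upper-half-space nontangential bound to the already-established lower-half-space bound of \lemref{bdd2} by constructing a homogeneous $A$-harmonic solution in $\RR^{n+1}_+$ that matches the common boundary trace of $\partial_{n+1}\E h$, and then estimating the remainder using the inhomogeneous equation for $\E h$.

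First I would establish a common $L^2(\RR^n)$ boundary trace $f$ for $\partial_{n+1}\E h$ on $\partial\RR^{n+1}_+$. \lemref{bdd2} provides the trace from below together with the bound $\doublebar{f}_{L^2(\RR^n)}\leq\doublebar{N_-(\partial_{n+1}\E h)}_{L^2(\RR^n)}$. \lemref{gradEcts} identifies this with the trace from above for $h$ in a dense subset $S\subset L^2(\RR^n)$, which suffices for the final estimate after a density argument in $h$.

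Next, in the setting of \thmref{Dirichletexists} (where this lemma is ultimately applied), $A$ satisfies the single layer potential requirements of \dfnref{goodlayer}; in particular $\s_A^{\perp,+}$ is invertible on $L^2(\RR^n)$. Setting $g=(\s_A^{\perp,+})^{-1}f$ and $w_0=\partial_{n+1}\s_A g$, I obtain a function with $\Div A\nabla w_0=0$ in $\RR^{n+1}_+$, $w_0\vert_{\partial\RR^{n+1}_+}=f$, and $\doublebar{N_+ w_0}_{L^2(\RR^n)}\leq C\doublebar{f}_{L^2(\RR^n)}$ by \eqref{eqn:NSt}. The remainder $\psi:=\partial_{n+1}\E h-w_0$ then has zero $L^2$ trace on $\partial\RR^{n+1}_+$ and, by \lemref{Fgradientexists} together with $t$-independence of $a$ and $A$, satisfies $-a\,\Div A\nabla\psi=\partial_{n+1}^3\s_{B^*} h$ in $\RR^{n+1}_+$.

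The main obstacle will be proving $\doublebar{N_+\psi}_{L^2(\RR^n)}\leq C\doublebar{h}_{L^2(\RR^n)}$ in the presence of the inhomogeneity. The key input is that \eqref{eqn:Svertsquare} applied to $\s_{B^*} h$ makes $\abs{\partial_{n+1}^2\s_{B^*} h(y,s)}^2\,s\,dy\,ds$ a Carleson measure of norm at most $C\doublebar{h}_{L^2(\RR^n)}^2$, so the right-hand side of the equation for $\psi$ is the $t$-derivative of a controlled Carleson density. Combining this with \thmref{squarebound} and \eqref{eqn:Ssquare} gives the square-function control $\triplebar{t\,\partial_{n+1}\psi}_+\leq C\doublebar{h}_{L^2(\RR^n)}$. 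I would then promote this to the full-gradient square-function bound $\triplebar{t\,\nabla\psi}_+\leq C\doublebar{h}_{L^2(\RR^n)}$ via a variant of \lemref{bdd1} adapted to the inhomogeneous Caccioppoli inequality—the extra term being absorbed by the Carleson-measure bound on the right-hand side—and finally convert this to the required nontangential estimate by a zero-trace version of \thmref{squareN}, whose hypotheses are satisfied because $\psi\vert_{\partial\RR^{n+1}_+}=0$ in $L^2$.
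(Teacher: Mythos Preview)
Your approach has a genuine gap at the final step, and the route is not the one the paper takes.

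The paper proves this lemma by a pointwise Cotlar-type inequality: fix $(x,t)\in\gamma_+(x^*)$, decompose $h=\sum_j h_j$ into dyadic annuli about $x^*$, and bound
\[
\bigabs{\partial_{n+1}\E h(x,t)}
\leq \bigabs{\partial_{n+1}\E h(x,-t)}
+\sum_{j\geq 0}\bigabs{\partial_{n+1}\E h_j(x,t)-\partial_{n+1}\E h_j(x,-t)}.
\]
Each difference is estimated directly from the kernel: for $j\geq 2$ one integrates by parts in $s$ on the region far from the support of $h_j$ and uses the H\"older bounds \eqref{eqn:fundsolnholder:k} and \eqref{eqn:Shdecay} to get decay $C2^{-j\alpha}M(\abs{h}^p)(x^*)^{1/p}$. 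Summing in $j$ and taking $1<p<2$ gives the lemma.

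Your plan instead subtracts an $A$-harmonic competitor $w_0=\partial_{n+1}\s_A g$ and tries to control the remainder $\psi=\partial_{n+1}\E h-w_0$ by square-function methods, ending with ``a zero-trace version of \thmref{squareN}.'' This last step does not work as stated: \thmref{squareN} (the result of Auscher--Axelsson you cite) requires $\Div A\nabla u=0$, but $\psi$ satisfies the \emph{inhomogeneous} equation $-a\,\Div A\nabla\psi=\partial_{n+1}^3\s_{B^*} h$. No inhomogeneous square-to-nontangential conversion of this type is available in the paper, and proving one here would amount to redoing the potential-theoretic analysis of $\E$ that the lemma is part of. If you try to peel off the inhomogeneity by a Newton potential, you are essentially reconstructing $\partial_{n+1}\E h$ itself and the argument becomes circular.

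There is also a secondary issue: to run \lemref{bdd1} with $v=\psi$ you need the a priori decay $\doublebar{\nabla\psi(\,\cdot\,,t)}_{L^2(\RR^n)}\leq C_1/t$ in the \emph{upper} half-space. For $w_0$ this follows from \eqref{eqn:slabsL2Caccioppoli} and \eqref{eqn:NSt}, but for $\nabla\partial_{n+1}\E h(\,\cdot\,,t)$ with $t>0$ such a bound is precisely what \lemref{bdd6} and \crlref{Esquare} establish \emph{after} the present lemma; invoking it here is circular. Finally, note that the lemma's stated hypotheses (those of \lemref{Fexists}) do not include invertibility of $\s_A^{\perp,+}$, which your construction of $w_0$ needs; the paper's Cotlar argument uses only the kernel estimates and \eqref{eqn:NSt}, and does not require any invertibility.
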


\begin{proof}
This is essentially the analogue to Cotlar's inequality for singular integral operators (see, for example, \cite[(8.2.2)]{Gra09}) reformulated to apply to our potential~$\E$. We prove it using similar arguments.

Let $x^*\in\RR^n$ and let $(x,t)\in \gamma_+(x^*)$ for some $t>0$, where $\gamma_+$ is the nontangential cone given by \eqref{eqn:cone}. 
There is some constant $j_0$, depending on the aperture $a$ of the nontangential cones, such that $x\in \Delta(x^*,2^{j_0}t)$.
Let $A_0=\Delta_0 = \Delta(x^*,2^{j_0+1} t)$, and for each $j\geq 1$, let $A_j$ be the annulus $\Delta(x^*,2^{j_0+j+1} t)\setminus \Delta(x^*,2^{j_0+j} t)$. Let $h_j=h\1_{A_j}$, so  $h=\sum_{j=0}^\infty h_j$. Then
\begin{align*}
\abs{\partial_{n+1} \E h(x,t)}
&\leq 
\abs{\partial_{n+1} \E h(x,-t)}
+\sum_{j=0}^\infty
\abs{\partial_{n+1} \E h_j(x,t)-\partial_{n+1} \E h_j(x,-t)}
.\end{align*}
Observe that $\abs{\partial_t \E h(x,-t)}\leq N_-(\partial_{n+1}\E h)(x^*)$.
By \eqref{eqn:Fdecay} and \crlref{4holder}, if $1<p<n$ then
\begin{equation*}\abs{\partial_t \F h_j(x,t)}+\abs{\partial_t \F h_j(x,-t)}
\leq
\frac{C}{t^{n/p}}\doublebar{h_j}_{L^p(\RR^n)}
\leq C 2^{j n/p} M(\abs{h}^p)(x^*)^{1/p}
\end{equation*}
where $M$ denotes the Hardy-Littlewood maximal function. 
Recall that
\begin{align*}
\E h_j(x,\tau)
&=\F h_j(x,\tau)-\s_A\left(\frac{1}{a}\s_{B^*}^{\perp,+} h_j\right)(x,\tau)
\\&=
	-\int_{\RR^{n+1}_+}
	\partial_s \Gamma_{(y,s)}^A(x,\tau)\,\frac{1}{a(y)}
	\,\partial_s \s_{B^*} h_j(y,s)\,ds\,dy
	-\s_A\left(\frac{1}{a}\s_{B^*}^{\perp,+} h_j\right)(x,\tau).
\end{align*}
Applying \eqref{eqn:Shdecay} to $\s_A$ and \eqref{eqn:NSt} to~$\s_{B^*}^{\perp,+}$, we see that the same is true of $\partial_t \E h_j$. We will apply this bound only for $j=0$ and $j=1$; we seek a better bound for $j\geq 2$.

Suppose $j\geq 2$. 
Let $\widetilde A_j=A_{j-1}\cup A_j\cup A_{j+1}$.
We divide $\RR^{n+1}_+$ into three pieces:
$I=\widetilde A_j \times(0,2^{j}t)$, $II=\widetilde A_j\times(2^jt,\infty)$
and $III=(\RR^n\setminus \widetilde A_j)\times(0,\infty)$. 
We integrate by parts in $s$ in $II$ and~$III$. Recalling the definition of $\s_A$, we have that
\begin{align*}
\partial_\tau\E h_j(x,\tau)
&=
		\int_{I}
		\partial_s^2 \Gamma_{(y,s)}^A(x,\tau)\,\frac{1}{a(y)}
		\,\partial_s \s_{B^*} h_j(y,s)\,ds\,dy
	\\&\qquad
		-\int_{II\cup III}
		\partial_s \Gamma_{(y,s)}^A(x,\tau)\,\frac{1}{a(y)}
		\,\partial_s^2 \s_{B^*} h_j(y,s)\,ds\,dy
	\\&\qquad
		+\int_{\widetilde A_j}
		\partial_\tau \Gamma_{(y,2^j t)}^A(x,\tau)\,\frac{1}{a(y)}
		\,\partial_{n+1} \s_{B^*} h_j(y,2^j t)\,dy
	\\&\qquad
		-\partial_{\tau} \s_A\left(\1_{\widetilde A_j}\frac{1}{a}\s^{\perp,+}_{B^*} h_j\right)(x,\tau)
.\end{align*}
Thus $\partial_{n+1} \E h_j(x,t)-\partial_{n+1} \E h_j(x,-t)$ may be written as a sum of four terms. Applying the bound \eqref{eqn:NSt} on $\s_{B^*}$, the bound \eqref{eqn:Shdecay} and H\"older continuity to $\s_A$ and $\s_{B^*}$, and the bounds \eqref{eqn:fundsolnsize:k} and \eqref{eqn:fundsolnholder:k} on $\Gamma_{(y,s)}^A(x,\tau)$, we may show that each term is at most $C2^{-j\alpha} M(\abs{h}^p)(x^*)^{1/p}$.

Thus, if $1<p<n$ then
\begin{align*}
\abs{\partial_t \E h(x,t)}
&\leq 
N_-(\partial_{n+1} \E h)(x^*)
+\sum_{j=0}^\infty C 2^{-j\alpha}M(\abs{h}^p)(x^*)^{1/p}
.\end{align*}
Choosing $1<p<2$ and recalling that $M$ is bounded $L^p\mapsto L^p$ for any $1<p<\infty$, we have that
\begin{align*}
\doublebar{N_+(\partial_{n+1} \E h)}_{L^2(\RR^n)}
&\leq 
\doublebar{N_-(\partial_{n+1} \E h)}_{L^2(\RR^n)}
+C\doublebar{h}_{L^2(\RR^n)}
\end{align*}
as desired.
\end{proof}

\begin{lem}\label{lem:bdd6} 
Suppose that $a$, $A$ and $B$ are $t$-independent, that $a$ is accretive, and that $A$ and~$B^*$ satisfy the single layer potential requirements of \dfnref{goodlayer}.
Then for all $h\in L^2(\RR^n)$, 
\begin{equation}
\label{eqn:NnablaE}
\doublebar{\widetilde N_+ (\nabla \E  h)^2}_{L^2(\RR^n)}
\leq
C\doublebar{h}_{L^2(\RR^n)}
\end{equation}
and $\nabla \E h\vert_{\partial\RR^{n+1}_+}$ exists and satisfies
\begin{equation}
\label{eqn:Ects}
\nabla \E h\big\vert_{\partial\RR^{n+1}_+}
=\nabla \E h\big\vert_{\partial\RR^{n+1}_-}
.\end{equation}
\end{lem}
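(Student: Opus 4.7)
The plan is to establish the nontangential maximal bound \eqref{eqn:NnablaE} first, then deduce the continuity \eqref{eqn:Ects} by density. This breaks into three steps.

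First, I would establish the square-function estimate $\triplebar{t\,\nabla\partial_t \E h}_+ \leq C\|h\|_{L^2(\RR^n)}$ by applying \lemref{bdd1} to $v = \partial_t \E h$ in $\RR^{n+1}_+$. The Caccioppoli hypothesis on $v$, $\partial_t v$, $\partial_t^2 v$ is automatic because $\partial_t^{k+1} \E h$ inherits the fourth-order equation from $\E h$ (using $t$-independence of $a$, $A$, $B$), so \thmref{Caccioppoli} applies. The required pointwise slab bound $\|\nabla\partial_t \E h(\cdot,t)\|_{L^2(\RR^n)} \leq C t^{-1}\|h\|_{L^2(\RR^n)}$ follows from \eqref{eqn:slabsL2Caccioppoli} applied to $\partial_t \E h$, combined with the pointwise inequality $|\partial_t \E h(x,s)| \leq N_+(\partial_t \E h)(x)$ and the $L^2$ bound $\|N_+(\partial_t \E h)\|_{L^2} \leq C\|h\|_{L^2}$ furnished by \lemref{bdd2} and \lemref{bdd5}. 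Together with $\triplebar{t\,\partial_t^2 \E h}_+ \leq C\|h\|_{L^2}$ (from \thmref{squarebound} for $\F h$ and \eqref{eqn:Svertsquare} for the $\s_A$ correction term in $\E h = \F h - \s_A\bigl((1/a)\s_{B^*}^{\perp,+}h\bigr)$), \lemref{bdd1} yields the desired square-function estimate.

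Second, I would decompose $\nabla \E h = \nabla_\parallel \E h + \e\,\partial_t \E h$. The vertical component is bounded by $\widetilde N_+(\partial_t \E h) \leq N_+(\partial_t \E h)$, already in $L^2$. For the tangential part, I first restrict to $h$ in the dense subset $S\subset L^2$ provided by \lemref{gradEcts}. By \lemref{bdd2} the trace $\vec G = \nabla\E h\vert_{\partial\RR^{n+1}_-}$ exists in $L^2$ with norm $\leq C\|h\|_{L^2}$, and by \lemref{gradEcts} also $\nabla_\parallel \E h(\cdot,t) \to \vec G_\parallel$ in $L^2$ as $t \to 0^+$, so
\begin{equation*}
\nabla_\parallel \E h(x,t) = \vec G_\parallel(x) + \int_0^t \partial_s \nabla_\parallel \E h(x,s)\,ds.
\end{equation*}
The first term contributes $\widetilde N_+(\vec G_\parallel)(x^*) \leq C M(|\vec G_\parallel|^2)(x^*)^{1/2}$, controlled in $L^2$ by the Hardy-Littlewood maximal inequality. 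For the integral term, a dyadic Cauchy-Schwarz estimate (splitting $(0,t)$ into intervals $[2^{-k-1}t,2^{-k}t]$ and summing against a small geometric weight) gives a pointwise bound of the form $\bigl\lvert\int_0^t \partial_s\nabla_\parallel\E h(x,s)\,ds\bigr\rvert^2 \leq C_\delta\, t^\delta \int_0^t \lvert\nabla\partial_s\E h(x,s)\rvert^2 s^{1-\delta}\,ds$; after averaging over $B((y,r),r/2)$ and invoking the standard Coifman-Meyer-Stein identity identifying nontangential cone integrals with integrals over $\RR^{n+1}_+$, this reduces to $\triplebar{t\,\nabla\partial_t\E h}_+^2$, already controlled by Step~1. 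This proves \eqref{eqn:NnablaE} for $h \in S$, and linearity plus density extends it to all $h \in L^2$.

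Third, the continuity \eqref{eqn:Ects} follows by observing that Step~2 together with \lemref{bdd2} makes $h \mapsto \nabla\E h(\cdot,t) - \nabla\E h(\cdot,-t)$ a uniformly bounded family of linear maps $L^2(\RR^n)\to L^2(\RR^n)$ for $t$ small; since this family tends to zero on the dense subset $S$ by \lemref{gradEcts}, it tends to zero on all of $L^2$. The main obstacle is Step~2: because $\E h$ does not satisfy a homogeneous second-order equation in $\RR^{n+1}_+$ (it solves the inhomogeneous $L_A \E h = -(1/a)\,\partial_t^2 \s_{B^*} h$), \thmref{squareN} cannot be applied directly to pass from the square-function bound of Step~1 to a nontangential bound on $\nabla_\parallel \E h$, and the dyadic/tent-space argument described above is the essential substitute for the absent second-order representation.
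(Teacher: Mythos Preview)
Your Step~1 and Step~3 are fine; in fact your Step~1 is essentially the paper's \crlref{Esquare}, reordered to avoid circularity by feeding in $\doublebar{N_+(\partial_t\E h)}_{L^2}$ (from \lemref{bdd2} and \lemref{bdd5}) in place of $\doublebar{\widetilde N_+(\nabla\E h)}_{L^2}$. That reordering is legitimate.

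The gap is in Step~2, specifically in the control of the integral term $\int_0^t \partial_s\nabla_\parallel\E h(x,s)\,ds$. Your dyadic Cauchy--Schwarz inequality
\[
\biggl|\int_0^t \partial_s\nabla_\parallel\E h(x,s)\,ds\biggr|^2 \leq C_\delta\, t^\delta \int_0^t |\nabla\partial_s\E h(x,s)|^2\, s^{1-\delta}\,ds
\]
is correct, but the weight on the right is $s^{1-\delta}$ with $\delta>0$, whereas the only square-function control you have from Step~1 is with weight~$s$. No Coifman--Meyer--Stein identity closes this gap: a function $G$ with $\int_0^1|G(z,s)|^2\,s\,ds<\infty$ can behave like $|G(z,s)|\sim s^{-1}$ near $s=0$, in which case $\int_0^r|G(z,s)|^2\,s^{1-\delta}\,ds=\infty$ for every $r>0$. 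Equivalently, the square-function bound $\triplebar{t\,\nabla\partial_t\E h}_+<\infty$ is compatible with $\int_0^t|\nabla\partial_s\E h(x,s)|\,ds$ diverging logarithmically, so the fundamental-theorem-of-calculus representation cannot be bounded this way. Restricting to $h\in S$ does give extra regularity of $\partial_s^k\s_{B^*}h$, but translating that into a usable pointwise bound on $\nabla\partial_s\E h$ near $s=0$ would require an inhomogeneous regularity estimate for $L_A$ that is not available here.

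The paper avoids this by moving in the opposite direction: rather than passing from $\nabla\E h$ up to $\nabla\partial_t\E h$, it uses the fourth-order Caccioppoli inequality (\thmref{Caccioppoli}) together with the $L^1$ reverse-H\"older bound (\crlref{reverseholder}) to pass from $\fint_B|\nabla\E h|^2$ down to $\bigl(\fint_{B'}|\E h-E|\bigr)^2$. Then one writes $|\E h(y,s)-E|\leq s\,N_+(\partial_t\E h)(y)+|\E^+h(y)-E|$ and applies the Poincar\'e inequality to the boundary trace $\E^+h$, obtaining
\[
\widetilde N_+(\nabla\E h)(x^*)^2 \leq C\,M\bigl(N_+(\partial_t\E h)\bigr)(x^*)^2 + C\,M\bigl(\nabla_\parallel\E^+ h\bigr)(x^*)^2,
\]
both terms of which are controlled by \lemref{bdd2} and \lemref{bdd5}. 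The key point is that Caccioppoli lets you trade a gradient for a factor of $1/t$ applied to the solution itself, and the solution's oscillation over a Whitney ball is controlled by boundary data plus the already-bounded vertical derivative---no integration of $\nabla\partial_t\E h$ down to the boundary is needed.
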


\begin{proof}
First, assume that $h\in S$ where $S$ is as in \lemref{gradEcts}. By Lemmas~\ref{lem:gradEcts} and~\ref{lem:bdd2}, the formula \eqref{eqn:Ects} is valid, and so we may define 
$\E^\pm h=\E h\vert_{\partial\RR^{n+1}_\pm}$.

Let $x^*\in\RR^n$ and let $(x,t)\in \gamma(x^*)$. 
By \thmref{Caccioppoli} and \crlref{reverseholder},
we have that
\begin{align*}
\fint_{B((x,t),t/2)}\abs{\nabla \E h}^2
&\leq 
\frac{C}{r^2}\fint_{B((x,t),5t/8)} \abs{\E h-E}^2
\leq
\frac{C}{r^2}\left(\fint_{B((x,t),3t/4)} \abs{\E h-E}\right)^{2}
\end{align*}
for any constant $E$.

If $(y,s)\in\RR^{n+1}_+$, then
\begin{align*}
\abs{\E h(y,s)-E}
&\leq \abs{\E h(y,s)-\E^+ h(y)}+\abs{\E^+ h(y)-E}
\\&\leq sN_+(\partial_{n+1} \E h)(y)+\abs{\E^+ h(y)-E}.
\end{align*}
Let $\Delta = \Delta(x^*,Ct)$ where $C$ is large enough that $\Delta(x,t)\subset \Delta(x^*,Ct)$.  Then 
\begin{align*}
\fint_{B((x,t),t/2)}\abs{\nabla \E h}^2
&\leq 
\frac{C}{t^2}\left(
\fint_{\Delta} t N_+(\partial_{n+1} \E h)(y)
+\abs{\E^+ h(y)-E}\,dy\right)^{2}
\\&\leq 
C\left(
\fint_{\Delta} N_+(\partial_{n+1} \E h)(y)\,dy\right)^{2}
+
\frac{C}{r^2}\left(
\fint_{\Delta} \abs{\E^+ h(x)-E}\,dx\right)^{2}
.
\end{align*}
Choose $E=\fint_\Delta \E h(x,0)\,dx$ and apply the Poincar\'e inequality.
We conclude that
\begin{align*}
\widetilde N_+(\nabla\E  h)(x^*)^2
&=\sup_{(x,t)\in\gamma(x^*)}\fint_{B((x,t),t/2)}\abs{\nabla \E  h}^2
\\&\leq 
	C M(N_+(\partial_t \E  h))(x^*)^2 +C M(\nabla_\parallel\E^+ h)(x^*)^2
.\end{align*}
By \lemref{bdd2} and \lemref{bdd5}, and by the $L^2$-boundedness of the Hardy-Littlewood maximal operator, we have that \eqref{eqn:NnablaE} is valid for all $h\in S$. 

We now must pass to arbitrary $h\in L^2(\RR^n)$. Define 
\begin{equation*}\widetilde N_{\delta,R} F(x) = \sup\biggl\{\biggl(\fint_{B((y,s),s/2)} \abs{F}^2\biggr)^{1/2}: (y,s)\in\gamma_+(x), \>\delta<s<R,\>\abs{y}<R\biggr\}.\end{equation*}
By \eqref{eqn:Fdecay}, \eqref{eqn:Shdecay} and \thmref{Caccioppoli}, for any fixed $\delta>0$, $R<\infty$ the map $h\mapsto \widetilde N_{\delta,R} (\nabla\E h)$ is continuous on $L^2(\RR^n)$. Thus because $S$ is dense in $L^2(\RR^n)$, we have that for any $h\in L^2(\RR^n)$,
\begin{equation*}
\doublebar{\widetilde N_{\delta,R} (\nabla \E  h)}_{L^2(\RR^n)}
\leq
C\doublebar{h}_{L^2(\RR^n)}
\end{equation*}
uniformly in $\delta$, $R$. Letting $\delta\to 0$ and $R\to\infty$ establishes \eqref{eqn:NnablaE}. Recall that \eqref{eqn:Ects} is valid for all $h\in S$. By \eqref{eqn:NnablaE} and \lemref{slabs}, we may extend \eqref{eqn:Ects} to all of $L^2(\RR^n)$.
\end{proof}

\begin{cor}
\label{cor:Esquare}
Suppose that $a$, $A$ and~$B$ are as in \lemref{bdd6}. 
If $h\in L^2(\RR^n)$, then
\begin{equation*}
\int_0^\infty\int_{\RR^n} \abs{\nabla \partial_t \E h(x,t)}^2 \,t\,dx\,dt
\leq
C\doublebar{h}_{L^2(\RR^n)}^2
.\end{equation*}
\end{cor}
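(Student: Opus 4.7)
The plan is to apply \lemref{bdd1} with $v = \partial_t \E h$ in the upper half-space. This lemma reduces the desired full square-function bound on $\nabla v$ to the vertical square-function bound on $\partial_t v$, plus a pointwise-in-$t$ decay estimate on $\doublebar{\nabla v(\,\cdot\,,t)}_{L^2(\RR^n)}$. I therefore need to check three things: (a) the Caccioppoli-type hypothesis from \lemref{slabs} for $v$, $\partial_t v$, and $\partial_t^2 v$; (b) a decay $\doublebar{\nabla v(\,\cdot\,,t)}_{L^2(\RR^n)} \le C_1/t$; and (c) the vertical bound $\int_0^\infty\!\int_{\RR^n} \abs{\partial_t^2 \E h}^2 \,t\,dx\,dt \le C\doublebar{h}_{L^2(\RR^n)}^2$.

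For (a), since $a$, $A$, and $B$ are $t$-independent, each vertical derivative $\partial_t^k \E h$ again satisfies $L_B^*(a\,L_A \partial_t^k \E h) = 0$ in $\RR^{n+1}_+$. \thmref{Caccioppoli} then yields
\[\int_{B(X,r)} \abs{\nabla \partial_t^k \E h}^2 \le \frac{C}{r^2} \int_{B(X,2r)} \abs{\partial_t^k \E h}^2,\]
which is exactly the form of the Caccioppoli inequality demanded by \lemref{slabs}, applied to $v=\partial_t \E h$ for $k=1,2,3$.

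For (c), decompose $\E h = \F h - \s_A\bigl(\tfrac{1}{a}\s_{B^*}^{\perp,+} h\bigr)$. \thmref{squarebound} gives $\int \abs{\partial_t^2 \F h}^2 t\,dx\,dt \le C\doublebar{h}_{L^2(\RR^n)}^2$. For the second term, the single layer potential requirements for $B^*$ provide $\doublebar{\s_{B^*}^{\perp,+} h}_{L^2(\RR^n)} \le C\doublebar{h}_{L^2(\RR^n)}$, and then the square-function estimate \eqref{eqn:Svertsquare} for $A$ controls $\int \abs{\partial_t^2 \s_A(\tfrac{1}{a}\s_{B^*}^{\perp,+} h)}^2 t\,dx\,dt$ by $C\doublebar{h}_{L^2(\RR^n)}^2$.

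Finally, for (b), combine \eqref{eqn:slabsL2} of \lemref{slabs} with \lemref{bdd6} to obtain $\sup_{s>0} \doublebar{\nabla \E h(\,\cdot\,,s)}_{L^2(\RR^n)} \le C\doublebar{\widetilde N_+(\nabla \E h)}_{L^2(\RR^n)} \le C\doublebar{h}_{L^2(\RR^n)}$; in particular $\sup_{s>0}\doublebar{\partial_t \E h(\,\cdot\,,s)}_{L^2(\RR^n)} \le C\doublebar{h}_{L^2(\RR^n)}$. Plugging this into \eqref{eqn:slabsL2Caccioppoli} applied to $\partial_t \E h$ yields $\doublebar{\nabla \partial_t \E h(\,\cdot\,,t)}_{L^2(\RR^n)} \le (C/t)\doublebar{h}_{L^2(\RR^n)}$, so we may take $C_1 = C\doublebar{h}_{L^2(\RR^n)}$. \lemref{bdd1} now gives the claimed estimate. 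The argument presents no serious obstacle; the only slightly delicate observation is that although $\E h$ solves no second-order equation in $\RR^{n+1}_+$, the fourth-order Caccioppoli of \thmref{Caccioppoli} is nonetheless strong enough, after trivial rescaling, to play the role of the second-order Caccioppoli that feeds the slab estimates of \lemref{slabs} and \lemref{bdd1}.
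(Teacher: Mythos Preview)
Your proof is correct and follows essentially the same route as the paper: verify the vertical square-function bound on $\partial_t^2\E h$ via the decomposition $\E h=\F h-\s_A\bigl(\tfrac{1}{a}\s_{B^*}^{\perp,+}h\bigr)$, obtain the $1/t$ decay of $\doublebar{\nabla\partial_t\E h(\,\cdot\,,t)}_{L^2}$ from \lemref{slabs} and \lemref{bdd6}, and conclude with \lemref{bdd1}. The paper's proof is terser and leaves the Caccioppoli verification (your step~(a)) implicit, but the content is the same.
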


\begin{proof} By \thmref{squarebound}, \eqref{eqn:Svertsquare} and the $L^2$-boundedness of $\s_{B^*}^{\perp,+}$, we have that 
\begin{equation*}
\int_0^\infty\int_{\RR^n} \abs{ \partial_t^2 \E h(x,t)}^2 \,t\,dx\,dt
\leq
C\doublebar{h}_{L^2(\RR^n)}^2
.\end{equation*}
By Lemmas \ref{lem:slabs} and~\ref{lem:bdd6},
we have that
\begin{equation*}\doublebar{\nabla \partial_t \E h(\,\cdot\,,t)}_{L^2(\RR^n)}\leq \frac{C}{t}\doublebar{\widetilde N_+(\nabla \E h)}_{L^2(\RR^n)}
\leq\frac{C}{t}\doublebar{h}_{L^2(\RR^n)}\end{equation*}
and so the conclusion follows from \lemref{bdd1}.
\end{proof}

\section{The proof of the main theorem}
\label{sec:final}
In this section, we will first prove an invertibility result for the potential $\E=\E_{B,a,A}$. We will use this invertibility result to prove existence of solutions to the fourth-order Dirichlet problem. Finally, we will conclude this paper by proving uniqueness of solutions and establishing a Fatou-type theorem.

We remark that the invertibility argument and the construction of solutions in this section closely parallels the construction of solutions for the biharmonic Dirichlet problem of \cite{DahKV86} and \cite{PipV92}.

The invertibility result we will prove is the following.
\begin{lem} 
\label{lem:invertible}
Suppose that $a$, $A$ and $B$ are $t$-independent, that $a$ is accretive and that $A$ and $A^*$ satisfy the single layer potential requirements of \dfnref{goodlayer}.
Then there is some $\varepsilon>0$ and some $C>0$, depending only on the parameters listed in \thmref{Dirichletexists}, such that if
\begin{equation*}\doublebar{\im a}_{L^\infty(\RR^n)}+\doublebar{A-B}_{L^\infty(\RR^n)}<\varepsilon\end{equation*}
then the mapping $h\mapsto \partial_{n+1} \E_{B,a,A} h\vert_{\partial\RR^{n+1}_\pm}$ is invertible on $L^2(\RR^n)$, and its inverse has norm at most~$C$.
\end{lem}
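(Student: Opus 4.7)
The plan is to first establish invertibility in the self-adjoint base case $B=A$ with $a$ real-valued, and then deduce the perturbative result by continuity. When $B=A$ and $a$ is real-valued, the operator $L_A^*(aL_A)$ is formally self-adjoint and admits the coercive energy form $Q[u]=\int a\abs{L_A u}^2$. I would establish injectivity of $T:=\partial_{n+1}\E_{A,a,A}\vert_\pm$ as follows. Supposing $Th=0$, \lemref{bdd6} gives $\partial_{n+1}\E h\vert_-=0$, while the proof of \lemref{bdd2} (in particular \eqref{eqn:NvertF} combined with \eqref{eqn:NSt}) gives $N_-(\partial_{n+1}\E h)\in L^2(\RR^n)$; uniqueness of $(D)^A_2$ in $\RR^{n+1}_-$ (part of the single layer potential requirements) then yields $\partial_{n+1}\E h\equiv 0$ in $\RR^{n+1}_-$, and \lemref{utunique} forces $\E h$ to be constant in $\RR^{n+1}_-$, so that $\nabla\E h\vert_-=0$ and by \lemref{bdd6} also $\nabla\E h\vert_+=0$. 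Thus $\E h$ solves $L_A^*(aL_A u)=0$ in $\RR^{n+1}_+$ with $\nabla\E h\vert_+=0$, $\widetilde N_+(\nabla\E h)\in L^2(\RR^n)$ (\lemref{bdd6}), and $\triplebar{t\,L_A\E h}_+<\infty$ (\thmref{squarebound} and \crlref{Esquare}). Subtracting the constant $c:=\E h\vert_+$ to achieve zero Dirichlet data, multiplying by a smooth cutoff $\chi_R$ supported in $B_R$, integrating by parts twice against $L_A^*(aL_A(\E h-c))=0$, and sending $R\to\infty$—using the above decay to kill the commutator and far-field boundary contributions—I would obtain $\int_{\RR^{n+1}_+}a\abs{L_A\E h}^2=0$, hence $L_A\E h\equiv 0$ in $\RR^{n+1}_+$ by accretivity of $a$. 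Then $\partial_{n+1}^2\s_{A^*}h = a L_A\E h = 0$ in $\RR^{n+1}_+$, and the pointwise decay \eqref{eqn:Shdecay} as $t\to\infty$ forces $\partial_{n+1}\s_{A^*}h\equiv 0$ in the upper half-space; so $\s_{A^*}^{\perp,+}h=0$, whence $h=0$ by \thmref{invertiblelayer}. Tracking constants through this chain gives a quantitative lower bound $\doublebar{h}_{L^2}\leq C\doublebar{Th}_{L^2}$, so $T$ has closed range, and applying the same argument to the adjoint $T^*$ (which has an analogous self-adjoint structure with $A$ replaced by $A^*$) shows the range of $T$ is dense, yielding invertibility.

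\textbf{Perturbation.} Tracking the estimates of Sections~\ref{sec:potentials}--\ref{sec:nontangential}, the operator-norm-valued map $(a,B)\mapsto\partial_{n+1}\E_{B,a,A}\vert_\pm$ on $L^2(\RR^n)$ is continuous in $(a,B)$ with respect to $\doublebar{\im a}_{L^\infty}+\doublebar{A-B}_{L^\infty}$: the difference $\E_{B,a,A}-\E_{A,\re a,A}$ decomposes into pieces linear in $\im a$ or in $A^*-B^*$, each bounded by the same square-function and nontangential arguments of Sections~\ref{sec:square} and~\ref{sec:nontangential}. Since invertibility is an open condition in the operator norm, there is some $\varepsilon>0$, depending only on the data of \thmref{Dirichletexists}, such that $\partial_{n+1}\E_{B,a,A}\vert_\pm$ is invertible with uniformly bounded inverse whenever $\doublebar{\im a}_{L^\infty}+\doublebar{A-B}_{L^\infty}<\varepsilon$.

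\textbf{Main obstacle.} The hard part will be the cutoff argument establishing $\int_{\RR^{n+1}_+}a\abs{L_A\E h}^2=0$ and the subsequent quantitative extraction of the lower bound $\doublebar{h}_{L^2}\lesssim\doublebar{Th}_{L^2}$. Because $\E h$ does not lie in any global finite-energy space, one must multiply by a large-scale cutoff and carefully handle the commutator and far-field boundary terms arising from the integration by parts. The decay provided by $\triplebar{t\,L_A\E h}_+<\infty$ and $\widetilde N_+(\nabla\E h)\in L^2(\RR^n)$ is precisely what is needed, but bookkeeping the various error terms and propagating constants through each step of the injectivity argument will constitute the delicate core of the proof.
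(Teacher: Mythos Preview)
Your approach diverges substantially from the paper's, and the core energy step has a genuine gap.

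The paper does not use an interior energy/uniqueness argument at all. Instead it establishes a Rellich-type boundary identity: writing $\E^\perp h=\partial_{n+1}\E_{A,a,A}h\vert_{\partial\RR^{n+1}_\pm}$ and computing $\int g\,\partial_t\F_{A,a,A}h(\,\cdot\,,-t)$ via the definition of $\s_A$, then integrating by parts in~$s$, one arrives (for real~$a$) at
\[
2\re\int_{\RR^n}\bar h\,\E^\perp_{A,a,A}h
=\int_{\RR^n}\frac{1}{a(y)}\,\bigl|\s_{A^*}^{\perp,+}h(y)\bigr|^2\,dy
\geq \frac{1}{C}\doublebar{h}_{L^2}^2,
\]
the last inequality by invertibility of $\s_{A^*}^{\perp,+}$. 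This single line gives coercivity, hence injectivity of both $\E^\perp$ and its adjoint, hence invertibility with an explicit bound on the inverse. The perturbation is then handled by analyticity of $a\mapsto\E^\perp_{A,a,A}$ and $B\mapsto\E^\perp_{B,a,A}$.

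Your cutoff argument runs into a concrete obstruction. With $u=\E h-c$ and $v=aL_Au=\partial_t^2\s_{A^*}h$, testing \dfnref{weaksoln} with $\chi_R\chi_\delta v$ and $\chi_R\chi_\delta u$ and combining gives
\[
\int\chi_R\chi_\delta\,\frac{|v|^2}{a}
=\int\bar v\,\nabla(\chi_R\chi_\delta)\cdot A\nabla u
-\int u\,\overline{\nabla v}\cdot A\nabla(\chi_R\chi_\delta).
\]
The far-field terms vanish as $R\to\infty$, but near $t=0$ one only has $\doublebar{u(\,\cdot\,,t)}_{L^2}\lesssim t$, $\doublebar{\nabla u(\,\cdot\,,t)}_{L^2}\lesssim 1$, while $\doublebar{v(\,\cdot\,,t)}_{L^2}\lesssim t^{-1}$ and $\doublebar{\nabla v(\,\cdot\,,t)}_{L^2}\lesssim t^{-2}$. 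Each near-boundary error term then scales like $\delta^{-1}$, not $o(1)$; the condition $\nabla u\vert_+=0$ gives no rate for $\doublebar{\nabla u(\,\cdot\,,t)}_{L^2}\to 0$, and $v$ genuinely can be singular at the boundary. The integral $\int_{\RR^{n+1}_+}a|L_A\E h|^2$ you aim to show vanishes is not even known to be finite a priori. You correctly flag this as the main obstacle, but offer no mechanism to overcome it, and I do not see one without importing something equivalent to the paper's boundary identity.

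A second issue: even granting injectivity, ``tracking constants'' through the chain does not yield $\doublebar{h}\leq C\doublebar{Th}$. The step from $\nabla\E h\vert_+$ small to $h$ small is exactly the quantitative uniqueness statement you are trying to prove; the energy argument, as formulated, is qualitative. The paper's coercivity inequality gives the quantitative bound in one stroke, which is what makes that route work.
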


\begin{proof}
Define 
$\E^\perp_{B,a,A} h=\partial_{n+1} \E_{B,a,A}h\vert_{\partial\RR^{n+1}_-}$.
We begin by showing that $\E^\perp_{A, a, A}$ is invertible for $a$ real; it is for this reason that we require that both $A$ and $A^*$ satisfy the single layer potential requirements.

Choose some $h\in S$, where $S\subset L^2(\RR^n)$ is as in \lemref{gradEcts}.
By definition of the single layer potential,
\begin{equation*}
{\int_{\RR^n} g(x)\,\partial_t \F_{A,a,A} h(x,-t)\,dx}
	=\int_{\RR^n}\int_0^\infty
	\partial_t^{2} \s_{A^T} g(y,s+t) \,\frac{1}{a(y)}
	\,\partial_s \s_{A^*} h(y,s)\,ds\,dy.
\end{equation*}
Integrating by parts and applying the dominated convergence theorem 
yields that
\begin{align*}
\int_{\RR^n} g(x)\,\E^\perp_{A,a,A} f(x)\,dx
&=-\lim_{t\to 0^-}\int_{\RR^n}\int_0^\infty
\partial_s \s_{A^T} g(y,s-t) \,\frac{1}{a(y)}
\,\partial_s^2 \s_{A^*} f(y,s)\,ds\,dy
\\
&=
\int_{\RR^n} 
\s_{A^T}^{\perp,+} g(y)\,\frac{1}{a(y)}
\s_{A^*}^{\perp,+} f(y)
\,dy
-\int_{\RR^n} f(x)\,\E^\perp_{\overline A,a,\overline A} g(x)\,dx
.\end{align*}
By density of~$S$ and by the $L^2$-boundedness of $\E^\perp_{A,a,A}$ and $\E^\perp_{\overline A,a,\overline A}$, we have that for any $h\in L^2(\RR^n)$,
\begin{align*}
\int_{\RR^n} \bar h\,\E^\perp_{A,a,A} h
+\int_{\RR^n} h\,\E^\perp_{\overline A,a,\overline A} \bar h
&=
\int_{\RR^n} \frac{1}{a(y)}\,
\s_{ A^T}^{\perp,+}  h(y)\,
\s_{ A^*}^{\perp,+}  h(y)
\,dy
.\end{align*}
Observe that $\overline{\E_{A,a,A} h(x,t)}=\E_{\overline A,\overline a,\overline A} \bar h(x,t)$.
Thus, if $a$ is real, then
\begin{equation*}
\int_{\RR^n} h \,\E^\perp_{\overline A,a,\overline A} \bar h\,d\sigma
=
\overline{ \int_{\RR^n} \bar h \,\E^\perp_{ A,a, A}  h\,d\sigma}
\end{equation*}
and so
\begin{align*}
\re \int_{\RR^n} \bar h \,\E^\perp_{A,a,A} h\,d\sigma
&=
\frac{1}{2}
\int_{\RR^n}
\frac{1}{a(Y)}
\abs{\s_{ A^*}^{\perp,+}  h(y)}^2
\,d\sigma(Y)
\geq \frac{1}{C} \doublebar{\s_{ A^*}^{\perp,+}  h}_{L^2(\RR^n)}^2.
\end{align*}
Thus because $A^*$ satisfies the single layer potential requirements,
\begin{align*}
\biggabs{\int_{\RR^n} \bar h \,\E^\perp_{A,a,A} h\,d\sigma}
&\geq \frac{1}{C}\doublebar{ h}_{L^2(\RR^n)}^2
\end{align*}
and so $\E^\perp_{A,a,A}$ must be one-to-one. But the adjoint $(\E^\perp_{A,a,A})^*$ must also be one-to-one, and so $\E^\perp_{A,a,A}$ must be invertible. Furthermore, $(\E^\perp_{A,a,A})^{-1}$ is bounded $L^2(\RR^n)\mapsto L^2(\RR^n)$, as desired.

We prove invertibility for $\doublebar{\im a}_{L^\infty}$ or $\doublebar{A-B}_{L^\infty}$ small using standard analyticity arguments. Specifically, the mapping $a\mapsto \E^\perp_{A,a,A}$ is analytic in the sense that if $z\mapsto a_z$ is analytic $D\mapsto L^\infty(\RR^n)$ for some $D\subset\CC$, then $z\mapsto \E^\perp_{A,a_z,A}h$ is analytic $D\mapsto L^2(\RR^n)$ for any $h\in L^2(\RR^n)$. Furthermore, under our assumptions, if $a_0$ is real and accretive then $\doublebar{\E^\perp_{A,a,A}h}_{L^2(\RR^n)}$ is bounded for all $t$-independent $a$ in a $L^\infty$ neighborhood of~$a_0$; thus if $\doublebar{a-a_0}_{L^\infty}$ is small enough, then
\begin{equation*}\doublebar{\E^\perp_{A,a,A}-\E^\perp_{A,a_0,A}}_{L^2(\RR^n)\mapsto L^2(\RR^n)}
\leq C\doublebar{a-a_0}_{L^\infty(\RR^n)}.\end{equation*}
Thus, if $\doublebar{\im a}_{L^\infty(\RR^n)}$ is small enough then $\E^\perp_{A,a,A}$ is invertible on $L^2(\RR^n)$.
Similarly, invertibility of $\E^\perp_{B,a,A}$ for $\doublebar{A-B}_{L^\infty(\RR^n)}$ small enough follows from analyticity of the map $B\mapsto \Gamma_{X}^B$.
\end{proof}

We now use this invertibility result to prove existence of solutions to the fourth-order Dirichlet problem.

\begin{thm} \label{thm:final}
Suppose that $a$, $A$ and $B$ satisfy the conditions of \lemref{invertible}.
Let $f\in W^2_1(\RR^n)$ and let $g\in L^2(\RR^n)$.

Then there exists a constant $c$ and an $h\in L^2(\RR^n)$, with $\doublebar{h}_{L^2(\RR^n)}\leq C\doublebar{\nabla f}_{L^2(\RR^n)}+C\doublebar{g}_{L^2(\RR^n)}$,
such that
\begin{equation*}u(X)=-\D_A f(X) -\s_A g(X) + \E_{B,a,A} h(X)+c\end{equation*}
satisfies
\begin{equation*}
\left\{\begin{aligned}
L_B^*(a\,L_Au)&=0  &&\text{in }\RR^{n+1}_+,\\
u&=f  &&\text{on } \partial\RR^{n+1}_+,\\
\e\cdot A\nabla u &= g &&\text{on }\partial\RR^{n+1}_+
\end{aligned}\right.
\end{equation*}
where $L_B^*(a\,L_A u)=0$ in the sense of \dfnref{weaksoln} and where 
$u=f$, $\e\cdot A\nabla u=g$ in the sense that
\begin{equation*}\lim_{t\to 0^+}
\doublebar{ u(\,\cdot\,,t)- f}_{W^2_1(\RR^n)}
+
\doublebar{\e\cdot A\nabla u(\,\cdot\,,t)-g}_{L^2(\RR^n)}
=0.
\end{equation*}
Furthermore,
\begin{align}
\label{eqn:estimate}
\doublebar{\widetilde N_+(\nabla u)}_{L^2(\RR^n)}
+\triplebar{t\,L_A u}_+
+\triplebar{t\,\nabla\partial_t u}_+
&\leq
C \doublebar{\nabla f}_{L^2(\RR^n)}
+C\doublebar{g}_{L^2(\RR^n)}
.\end{align}
\end{thm}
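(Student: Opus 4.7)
The plan is to take
\[u(X) = -\D_A f(X) - \s_A g(X) + \E_{B,a,A} h(X) + c\]
and to select $h\in L^2(\RR^n)$ and the constant $c$ so that $u$ vanishes in the lower half-space; the desired boundary data on $\partial\RR^{n+1}_+$ will then fall out of the jump relations. The PDE is automatic: $L_A(\D_A f) = L_A(\s_A g) = 0$ in both half-spaces, while in $\RR^{n+1}_+$ we have $a L_A(\E h) = -\partial_{n+1}^2 \s_{B^*} h$ by \eqref{eqn:LFupper}, and since $\partial_{n+1}^2 \s_{B^*} h$ is $L_{B^*}$-harmonic in $\RR^{n+1}_+$ (because $\s_{B^*} h$ is), \dfnref{weaksoln} gives $L_B^*(a L_A u) = 0$. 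In $\RR^{n+1}_-$ every summand is $L_A$-harmonic---using \eqref{eqn:LFlower} for $\E h$---so $u - c$ is $L_A$-harmonic in the lower half-space for \emph{every} choice of $h$.

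The choice of $h$ is dictated by \lemref{invertible}. The traces $\partial_{n+1}\D_A f|_{\partial\RR^{n+1}_-}$ and $\s_A^{\perp,-} g$ exist in $L^2(\RR^n)$ with norms controlled by $\doublebar{\nabla f}_{L^2(\RR^n)}$ and $\doublebar{g}_{L^2(\RR^n)}$, by \thmref{2L2limits} applied to $\D_A f$ and by the $L^2$-boundedness of $\s_A^{\perp,-}$ built into the single layer potential requirements. So \lemref{invertible} produces a unique $h \in L^2(\RR^n)$ with
\[\partial_{n+1} \E_{B,a,A} h|_{\partial\RR^{n+1}_-} = \partial_{n+1} \D_A f|_{\partial\RR^{n+1}_-} + \s_A^{\perp,-} g\]
and $\doublebar{h}_{L^2(\RR^n)} \leq C\doublebar{\nabla f}_{L^2(\RR^n)} + C\doublebar{g}_{L^2(\RR^n)}$. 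This forces $\partial_{n+1}(u-c)|_{\partial\RR^{n+1}_-} = 0$. Combined with $\widetilde N_-(\nabla u) \in L^2(\RR^n)$ (from \eqref{eqn:NnablaD}, \eqref{eqn:NnablaS}, and \lemref{bdd2}) and well-posedness of $(N^\perp)^A_2$ in $\RR^{n+1}_-$---which follows from $(R)^{A^*}_2$ via the discussion in \secref{2layer}---this makes $u-c$ constant in $\RR^{n+1}_-$; choosing $c$ to absorb that constant, we obtain $u \equiv 0$ in $\RR^{n+1}_-$.

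The boundary values on $\partial\RR^{n+1}_+$ then fall out of jump relations applied to $u$ viewed as a global function. Since $\nabla u|_{\partial\RR^{n+1}_-} = 0$, combining \eqref{eqn:Djump}, \eqref{eqn:Scts}, and the continuity relation \eqref{eqn:Ects} yields $\nabla_\parallel u|_{\partial\RR^{n+1}_+} = \nabla f$, and combining \eqref{eqn:Dcts}, \eqref{eqn:Sjump}, and \eqref{eqn:Ects} yields $\e \cdot A \nabla u|_{\partial\RR^{n+1}_+} = g$; the convergence to the boundary data in $W^2_1$ (respectively $L^2$) follows from the definition \eqref{eqn:trace} together with \thmref{2L2limits} applied to $\nabla u$ in $\RR^{n+1}_+$. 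The estimate \eqref{eqn:estimate} is then immediate from the bounds already established: $\doublebar{\widetilde N_+(\nabla u)}_{L^2(\RR^n)}$ from \eqref{eqn:NnablaD}, \eqref{eqn:NnablaS}, and \lemref{bdd6}; $\triplebar{t\,\nabla\partial_t u}_+$ from \eqref{eqn:Dtsquare}, \eqref{eqn:Ssquare}, and \crlref{Esquare}; and $\triplebar{t\,L_A u}_+ = \triplebar{t\,(1/a)\,\partial_{n+1}^2 \s_{B^*} h}_+$ controlled by \eqref{eqn:Svertsquare} applied to $B^*$ together with accretivity of $a$.

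The essential leverage here is the invertibility of \lemref{invertible}: once we recognize that $u|_{\RR^{n+1}_-}$ is governed by a second-order oblique Neumann problem, the entire fourth-order problem reduces to solving for $h$ with prescribed Neumann data on $\partial\RR^{n+1}_-$, and the jump relations transfer both pieces of Dirichlet data to $\partial\RR^{n+1}_+$ at no additional cost---the three potentials $\D_A$, $\s_A$, $\E$ having jump behavior carefully matched to the three pieces of data ($f$, $g$, and zero, respectively).
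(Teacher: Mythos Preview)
Your argument is correct and matches the paper's proof: choose $h$ via \lemref{invertible} so that $\partial_{n+1} u$ vanishes on $\partial\RR^{n+1}_-$, use uniqueness in the lower half-space to force $u$ constant there, and then read off both pieces of boundary data on $\partial\RR^{n+1}_+$ from the jump relations \eqref{eqn:Sjump}--\eqref{eqn:Djump} together with \eqref{eqn:Ects}. The paper phrases the uniqueness step as ``$\partial_{n+1} w$ is a $(D)^A_2$-solution with zero data, hence zero, then apply \lemref{utunique}'' rather than invoking $(N^\perp)^A_2$, but these are equivalent.

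Two small slips worth fixing. First, \eqref{eqn:LFupper} gives $a\,L_A(\E h) = +\partial_{n+1}^2 \s_{B^*} h$, not minus; this does not affect your argument. Second, \thmref{2L2limits} requires $\Div A\nabla u = 0$, which fails for $u$ in $\RR^{n+1}_+$; the existence of $\nabla u|_{\partial\RR^{n+1}_+}$ comes instead from applying \thmref{2L2limits} to $\D_A f$ and $\s_A g$ separately and \lemref{bdd6} to $\E h$, while the $L^2$ convergence $u(\cdot,t)\to f$ (as opposed to $\nabla_\parallel u(\cdot,t)\to\nabla f$) comes from \rmkref{NTlimit}.
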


\begin{proof}
Let $v(X)=-\D_A f(X) -\s_A g(X)$, so that $\Div A\nabla v=0$ in $\RR^{n+1}\setminus\RR^n$. By \eqref{eqn:NnablaD} and \eqref{eqn:NnablaS}, and by \eqref{eqn:Dtsquare} and \eqref{eqn:Ssquare}, we have that
\begin{equation}
\label{eqn:vbounds}
\doublebar{\widetilde N_\pm(\nabla v)}_{L^2(\RR^n)}
+\triplebar{t\,\nabla\partial_t v}
\leq  C \doublebar{\nabla f}_{L^2(\RR^n)}+C\doublebar{g}_{L^2(\RR^n)}.\end{equation}

By \thmref{2L2limits}, $\partial_{n+1} v\vert_{\partial\RR^{n+1}_-}$ exists and lies in $L^2(\RR^n)$.
Let $h\in L^2(\RR^n)$ be such that
\begin{equation*}\partial_{n+1} \E_{B,a,A} h\big\vert_{\partial\RR^{n+1}_-}=-\partial_{n+1} v\vert_{\partial\RR^{n+1}_-}.\end{equation*}
By \lemref{invertible}, $h$ exists and satisfies $\doublebar{h}_{L^2(\RR^n)}\leq C \doublebar{\nabla f}_{L^2(\RR^n)}+C\doublebar{g}_{L^2(\RR^n)}$. Thus by \lemref{bdd6} and \crlref{Esquare}, 
\begin{equation}
\label{eqn:Ebounds}
\doublebar{\widetilde N_\pm(\nabla \E_{B,a,A} h)}_{L^2(\RR^n)}
+\triplebar{t\,\nabla\partial_t \E_{B,a,A}  h}
\leq C \doublebar{\nabla f}_{L^2(\RR^n)}+C\doublebar{g}_{L^2(\RR^n)}.\end{equation}
By \eqref{eqn:LFupper}, $L_A \E_{B,a,A} h=(1/a) \partial_{n+1}^2 \s_{B^*} h$ and so by \eqref{eqn:Svertsquare},
\begin{equation*}\triplebar{t\,L_A \E_{B,a,A}  h}_+
\leq C\doublebar{h}_{L^2(\RR^n)}
\leq C \doublebar{\nabla f}_{L^2(\RR^n)}+C\doublebar{g}_{L^2(\RR^n)}.\end{equation*}

Let $w=v+\E_{B,a,A} h=-\D_A f-\s_A g + \E_{B,a,A} h$.  Then $w$ satisfies \eqref{eqn:estimate}.
By \eqref{eqn:LFupper}, $L_B^*(a\,L_A w) = 0 $ in $\RR^{n+1}_+$ in the sense of \dfnref{weaksoln}. We need only show that for some constant~$c$, $u=w+c$ has the correct boundary values.

By our choice of~$h$, $\partial_{n+1} w=0$ on~$\partial\RR^{n+1}_-$.
By \eqref{eqn:vbounds} and \eqref{eqn:Ebounds}, $\partial_{n+1} w$ is a $(D)^A_2$-solution in $\RR^{n+1}_-$, and so $\partial_{n+1} w=0$ in~$\RR^{n+1}_-$. Again by \eqref{eqn:vbounds} and \eqref{eqn:Ebounds}, $w$ satisfies the conditions of \lemref{utunique}, and so $w$ is constant in~$\RR^{n+1}_-$.

By \eqref{eqn:Sjump}, \eqref{eqn:Scts}, \eqref{eqn:Dcts}, and \eqref{eqn:Djump},
\begin{equation*}\lim_{t\to 0^+}  \nabla_\parallel v(\,\cdot\,,t)- \nabla_\parallel v(\,\cdot\,,-t) = \nabla f,
\quad
\lim_{t\to 0^+}  \e\cdot A \nabla v(\,\cdot\,,t) - \e\cdot A\nabla v(\,\cdot\,,-t) = g\end{equation*}
in $L^2(\RR^n)$, and by \eqref{eqn:Ects} the same is true of~$w$. Since $w$ is constant in $\RR^{n+1}_-$, this yields that 
$\e\cdot A\nabla w\vert_{\partial\RR^{n+1}_+} = g$ and that
$\nabla_\parallel w\vert_{\partial\RR^{n+1}_+} = \nabla f$. As in \rmkref{NTlimit} we have that $w\to f-c$, for some constant~$c$, pointwise nontangentially and in~$L^2(\RR^n)$.
Letting $u=w+c$ completes the proof.
\end{proof}

We conclude this paper by proving a Fatou-type theorem and uniqueness of solutions.

\begin{thm}
\label{thm:converse}
Suppose that $a$, $A$ and $B$ are $t$-independent, that $a$ is accretive, and that $A$ and~$B^*$ satisfy the single layer potential requirements of \dfnref{goodlayer}.
We do not require that $\doublebar{\im a}_{L^\infty}$ or $\doublebar{A-B}_{L^\infty}$ be small.

Suppose that $u$ satisfies
\begin{equation*}
\left\{\begin{aligned}
L_B^*(a\,L_Au)&=0  \text{ in }\RR^{n+1}_+,\\
\doublebar{\widetilde N_+(\nabla u)}_{L^2(\RR^n)}
+\triplebar{t\,L_A u}_+
&<\infty
.\end{aligned}\right.
\end{equation*}

Then $\vec G=\nabla u\vert_{\partial\RR^{n+1}_+}$ exists in the sense of \eqref{eqn:trace} and satisfies
\begin{equation}\label{eqn:fatou}\doublebar{\vec G}_{L^2(\RR^n)}\leq C\doublebar{\widetilde N_+(\nabla u)}_{L^2(\RR^n)}
+C\triplebar{t\,L_A u}_+.\end{equation}

Furthermore, there exist functions $u_A\in W^2_{1,loc}(\RR^{n+1}_+)$ and~$h\in L^2(\RR^n)$ that satisfy the estimate
\begin{equation*}
\doublebar{\widetilde N_+(\nabla u_A)}_{L^2(\RR^n)} +\doublebar{h}_{L^2(\RR^n)}
\leq C\doublebar{\widetilde N_+(\nabla u)}_{L^2(\RR^n)}
+C\triplebar{t\,L_A u}_+\end{equation*}
and such that
\begin{equation*}u=u_A+\E_{B,a,A} h \text{ and } \Div A\nabla u_A=0\text{ in }\RR^{n+1}_+.\end{equation*}

\end{thm}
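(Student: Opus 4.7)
Set $v:=a\,L_A u$. By \dfnref{weaksoln}, $v\in W^2_{1,loc}(\RR^{n+1}_+)$ and $\Div B^*\nabla v=0$ weakly in $\RR^{n+1}_+$; accretivity of $a$ yields $\triplebar{tv}_+\leq C\triplebar{t\,L_A u}_+$. The plan is to produce $h\in L^2(\RR^n)$ with $\doublebar{h}_{L^2(\RR^n)}\leq C\triplebar{tv}_+$ satisfying $\partial_{n+1}^2\s_{B^*}h=v$ in $\RR^{n+1}_+$. Once this is achieved, define $u_A:=u-\E_{B,a,A}h$. By \lemref{Fgradientexists} we have $a\,L_A\E_{B,a,A}h=\partial_{n+1}^2\s_{B^*}h=v$, hence $a\,L_A u_A=0$ and $\Div A\nabla u_A=0$ in the weak sense. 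The triangle inequality together with \lemref{bdd6} then gives
\begin{equation*}
\doublebar{\widetilde N_+(\nabla u_A)}_{L^2(\RR^n)}\leq \doublebar{\widetilde N_+(\nabla u)}_{L^2(\RR^n)}+C\doublebar{h}_{L^2(\RR^n)},
\end{equation*}
which produces the required estimate on $u_A$ and $h$.

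\textbf{The key step: constructing $h$.} The strategy is to first build a $(D)^{B^*}_2$-solution $w$ in $\RR^{n+1}_+$ with $\partial_{n+1} w=v$. Combining Moser's local boundedness (\crlref{reverseholder} applied to $v$, which satisfies $\Div B^*\nabla v=0$) with the weighted bound $\triplebar{tv}_+<\infty$ gives the slice estimate $\doublebar{v(\,\cdot\,,T)}_{L^2(\RR^n)}\leq CT^{-1}\triplebar{tv}_+$ for every $T>0$. Consequently each shift $v_\delta(x,t):=v(x,t+\delta)$ is a bona fide $(D)^{B^*}_2$-solution with $L^2$ boundary data $v(\,\cdot\,,\delta)$. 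An antiderivative $w$ of $v$ in $t$ is then constructed either by integrating these shifts and passing to the limit $\delta\to 0^+$, or by writing $w(x,t)=w_0(x)+\int_0^t v(x,s)\,ds$ and choosing the boundary correction $w_0$ via the well-posed $n$-dimensional second-order problem obtained by using $t$-independence of $B^*$ to reduce $\Div B^*\nabla w=0$ to a PDE on $w_0$; the slice bound above provides the integrability needed for convergence. The outcome is an honest $(D)^{B^*}_2$-solution $w$ with $\partial_{n+1}w=v$ and $\doublebar{N_+ w}_{L^2(\RR^n)}\leq C\triplebar{tv}_+$. Since $B^*$ satisfies the single-layer-potential requirements of \dfnref{goodlayer}, $\s_{B^*}^{\perp,+}$ is invertible on $L^2(\RR^n)$, so we may set $h:=(\s_{B^*}^{\perp,+})^{-1}(w\vert_{\partial\RR^{n+1}_+})$, with $\doublebar{h}_{L^2(\RR^n)}\leq C\triplebar{tv}_+$. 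Both $\partial_{n+1}\s_{B^*}h$ and $w$ are $(D)^{B^*}_2$-solutions with common boundary data $\s_{B^*}^{\perp,+}h=w\vert_{\partial\RR^{n+1}_+}$; Dirichlet uniqueness (part of \dfnref{goodlayer}) forces them to coincide, and differentiating in $t$ gives $v=\partial_{n+1}^2\s_{B^*}h$.

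\textbf{The Fatou statement.} With $u_A$ and $h$ produced as above, $u_A$ is a second-order $A$-solution with $\widetilde N_+(\nabla u_A)\in L^2(\RR^n)$, so \thmref{2L2limits} yields the $L^2$-trace $\nabla u_A\vert_{\partial\RR^{n+1}_+}$. By \lemref{bdd6}, $\nabla \E_{B,a,A}h$ also admits an $L^2$-trace on $\partial\RR^{n+1}_+$ with norm bounded by $C\doublebar{h}_{L^2(\RR^n)}$. Adding, $\vec G=\nabla u\vert_{\partial\RR^{n+1}_+}$ exists in the sense of \eqref{eqn:trace}, and combining the bounds from the previous two steps yields \eqref{eqn:fatou}.

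\textbf{Main obstacle.} The only nontrivial step is the construction of the auxiliary $(D)^{B^*}_2$-antiderivative $w$ in the middle paragraph. The weighted bound $\triplebar{tv}_+<\infty$ is not by itself enough to make $\int_t^\infty v(x,s)\,ds$ literally converge (a naive Cauchy--Schwarz would require $\triplebar{t^{3/2}v}_+<\infty$), so $w$ cannot be written as a crude integral of $v$; instead one must manufacture $w$ as a genuine Dirichlet solution, exploiting $t$-independence of $B^*$ and the single-layer-potential hypotheses to either (i) reduce the defect $\Div B^*\nabla w$ to an $n$-dimensional equation that one can solve on the boundary, or (ii) build $w$ by a shift-and-limit procedure using the slice bound. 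Every other ingredient---boundedness of layer potentials, nontangential estimates for $\E_{B,a,A}h$, existence of traces, and invertibility of $\s_{B^*}^{\perp,+}$---has already been established in Sections~\ref{sec:secondorder}--\ref{sec:nontangential}.
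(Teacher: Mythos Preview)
Your overall architecture is correct and matches the paper's: set $v=a\,L_Au$, find $h$ with $\partial_{n+1}^2\s_{B^*}h=v$, subtract $\E_{B,a,A}h$, and read off the conclusions from \thmref{2L2limits} and \lemref{bdd6}. The gap is in the middle step, and it is precisely the part you flag as the ``main obstacle''---but you have misdiagnosed it.

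You claim that $\int_t^\infty v(x,s)\,ds$ cannot be made to converge from the hypotheses, and therefore propose a shift-and-limit procedure or an $n$-dimensional boundary correction, neither of which you carry out. In fact the paper simply writes the antiderivative as $\int_t^\infty v(x,s)\,ds$ and shows it converges absolutely. The point you missed is that $v=a\,L_Au$ is controlled not only by $\triplebar{t\,L_Au}_+$ but also, and much more strongly, by $\doublebar{\widetilde N_+(\nabla u)}_{L^2}$ via the \emph{fourth-order} Caccioppoli inequality (\thmref{Caccioppoli}). Since $L_B^*(a\,L_Au)=0$, that theorem plus Poincar\'e give
\[
\fint_{B((x,t),t/4)}\abs{v}^2\leq \frac{C}{t^2}\fint_{B((x,t),t/2)}\abs{\nabla u}^2,
\]
and combining with local boundedness of the $B^*$-solution $v$ and the definition of $\widetilde N_+$ yields the pointwise decay $\abs{v(x,t)}\leq Ct^{-1-n/2}\doublebar{\widetilde N_+(\nabla u)}_{L^2}$. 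Since $n\geq 2$ this is integrable at infinity, so the crude integral does converge. Your slice bound $\doublebar{v(\,\cdot\,,T)}_{L^2}\leq CT^{-1}\triplebar{tv}_+$ used only the weaker hypothesis and is therefore one power of $t$ short; the remedy is not a more elaborate construction but the other half of the assumed control on $u$. Your proposed option~(ii), writing $w_0+\int_0^t v$, has the mirror-image problem: $\int_0^t\doublebar{v(\,\cdot\,,s)}_{L^2}\,ds$ need not converge from $\triplebar{tv}_+<\infty$ alone, and you give no mechanism to tame it.

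Once the antiderivative exists, the paper bounds its $N_+$-norm by first using \lemref{slabs} and \lemref{bdd1} to get a square-function bound on its gradient (this is where $\triplebar{t\,L_Au}_+$ genuinely enters, via $\partial_{n+1}(\text{antiderivative})=-v$), and then invoking \thmref{squareN}. From there your last two paragraphs are exactly right.
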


\begin{proof}
If $L_B^*(a\,L_Au)=0$ in the sense of \dfnref{weaksoln}, then there is some $w\in W^2_{1,loc}(\RR^{n+1}_+)$ such that $w=a\,L_A u$ in the weak sense. Furthermore, $\Div B^*\nabla w=0$ in $\RR^{n+1}_+$.

Now, observe that by the De Giorgi-Nash-Moser condition, \thmref{Caccioppoli} and the Poincar\'e inequality, and the definition of~$\widetilde N_+$,
\begin{align*}\abs{w (x,t)} 
&\leq C\left(\fint_{B((x,t),t/4)} \abs{w}^2\right)^{1/2}
\leq \frac{C}{t}\left(\fint_{B((x,t),t/2)} \abs{\nabla u}^2\right)^{1/2}
\\&\leq \frac{C}{t}\inf_{\abs{x-y}<t/C} \widetilde N_+(\nabla u)(y)
\leq \frac{C}{t} \left(\fint_{\Delta(x,t/C)} \widetilde N_+(\nabla u)^2\right)^{1/2}
\\&\leq Ct^{-1-n/2} \doublebar{\widetilde N_+(\nabla u)}_{L^2(\RR^n)}
.\end{align*}

We define
\begin{equation*}
v(x,t)=\int_t^\infty w(x,s)\,ds = \int_0^\infty w(x,t+s)\,ds.
\end{equation*}
Observe that the integral converges absolutely for all $t>0$. Furthermore, $v\in W^2_{1,loc}(\RR^{n+1}_+)$ and $L_{B^*} v=0$. Using \lemref{slabs}, we may bound $\nabla v$ as follows:
\begin{align*}
\doublebar{\nabla v(\,\cdot\,,t)}_{L^2(\RR^n)}
&\leq C\int_t^\infty \doublebar{\nabla w(\,\cdot\,,s)}_{L^2(\RR^n)}\,ds
\leq \int_t^\infty \frac{C}{s}\doublebar{w(\,\cdot\,,s)}_{L^2(\RR^n)}\,ds
\\&\leq \int_t^\infty \frac{C}{s^2}\doublebar{\widetilde N_+(\nabla u)}_{L^2(\RR^n)}\,ds
\leq  \frac{C}{t}\doublebar{\widetilde N_+(\nabla u)}_{L^2(\RR^n)}.
\end{align*}
Thus, by \lemref{bdd1},
\begin{equation*}
\int_{\RR^{n+1}_+} \abs{\nabla v(x,t)}^2\,t\,dx\,dt
\leq C\int_{\RR^{n+1}_+} \abs{\partial_t v(x,t)}^2\,t\,dx\,dt
+ C\doublebar{\widetilde N_+(\nabla u)}_{L^2(\RR^n)}^2.
\end{equation*}
But since $\partial_{n+1} v=-w=-a\,L_A u$, the right-hand side is at most \begin{equation*}C\doublebar{\widetilde N_+(\nabla u)}_{L^2(\RR^n)}^2
+C\triplebar{t\,L_A u}_+^2.\end{equation*}
Thus, by \thmref{squareN},
\begin{equation*}\doublebar{N_+ v}_{L^2(\RR^n)}\leq C\doublebar{\widetilde N_+(\nabla u)}_{L^2(\RR^n)}
+C\triplebar{t\,L_A u}_+.\end{equation*}
By invertibility of $\s_{B^*}^{\perp,+}$ and by uniqueness of solutions to $(D)^{B^*}_2$, we have that $v=-\partial_{n+1} \s_{B^*} h$ for some $h\in L^2(\RR^n)$. So $a\,L_Au=w=\partial_{n+1}^2 \s_{B^*} h$. 

Let $u_A = u-\E_{B,a,A} h$. By \lemref{Fgradientexists}, $\Div A\nabla u_A=0$ in $\RR^{n+1}_+$.
Furthermore, by \lemref{bdd6}
\begin{equation*}\doublebar{\widetilde N_+(\nabla u_A)}_{L^2(\RR^n)}
\leq \doublebar{\widetilde N_+(\nabla u)}_{L^2(\RR^n)}
+ C\doublebar{h}_{L^2(\RR^n)}\end{equation*}
as desired.

The existence of $\vec G=\nabla u\vert_{\partial\RR^{n+1}_+}$ and the bound \eqref{eqn:fatou} follows immediately from \thmref{2L2limits} and \lemref{bdd6}.
\end{proof}

Finally, we prove uniqueness of solutions.
\begin{cor} Let $a$, $A$, and $B$ be as in \thmref{converse}. Assume in addition that $\E^\perp_{B,a,A}$ is one-to-one, where $\E^\perp_{B,a,A} h=\partial_{n+1} \E_{B,a,A} h\vert_{\partial\RR^{n+1}_\pm}$.

If $u$ satisfies
\begin{equation*}
\left\{\begin{aligned}
L_B^*(a\,L_Au)&=0  &&\text{in }\RR^{n+1}_+,\\
\nabla u &=0 &&\text{on } \partial\RR^{n+1}_+,\\
\doublebar{\widetilde N_+(\nabla u)}_{L^2(\RR^n)}
+\triplebar{t\,L_A u}_+
&<\infty
\end{aligned}\right.
\end{equation*}
then $u$ is constant in $\RR^{n+1}_+$.
\end{cor}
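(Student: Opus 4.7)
The plan is to reduce to the second-order uniqueness result \lemref{jumpunique} via the Fatou-type decomposition just proved, and then to use the injectivity hypothesis to kill the remaining layer-potential piece.

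First, apply \thmref{converse} to $u$: there exist $u_A\in W^2_{1,loc}(\RR^{n+1}_+)$ with $\Div A\nabla u_A=0$ in $\RR^{n+1}_+$ and $h\in L^2(\RR^n)$ with $\widetilde N_+(\nabla u_A)\in L^2(\RR^n)$ such that $u=u_A+\E_{B,a,A} h$ in $\RR^{n+1}_+$. By \thmref{2L2limits} the trace $\nabla u_A\vert_{\partial\RR^{n+1}_+}$ exists in $L^2(\RR^n)$, and by \lemref{bdd6} so does $\nabla \E_{B,a,A} h\vert_{\partial\RR^{n+1}_+}$; adding, the hypothesis $\nabla u=0$ on $\partial\RR^{n+1}_+$ gives
\begin{equation*}
\nabla u_A\big\vert_{\partial\RR^{n+1}_+}=-\nabla \E_{B,a,A} h\big\vert_{\partial\RR^{n+1}_+}
\quad\text{in }L^2(\RR^n).
\end{equation*}

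Next, view $\E_{B,a,A} h$ as a second-order solution in the \emph{lower} half-space. By \eqref{eqn:LFlower} we have $\Div A\nabla\E_{B,a,A} h=0$ in $\RR^{n+1}_-$, and by \lemref{bdd2} the nontangential estimate $\widetilde N_-(\nabla \E_{B,a,A} h)\in L^2(\RR^n)$ holds, with boundary gradient $\nabla\E_{B,a,A} h\vert_{\partial\RR^{n+1}_-}$ existing in $L^2(\RR^n)$. The continuity relation \eqref{eqn:Ects} from \lemref{bdd6} then gives $\nabla \E_{B,a,A} h\vert_{\partial\RR^{n+1}_-}=\nabla \E_{B,a,A} h\vert_{\partial\RR^{n+1}_+}$. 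Setting $v_+=u_A$ and $v_-=-\E_{B,a,A} h$, we therefore have $\Div A\nabla v_\pm=0$ in $\RR^{n+1}_\pm$, $\widetilde N_\pm(\nabla v_\pm)\in L^2(\RR^n)$, and
\begin{equation*}
\nabla v_+\big\vert_{\partial\RR^{n+1}_+}
=\nabla u_A\big\vert_{\partial\RR^{n+1}_+}
=-\nabla \E_{B,a,A} h\big\vert_{\partial\RR^{n+1}_+}
=\nabla v_-\big\vert_{\partial\RR^{n+1}_+}.
\end{equation*}
Now \lemref{jumpunique} applies and yields that $v_\pm$ are constant in $\RR^{n+1}_\pm$; in particular $u_A$ is constant in $\RR^{n+1}_+$, so $\partial_{n+1}u_A\vert_{\partial\RR^{n+1}_+}=0$.

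Finally, from $\nabla u=0$ on $\partial\RR^{n+1}_+$ and the fact that $u_A$ is constant we obtain $\partial_{n+1}\E_{B,a,A} h\vert_{\partial\RR^{n+1}_+}=-\partial_{n+1}u_A\vert_{\partial\RR^{n+1}_+}=0$, which by \eqref{eqn:Ects} equals $\E^\perp_{B,a,A} h$. The injectivity hypothesis on $\E^\perp_{B,a,A}$ then forces $h=0$, so $u=u_A$ is constant in $\RR^{n+1}_+$.

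The only delicate point is verifying the gradient continuity across $\partial\RR^{n+1}_+$ needed to invoke \lemref{jumpunique}; this is exactly the content of \lemref{bdd6}, and the rest of the argument is essentially bookkeeping.
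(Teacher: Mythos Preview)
Your proof is correct and follows essentially the same approach as the paper's: decompose $u=u_A+\E_{B,a,A}h$ via \thmref{converse}, use the continuity relation \eqref{eqn:Ects} to match boundary gradients and invoke \lemref{jumpunique}, then apply injectivity of $\E^\perp_{B,a,A}$ to conclude $h=0$. The only cosmetic difference is that the paper reads off $\E^\perp_{B,a,A}h=0$ directly from the constancy of $\E_{B,a,A}h$ in $\RR^{n+1}_-$, whereas you route through the upper trace and the constancy of $u_A$; both are immediate.
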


\begin{proof}
By \thmref{converse} we have that $u=u_A+\E_{B,a,A} h$ for some $h\in L^2(\RR^n)$ and some $u_A$ with $\Div A\nabla u_A=0$ in $\RR^{n+1}_+$.

Let $u_+=u_A$, and let $u_-=-\E_{B,a,A} h$ in $\RR^{n+1}_-$. Observe that $\Div A\nabla u_\pm=0$ in $\RR^{n+1}_\pm$.
By \lemref{bdd6}, and because $\nabla u=0$ on $\partial\RR^{n+1}_+$, we have that
\begin{equation*}
\nabla u_-\big\vert_{\partial\RR^{n+1}_-}
=-\nabla \E_{B,a,A} h\big\vert_{\partial\RR^{n+1}_-}
=-\nabla \E_{B,a,A} h\big\vert_{\partial\RR^{n+1}_+}
=\nabla u_+\big\vert_{\partial\RR^{n+1}_+}.\end{equation*}
Thus by \lemref{jumpunique}, $u_+=u_A$ and $u_-=-\E_{B,a,A} h$ are constant in $\RR^{n+1}_\pm$. In particular, if $\E^\perp_{B,a,A}$ is one-to-one, then $h=0$, and so $u=u_A+\E_{B,a,A} h$ is constant in $\RR^{n+1}_+$, as desired.
\end{proof}

\section*{Acknowledgements} The first named author would like to thank Jill Pipher and Martin Dindos for helpful discussions concerning higher order differential equations, and Ana Grau de la Herran for many helpful discussions concerning the boundedness of linear operators that are far from being Calder\'on-Zygmund operators.

\bibliographystyle{amsplain}\bibliography{initials}\end{document}